\newtheorem{theorem}{Theorem}[section]
\newtheorem{corollary}{Corollary}
\newtheorem{lemma}[theorem]{Lemma}
\newtheorem{proposition}[theorem]{Proposition}
\theoremstyle{definition}
\newtheorem{definition}[theorem]{Definition}
\newtheorem{remark}{Remark}
\newcommand{\C}{\mathbb{C}}
\newcommand{\N}{\mathbb{N}}
\newcommand{\R}{\mathbb{R}}
\newcommand{\sphere}{\mathbb{S}^2}
\newcommand{\Z}{\mathbb{Z}}
\newcommand{\cC}{\mathcal{C}}
\newcommand{\cU}{\mathcal{U}}
\DeclareMathOperator{\re}{Re}
\DeclareMathOperator{\im}{Im}
\renewcommand{\epsilon}{\varepsilon}
\renewcommand{\phi}{\varphi}
\renewcommand{\theta}{\vartheta}
\title[Invisible tricorns]{Invisible tricorns in real slices of rational maps}
\begin{document}

\author[R. Lodge]{Russell Lodge}
\address{Department of Mathematics and Computer Science, Indiana State University, Terre Haute, IN 47809, USA}
\email{russell.lodge@indstate.edu}

\author[S. Mukherjee]{Sabyasachi Mukherjee}
\address{School of Mathematics, Tata Institute of Fundamental Research, 1 Homi Bhabha Road, Mumbai 400005, India}
\email{sabya@math.tifr.res.in}

\maketitle

\begin{abstract}
One of the conspicuous features of real slices of bicritical rational maps is the existence of Tricorn-type hyperbolic components. Such a hyperbolic component is called \emph{invisible} if the non-bifurcating sub-arcs on its boundary do not intersect the closure of any other hyperbolic component. Numerical evidence suggests an abundance of invisible Tricorn-type components in real slices of bicritical rational maps. In this paper, we study two different families of real bicritical maps and characterize invisible Tricorn-type components in terms of suitable topological properties in the dynamical planes of the representative maps. We use this criterion to prove the existence of infinitely many invisible Tricorn-type components in the corresponding parameter spaces. Although we write the proofs for two specific families, our methods apply to generic families of real bicritical maps.
\end{abstract}

\tableofcontents

\section{Introduction}
In \cite{M4}, Milnor studied the dynamics and parameter spaces of rational maps with two critical points (we will call such maps `strictly bicritical'). A rational map is called \emph{real} if it commutes with an antiholomorphic involution of the Riemann sphere $\hat{\C}$. In suitable regions of parameter spaces, the two critical orbits of a strictly bicritical real map are related by an antiholomorphic involution. The dynamics of such maps are reminiscent of unicritical antiholomorphic polynomials and their parameter spaces display Tricorn-like geometry \cite[\S 5]{M4}.

Note that up to a M{\"o}bius change of coordinates, an antiholomorphic involution of $\hat{\C}$ can be written either as the complex conjugation map $\iota(z)=\overline{z}$ or as the antipodal map $\eta(z)=-\frac{1}{\overline{z}}$. The former has a circle of fixed points and the latter has no fixed point. Hence, a real rational map is (in suitable coordinates) either a map with real coefficients or an antipode-preserving map (i.e. it sends pairs of antipodal points to pairs of antipodal points). By a theorem of Borsuk and Hopf, the latter possibility can only be realized by rational maps of odd degree.

More generally, a family of rational maps with only two \emph{free} critical points exhibits many features similar to those of strictly bicritical rational maps. As an abuse of terminology, we will refer to such families as real bicritical families. In this article, we will study the Tricorn-like geometry and its topological consequences for real bicritical families of rational maps such that the two free critical orbits are related by an antiholomorphic involution.

A natural example of a real bicritical family commuting with $\iota$ is given by degree $4$ real Newton maps
$$N_a: \hat{\mathbb{C}}\to \hat{\mathbb{C}}$$
$$N_a(z)=z-\frac{f_a(z)}{f_a'(z)}$$
corresponding to the polynomials $f_a(z)=(z-1)(z+1)(z-a)(z-\bar{a})$, $a\in\cU$, where $\cU$ is the set of all parameters in $\mathbb{H}$ such that the two non-fixed critical points of $N_a$ are complex conjugate. We will call this family
\[\mathcal{N}_4^*=\{N_a\ |\ a\in \mathcal{U}\}.\]

The choice and parametrization of the family deserve some explanation (this family was also considered in \cite{Su}). Since a Newton map of any degree $d\geq 2$ has a repelling fixed point at $\infty$ (i.e. $\infty$ is a marked point), one can always send two fixed points of a Newton map (i.e. two roots of the corresponding polynomial) to $1$ and $-1$ by an affine conjugacy. The other $d-2$ fixed points parametrize the family of all Newton maps of degree $d$, so it is a complex $(d-2)$-dimensional family. In particular, the parameter space of all Newton maps of degree four is complex $2$-dimensional. We have chosen the real slice $\mathbb{H}$ of degree four Newton maps so that the other two fixed points are complex conjugates of each other.\footnote{For $a\in\mathbb{R}$, the fixed points $a$ and $\bar{a}$ are real, so they are not complex conjugates of each other. Moreover, for any $a\in\mathbb{C}$, we have that $N_a=N_{\overline{a}}$. Hence it suffices to restrict attention to the upper-half plane.} Note that two maps $N_a$ and $N_b$ in the family $\mathcal{N}_4^*$ with $a\neq b$ are holomorphically (affinely) conjugate if and only if $b=-\bar{a}$. The conjugating map between $N_a$ and $N_{-\bar{a}}$ is $z\mapsto -z$.

\begin{figure}[ht!]
\begin{center}
\includegraphics[scale=0.4]{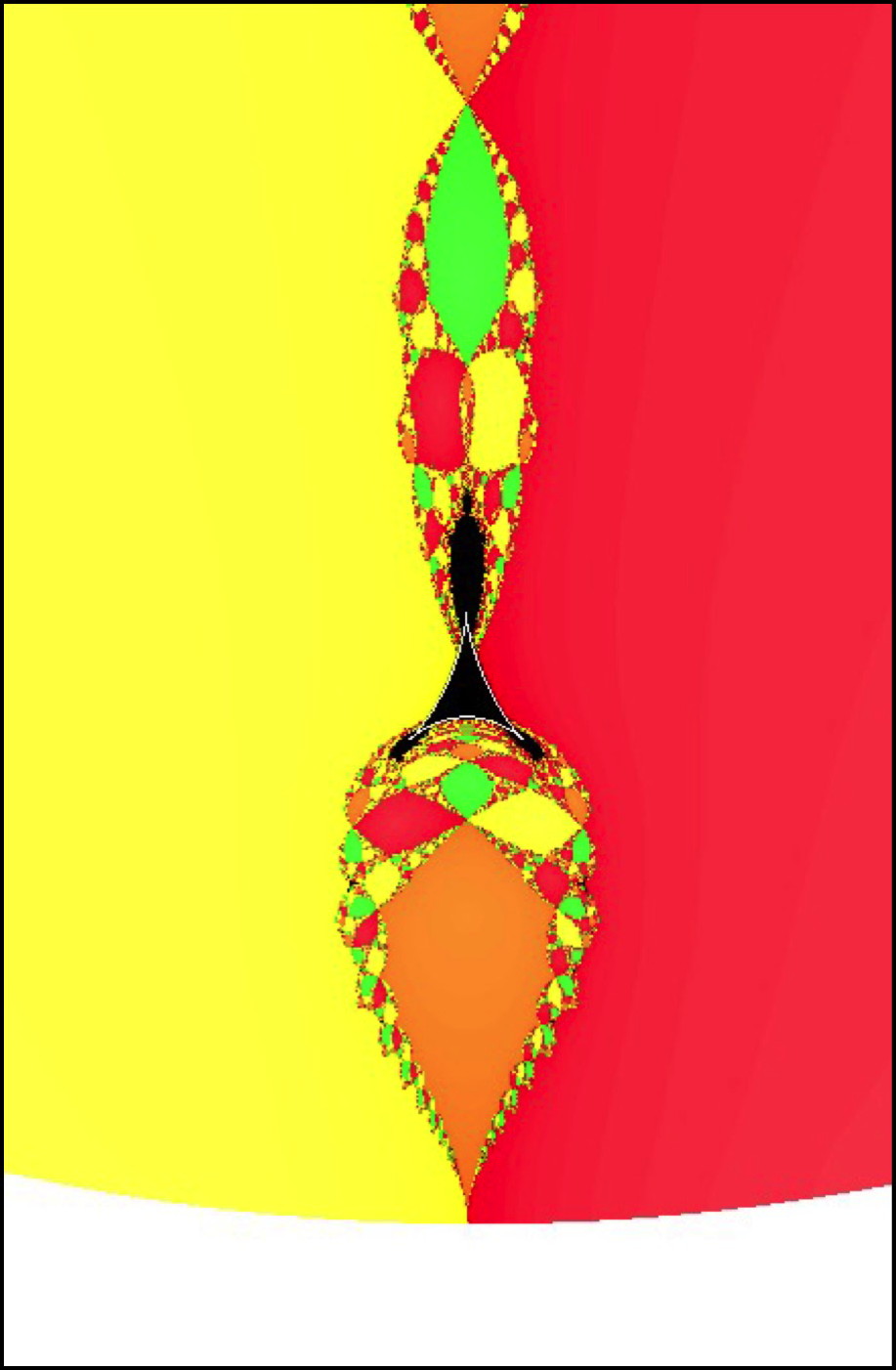}
\caption{A part of the parameter plane of the family $\mathcal{N}_4^*$ that contains a Tricorn component (enclosed by white curves) with two visible and one invisible boundary arcs. The white region at the bottom represents the complement of $\cU$.}
\label{parameter_plane}
\end{center}
\end{figure}

Since $f_a$ is a real polynomial, $N_a$ is a rational map with real coefficients and $N_a\circ\iota=\iota\circ N_a$. For $a\in\cU$, all the zeroes of the polynomial $f_a$ are distinct and hence $1, -1, a, \bar{a}$ are super-attracting fixed points of $N_a$. In particular, they are critical points of $N_a$. Since $N_a$ is a degree $4$ rational map, it has $2\cdot4-2=6$ critical points in $\hat{\mathbb{C}}$, four of which are $1, -1, a, \bar{a}$ as mentioned above. The other two critical points of $N_a$ are:

\begin{equation*}
\frac{3(a+\bar{a})\pm\sqrt{9(a^2+\bar{a}^2)-6\vert a\vert^2+24}}{12}.
\end{equation*}
Note that the two ``free'' critical points\footnote{Here, the word ``free'' is used in an informal sense to indicate the fact that these two critical points can exhibit various different dynamical behavior as opposed to the other four critical points $1, -1, a, \bar{a}$, which are necessarily fixed by the dynamics.} are complex conjugate precisely when $9(a^2+\bar{a}^2)-6\vert a\vert^2+24<0$; i.e. $2\im(a)^2-\re(a)^2-2>0$. Hence,
\[\cU=\{a\in\mathbb{C}:2\im(a)^2-\re(a)^2-2>0\}\] (see Figure \ref{conjugate}). The fact that the two free critical orbits of all maps in $\mathcal{N}_4^*$ are related by the antiholomorphic involution $\iota$ is going to play a pivotal role in our investigation.

\begin{figure}[ht]
\begin{center}
\includegraphics[scale=0.42]{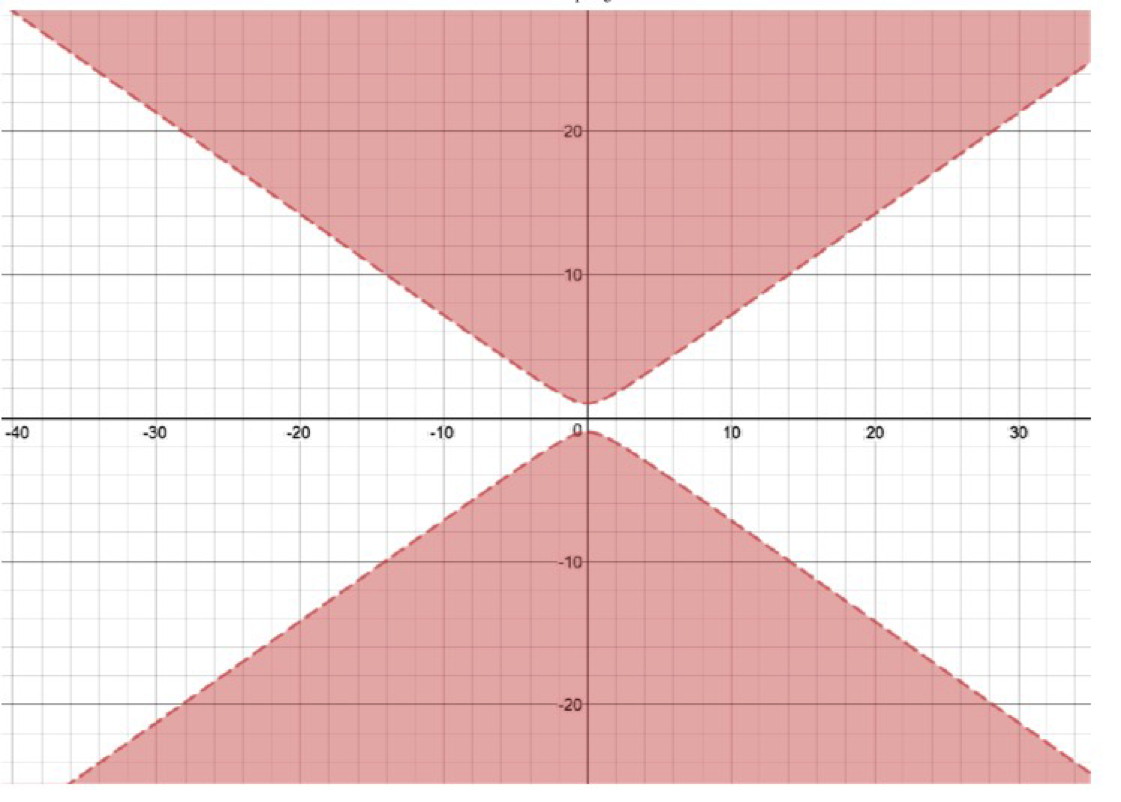}
\caption{For every parameter in the shaded region, the two free critical points of $N_a$ are complex conjugate. The upper red component is denoted by $\cU$.}
\label{conjugate}
\end{center}
\end{figure}

The second family of real bicritical rational maps that we will investigate in this paper is the family of antipode-preserving cubic rational maps. More precisely, we will consider the maps

\begin{align*}
f_q: \hat{\mathbb{C}}\to \hat{\mathbb{C}}\\
f_q(z)=z^2\frac{q-z}{1+\overline{q}z}
\end{align*}
for $q\in\C$.

Since $f_q\circ \eta=\eta\circ f_q$, each $f_q$ is a real rational map. Moreover, the critical points $0$ and $\infty$ of $f_q$ are super-attracting fixed points. It follows that the family $$\mathcal{A}_3:=\{f_q:q\in\C\}$$ is bicritical. The family $\mathcal{A}_3$ was introduced by Bonifant, Buff, and Milnor in \cite{BBM1}.

The Tricorn is the connectedness locus of quadratic antiholomorphic polynomials $\overline{z}^2+c$ (see \cite[\S 2]{IM2} for a general background on the combinatorics and topology of the Tricorn). A hyperbolic component in the parameter space of $\mathcal{N}_4^*$ (respectively, $\mathcal{A}_3$) is called a \emph{Tricorn component} if the corresponding maps have a unique self-conjugate (respectively, self-antipodal) attracting cycle. Such an attracting cycle necessarily attracts both critical orbits of the map. Maps in a Tricorn component behave, in a certain sense, like quadratic antiholomorphic polynomials, and hence can be profitably studied using tools from antiholomorphic dynamics. For a typical Tricorn component in the parameter space $\mathcal{N}_4^*$, see Figure \ref{parameter_plane}.

On the other hand, a hyperbolic component of $\mathcal{N}_4^*$ (respectively, $\mathcal{A}_3$) is called a \emph{Mandelbrot component} if the corresponding maps have two distinct attracting cycles. Due to the real symmetry of the maps, these two cycles are complex conjugate (respectively, antipodal), and hence have the same period.

The boundary of every Tricorn component in both parameter spaces consists of three parabolic arcs (real-analytic arcs of quasiconformally conjugate simple parabolic parameters) and parabolic cusps (double parabolic parameters). In the family $\mathcal{N}_4^*$, every Tricorn component is bounded. More precisely, the boundary of a Tricorn component is a Jordan curve consisting of three parabolic arcs and three cusp points such that two parabolic arcs meet at each cusp. On the other hand, the Tricorn components in the family $\mathcal{A}_3$ come in two different flavors. The first kind of Tricorn components (which are more conspicuous in the parameter space) are unbounded, their boundaries comprise two unbounded parabolic arcs and a bounded parabolic arc. Each unbounded arc meets the unique bounded arc at a finite cusp point, and the two unbounded arcs stretch out to infinity to ``meet" at an ``ideal" cusp point. These components are referred to as \emph{tongues}. The second type of Tricorn components in $\mathcal{A}_3$ are bounded, and their boundaries are again topological triangles with vertices being parabolic cusps and sides being parabolic arcs.

At the ends of every bounded parabolic arc (in either family), there are bifurcations from the Tricorn component to Mandelbrot components across sub-arcs of the arc (see Figure \ref{parameter_plane}). We will refer to the complement of these sub-arcs (across which bifurcation from the Tricorn component to Mandelbrot components occurs) as the ``non-bifurcating" sub-arc of a parabolic arc. We say that a bounded Tricorn component is \emph{invisible} if the non-bifurcating sub-arcs on its boundary do not intersect the closure of any other hyperbolic component in the parameter space (see Definition \ref{parameter_visibility} for a precise formulation, and Figure \ref{inaccessible}(right) and Figure \ref{inaccessible_2}(right) for pictures of invisible Tricorn components).

The principal goal of this article is to study the geometry of the parameter space of the above families near the Tricorn components. Using parabolic implosion methods, we characterize invisible Tricorn components in the parameter space in terms of `invisibility' (see Definition \ref{dynamical_visibility}) of Fatou components in the dynamical plane.

Our main theorem for the family $\mathcal{N}_4^*$ is the following (compare Figure \ref{inaccessible}).

\begin{theorem}[Invisible Tricorn Components in $\mathcal{N}_4^*$]\label{inaccessible_arcs}
For each $n\in\N$, $n>1$, there exists a Tricorn component $H^{(n)}$ of period $2n$ in the parameter space of the family $\mathcal{N}_4^*$ (having its center in the symmetry locus) such that
\begin{enumerate}
\item $\partial H^{(n)}$ has two visible and one invisible parabolic arcs, and

\item every neighborhood of the invisible parabolic arc on $\partial H^{(n)}$ intersects infinitely many capture components and invisible Tricorn components.
\end{enumerate}
\end{theorem}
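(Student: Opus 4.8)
The plan is to establish Theorem~\ref{inaccessible_arcs} in two stages: first construct, for each $n \in \N$, a specific tricorn component $H^{(n)}$ of period $2n$ whose center lies in the symmetry locus and exhibits a concrete topological feature in its dynamical plane; and then invoke the characterization of invisibility (the promised criterion relating parameter-space visibility to visibility of Fatou components) to convert that dynamical feature into the two claimed parameter-space statements. For the existence of the components $H^{(n)}$, I would exploit the well-understood structure of the family along the symmetry locus, where the maps $N_a$ behave like unicritical antiholomorphic polynomials $\bar z^2 + c$. The real symmetry locus inside $\cU$ carries a copy of (a subset of) the real tricorn, and the real multicorn is known to contain hyperbolic components of every period; pulling back the period-$2n$ antiholomorphic centers via the quasiconformal surgery/straightening relating a neighborhood of the symmetry locus to the tricorn produces the desired $H^{(n)}$. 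At the center of $H^{(n)}$, I would arrange (by choosing the appropriate combinatorial type, e.g.\ the landing pattern of the $\alpha$-fixed point or a suitable internal address) that the super-attracting periodic Fatou component containing the free critical value is \emph{invisible} in the sense of Definition~\ref{dynamical_visibility}: no other Fatou component of $N_a$ touches the relevant non-escaping part of its boundary.

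Once the dynamical invisibility at the center (hence, by openness/quasiconformal rigidity along the arc, along the non-bifurcating sub-arc of one parabolic arc) is established, part~(1) is immediate from the characterization theorem: the parabolic arc sharing its accessing dynamics with the invisible Fatou component is itself invisible in parameter space, while the other two parabolic arcs — which by the tricorn structure do abut Mandelbrot companion components or satellite tricorns — are visible. The point here is that in a period-$2n$ tricorn component exactly the arc ``facing'' the invisible critical Fatou component fails to be approached by other hyperbolic components, and one must check that the combinatorial type chosen for $H^{(n)}$ indeed yields precisely one such arc rather than zero or more; this is a finite combinatorial verification using the parametrization of parabolic arcs by pre-images of the root, carried out once and for all $n$ by the self-similar structure near the symmetry locus.

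For part~(2), the strategy is to produce, inside every neighborhood of the invisible parabolic arc of $\partial H^{(n)}$, infinitely many \emph{baby tricorns} — small homeomorphic copies of the tricorn accumulating on that arc — and to show that the tricorn components appearing near the ``antiholomorphic-center'' of each baby tricorn are themselves invisible. The accumulation of baby tricorns onto the non-bifurcating arc follows from parabolic implosion / Lavaurs-map analysis: as one perturbs $N_a$ off the parabolic arc in the non-bifurcating direction, the enriched dynamics of the Lavaurs maps realize renormalizable parameters, and the corresponding renormalization loci are small copies of the tricorn clustering at the arc. Invisibility of the small tricorn components is then inherited: the renormalization semi-conjugacy transports the invisible-Fatou-component picture from the dynamical plane of the parabolic map (which is invisible because the original arc was) down to the dynamical planes of the renormalized maps, so their critical Fatou components are again invisible, and the characterization theorem upgrades this to parameter-space invisibility.

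The main obstacle I anticipate is the parabolic-implosion step underlying part~(2): one must show not merely that baby tricorns accumulate on the invisible arc, but that the relevant geometric configuration — the non-touching of Fatou component boundaries — survives the implosion limit, i.e.\ that the Lavaurs maps at the arc inherit the invisibility and that this is an open-and-stable property along the small tricorn. Controlling the Hausdorff limits of Fatou components and Julia sets under parabolic perturbation, and verifying that no ``new'' Fatou component sneaks in to touch the critical component's boundary in the limit, is the delicate analytic core; the rest of the argument is combinatorial bookkeeping and an application of the already-established visibility characterization.
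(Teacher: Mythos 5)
Your high-level outline is recognizable, but both of its load-bearing steps diverge from the paper and neither is carried through in a way that would close the argument.

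\textbf{Construction of $H^{(n)}$.} You propose to obtain $H^{(n)}$ by a straightening/surgery relating a neighborhood of the symmetry locus $\mathcal{S}$ to the quadratic tricorn and pulling back period-$2n$ centers, then ``arranging by choosing the appropriate combinatorial type'' that exactly one dynamical co-root of the characteristic component is invisible. This glosses over the actual difficulty. No straightening theorem for $\mathcal{N}_4^*$ along $\mathcal{S}$ is established (or even stated) in the paper, and even granting one, a tricorn copy only controls the first-return dynamics on the renormalization domain; it says nothing about whether the boundary co-roots of $U_1$ touch $\partial\mathcal{B}_1^{\mathrm{imm}}\cup\partial\mathcal{B}_{-1}^{\mathrm{imm}}$, which is a global basin-geometry question outside the renormalization picture. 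The paper instead builds the centers $N_{a_n}$ directly by Thurston's theorem applied to an explicit subdivision rule (Proposition~\ref{thm_NewtonExists}), carefully normalizing the iteration on Teichm\"uller space to force $a_n\in\mathcal{S}$, and then uses the arcs $\alpha_i$ together with Pilgrim's result to verify that two co-roots are visible; Corollary~\ref{inv_co_root} then uses the $-\iota$ symmetry to show the third is invisible. This explicit combinatorial construction is the content of Section~\ref{sec:ConstructingPCFmaps} and cannot be replaced by an appeal to a putative tricorn copy.

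\textbf{Part (2): invisibility of the accumulating tricorn components.} Your plan is to exhibit renormalizable parameters (``baby tricorns'') via Lavaurs maps and then transport dynamical invisibility through a renormalization semi-conjugacy; you yourself flag this as the ``delicate analytic core.'' The paper sidesteps this entirely. Lemma~\ref{tricorns_accumulate} produces the accumulating tricorn components directly as parameters where $c_a$ eventually maps to $\overline{c_a}$ (using density of iterated preimages in the repelling cylinder and the winding of the transit map); there is no renormalization structure invoked. Invisibility of all but finitely many of them then follows from a short contradiction argument (see the proof of Theorem~\ref{inaccessible_arcs}) using Corollary~\ref{visibility_principal}: a visible arc must meet $\partial\mathcal{H}_1\cup\partial\mathcal{H}_{-1}$, so if infinitely many of the accumulating tricorn components were visible, their closures would force the original invisible arc $\cC^{(n)}$ to meet $\partial\mathcal{H}_1\cup\partial\mathcal{H}_{-1}$, a contradiction. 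Your renormalization route would require establishing a stability-of-invisibility statement under parabolic implosion limits that the paper never needs; as stated it is a genuine gap rather than a streamlining.

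In short: the overall skeleton (centers with controlled combinatorics, a dynamical-to-parameter visibility dictionary, and parabolic implosion to populate a neighborhood of the invisible arc) matches the paper, but your replacements for both the Thurston-theoretic construction and the contradiction argument are unsubstantiated and do not reach the conclusion without substantial further work.
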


For the family $\mathcal{A}_3$, we prove the following result which confirms a conjecture of Buff, Bonifant, and Milnor \cite[Remark 6.12]{BBM3}.

\begin{theorem}[Invisible Tricorn Components in $\mathcal{A}_3$]\label{inaccessible_arcs_1}
Let $H$ be a tongue component. Then, the bounded parabolic arc on $\partial H$ is invisible. Moreover, every neighborhood of the non-bifurcating sub-arc of this bounded arc intersects infinitely many capture components and invisible bounded Tricorn components.
\end{theorem}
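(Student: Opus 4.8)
The plan is to reduce both assertions, via the visibility criterion established above, to topological statements about the dynamical plane of a single parabolic map, and then to recover the advertised nearby components by parabolic implosion; the argument runs parallel to that of Theorem~\ref{inaccessible_arcs}. Fix a tongue $H$ of period $k$, write $\Gamma\subset\partial H$ for the bounded parabolic arc and $\Gamma_0\subset\Gamma$ for its non-bifurcating sub-arc, and pick a representative $q_0\in\Gamma_0$. The map $f_{q_0}$ then has a self-antipodal simple parabolic cycle of period $k$, with a single attracting petal and a single repelling direction at each cycle point; let $U_0$ be the characteristic parabolic Fatou component, i.e.\ the component of the immediate basin containing the free critical value. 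The criterion characterizing invisibility says that $\Gamma_0$ is invisible in parameter space precisely when $U_0$ is an invisible Fatou component of $f_{q_0}$ in the sense of Definition~\ref{dynamical_visibility}, and it reduces the invisibility of the bounded tricorn components produced below to the dynamical invisibility of their characteristic Fatou components. So everything comes down to the dynamical plane of $f_{q_0}$ and of its small renormalized copies.

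The core point is that the characteristic Fatou component of a tongue is dynamically invisible. The dynamical plane of $f_{q_0}$ is partitioned into the Julia set and three grand orbits of Fatou components: the super-attracting basins $\mathcal{B}(0)$ and $\mathcal{B}(\infty)$ (interchanged by $\eta$), and the parabolic basin containing $U_0$ and $\eta(U_0)$. Denoting by $\hat p\in\partial U_0$ the characteristic parabolic point, I would show that $\hat p$ lies on the boundary of no Fatou component outside the grand orbit of $U_0$; in particular $\hat p\notin\overline{\mathcal{B}(0)}\cup\overline{\mathcal{B}(\infty)}$. The plan is: (i) build a combinatorial model --- external rays together with a Yoccoz-type puzzle, in analogy with $\overline{z}^2+c$ --- describing the Julia set of $f_{q_0}$ in a neighborhood of $\hat p$; and (ii) exploit the combinatorics of tongues (we are on the bounded arc, so there is a single repelling direction at $\hat p$, and the antipodal symmetry forces $\eta(\hat p)$ to be another point of the same cycle) to conclude that the sector at $\hat p$ complementary to $\overline{U_0}$ meets only the Julia set and Fatou components in the grand orbit of $U_0$. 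The model case is the corresponding statement for an invisible parabolic arc of the tricorn. I expect this step to be the main obstacle, since it requires a rather precise description of the Julia set of $f_{q_0}$ near a parabolic point and is exactly where the combinatorial geometry specific to tongues (rather than to an arbitrary bounded tricorn component) enters; the transversality input needed below is, by contrast, more routine.

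Combining the two previous paragraphs gives the first assertion: $\Gamma_0$ meets the closure of no other hyperbolic component, so the bounded arc of $H$ is invisible.

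For the second assertion I would run parabolic implosion at $q_0$. Perturbing $q$ off $\Gamma$ to the side on which the parabolic cycle is destroyed, the free critical orbit --- trapped in $U_0$ for $q_0$ --- now escapes through a neighborhood of $\hat p$, and the ensuing dynamics is controlled by the Lavaurs maps $g_\tau$, $\tau\in\C$ (equivalently a phase $t\in\R/\Z$ and a scale). As $\tau$ varies, $g_\tau$ carries the critical value over the whole repelling-petal region of $\hat p$; since that region, although its boundary passes through $\hat p$, contains chunks of $\mathcal{B}(0)$ as well as pieces on which $g_\tau$ generates antiholomorphic quadratic-like return maps with connected Julia set, one obtains, by the usual transversality/Rouch\'e-type argument promoting Lavaurs configurations to genuine parameters, infinitely many capture components $W_m$ (free critical orbit landing in $\mathcal{B}(0)$, equivalently $\mathcal{B}(\infty)$) and infinitely many bounded tricorn components $H_m$, all accumulating on $\Gamma_0$. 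Because $\Gamma_0$ is invisible by the first assertion, none of the $\overline{W_m},\overline{H_m}$ can meet $\Gamma_0$, so these components genuinely cluster onto $\Gamma_0$ from shrinking positive distance, which is the content of the statement. Finally, each $H_m$ is bounded (it lies near a finite parameter, so it is not a tongue), and its characteristic Fatou component carries, through the renormalization, a rescaled copy of the local picture of $U_0$ at $\hat p$; by the second paragraph and the criterion, each $H_m$ is therefore an invisible bounded tricorn component. This completes the proof.
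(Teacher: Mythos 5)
Your high-level plan — reduce both assertions to dynamical-plane invisibility via the criterion of Lemma~\ref{visible_1}, then recover nearby components by parabolic implosion — matches the paper's outline, but there are two genuine gaps at the exact points where the real work has to happen.

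First, you flag the dynamical invisibility of the characteristic parabolic point (your $\hat p$) for $q_0$ on the bounded arc as ``the main obstacle,'' propose a Yoccoz-puzzle construction, and leave it unresolved. The paper does not need any puzzle machinery here: it cites \cite[Proposition~5.12]{BBM2}, which says that for a tongue parameter on the co-root arc, $\partial\mathcal{B}^{\mathrm{imm}}_{0}\cap\partial\mathcal{B}^{\mathrm{imm}}_{\infty}$ is exactly the root cycle $\{p_1(U_i)\}_i=\{p_3(U_i)\}_i$. Since on the bounded arc the characteristic parabolic point is the co-root $p_2(U_1)$, it is not in that intersection, and by the equivalence in Lemma~\ref{visible_1} it is invisible. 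So the step you regard as hardest is a direct quote of known combinatorics of tongues; your route is both harder and incomplete.

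Second, and more seriously, your justification for the invisibility of the accumulating bounded tricorn components $H_m$ does not work as stated. You write that the characteristic Fatou component of $H_m$ ``carries, through the renormalization, a rescaled copy of the local picture of $U_0$ at $\hat p$'' and invoke the criterion. But the criterion requires showing that the (new, higher-period) co-roots of $H_m$ avoid $\partial\mathcal{B}^{\mathrm{imm}}_{0}\cap\partial\mathcal{B}^{\mathrm{imm}}_{\infty}$ \emph{for the perturbed map}, and that intersection is a global feature of $f_q$, not a rescaled local feature near the imploded parabolic point; renormalization gives you the small Julia set, not the location of the big basins' shared boundary. The paper's argument is a different and essential one: for $q$ in $H_k$ near $\cC_2$, the set $\partial\mathcal{B}^{\mathrm{imm}}_{0}\cap\partial\mathcal{B}^{\mathrm{imm}}_{\infty}$ is a rotation set under angle doubling with rotation number $m/2n$ (by stability of the tongue's rotation number, \cite[Remark~7.5]{BBM1}, \cite[Theorem~5.2]{BBM2}), hence a single cycle of period $2n$; meanwhile all three co-root cycles of $H_k$ have period $2r_k > 2n$, so none of them can lie in that intersection. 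That period/rotation-number count is what makes \emph{all three} parabolic arcs of $H_k$ invisible, and it has no analogue in your sketch. You also omit the path-connectedness input (the paper's Lemma~\ref{julia_path}) used to run the implosion argument in the style of Lemma~\ref{capture_components_accumulate_2}, though your Lavaurs-map framing is a reasonable substitute for that mechanism.
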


As the above theorems suggest, the existence of invisible Tricorn components is a fairly general phenomenon in parameter spaces of real bicritical rational maps (also see \cite[Figure 9(c)]{CFG}). Our methods, with minor modifications, apply to any reasonable real bicritical family of rational maps.

Let us now detail the organization of the paper. Part \ref{part_one} of the paper concerns the family $\mathcal{N}_4^*$. In Section \ref{sec_symmetry}, we discuss some elementary consequences of real symmetry in the dynamical plane of the maps $N_a$. In Section \ref{hyperbolic}, we give a classification of hyperbolic components in the family $\mathcal{N}_4^*$. Section \ref{Tricorn_comp} is devoted to studying Tricorn components and their boundaries. The main result of this section is Theorem \ref{Tricorn_boundary}, which states that the boundary of every Tricorn component consists of three parabolic arcs each of which accumulates at parabolic cusps at both ends. Most of the arguments used in this section are inspired by the proofs of the corresponding results in the antiholomorphic polynomial setting. However, the poles of $N_a$ contribute additional complexity to some of the proofs. In Section \ref{sec:ConstructingPCFmaps}, we construct postcritically finite Newton maps (more precisely, centers of Tricorn components of $\mathcal{N}_4^*$) with prescribed combinatorics and topology. This is done by producing a sequence of branched coverings of topological spheres with desired combinatorics, and then invoking W. Thurston's characterization of rational maps to prove that the covers are realized by rational maps which we show to be Newton maps. In particular, this yields infinitely many postcritically finite Newton maps with desired interaction between the basins of attracting fixed points and the immediate basins containing the free critical points. Section \ref{sec_implosion} contains the main technical tool and key lemmas that lead to the proof of existence of invisible Tricorn components in $\mathcal{N}_4^*$. In Subsection \ref{para_imp_back}, we discuss the technique of parabolic implosion for antiholomorphic maps. The notions of visibility of Fatou components in the dynamical plane and Tricorn components in the parameter plane are defined in Subsection \ref{vis_dyn_plane}. In Subsection \ref{char_inv_tri}, we use parabolic implosion techniques to characterize invisible Tricorn components in terms of visibility properties of Fatou components in the dynamical plane. Finally in Section \ref{sec_mainthm_1}, we combine the results of Section \ref{sec:ConstructingPCFmaps} and Section \ref{sec_implosion} to prove Theorem \ref{inaccessible_arcs}.

Part \ref{part_two} of the paper deals with the family $\mathcal{A}_3$. This family has been extensively studied in \cite{BBM1,BBM3}, and the proofs of most of the basic results about this family that we will have need for can be found in their work. We briefly recall the various types of hyperbolic components of $\mathcal{A}_3$ in Section \ref{hyperbolic_1}. The Tricorn components in $\mathcal{A}_3$ come in two different flavors (namely, tongues and bounded Tricorns), and we describe the dynamical differences between their representative maps in Section \ref{tri_comp_antipode}. The final Section \ref{inv_tri_bare_arc} is devoted to the proof of Theorem \ref{inaccessible_arcs_1}. Since the proof of Theorem \ref{inaccessible_arcs_1} essentially follows the same strategy as that of Theorem \ref{inaccessible_arcs}, we only indicate the necessary modifications. We conclude the paper with a complementary result on the existence of bare regions on the boundaries of low period tongues.
\bigskip

\begin{huge}\part{The Family $\mathcal{N}_4^*$}\end{huge}\label{part_one}
\bigskip

In the first part of the paper, we will study the parameter space of degree $4$ real Newton maps, and prove the existence of infinitely many invisible Tricorn components (Theorem \ref{inaccessible_arcs}).
\bigskip

\section{Symmetry in the dynamical plane}\label{sec_symmetry}

In this section, we will delve into some of the consequences of real symmetry of the maps in the family $\mathcal{N}_4^*$. Recall that the two free critical points of every map $N_a$ in $\mathcal{N}_4^*$ are complex conjugate. This limits the number of distinct dynamical configurations as the dynamics of one of the free critical points dictates the dynamics of the other.

We denote the free critical point of $N_a$ that lies in the upper half-plane by $c_a$ and the one that lies in the lower half-plane by $\overline{c_a}$.

Following \cite{M4}, we will define the symmetry locus of the family $\mathcal{N}_4^*$. First note that the automorphism group of the rational map $N_a$ is defined as $$\mathrm{Aut}(N_a)=\{M\in PSL_2(\C): M\circ N_a=N_a\circ M\},$$ where $PSL_2(\C)$ is the group of M{\"o}bius automorphisms of $\hat{\C}$.

\begin{definition}[Symmetry Locus]
The \emph{symmetry locus} of the family $\mathcal{N}_4^*$ is defined as $\mathcal{S}=\{a\in\cU:\mathrm{Aut}(N_a)\neq\{\mathrm{id}\}\}$.
\end{definition}

We will have need for an explicit description of the symmetry locus of the family $\mathcal{N}_4^*$.

\begin{proposition}[Symmetry Locus of $\mathcal{N}_4^*$]\label{prop_symmetry_locus}
$\mathcal{S}=\cU\cap i\R$.
\end{proposition}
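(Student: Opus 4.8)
The plan is to first establish the inclusion $\cU \cap i\R \subseteq \mathcal{S}$ by exhibiting an explicit nontrivial automorphism, and then prove the reverse inclusion by analyzing how any M\"obius automorphism must permute the marked critical/fixed points of $N_a$. For the first inclusion, suppose $a = it$ with $t \in \R$ and $a \in \cU$. Then the polynomial $f_a(z) = (z-1)(z+1)(z-it)(z+it) = (z^2-1)(z^2+t^2)$ is even, so $f_a(-z) = f_a(z)$ and $f_a'(-z) = -f_a'(z)$; a direct computation then gives $N_a(-z) = -N_a(z)$, i.e. $M(z) = -z$ lies in $\mathrm{Aut}(N_a)$. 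Since $M \neq \mathrm{id}$, this shows $a \in \mathcal{S}$. (Note this is the same involution already identified in the introduction as conjugating $N_a$ to $N_{-\bar a}$, which is consistent since $-\bar{a} = a$ exactly when $a \in i\R$.)

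For the reverse inclusion, let $a \in \mathcal{S}$ and pick $M \in \mathrm{Aut}(N_a)$ with $M \neq \mathrm{id}$. The key observation is that $M$ must permute dynamically distinguished finite sets. In particular $M$ fixes the set of critical points, and since $M$ is a M\"obius map it must send the super-attracting fixed points $\{1, -1, a, \bar a\}$ (the roots of $f_a$, which are the critical points whose forward orbit is a fixed point) to themselves as a set, and likewise permute the two free critical points $\{c_a, \overline{c_a}\}$ among themselves; moreover $M$ must fix $\infty$ or send it into this finite set of special points — in fact $M(\infty)$ must be a fixed point of $N_a$ of the same multiplier type, and $\infty$ is the unique repelling fixed point, so $M(\infty) = \infty$. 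Hence $M$ is affine, $M(z) = \alpha z + \beta$. Then $M$ permutes $\{1,-1,a,\bar a\}$. One checks that an affine map fixing $\infty$ and permuting a generic configuration of four points must be an involution, and going through the possible pairings (using that $a \notin \R$ for $a \in \cU$, so $a, \bar a$ are not real and the four points are in "general position" apart from the conjugate-pair structure) forces $M(z) = -z$, which requires $\{a, \bar a\} = \{-1 \cdot a, -1\cdot \bar a\}$ mapped to $\{1,-1,a,\bar a\}$ appropriately — the only consistent possibility is $-a = \bar a$ (equivalently $-1 \mapsto 1$, $1 \mapsto -1$, $a \mapsto -a = \bar a$), i.e. $\re(a) = 0$, so $a \in i\R$.

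The main obstacle, and the step requiring the most care, is the case analysis for the affine map $M$ permuting the four roots $\{1,-1,a,\bar a\}$: one must rule out all pairings other than the one giving $M(z)=-z$, and also rule out $3$-cycles and $4$-cycles (these would force $M$ to have finite order $\geq 3$ fixing $\infty$, hence $M(z) = \alpha z + \beta$ with $\alpha$ a primitive root of unity, and then the barycenter of the four points must be the fixed point of $M$; chasing the resulting algebraic constraints, together with $a \in \cU$ which in particular excludes small or real values of $a$, eliminates these). It may be cleaner to argue via the cross-ratio: the four points $1,-1,a,\bar a$ have a cross-ratio $\lambda$ that is preserved up to the anharmonic group action by any M\"obius permutation, and for $a \in \cU$ the configuration is not "special" (not harmonic or equianharmonic) except precisely on a curve; checking that $\cU \cap i\R$ is exactly where a nontrivial symmetry is compatible with the conjugate-pair structure finishes the argument. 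Either way, the extra input $a \notin \R$ and the definition of $\cU$ are used to avoid degenerate configurations.
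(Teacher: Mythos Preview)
Your proposal is correct in outline but takes a more laborious route than the paper for the reverse inclusion. Both arguments begin identically: the inclusion $\cU \cap i\R \subseteq \mathcal{S}$ via $N_a(-z) = -N_a(z)$, and the observation that any nontrivial $M \in \mathrm{Aut}(N_a)$ must fix $\infty$ (the unique repelling fixed point) and hence be affine, $M(z) = \alpha z + \beta$. The divergence is in how $M$ is then pinned down. You work with the permutation action of $M$ on the four super-attracting fixed points $\{1,-1,a,\bar a\}$, which forces a case analysis over pairings, $3$-cycles, and $4$-cycles that you acknowledge is the main obstacle and leave only sketched (the cross-ratio suggestion is also not carried through). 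The paper instead exploits the set you mention but do not use: the \emph{two} free critical points $\{c_a, \overline{c_a}\}$. Since $M$ must permute this pair and cannot fix both (else $M$ fixes three points $\infty, c_a, \overline{c_a}$ and equals the identity), $M$ swaps them; this immediately gives $\alpha = -1$ and $\beta = c_a + \overline{c_a} \in \R$. Thus $M$ preserves $\R$, so it permutes $\{1,-1\}$, and $M \neq \mathrm{id}$ forces $M(1) = -1$, hence $\beta = 0$. From the explicit formula for the free critical points one reads off $c_a + \overline{c_a} = \mathrm{Re}(a)$, so $\mathrm{Re}(a) = 0$. This bypasses the entire case analysis on the four roots and requires no cross-ratio considerations.
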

\begin{proof}
For each $a\in\cU\cap i\R$, we have $N_a(-z)=-N_a(z)$. So, $a\in\mathcal{S}$.

Now let $a\in\mathcal{S}$, and $M\in\mathrm{Aut}(N_a)\setminus\{\mathrm{id}\}$. Since $N_a$ has a unique repelling fixed point at $\infty$, the M{\"o}bius map $M$ must fix $\infty$. Hence, $M(z)=\alpha z+\beta,$ for some $\alpha, \beta\in\C$.

Note that $N_a$ has precisely two non-fixed critical points $c_a$ and $\overline{c_a}$. Evidently, $M$ must either fix these two critical points, or act as a transposition on them. But if $M$ fixes $c_a$ and $\overline{c_a}$, then $M$ is the identity map (a M{\"o}bius map fixing three points is the identity).

Therefore, we have $M(c_a)=\overline{c_a}$ and $M(\overline{c_a})=c_a$. A simple computation shows that $\alpha=-1$ and $\beta=c_a+\overline{c_a}$. In particular, $M$ fixes the real line. So $M$ must permute the fixed critical points $1$ and $-1$. Since $M$ is not the identity map, we must have $M(1)=-1$ and $M(-1)=1$. It now follows that $\beta=c_a+\overline{c_a}=0$; i.e. $\mathrm{Re}(a)=0$. Thus, $a\in i\R$. It is immediately seen that $\mathrm{Aut}(N_a)=\{\mathrm{id},-z\}$.

We conclude that $\mathcal{S}=\cU\cap i\R$.
\end{proof}

The next proposition describes the location of the poles of $N_a$.

\begin{proposition}[Poles of $N_a$]
\label{poles}
The Newton map $N_a$ has three distinct finite poles. Exactly one of them lies on the real line (more precisely, in the interval $\left(-1,1\right)$), and the other two are complex conjugates of each other.
\end{proposition}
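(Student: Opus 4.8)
The plan is to analyze the Newton map $N_a$ of the quartic $f_a(z) = (z-1)(z+1)(z-a)(z-\bar a)$ directly as a rational function and locate its poles. Recall that for a Newton map $N_f = z - f/f'$, the finite poles are precisely the critical points of $f$, i.e.\ the zeros of $f'$, that are not themselves zeros of $f$; indeed, writing $N_f = (zf' - f)/f'$, the numerator and denominator share no common factor when $f$ has simple roots, so the poles of $N_f$ are exactly the roots of $f'$. Since $f_a$ has degree $4$ and (for $a \in \cU$) four distinct simple roots, $f_a'$ has degree $3$ and its three roots are all distinct from the roots of $f_a$; hence $N_a$ has exactly three distinct finite poles, namely the roots of $f_a'$. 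This disposes of the count.

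Next I would pin down the reality properties. Since $f_a$ has real coefficients, $f_a'$ has real coefficients, so its three roots are either all real, or one real and a complex-conjugate pair. To rule out the all-real case I would invoke the interlacing behavior of critical points of real polynomials together with the location of the roots of $f_a$. The two real roots of $f_a$ are $1$ and $-1$; by Rolle's theorem there is a root of $f_a'$ in $(-1,1)$, which accounts for the claimed real pole in the interval $(-1,1)$. The other two roots of $f_a$, namely $a$ and $\bar a$, are genuinely non-real for $a \in \cU$ (as $\cU \subset \mathbb{H}$ and $\cU \cap \R = \emptyset$), so they contribute no further sign changes of $f_a$ on $\R$ and hence no forced real root of $f_a'$. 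To conclude that the remaining two roots of $f_a'$ are actually non-real, I would argue that if all three roots of $f_a'$ were real then $f_a$, being a real quartic with a real critical point structure forcing three real critical values interlaced appropriately, would have more real roots than it does — or, more cleanly, I would simply compare with the explicit formula for the free critical points of $N_a$ already recorded in the introduction: two of the three poles of $N_a$ are exactly the two free critical points $c_a$ and $\overline{c_a}$, which by the discussion preceding the definition of $\cU$ are complex conjugate and non-real precisely when $a \in \cU$. The third pole is then the remaining root of the cubic $f_a'$, which is real (being the quotient of a real cubic by its conjugate pair of roots), and lies in $(-1,1)$ by the Rolle argument above.

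Actually the cleanest route is to make the identification of poles with free critical points the centerpiece: two of the three poles of $N_a$ are the free critical points $c_a, \overline{c_a}$ of $N_a$ (a critical point of $N_a$ that is a simple critical point of $f_a$ but not a root of $f_a$ is a pole of $N_a$ and vice versa, by the factorization above), so $\overline{c_a} = \overline{c_a}$ gives the conjugate pair directly from $a \in \cU$; the third pole is real because $f_a'(z) = c \cdot (z - c_a)(z - \overline{c_a})(z - p)$ with $c \in \R$ and the product $(z-c_a)(z-\overline{c_a})$ real forces $p \in \R$; and $p \in (-1,1)$ by Rolle applied to $f_a$ on $[-1,1]$ (noting $p \ne c_a, \overline{c_a}$ since those are non-real). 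Distinctness of all three poles follows since $c_a \ne \overline{c_a}$ and $p$ is real while the others are not. I expect the only mildly delicate point to be the bookkeeping that the numerator $zf_a'(z) - f_a(z)$ and $f_a'(z)$ are coprime — equivalently that no root of $f_a'$ is a root of $f_a$ — which is immediate here because the roots of $f_a$ are simple (so $f_a'$ does not vanish at them); everything else is elementary.
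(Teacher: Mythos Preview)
Your proposal contains a genuine error: you identify two of the three poles of $N_a$ with the free critical points $c_a,\overline{c_a}$ of $N_a$, but these are different objects. Writing $N_a=(zf_a'-f_a)/f_a'$, the finite poles of $N_a$ are the zeros of $f_a'$, whereas $N_a'=f_a f_a''/(f_a')^2$ shows that the free critical points of $N_a$ are the zeros of $f_a''$. The formula in the introduction for $c_a,\overline{c_a}$ solves $f_a''=0$, not $f_a'=0$. So the sentence ``two of the three poles of $N_a$ are exactly the two free critical points $c_a$ and $\overline{c_a}$'' is false, and the ``cleanest route'' built on it collapses. Your parenthetical justification conflates ``critical point of $f_a$'' with ``critical point of $N_a$''.

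What survives is the Rolle step: since $f_a(\pm 1)=0$, there is a root of $f_a'$ in $(-1,1)$. But you still owe a proof that this is the \emph{only} real root of $f_a'$ (and hence that the remaining two are a genuine conjugate pair, and that all three are distinct). Your sketched alternative---``if all three roots of $f_a'$ were real then $f_a$ would have more real roots than it does''---is not valid: a real quartic with three real critical points can perfectly well have exactly two real roots. The paper closes this gap by computing
\[
f_a''(z)=12\Big(z-\tfrac{\re(a)}{2}\Big)^2+\big(2\im(a)^2-\re(a)^2-2\big)>0\quad\text{for all }z\in\R,
\]
the inequality being exactly the defining condition of $\cU$. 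Thus $f_a'$ is strictly increasing on $\R$, so it has a unique simple real root; the other two roots are then a non-real conjugate pair, and distinctness of all three is automatic. The location in $(-1,1)$ follows either from your Rolle argument or from the paper's direct check $f_a'(-1)<0<f_a'(1)$.
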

\begin{proof}
Note that $N_a$ fixes $\infty$, so it can have at most three finite poles. Since all zeroes of $f_a$ are simple, it follows that the finite poles of $N_a$ are zeroes of $f_a'$. A brief computation shows that

\begin{equation*}
f_a'(z)=4z^3-6\re(a)z^2+2(\re(a)^2+\im(a)^2-1)z+2\re(a).
\end{equation*}

Since $f_a'(z)$ is a real cubic polynomial, it must have at least one real root which is immediately seen to be a pole of $N_a$. Now, for all $z\in\mathbb{R}$, we have that
\begin{align*}
f_a''(z)&=12z^2-12\re(a)z+2(\re(a)^2+\im(a)^2-1)\\
&=12\left[\left(z-\frac{\re(a)}{2}\right)^2+\frac{2\im(a)^2-\re(a)^2-2}{12}\right]\\
&> 0.
\end{align*}

This proves that $f_a'$ has a unique simple real root. As $f_a'$ is a real polynomial, the other two roots of $f_a'$ must be complex conjugate. Moreover, $f_a'(1)=2(\re(a)-1)^2+2\im(a)^2>0$ and $f_a'(-1)=-2(\re(a)+1)^2-2\im(a)^2<0$. By the intermediate value theorem, the unique real pole of $N_a$ lies in $\left(-1,1\right)$.
\end{proof}

For any $a$, let us denote the unique real pole of $N_a$ by $p_a$. The immediate basins of attraction of the fixed points $1, -1, a$, and $\bar{a}$ of $N_a$ will be denoted by $\mathcal{B}^{\mathrm{imm}}_{1}, \mathcal{B}^{\mathrm{imm}}_{-1}, \mathcal{B}^{\mathrm{imm}}_{a}$, and $\mathcal{B}^{\mathrm{imm}}_{\bar{a}}$ (respectively). The full basins of these fixed points will be denoted simply by dropping the superscript $``\mathrm{imm}"$. In the following proposition, we will collect a number of easy results about the topology and mapping properties of these immediate basins.

\begin{proposition}\label{basins_prop}
1) All immediate basins are simply connected.

2) The immediate basins $\mathcal{B}^{\mathrm{imm}}_{1}$ and $\mathcal{B}^{\mathrm{imm}}_{-1}$ are invariant under $\iota$. The other two immediate basins $\mathcal{B}^{\mathrm{imm}}_{a}$ and $\mathcal{B}^{\mathrm{imm}}_{\bar{a}}$ are disjoint from the real line, and they are interchanged by $\iota$.

3) If $1$ (respectively $-1$) is the only critical point in $\mathcal{B}^{\mathrm{imm}}_{1}$ (respectively in $\mathcal{B}^{\mathrm{imm}}_{-1}$), then $N_a:\mathcal{B}^{\mathrm{imm}}_{1}\to\mathcal{B}^{\mathrm{imm}}_{1}$ (respectively $N_a:\mathcal{B}^{\mathrm{imm}}_{-1}\to\mathcal{B}^{\mathrm{imm}}_{-1}$) is a $2:1$ branched cover. Otherwise, $\mathcal{B}^{\mathrm{imm}}_{1}$ (respectively $\mathcal{B}^{\mathrm{imm}}_{-1}$) contains three critical points (the fixed point and the two free critical points), and $N_a:\mathcal{B}^{\mathrm{imm}}_{1}\to\mathcal{B}^{\mathrm{imm}}_{1}$ (respectively $N_a:\mathcal{B}^{\mathrm{imm}}_{-1}\to\mathcal{B}^{\mathrm{imm}}_{-1}$)  is a $4:1$ branched cover.

4) If $\mathcal{B}^{\mathrm{imm}}_{1}$ (respectively $\mathcal{B}^{\mathrm{imm}}_{-1}$) contains only one critical point, then $N_a:\mathcal{B}^{\mathrm{imm}}_{1}\to\mathcal{B}^{\mathrm{imm}}_{1}$ (respectively $N_a:\mathcal{B}^{\mathrm{imm}}_{-1}\to\mathcal{B}^{\mathrm{imm}}_{-1}$) is conformally conjugate to $w^2:\mathbb{D}\to\mathbb{D}$ via the Riemann map of $\mathcal{B}^{\mathrm{imm}}_{1}$ (respectively $\mathcal{B}^{\mathrm{imm}}_{-1}$).

5) If $\mathcal{B}^{\mathrm{imm}}_{1}$ and $\mathcal{B}^{\mathrm{imm}}_{-1}$ contain only one critical point each, then $p_a\in \partial \mathcal{B}^{\mathrm{imm}}_{1}\cap\partial \mathcal{B}^{\mathrm{imm}}_{-1}$, and  $\mathbb{R}\setminus\{p_a\} \subset \mathcal{B}^{\mathrm{imm}}_{1}\cup \mathcal{B}^{\mathrm{imm}}_{-1}$.
\end{proposition}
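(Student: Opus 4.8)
The plan is to prove the five items roughly in order, since several later items rely on earlier ones. For item (1), I would invoke the standard fact about Newton maps of polynomials: each immediate basin of a root is simply connected (this goes back to Przytycki, and for Newton maps specifically to work of Hubbard–Schleicher–Sutherland). Actually, since we only need simple connectivity of the \emph{immediate} basins, the quickest route is to note that the fixed points $1,-1,a,\bar a$ are super-attracting with local degree $2$, so each immediate basin carries a Böttcher coordinate near the fixed point; the obstruction to simple connectivity would be a critical point of the first-return map lying on a non-trivial loop, but one can appeal to the general Newton-map result to conclude simple connectivity outright. For item (2), since $N_a$ has real coefficients, $\iota$ conjugates $N_a$ to itself and permutes Fatou components; $\iota$ fixes the roots $1,-1$ and swaps $a\leftrightarrow\bar a$, so it must send $\mathcal B^{\mathrm{imm}}_{1}$, $\mathcal B^{\mathrm{imm}}_{-1}$ to themselves and swap $\mathcal B^{\mathrm{imm}}_{a}\leftrightarrow\mathcal B^{\mathrm{imm}}_{\bar a}$. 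To see that $\mathcal B^{\mathrm{imm}}_{a}$ is disjoint from $\R$: if it met $\R$ then, being $\iota$-related to the distinct component $\mathcal B^{\mathrm{imm}}_{\bar a}$, the real point would lie in both, forcing $a=\bar a$, contradicting $a\in\cU\subset\mathbb H$.

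For items (3) and (4), I would use the Riemann–Hurwitz formula applied to the proper map $N_a\colon\mathcal B^{\mathrm{imm}}_{1}\to\mathcal B^{\mathrm{imm}}_{1}$ (proper because immediate basins of super-attracting fixed points are mapped onto themselves). Writing $d$ for its degree, $\chi(\mathcal B^{\mathrm{imm}}_{1})=d\,\chi(\mathcal B^{\mathrm{imm}}_{1}) - (\text{ramification})$, i.e. $1 = d - \sum(\mathrm{mult}-1)$; the fixed point $1$ contributes $1$ to the ramification, and the only other critical points of $N_a$ available to land in $\mathcal B^{\mathrm{imm}}_{1}$ are the free critical points $c_a,\overline{c_a}$ (the roots $-1,a,\bar a$ have their own invariant immediate basins, disjoint from $\mathcal B^{\mathrm{imm}}_{1}$). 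So either neither free critical point is in $\mathcal B^{\mathrm{imm}}_{1}$, giving $d=2$; or both are (they come as a $\iota$-conjugate pair and $\mathcal B^{\mathrm{imm}}_{1}$ is $\iota$-invariant, so one lies in it iff both do), giving ramification $1+1+1=3$ and hence $d=4$. This is exactly the dichotomy asserted. Item (4) is then a corollary: when $\mathcal B^{\mathrm{imm}}_{1}$ contains only the critical point $1$, the degree-$2$ proper self-map of a simply connected domain with a unique (totally ramified) critical point at the super-attracting fixed point is, via the Riemann map normalized to send $1$ to $0$, a degree-$2$ Blaschke product of $\D$ with a single critical point at $0$, which forces it to be $w\mapsto w^2$.

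Item (5) is the substantive one and I expect it to be the main obstacle. Assume $\mathcal B^{\mathrm{imm}}_{1}$ and $\mathcal B^{\mathrm{imm}}_{-1}$ each contain a unique critical point. The first claim, $\R\setminus\{p_a\}\subset\mathcal B^{\mathrm{imm}}_{1}\cup\mathcal B^{\mathrm{imm}}_{-1}$: the real line is $N_a$-invariant (real coefficients), $\infty$ is repelling, $p_a\in(-1,1)$ is the unique finite real pole (Proposition \ref{poles}), and $1,-1$ are the only real fixed points that are attracting; one analyzes the real dynamics of $N_a$ on $\R\setminus\{p_a\}$, which splits into two rays $(1,\infty)$, $(-\infty,-1)$ and two bounded pieces $(-1,p_a)$, $(p_a,1)$, and shows by the usual Newton-map monotonicity/convexity arguments (using $f_a''>0$ on $\R$ from the proof of Proposition \ref{poles}, near the roots $\pm1$; note $f_a$ is real and its only real roots are $\pm1$) that every real non-pole point is attracted to $1$ or $-1$ — this is a one-dimensional dynamics computation which I would carry out explicitly but not reproduce here. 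The second claim, $p_a\in\partial\mathcal B^{\mathrm{imm}}_{1}\cap\partial\mathcal B^{\mathrm{imm}}_{-1}$: since $\R\setminus\{p_a\}$ is the disjoint union of the real traces of the two immediate basins and $p_a$ separates $(-1,p_a)$ from $(p_a,1)$ on the real line, $p_a$ is a limit of points of $\mathcal B^{\mathrm{imm}}_{1}\cap\R$ from one side and of $\mathcal B^{\mathrm{imm}}_{-1}\cap\R$ from the other (one must check $(p_a,1)\subset\mathcal B^{\mathrm{imm}}_{1}$ and $(-1,p_a)\subset\mathcal B^{\mathrm{imm}}_{-1}$, not the full basins — this follows because these intervals are connected, contain the super-attracting point in their closure, and map into the respective immediate basin under iteration while staying real), whence $p_a$ lies on both boundaries. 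The delicate point to get right is ruling out that a bounded real interval is swallowed by a \emph{non-immediate} basin component or by some other Fatou component; the hypothesis that each of $\mathcal B^{\mathrm{imm}}_{\pm1}$ contains a critical point, combined with the fact that $N_a$ has only two free critical points already accounted for, pins down the global structure enough to exclude this.
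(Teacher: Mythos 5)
Items (1)--(4) of your sketch track the paper's proof closely (connected Julia set $\Rightarrow$ simply connected Fatou components for (1); symmetry of $\iota$ for (2); Riemann--Hurwitz for (3); Böttcher coordinate/Blaschke normalization for (4)), with only cosmetic differences such as deriving the disjointness of $\mathcal{B}^{\mathrm{imm}}_a$ from $\mathbb{R}$ via the swap $\mathcal{B}^{\mathrm{imm}}_a\leftrightarrow\mathcal{B}^{\mathrm{imm}}_{\bar a}$ rather than via the observation that a real orbit cannot converge to the non-real point $a$. Item (5) is where the routes diverge genuinely. The paper pushes the antiholomorphic involution $\iota$ through the (unique, by item (4)) Böttcher coordinate to obtain $\widetilde\iota(w)=\bar w$ on $\mathbb{D}$, then identifies the dynamical $0$-ray and $1/2$-ray of $\mathcal{B}^{\mathrm{imm}}_1$ with the real intervals $[1,+\infty)$ and $(p_a,1]$ (the $0$-ray lands on the unique $N_a$-fixed point $\infty\in\partial\mathcal{B}^{\mathrm{imm}}_1$, and the $1/2$-ray lands on its real preimage $p_a$); this argument genuinely uses the uniqueness-of-critical-point hypothesis via item (4). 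Your argument instead uses the strict convexity $f_a''>0$ on $\mathbb{R}$ (from Proposition \ref{poles}) to run the real Newton iteration directly. That is a valid alternative, and indeed more elementary; it even shows the conclusion of (5) holds without the critical-point hypothesis. However, you should make the convexity argument precise rather than waving at it, because the mechanism is exactly what resolves the ``delicate point'' you flag: the tangent-line inequality $f_a(t)>T_x(t)$ for $t\neq x$, together with the sign pattern $f_a<0$ on $(-1,1)$ and $f_a>0$ outside $[-1,1]$, forces $N_a(x)\in[1,+\infty)$ for all $x\in(p_a,1)$, and $[1,+\infty)$ is forward invariant with orbits decreasing to $1$. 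Thus $(p_a,+\infty)$ is a connected subset of the Fatou set containing $1$, hence lies in $\mathcal{B}^{\mathrm{imm}}_1$, and symmetrically $(-\infty,p_a)\subset\mathcal{B}^{\mathrm{imm}}_{-1}$. In particular, the appeal at the end of your sketch to ``the hypothesis that each of $\mathcal{B}^{\mathrm{imm}}_{\pm1}$ contains a critical point'' to rule out a bounded interval being swallowed by a strictly pre-periodic component is misplaced: the convexity already excludes that, and that hypothesis plays no role in your version of (5).
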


\begin{proof}
1) This follows from the fact that Julia sets of Newton maps arising from polynomials are connected \cite{Prz,Shi1}.

2) This is obvious as $\iota$ conjugates $N_a$ to itself, fixes $1$ and $-1$, and acts as a transposition on the set $\{a, \bar{a}\}$. Moreover, as $N_a$ leaves the real line invariant and $a$, $\bar{a}$ are strictly complex, the real line cannot intersect the immediate basins $\mathcal{B}^{\mathrm{imm}}_{a}$ and $\mathcal{B}^{\mathrm{imm}}_{\bar{a}}$.

3) Note that due to the symmetry of $\mathcal{B}^{\mathrm{imm}}_{1}$ (respectively $\mathcal{B}^{\mathrm{imm}}_{-1}$) with respect to $\iota$, if $\mathcal{B}^{\mathrm{imm}}_{1}$ (respectively $\mathcal{B}^{\mathrm{imm}}_{-1}$) contains one of the free critical points then it must contain the other as well. The statements about degrees of the map $N_a:\mathcal{B}^{\mathrm{imm}}_{1}\to\mathcal{B}^{\mathrm{imm}}_{1}$ (respectively $N_a:\mathcal{B}^{\mathrm{imm}}_{-1}\to\mathcal{B}^{\mathrm{imm}}_{-1}$) now directly follow from the Riemann-Hurwitz formula.

4) If the super-attracting Fatou component $\mathcal{B}^{\mathrm{imm}}_{1}$ (respectively $\mathcal{B}^{\mathrm{imm}}_{-1}$) contains only one critical point (which is necessarily a simple critical point), then the corresponding B{\"o}ttcher coordinate extends as a biholomorphism between $\mathcal{B}^{\mathrm{imm}}_{1}$ (respectively $\mathcal{B}^{\mathrm{imm}}_{-1}$) and $\mathbb{D}$ such that it conjugates $N_a$ to $w^2$.

5) Let $\phi_1:\mathcal{B}^{\mathrm{imm}}_{1}\to\mathbb{D}$ be the\footnote{since $N_a$ is $2:1$ on the immediate basin, the B{\"o}ttcher coordinate is unique.} B{\"o}ttcher coordinate that conjugates $N_a$ to $w^2$. The antiholomorphic involution $\iota$ of $\mathcal{B}^{\mathrm{imm}}_{1}$ can be transported by $\phi_1$ to define an antiholomorphic involution $\widetilde{\iota}:=\phi_1 \circ \iota \circ \phi_1^{\circ (-1)}$ of $\mathbb{D}$. As $\phi_1(1)=0$ and $\iota$ fixes $1$, it follows that $\widetilde{\iota}$ fixes $0$. A simple computation using the description of conformal automorphisms of $\mathbb{D}$ implies that there exists an $\alpha\in\mathbb{S}^1$ such that $\widetilde{\iota}(w)=\alpha \bar{w}$, for all $w\in\mathbb{D}$. Again, since $\iota$ conjugates $N_a$ to itself, $\widetilde{\iota}$ must conjugate $w^2$ to itself on $\mathbb{D}$. Therefore, $\alpha=1$; i.e. $\widetilde{\iota}(w)=\overline{w}$, for all $w\in\mathbb{D}$.

Since the radial lines at angles $0$ and $1/2$ in $\mathbb{D}$ are fixed by $\widetilde{\iota}$, it follows that the dynamical rays at angles $0$ and $1/2$ in $\mathcal{B}^{\mathrm{imm}}_{1}$ are fixed by $\iota$. Hence, these two dynamical rays are contained in the real line. As the dynamical $0$-ray is fixed by $N_a$, it must land at $\infty$ (which is the only fixed point of $N_a$ on $\partial \mathcal{B}^{\mathrm{imm}}_{1}$). Therefore, the dynamical $0$-ray in $\mathcal{B}^{\mathrm{imm}}_{1}$ is the interval $\left[1,+\infty\right)$. Similarly, the dynamical $1/2$-ray must land at a pole of $N_a$ on $\mathbb{R}$. Hence it must land on $p_a$; i.e. the dynamical $1/2$-ray in $\mathcal{B}^{\mathrm{imm}}_{1}$ is the interval $\left(p_a,1\right]$. In particular, $p_a\in\partial \mathcal{B}^{\mathrm{imm}}_{1}$.

One can similarly prove that the dynamical $0$-ray in $\mathcal{B}^{\mathrm{imm}}_{-1}$ is the interval $\left(-\infty,-1\right]$ and the dynamical $1/2$-ray in $\mathcal{B}^{\mathrm{imm}}_{-1}$ is the interval $\left[1,p_a\right)$. In particular, $p_a\in\partial \mathcal{B}^{\mathrm{imm}}_{-1}$.

This proves that $p_a\in \partial \mathcal{B}^{\mathrm{imm}}_{1}\cap\partial \mathcal{B}^{\mathrm{imm}}_{-1}$, and  $\mathbb{R}\setminus\{p_a\} \subset \mathcal{B}^{\mathrm{imm}}_{1}\cup \mathcal{B}^{\mathrm{imm}}_{-1}$.
\end{proof}

\begin{proposition}\label{no_free}
The immediate basins $\mathcal{B}^{\mathrm{imm}}_{a}$ and $\mathcal{B}^{\mathrm{imm}}_{\bar{a}}$ do not contain any free critical point.
\end{proposition}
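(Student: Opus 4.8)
The strategy is to argue by contradiction: first reduce, using the real symmetry, to showing that $c_a$ alone cannot lie in $\mathcal{B}^{\mathrm{imm}}_{a}$, and then derive a contradiction by producing an ``extra'' preimage of $a$ inside that basin whose position is forbidden by the Gauss--Lucas theorem.

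First I would record that $\mathcal{B}^{\mathrm{imm}}_{a}$ lies in the open upper half-plane: by Proposition~\ref{basins_prop}(2) it is disjoint from $\mathbb{R}$, and, being connected and containing $a\in\mathbb{H}$, it must be contained in $\mathbb{H}$; symmetrically $\mathcal{B}^{\mathrm{imm}}_{\bar a}$ is contained in the open lower half-plane. Since $c_a\in\mathbb{H}$ while $\overline{c_a}$ lies in the lower half-plane, this already rules out $\overline{c_a}\in\mathcal{B}^{\mathrm{imm}}_{a}$ and $c_a\in\mathcal{B}^{\mathrm{imm}}_{\bar a}$. Moreover $\iota$ conjugates $N_a$ to itself and carries $\mathcal{B}^{\mathrm{imm}}_{a}$ to $\mathcal{B}^{\mathrm{imm}}_{\bar a}$ and $c_a$ to $\overline{c_a}$, so it suffices to prove $c_a\notin\mathcal{B}^{\mathrm{imm}}_{a}$.

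So suppose $c_a\in\mathcal{B}^{\mathrm{imm}}_{a}$. The map $N_a\colon\mathcal{B}^{\mathrm{imm}}_{a}\to\mathcal{B}^{\mathrm{imm}}_{a}$ is a proper holomorphic self-map of a simply connected domain (Proposition~\ref{basins_prop}(1)), so by Riemann--Hurwitz $\deg(N_a|_{\mathcal{B}^{\mathrm{imm}}_{a}})-1$ equals the total multiplicity of the critical points of $N_a$ lying in $\mathcal{B}^{\mathrm{imm}}_{a}$. Under our assumption these are exactly $a$ and $c_a$, each simple (the other free critical point $\overline{c_a}$ is in the lower half-plane, and $1,-1,\bar a$ lie in their own basins), so $N_a|_{\mathcal{B}^{\mathrm{imm}}_{a}}$ has degree $3$. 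Since $a$ is a simple root of $f_a$, the local degree of $N_a$ at $a$ is $2$, so $a$ must have a further preimage $a'\in\mathcal{B}^{\mathrm{imm}}_{a}\setminus\{a\}$; in particular $\im a'>0$. On the other hand, writing $f_a(z)=(z-a)g(z)$ with $g(z)=(z-1)(z+1)(z-\bar a)$ and using $f_a'=g+(z-a)g'$, the equation $N_a(z)=a$ becomes $(z-a)^{2}g'(z)=0$; hence every preimage of $a$ other than $a$ itself is a zero of $g'$. As the roots $1,-1,\bar a$ of the cubic $g$ all lie in the closed lower half-plane, the Gauss--Lucas theorem forces the zeros of $g'$ into that half-plane too, so $\im a'\le 0$ --- a contradiction. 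Thus $c_a\notin\mathcal{B}^{\mathrm{imm}}_{a}$, and the proposition follows.

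The one genuinely substantive point is the last one: identifying $N_a^{-1}(a)\setminus\{a\}$ with the critical set of $g$ and then invoking Gauss--Lucas to confine it to $\{\im z\le 0\}$. The remaining ingredients --- the half-plane reduction, the properness of $N_a$ on an immediate basin, and the Riemann--Hurwitz count --- are routine; one should nonetheless check the harmless facts that $a$ is a simple root of $f_a$ (so $N_a$ has local degree exactly $2$ there) and that $c_a,\overline{c_a}$ are distinct simple critical points for every $a\in\cU$.
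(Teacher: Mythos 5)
Your proof is correct, but it takes a genuinely different route from the paper's. The paper invokes \cite[Corollary 5.2]{RS}: if $\mathcal{B}^{\mathrm{imm}}_{a}$ contained a free critical point, it would have two accesses to infinity, and the cited result forces a zero of $f_a$ other than $a$ to lie ``between'' those accesses; since $\mathcal{B}^{\mathrm{imm}}_{a}\subset\mathbb{H}$ contains no other root, this is impossible. Your argument instead uses the Riemann--Hurwitz degree count to force an extra preimage $a'\neq a$ of $a$ inside the basin, the Newton-specific algebraic identity $N_a(z)=a\iff(z-a)^2g'(z)=0$ with $g(z)=(z-1)(z+1)(z-\bar a)$ to identify $a'$ with a critical point of $g$, and Gauss--Lucas to confine those critical points to the closed lower half-plane, contradicting $a'\in\mathbb{H}$. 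Both arguments are sound. Yours is more elementary and self-contained --- it avoids the channel/access machinery of Rückert--Schleicher entirely --- and the observation that the non-fixed Newton preimages of a simple root $a$ are exactly the critical points of the cofactor $g=f_a/(z-a)$ is a clean identity that would adapt to other Newton families. The paper's route is shorter given the cited lemma and generalizes more directly to statements about accesses and channel combinatorics used elsewhere in the paper. One minor point worth keeping when polishing: after the degree-$3$ count you should note (as you essentially do) that the extra preimage $a'$ cannot be a critical point, since that would push the Riemann--Hurwitz total past $3$; this ensures $a'$ is an honest simple preimage and not an artifact of counting.
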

\begin{proof}
Let us suppose that the immediate basin $\mathcal{B}^{\mathrm{imm}}_{a}$ contains a free critical point. Then $\mathcal{B}^{\mathrm{imm}}_{a}$ has two accesses to infinity (see \cite[Proposition~6]{HSS}). By \cite[Corollary 5.2]{RS}, there must be a zero of $f_a$ (different from $a$ itself) between these two accesses. However, $\mathcal{B}^{\mathrm{imm}}_{a}$ is contained in the upper half-plane and no other zero of $f_a$ lies in the upper half-plane. Therefore, no zero of $f_a$ can lie between these two accesses to infinity. This contradicts the assumption that $\mathcal{B}^{\mathrm{imm}}_{a}$ contains a free critical point.

The proof for the immediate basin $\mathcal{B}^{\mathrm{imm}}_{\bar{a}}$ is analogous.
\end{proof}
\bigskip

\section{Classification of hyperbolic components}\label{hyperbolic}

Let us start with some basic definitions and notations that we will need in the rest of the paper. The set of all critical points of a rational map $R$ (or a branched cover of the $2$-sphere of degree $d\geq 2$) is denoted by $C(R)$. The postcritical set of $R$ is defined as the iterated forward images of all the critical points of $R$, and is denoted by $P_R$. In other words, $P_R=\displaystyle\bigcup_{n=1}^\infty R^{\circ n}(C(R))$. A rational map (or a branched cover of the sphere) is called postcritically finite if $P_R$ is a finite set.

Recall that a rational map is called \emph{hyperbolic} if the forward orbit of each critical point of the map converges to an attracting cycle. Since the critical points $\pm 1$, $a$, and $\overline{a}$ of every map $N_a$ are fixed, it follows that a map $N_a$ in $\mathcal{N}_4^*$ is hyperbolic if and only if the forward orbits of the two free critical points converge to attracting cycles. If $N_a$ is a hyperbolic map, the corresponding parameter $a$ is called a hyperbolic parameter. A connected component of the set of all hyperbolic parameters (in $\mathcal{N}_4^*$) is called a hyperbolic component. A hyperbolic parameter $a_0$ is called a center of a hyperbolic component if $N_{a_0}$ is postcritically finite.

Using the results of Section \ref{sec_symmetry}, we will now describe all possible types of hyperbolic components that appear in the family $\mathcal{N}_4^*$. For a hyperbolic map $N_a$, the free critical points must converge to attracting cycles. This can happen in a variety of ways. However, the dynamical symmetry of the two free critical points ensures that the two free critical points exhibit symmetric dynamical behavior.

Recall that each $N_a$ has five fixed points; while $\infty$ is a repelling fixed point, the others (namely $\pm 1$, $a$, $\bar{a}$) are super-attracting. We will first discuss the case when the orbit of a free critical point converges to one of these super-attracting fixed points. By Proposition \ref{no_free}, the free critical points cannot belong to the immediate basins of $a$ or $\overline{a}$. On the other hand, if one free critical point lies in the basin of the fixed point $+1$ (respectively $-1$), then by Proposition \ref{basins_prop} the other one must lie in the same basin too. These facts lead to the following three types of hyperbolic components.

\textbf{Principal hyperbolic components}: There are two principal hyperbolic components in the family $\mathcal{N}_4^*$.

$\mathcal{H}_1$: This consists of the set of parameters $a$ for which the immediate basin of attraction of the fixed point $1$ contains both free critical points. In this case, the immediate basin of attraction of the fixed point $1$ has three accesses to infinity and all other immediate basins have a unique access to infinity. Moreover, any Fatou component eventually maps to one of the immediate basins. In Figure \ref{parameter_plane}, the red unbounded component is $\mathcal{H}_1$.

$\mathcal{H}_{-1}$: This consists of the set of parameters $a$ for which the immediate basin of attraction of the fixed point $-1$ contains both free critical points. In this case, the immediate basin of attraction of the fixed point $-1$ has three accesses to infinity and all other immediate basins have a unique access to infinity. Moreover, any Fatou component eventually maps to one of the immediate basins. In Figure \ref{parameter_plane}, the yellow unbounded component is $\mathcal{H}_{-1}$.

\textbf{Capture components}: A capture component $H$ is a connected component of the hyperbolicity locus such that for every parameter $a$ in $H$, the free critical points of $N_a$ lie in pre-periodic Fatou components that eventually map to some of the immediate basins of attraction of the fixed points. If one of the free critical points lies in the basin of $1$ (respectively $-1$), then the other must belong to the same basin. On the other hand, if one of the free critical points lies in the basin of $a$, then the other must lie in the basin of $\overline{a}$. Once again, each Fatou component eventually maps to one of the immediate basins. In Figure \ref{parameter_plane}, the capture components are bounded and shaded in red, yellow, green or orange (depending on which fixed points the free critical points converge to).

We now consider the case when a free critical point converges to an attracting cycle of period greater than one. Since the cycle of immediate basins of any attracting cycle must contain a critical point (necessarily one of the two available free critical points in this case), such a hyperbolic map can have at most two attracting cycles. Therefore, either there is a unique self-conjugate attracting cycle that attracts both free critical points, or there are two distinct attracting cycles each attracting one free critical point.

\textbf{Mandelbrot components:} A Mandelbrot component $H$ is a connected component of the hyperbolicity locus such that for every parameter $a$ in $H$, the map $N_a$ has two distinct attracting cycles (of period greater than one). Due to symmetry, these two attracting cycles are mapped to each other by $\iota$. Hence both these attracting cycles have a common period $n$. The immediate basins of each of these two attracting cycles contain one of the free critical points of $N_a$. Every Fatou component of $N_a$ eventually maps to one of the fixed immediate basins or to one of these two cycles of immediate basins. We will refer to such a component as a Mandelbrot component of period $n$. Figure \ref{mandelbrot} shows a Mandelbrot component.

\begin{figure}[ht!]
\begin{center}
\includegraphics[scale=0.3]{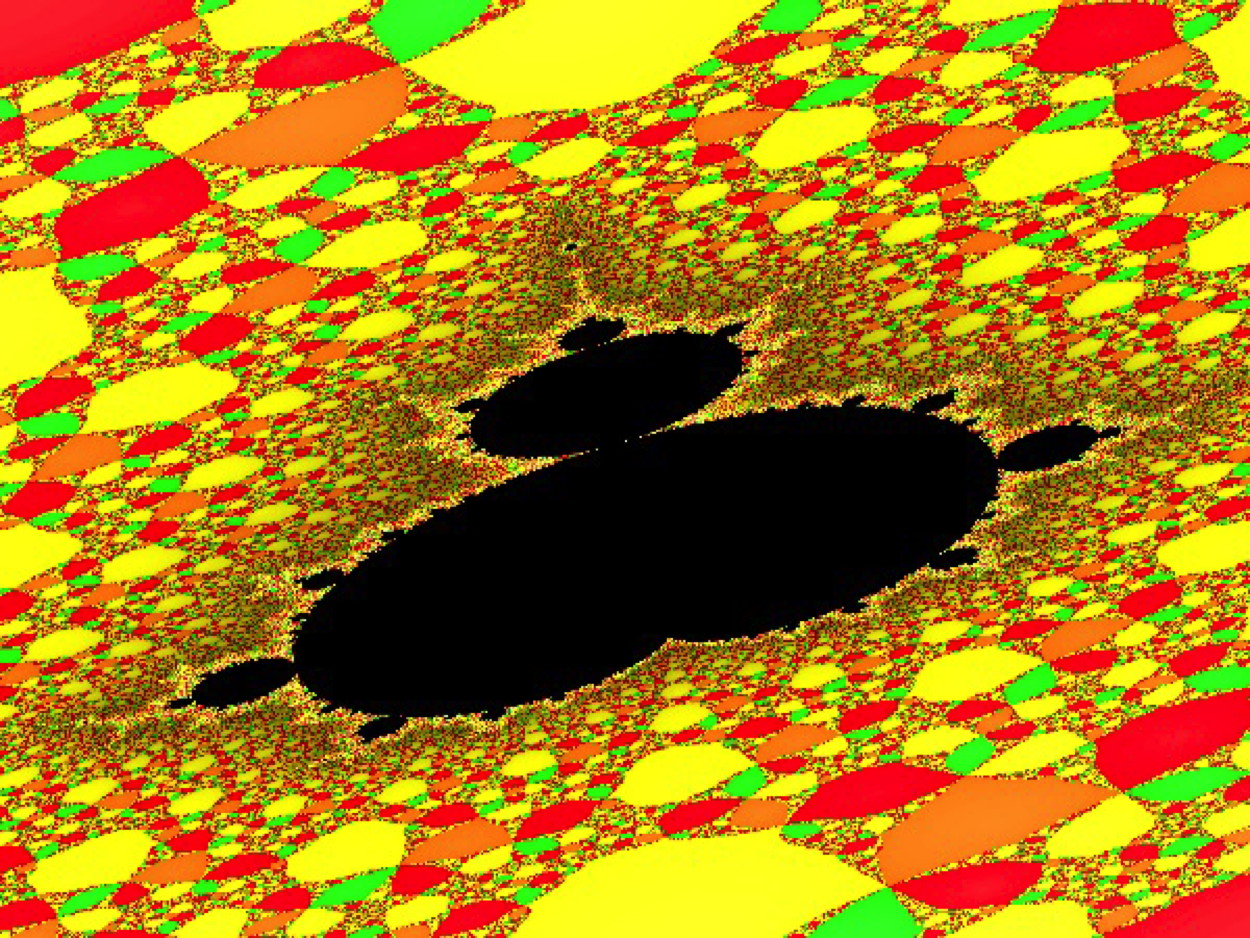}
\caption{The main cardioid of the ``baby Mandelbrot set'' (in black) is a Mandelbrot component.}
\label{mandelbrot}
\end{center}
\end{figure}

\textbf{Tricorn components:} A Tricorn component $H$ is a connected component of the hyperbolicity locus such that for every parameter $a$ in $H$, the map $N_a$ has a unique attracting cycle (of period greater than one). Such an attracting cycle is necessarily self-conjugate and hence must attract both free critical points (as $\iota$ conjugates $N_a$ to itself). By Proposition \ref{basins_prop}, the attracting cycle is disjoint from the real line. Therefore, $\iota$ acts as a fixed-point free involution on this attracting cycle. It follows that the period of the attracting cycle must be an even integer $2n$. Every Fatou component of $N_a$ eventually maps to one of the fixed immediate basins or to this $2n$-periodic cycle of immediate basins. Figure \ref{parameter_plane} shows a Tricorn component (in black).\footnote{Our definition of Tricorn components follows \cite{BBM1,BBM3}. We should warn the readers that this definition is different from the usual definition of Tricorn in the context of quadratic antiholomorphic polynomials. A Tricorn component, as defined in this article, is only a hyperbolic component and does \emph{not} include the decorations attached to it.}

The following proposition follows from the general theory of hyperbolic components of rational maps \cite[Theorem 7.13, Theorem 9.3]{MP}.

\begin{proposition}\label{unique_center}
Every hyperbolic component in the parameter space of the family $\mathcal{N}_4^*$ is simply connected and has a unique center (i.e. a postcritically finite parameter).
\end{proposition}

Let us take a more careful look at the dynamics of the maps in a Tricorn component, which are the main objects of study of this article. Let $a$ belong to a Tricorn component $H$ of period $2n$. Since the real line is disjoint from the immediate basins of this $2n$-periodic cycle, the two free critical points $c_a$ and $\overline{c_a}$ must lie in two distinct immediate basins of this attracting cycle. Let us label the periodic (attracting) Fatou components $\{U_1, U_2, \cdots, U_{2n}\}$ so that $c_a$ is contained in $U_1$. By symmetry, it follows that $\overline{c_a}$ belongs to the conjugate Fatou component $\iota(U_1)$. For any fixed $j$, let $k$ be the smallest positive integer such that $N_{a}^{\circ k}(U_j)=\iota(U_j)$. Then $\iota\circ N_{a}^{\circ k}(U_j)=U_j$; i.e.\ $N_{a}^{\circ k}(\iota(U_j))=U_j$. Therefore, $N_{a}^{\circ 2k}(U_j)=U_j$. This implies that $k=n$; i.e. $N_{a}^{\circ n}(U_j)=\iota(U_j)$ for each $j$ in $\{1, 2, \cdots, 2n\}$. In particular, $N_{a}^{\circ n}(U_1)=\iota(U_1)$.

\bigskip

\section{Tricorn components and their boundaries}\label{Tricorn_comp}

Throughout this section, we assume that $H$ is a Tricorn component of period $2n$. The dynamics of the maps in the Tricorn components are reminiscent of antiholomorphic dynamics. Since $N_a$ commutes with the complex conjugation map $\iota$, we have that $N_a^{\circ 2n}=(\iota\circ N_a^{\circ n})\circ(\iota\circ N_a^{\circ n})$. Hence the first return map $N_a^{\circ 2n}$ of any $2n$-periodic Fatou component is the second iterate of the first antiholomorphic return map $\iota\circ N_a^{\circ n}$.

By Proposition \ref{unique_center}, each Tricorn component is simply connected. An explicit real-analytic uniformization of the Tricorn components can be found in \cite[Theorem 5.9]{NS} or \cite[Lemma 3.2]{BMMS}.

\begin{proposition}
Each Tricorn component $H$ is simply connected. Moreover, there is a dynamically defined real-analytic three-fold cover from $H$ to the unit disk, ramified only over the origin.
\end{proposition}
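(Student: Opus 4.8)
\emph{Outline of proof.} Simple connectivity of $H$ is exactly Proposition~\ref{unique_center}, so the content lies in the three-fold cover; its construction runs parallel to the case of odd-period hyperbolic components of the tricorn (\cite[Theorem~5.9]{NS}, \cite[Lemma~3.2]{BMMS}), and the plan is to reduce to that situation. I would first pin down the dynamics on the characteristic Fatou component. Label the $2n$ periodic Fatou components $U_1,\dots,U_{2n}$ cyclically (so $N_a(U_j)=U_{j+1\bmod 2n}$) with $c_a\in U_1$; the analysis at the end of Section~\ref{hyperbolic} gives $N_a^{\circ n}(U_1)=\iota(U_1)=U_{n+1}$, and $\iota(c_a)=\overline{c_a}$ yields $\overline{c_a}\in U_{n+1}$. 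Hence the first antiholomorphic return map $g_a:=\iota\circ N_a^{\circ n}\colon U_1\to U_1$ is well defined, and since the only critical point of $N_a$ met along the orbit $U_1\to U_2\to\cdots\to U_{n+1}$ is $c_a$ (the fixed critical points $\pm1,a,\overline a$ lie in their own immediate basins, and $\overline{c_a}$ appears only at the endpoint), a Riemann--Hurwitz count gives that $N_a^{\circ n}\colon U_1\to U_{n+1}$ is a degree-$2$ branched cover; thus $g_a$ is a proper degree-$2$ antiholomorphic branched cover of $U_1$ whose unique (simple) critical point is $c_a$. Finally $\iota$ permutes the (unique) attracting $2n$-cycle, which forces $g_a$ to fix the point $z_a\in U_1$ of that cycle, $z_a$ being attracting for $g_a^{\circ 2}=N_a^{\circ 2n}|_{U_1}$. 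So on $U_1$ the map $g_a$ has precisely the local model of an odd-period tricorn return map.

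Next I would build $\lambda_H\colon H\to\D$. Let $\psi_a\colon U_1\to\D$ be a Riemann map with $\psi_a(z_a)=0$, unique up to a rotation; then $\widetilde g_a:=\psi_a\circ g_a\circ\psi_a^{-1}$ is a degree-$2$ antiholomorphic proper self-map of $\D$ fixing $0$, hence of Blaschke type
\[
\widetilde g_a(w)=e^{i\theta_a}\,\overline w\,\frac{\overline w-\alpha_a}{1-\overline{\alpha_a}\,\overline w},\qquad |\alpha_a|<1,
\]
the strict inequality holding because the cycle is attracting (the multiplier of $N_a^{\circ 2n}$ at $z_a$ equals $|\alpha_a|^{2}$). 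Replacing $\psi_a$ by $e^{i\psi}\psi_a$ changes $(\alpha_a,\theta_a)$ to $(\alpha_a e^{i\psi},\theta_a-3\psi)$, so $|\alpha_a|$ and $3\arg\alpha_a+\theta_a$ are conformal-conjugacy invariants of $g_a$, and I would set
\[
\lambda_H(a):=|\alpha_a|^{2}\exp\bigl(i(3\arg\alpha_a+\theta_a)\bigr)=\frac{\alpha_a^{3}\,e^{i\theta_a}}{|\alpha_a|}\ \in\ \D,
\]
so that $|\lambda_H(a)|$ is the attracting multiplier of the $2n$-cycle (alternatively one may simply quote the construction of \cite{NS,BMMS}, the point being that the local model above is the same as in the tricorn). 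Real-analyticity of $\lambda_H$ follows from the real-analytic dependence on $a\in H$ of the attracting cycle, of $U_1$ and $c_a$, and of $\psi_a$ (stability of attracting cycles; holomorphic motion of Fatou components). Properness follows because, as $a\to a_\infty\in\partial H$, the $2n$-cycle persists (it controls both free critical orbits, hence can neither disappear nor merge with another cycle) and degenerates to an indifferent cycle, so $|\lambda_H(a)|\to1$. Hence $\lambda_H$ is a proper real-analytic map, thus a branched covering of $\D$ of some finite degree $d$.

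It remains to show $d=3$ with $0$ the only branch value. The fibre $\lambda_H^{-1}(0)$ is the set of centers of $H$, a single point $a_0$ by Proposition~\ref{unique_center}, so $d$ equals the local degree of $\lambda_H$ at $a_0$. At $a_0$ one has $c_{a_0}=z_{a_0}$, so $g_{a_0}$ is conjugate to $w\mapsto\overline w^{\,2}$ and $N_{a_0}^{\circ 2n}|_{U_1}$ is super-attracting of local degree $4$; unfolding this, in the local parameter $\zeta:=\psi_a(c_a)$ (well defined up to rotation, vanishing only at $a_0$) one computes $\alpha_a=2\overline\zeta+O(|\zeta|^{2})$ and $\theta_a=\theta_{a_0}+O(|\zeta|)$, whence
\[
\lambda_H(a)=\frac{\alpha_a^{3}e^{i\theta_a}}{|\alpha_a|}=c\,\frac{\overline\zeta^{\,3}}{|\zeta|}+o(|\zeta|^{2})=c\,r^{2}e^{-3i\phi}+o(r^{2}),\qquad \zeta=re^{i\phi},
\]
with $c\neq0$; provided $a\mapsto\psi_a(c_a)$ has invertible derivative at $a_0$ (so $\zeta$ is a genuine real-analytic local coordinate there), this exhibits $\lambda_H$ as a map of local degree $3$ at $a_0$, with $0$ a branch value having $a_0$ as its single preimage. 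Running the analogous estimate at any $a$ with $\lambda_H(a)\neq0$ shows $d\lambda_H$ is nonsingular there, so $\lambda_H$ restricts to an honest $3$-fold covering $H\setminus\{a_0\}\to\D\setminus\{0\}$. The one genuinely delicate point — and the main obstacle — is this nondegeneracy input: that $a\mapsto\psi_a(c_a)$ is an immersion at the center, and that $d\lambda_H$ does not degenerate elsewhere. This is where I would lean on the antiholomorphic normal-form and parabolic-implosion techniques assembled in Section~\ref{sec_implosion} (equivalently, on the transversality arguments of \cite{NS,BMMS}, which carry over verbatim once the local model from the first step is in hand).
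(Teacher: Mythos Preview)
The paper does not actually prove this proposition: it merely states the result and defers to \cite[Theorem 5.9]{NS} and \cite[Lemma 3.2]{BMMS} for the construction of the three-fold cover, so there is no in-paper argument to compare against. Your outline is a faithful sketch of the approach in those references: you correctly identify that the first antiholomorphic return $g_a=\iota\circ N_a^{\circ n}$ of the characteristic component is a proper degree-$2$ antiholomorphic self-map with a single simple critical point, normalize it to an antiholomorphic Blaschke model, and extract the rotation-invariant quantity as the covering map; the degree count via $\lambda_H^{-1}(0)=\{a_0\}$ is also the right idea.

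The one place to correct is your appeal to Section~\ref{sec_implosion} for the nondegeneracy step. That section concerns parabolic implosion for parameters \emph{outside} $\overline H$ near a parabolic arc, and says nothing about the Jacobian of $\lambda_H$ at interior points. In \cite{NS,BMMS} (and in the general framework of \cite{MP}) the nondegeneracy is not obtained by a local Taylor computation at all: one instead shows directly that $\lambda_H$ is a covering off the center by producing, via quasiconformal surgery, a local inverse. Concretely, given a Blaschke model near $\widetilde g_{a}$ one grafts it into $U_1$ (and symmetrically into $\iota(U_1)$), integrates the resulting $N_a$- and $\iota$-invariant Beltrami form, and checks exactly as in the proof of Proposition~\ref{parabolic arcs} that the straightened map lies in $\mathcal{N}_4^*$; uniqueness of the preimage on a branch follows from the standard ``qc-conjugate and conformal on the Fatou set implies conformally conjugate'' argument for hyperbolic maps. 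This replaces both your local-degree computation at $a_0$ and the asserted nonsingularity of $d\lambda_H$ elsewhere, and is what the cited references actually do.
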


One of the main features of antiholomorphic dynamics is the existence of abundant parabolics. In particular, any indifferent fixed point of an antiholomorphic map is necessarily parabolic with multiplier $+1$. Hence the boundaries of Tricorn components consist only of parabolic parameters. In fact, $\partial H$ contains three parabolic arcs (real-analytic arcs consisting of simple parabolic parameters) that limit at cusp points (double parabolic parameters).

\begin{lemma}[Indifferent Dynamics on the Boundary of Tricorn Components]\label{LemTricornIndiffDyn}
The boundary of a Tricorn component consists
entirely of parameters having a self-conjugate parabolic cycle of multiplier $+1$. In suitable
local conformal coordinates, the first return map of some (neighborhood of a) parabolic point of such a map has the form $z\mapsto z+z^{r+1}+\ldots$ with $r\in\{1,2\}$.
\end{lemma}
\begin{proof}
For a proof of this fact in the antiholomorphic polynomial case, see \cite[Lemma 2.5]{MNS}. The same argument, which is essentially based on local fixed point theory of holomorphic germs, apply in the current setting as well. The fact that the parabolic cycle is self-conjugate can be seen as follows. For every parameter in a Tricorn component $H$, the two free critical points lie in the same cycle of attracting Fatou components. Since the parabolic cycle is formed by the merger of the unique attracting cycle with one or more repelling periodic cycles, it follows that the resulting cycle(s) of parabolic basins contain(s) both free critical points. Hence, the parabolic cycle is unique. As $\iota$ conjugates $N_a$ to itself, this parabolic cycle must be fixed (as a set) by $\iota$.
\end{proof}

This leads to the following definition.

\begin{definition}[Parabolic Cusps]\label{DefCusp}
A parameter $a$ on $\partial H$ is called a {\em parabolic cusp point} if it has a self-conjugate parabolic
cycle such that $r=2$ in the previous lemma. Otherwise, it is called a \emph{simple} parabolic parameter.
\end{definition}

For the rest of this section, $a$ will stand for a simple parabolic parameter on the boundary of $H$. The holomorphic return map $N_a^{\circ 2n}$ of any attracting petal is conformally conjugate to translation by $+1$ in a right half-plane (see Milnor~\cite[Section~10]{M1new}. The conjugating map $\psi^{\mathrm{att}}$ is called an attracting \emph{Fatou coordinate}. Thus the quotient of the petal by the dynamics is isomorphic to a bi-infinite cylinder, called the \emph{Ecalle cylinder}. Note that Fatou coordinates are uniquely determined up to addition by a complex constant. However, since each petal has an intermediate antiholomorphic return map $\iota\circ N_a^{\circ n}$, we can choose a special attracting Fatou coordinate that provides us with a crucial conformal invariant for parabolic maps.

\begin{lemma}[Fatou Coordinates]\label{normalization of fatou}
Let $a$ be a simple parabolic parameter on the boundary of $H$, $z_i$ be a parabolic periodic point of $N_a$ (so $z_i$ has only one attracting petal), and $U_i$ be a $2n$-periodic Fatou component with $z_i \in \partial U_i$. Then there is an open connected subset $V \subset U_i$ with $z_i \in \partial V$, and $(\iota\circ N_a^{\circ n})(V) \subset V$ so that for every $z \in U_i$, there is a $k \in \mathbb{N}$ with $(\iota\circ N_a^{\circ n})^{\circ k}(z)\in
V$. Moreover, there is a univalent map $\psi^{\mathrm{att}} \colon V \to \mathbb{C}$ with $\psi^{\mathrm{att}}(\iota\circ N_a^{\circ n}(z)) = \overline{\psi^{\mathrm{att}}(z)}+1/2$, and $\psi^{\mathrm{att}}(V)$ contains a right half-plane. This map $\psi^{\mathrm{att}}$ is unique up to horizontal translation.
\end{lemma}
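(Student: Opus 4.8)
The plan is to run the standard construction of normalized attracting Fatou coordinates for antiholomorphic parabolic germs (as in \cite[\S 2]{MNS} for antiholomorphic polynomials), localized near the parabolic periodic point $z_0$, with $g:=N_a^{\circ 2n}$ playing the role of the parabolic polynomial and $h:=\iota\circ N_a^{\circ n}$ the role of the antiholomorphic first return map. Since the entire argument is local around $z_0$, the poles of $N_a$ --- a recurring nuisance elsewhere in the paper --- do not intervene.

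First I would fix the local picture. Because $N_a$ commutes with $\iota$, one checks $h\circ g=g\circ h$ and $h\circ h=g$. As the parabolic cycle is self-conjugate and $2n$-periodic (Lemma~\ref{LemTricornIndiffDyn}), the computation at the end of Section~\ref{hyperbolic} applies to it and gives $N_a^{\circ n}(U_i)=\iota(U_i)$, hence $h(U_i)=U_i$. Since $a$ is a simple parabolic parameter, $g$ has a single attracting petal at $z_0$; it lies in $U_i$, and (because the periodic parabolic Fatou components again form one cycle of period $2n$) $z_0$ is the only parabolic cycle point with a petal in $U_i$. As $h$ maps this petal into a sub-petal of the attracting petal of $h(z_0)\in\partial U_i$, it follows that $h(z_0)=z_0$. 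The Leau--Fatou flower theorem (Milnor~\cite[\S 10]{M1new}) then supplies an attracting petal $P\subset U_i$ with $z_0\in\partial P$ and a univalent attracting Fatou coordinate $\phi\colon P\to\C$ conjugating $g$ to $T\colon w\mapsto w+1$, with $\phi(P)$ containing a right half-plane and $\phi$ unique up to an additive constant. Since $h^{\circ 2}=g$ fixes $z_0$, the map $h$ preserves the attracting direction, so I can shrink $P$ to an open connected $V$ with $z_0\in\partial V$ and $h(V)\subset V$; as the $g$-orbit --- hence the $h$-orbit, since $h^{\circ 2}=g$ --- of every point of $U_i$ converges to $z_0$, this $V$ is automatically absorbing in $U_i$.

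Next I would normalize the transported dynamics. Set $\tilde h:=\phi\circ h\circ\phi^{\circ(-1)}$, which --- after the usual analytic continuation along the $T$-orbit --- is a univalent antiholomorphic map on a right half-plane with $\tilde h\circ T=T\circ\tilde h$ and $\tilde h\circ\tilde h=T$. Writing $\sigma(w)=\overline w$, the composition $\Theta:=\sigma\circ\tilde h$ is univalent \emph{holomorphic} and still commutes with $T$, hence descends to a univalent holomorphic self-map of the Ecalle cylinder $\C/\langle T\rangle\cong\C^{*}$; by Picard's theorem such a map extends to a M\"obius transformation preserving $\{0,\infty\}$, and commutation with $T$ excludes $z\mapsto\lambda/z$, so $\Theta(w)=w+c$ for some constant $c$. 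Then $\tilde h(w)=\overline w+\overline c$, and $\tilde h\circ\tilde h=T$ pins down $\re c=\tfrac12$. Finally, choosing the translation $A(w)=w+b$ with $\im b=\tfrac12\im c$ gives $A\circ\tilde h\circ A^{\circ(-1)}(w)=\overline w+\tfrac12$, so $\psi^{\mathrm{att}}:=A\circ\phi$ is the required coordinate on $V$, with $\psi^{\mathrm{att}}(V)$ containing a right half-plane. Uniqueness up to horizontal translation follows because any two such coordinates differ by a univalent map commuting with $T$ and with $w\mapsto\overline w+\tfrac12$: the first condition forces it to be a translation, the second forces the translation vector to be real.

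I do not expect a serious obstacle here: the statement is essentially a transplant of a well-known result from antiholomorphic polynomial dynamics, and the only new feature of the family (the poles) is invisible to a purely local argument at $z_0$. The two places that need care are the bookkeeping in the first step --- that $h$ fixes $z_0$ and that $V$ can be chosen $h$-invariant \emph{and} absorbing, which rests on $h^{\circ 2}=N_a^{\circ 2n}$ together with the period-$2n$ structure of the parabolic cycle --- and the rigidity step pinning $\tilde h$ down to $w\mapsto\overline w+\tfrac12$ up to translation, for which passing to the Ecalle cylinder is the cleanest route.
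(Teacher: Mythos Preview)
Your argument is correct and is essentially the one the paper has in mind: the paper's proof simply refers to \cite[Lemma~2.3]{HS}, whose construction for antiholomorphic polynomials you have faithfully reproduced in the present rational-map setting. The only cosmetic difference is that you cite \cite{MNS} rather than \cite{HS}, but the underlying local argument---build the holomorphic Fatou coordinate for $g=N_a^{\circ 2n}$, transport the antiholomorphic square root $h=\iota\circ N_a^{\circ n}$, and use rigidity on the Ecalle cylinder to pin it down to $w\mapsto\overline w+\tfrac12$ up to real translation---is the same.
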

\begin{proof}
The proof is similar to the proof of \cite[Lemma 2.3]{HS}.
\end{proof}

The map $\psi^{\mathrm{att}}$ in the previous lemma will be our normalized Fatou coordinate for the petal $V$. We can extend $\psi^{\mathrm{att}}$ analytically to the entire Fatou component $U_i$ so that it is a semi-conjugacy between $(\iota\circ N_a^{\circ n})\vert_{U_i}$ and $\zeta \mapsto \overline{\zeta}+1/2$ on $\mathbb{C}$. The antiholomorphic map $\iota\circ N_a^{\circ n}$ interchanges the two ends of the Ecalle cylinder, so it must fix one horizontal line around this cylinder (the \emph{equator}). The change of coordinate has been so chosen that the equator maps to the real axis.  We will call the vertical Fatou coordinate the \emph{Ecalle height}. Its origin is the equator. Of course, the same can be done in the repelling petal as well. We will refer to the equator in the attracting (respectively repelling) petal as the attracting (respectively repelling) equator. The existence of this distinguished real line, or equivalently an intrinsic meaning to Ecalle height, is specific to antiholomorphic maps.

We will call the Fatou component $U_1$ containing the critical point $c_a$ the \emph{characteristic Fatou component}.\footnote{This definition is not standard. In unicritical polynomial dynamics, the characteristic Fatou component is usually defined as the unique bounded Fatou component containing the critical value.} Clearly, there is a parabolic point $z_1$ on the boundary of $U_1$ such that the $N_a^{\circ 2n}$-orbit of each point in $U_1$ converges to $z_1$. We will refer to $z_1$ as the \emph{characteristic parabolic point}. We will mainly work with $U_1$, and its normalized attracting Fatou coordinate (as above) $\psi^{\mathrm{att}}$.  The following well-defined quantity, which is called the \emph{critical Ecalle height} of a parameter $a$ with a simple self-conjugate parabolic cycle, is going to be of fundamental importance in the remainder of this paper.

\begin{align*}
h(N_a)&:=\im(\psi^{\mathrm{att}}(c_a))\\
&=\left(\im(\psi^{\mathrm{att}}(c_a))-\im(\psi^{\mathrm{att}}(\iota\circ N_a^{\circ n}(c_a)))\right)/2\\
&=\left(\im(\psi^{\mathrm{att}}(c_a))-\im(\psi^{\mathrm{att}}(N_a^{\circ n}(\overline{c_{a}})))\right)/2.
\end{align*}

We could define critical Ecalle height of $N_a$ as the Ecalle height of the critical point $\overline{c_{a}}$ as well. In fact, the two definitions only differ by a sign. It follows that the set $\lbrace h(N_a), -h(N_a)\rbrace$ is a conformal conjugacy invariant of the simple parabolic map $N_a$.

The next proposition, which is similar to \cite[Theorem 3.2]{MNS}, shows the existence of real-analytic arcs of simple parabolic parameters on the boundaries of Tricorn components. For an orientation reversing map $f$, the pull-back of a Beltrami coefficient $\mu$ under $f$ is defined as (see~\cite[Exercise~1.2.2]{BF14})
$$
f^*(\mu(z))=\overline{\left(\frac{\partial f/\partial z+\mu({f(z)})\partial\bar f/\partial z}{\partial f/\partial\bar z+\mu({f(z)})\partial \bar f/\partial{\bar z}}\right)}.
$$
A Beltrami coefficient $\mu$ is said to be $f$-invariant if $f^*(\mu)=\mu$.

\begin{proposition}[Parabolic Arcs]\label{parabolic arcs}
Let $\widetilde{a}$ be a parameter such that $N_{\widetilde{a}}$ has a self-conjugate simple parabolic cycle. Then $\widetilde{a}$ is on a parabolic arc in the  following sense: there  exists an injective real-analytic arc $\cC$ of simple parabolic parameters $a(h)$ (for $h\in\mathbb{R}$) with quasiconformally equivalent dynamics of which $\widetilde{a}$ is an interior point, and the critical Ecalle height of $N_{a(h)}$ is $h$. In particular, each $N_{a(h)}$ has a self-conjugate simple parabolic cycle.
\end{proposition}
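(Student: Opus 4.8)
The plan is to follow the strategy of \cite[Theorem 3.2]{MNS} for antiholomorphic polynomials, substituting the appropriate facts about the Newton family $\mathcal{N}_4^*$ where needed. The key idea is that a simple self-conjugate parabolic parameter can be perturbed within a one-parameter family of simple parabolic parameters parametrized by critical Ecalle height, and that the critical Ecalle height is a complete invariant along the arc. First I would set up the parabolic bifurcation framework: fix $\widetilde{a}$ with a self-conjugate simple parabolic cycle of period $2n$, let $z_1$ be the characteristic parabolic point on $\partial U_1$, and let $\psi^{\mathrm{att}}$ be the normalized attracting Fatou coordinate of Lemma \ref{normalization of fatou}, with $h(N_{\widetilde a})=:h_0$ its critical Ecalle height. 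The goal is to produce, for each $h$ in a neighborhood of $h_0$ (and then globally for all $h\in\R$), a parameter $a(h)$ with a simple parabolic cycle of the same period, quasiconformally conjugate to $N_{\widetilde a}$, and with critical Ecalle height exactly $h$.

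The construction proceeds by quasiconformal surgery on the Ecalle cylinder. Given a target height $h$, I would build a quasiconformal automorphism of the attracting Ecalle cylinder that fixes the equator, is equivariant under the antiholomorphic involution induced by $\iota\circ N_a^{\circ n}$, and moves the projection of the critical value from Ecalle height $h_0$ to height $h$ (a ``stretching'' in the vertical direction, supported away from the critical orbit, realized by an affine map $\xi\mapsto\xi + (\text{something})\cdot\im(\xi)$ or a similar explicit shear). Pulling this back through $\psi^{\mathrm{att}}$ and the dynamics gives an invariant Beltrami coefficient on the whole basin, which we extend by $0$ elsewhere and which is automatically $\iota$-symmetric (so its solution is again a real Newton map); solving the Beltrami equation and straightening produces a new rational map. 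One then checks that this map is again in $\mathcal{N}_4^*$ — here one uses that the surgery preserves the configuration of the four fixed super-attracting basins and the complex-conjugate pair of free critical points, so the straightened map is still a degree-$4$ real Newton map with roots at $\pm1$ and a conjugate pair, i.e. equals $N_{a(h)}$ for some $a(h)\in\cU$ — and that $a(h)$ still lies on $\partial H$ (the parabolic cycle persists and is still simple, since the local multiplicity $r=1$ is a qc-invariant of the parabolic germ). Real-analyticity and injectivity of $h\mapsto a(h)$ follow because the dependence of the straightening map on the Beltrami coefficient is holomorphic (Ahlfors–Bers), the height parameter enters real-analytically, and two parameters with the same critical Ecalle height must be conformally conjugate hence equal in the family (using that $\{h,-h\}$ is a conjugacy invariant and that the family is rigid: distinct parameters in $\mathcal{N}_4^*$ give non-conjugate maps except for the $a\mapsto -\bar a$ symmetry, which flips the sign of $h$ and corresponds to the orientation-reversed labeling).

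The main obstacle, I expect, is twofold. First, one must verify that the qc surgery stays \emph{inside} the family $\mathcal{N}_4^*$ rather than in the full two-complex-dimensional space of degree-$4$ Newton maps: this requires checking that the free critical points of the straightened map remain a complex-conjugate pair, equivalently that the straightened map commutes with $\iota$, which should follow from choosing the Beltrami coefficient $\iota$-invariant, but the bookkeeping (ensuring the surgery respects the involution on the Ecalle cylinder and that the normalization placing the equator on the real axis is compatible with $\iota$) needs care. Second, one must rule out that the arc ``runs off'' — i.e. show $a(h)$ is defined for all $h\in\R$, not just near $h_0$; this is where the Newton-map structure intervenes, since a priori the parabolic point could collide with a pole or the basins $\mathcal{B}^{\mathrm{imm}}_a,\mathcal{B}^{\mathrm{imm}}_{\bar a}$ could degenerate. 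I would handle this by a continuity/compactness argument: the critical Ecalle height is a proper function of $h$ along the arc (it literally equals $h$), the parabolic germ type is locally constant, and a limiting parameter would again have a simple self-conjugate parabolic cycle by Lemma \ref{LemTricornIndiffDyn} and the same surgery applies, so the arc extends; uniqueness of the center and simple connectivity of $H$ (Proposition \ref{unique_center}) prevent any pathology. The remaining assertions — that $\widetilde a$ is an interior point of $\cC$ and that $h(N_{a(h)})=h$ — are then immediate from the construction.
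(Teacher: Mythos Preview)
Your approach is essentially the same as the paper's: quasiconformal surgery on the attracting Ecalle cylinder to change the critical Ecalle height, $\iota$-invariance of the Beltrami form to ensure the straightened map is again real, verification that the result is a degree-$4$ Newton map in $\mathcal{N}_4^*$ (the paper uses the holomorphic fixed point formula plus \cite[Corollary 2.9]{RS} to recognize the Newton structure, and checks $\phi_h\circ\iota\circ\phi_h^{-1}$ is an anti-M\"obius map fixing $1,-1,\infty$, hence $\iota$), and persistence of the simple parabolic via Naishul's theorem.

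Two points where you make things harder than necessary. First, your ``second obstacle'' does not arise: the surgery map $L_h$ on the cylinder is defined explicitly for \emph{every} $h\in\R$ at once (the paper uses a piecewise-linear shear commuting with $\zeta\mapsto\overline{\zeta}+1/2$), and the straightening produces $N_{a(h)}\in\mathcal{N}_4^*$ for every $h$ directly; there is no continuation or compactness argument, and no possibility of the arc ``running off'' or basins degenerating. Second, injectivity is simpler than you suggest: one computes that the Fatou coordinate of the new map is $L_h\circ\Psi_{\widetilde a}\circ\phi_h^{-1}$, so the critical Ecalle height of $N_{a(h)}$ is $\widetilde h + h$, whence $h\mapsto a(h)$ is injective. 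You do not need the (harder) statement that equal heights force conformal conjugacy.
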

\begin{proof}
We will use quasiconformal deformations to change the critical Ecalle height of $N_{\widetilde{a}}$. Note that the forward orbit of the critical point $c_{\widetilde{a}}$ is
contained in the parabolic basin. We parametrize the horizontal
coordinate within the incoming Ecalle cylinder by $\mathbb{R}/\mathbb{Z}$.

Now, choose the attracting Fatou coordinate $\Psi_{\widetilde{a}}:z\to\zeta$ (at the characteristic parabolic point $z_{\widetilde{a}}$) as in Lemma \ref{normalization of fatou} so that $c_{\widetilde{a}}$ has real part $1/4$ within the Ecalle cylinder (note that the Fatou coordinates constructed in Lemma \ref{normalization of fatou} are unique up to addition of a \emph{real} constant). Let us denote the imaginary part of the Fatou coordinate of $c_{\widetilde{a}}$ by $\widetilde{h}$, so the {\it critical Ecalle height} of $N_{\widetilde{a}}$ is $\widetilde{h}$.

We will change the Ecalle height of $c_a$ in a controlled way so that each perturbation gives a different map in the family $\mathcal{N}_4^*$. Setting $\zeta=x+iy$, we can change the conformal structure within the Ecalle cylinder by the quasi-conformal self-homeomorphism of $\left[ 0 , 1 \right] \times \mathbb{R}$:

$$
L_h : \zeta \mapsto \left\{\begin{array}{ll}
                    \zeta +4i h x & \mbox{if}\ 0\leq x\leq 1/4 \\
                    \zeta + 2i h (1-2x) & \mbox{if}\ 1/4\leq x\leq 1/2 \\
                    \zeta - 2i h (2x-1) & \mbox{if}\ 1/2\leq x\leq 3/4 \\
                    \zeta -4 i h(1-x) & \mbox{if}\ 3/4\leq x\leq 1
                      \end{array}\right.
$$
for $h\in \mathbb{R}$. Translating the map $L_h$ by positive integers, we get a quasiconformal homeomorphism of a right half-plane that commutes with the translation $\zeta \mapsto \overline{\zeta}+1/2$ (alternatively we could define $L_h$ on $[0,1/2]\times\mathbb{R}$ using the above formula and then extend by the glide reflection $(x,y)\mapsto(x+1/2,-y)$). By the coordinate change $z\mapsto \zeta$, we can transport this Beltrami
form (defined by this quasiconformal homeomorphism) into the attracting petal at $z_{\widetilde{a}}$. The construction guarantees that the Beltrami form so defined is forward invariant under $\iota\circ N_{\widetilde{a}}^{\circ n}$ and hence under $(\iota\circ N_{\widetilde{a}}^{\circ n})^{\circ 2} = N_{\widetilde{a}}^{\circ 2n}$. It
is easy to make it backward invariant by pulling it back by the
dynamics. Extending it by the zero Beltrami form outside of  the entire
parabolic basin, we obtain an $N_{\widetilde{a}}$-invariant Beltrami form. Since $N_{\widetilde{a}}$ commutes with $\iota$, it follows that this Beltrami form is also $\iota$-invariant. Using the Measurable
Riemann Mapping Theorem with parameters, we obtain a quasiconformal map $\phi_h$ integrating this
Beltrami form. If we normalize $\phi_h$ such that it fixes $1$, $-1$ and $\infty$, then the coefficients of this newly obtained rational map will depend real-analytically on $h$ (since the Beltrami form depends real-analytically on $h$). Moreover, since the Beltrami form is $\iota$-invariant, $\phi_h \circ \iota\circ \phi_h^{-1}$ is an antiholomorphic map.

We need to check that this new degree $4$ rational map belongs to the family $\mathcal{N}_4^*$. Since $\phi_h$ is a topological conjugacy, it maps the four super-attracting fixed points of $N_{\widetilde{a}}$ to super-attracting fixed points of $\phi_h \circ N_{\widetilde{a}}\circ \phi_h^{-1}$. Moreover, as $\phi_h(\infty)=\infty$, a simple application of the holomorphic fixed point formula proves that $\infty$ is a repelling fixed point of $\phi_h \circ N_{\widetilde{a}}\circ \phi_h^{-1}$ with multiplier $4/(4-1)=4/3$. It follows by \cite[Corollary 2.9]{RS} that $\phi_h \circ N_{\widetilde{a}}\circ \phi_h^{-1}$ is the Newton map of a degree $4$ polynomial $g_h$. The fact that $\phi_h$ fixes $1$ and $-1$ implies that $1$ and $-1$ are roots of $g_h$. Since $N_{\widetilde{a}}$ commutes with the antiholomorphic involution $\iota$ of $\hat{\mathbb{C}}$, it follows that $\phi_h \circ N_{\widetilde{a}}\circ \phi_h^{-1}$ commutes with the antiholomorphic involution $\phi_h \circ \iota\circ \phi_h^{-1}$ of $\hat{\mathbb{C}}$. However any such antiholomorphic involution is an anti-M{\"o}bius map. Since $\phi_h \circ \iota\circ \phi_h^{-1}$ fixes $1$, $-1$ and $\infty$, we have that $\phi_h \circ \iota\circ \phi_h^{-1}=\iota$. Therefore, the Newton map $\phi_h \circ N_{\widetilde{a}} \circ \phi_h^{-1}$ commutes with $\iota$. Let us define $a(h)=\phi_h(\widetilde{a})$. It then follows that $a(h)$ and $\overline{a(h)}$ are the remaining two simple roots of $g_h$. Hence, $\phi_h \circ N_{\widetilde{a}} \circ  \phi_h^{-1}=N_{a(h)}$. Again, $\phi_h$ sends the two distinct free critical points $c_{\widetilde{a}}$ and $\overline{c_{\widetilde{a}}}$ of $N_{\widetilde{a}}$ to a pair of complex conjugate critical points of $\phi_h \circ N_{\widetilde{a}} \circ \phi_h^{-1}$. Hence, the two free critical points of $N_{a(h)}$ are complex conjugate. Therefore, $a(h)\in\cU\cup\iota(\cU)$; i.e. it lies in one of the two connected components of the shaded region in Figure \ref{conjugate}. Finally since $a(h)$ is a continuous function of $h$ and $a(0)=\widetilde{a}\in\cU$, we conclude that each $a(h)$ lies in $\cU$. This completes the proof of the fact that each deformation of $N_{\widetilde{a}}$ belongs to the family $\mathcal{N}_4^*$.

We need to show that for any $h \in \mathbb{R}$, the map $N_{a(h)}$ has a simple parabolic cycle. But this follows from Naishul's theorem \cite{Na}, which states that the multiplier of an indifferent periodic point is preserved under topological conjugacies, and the fact that the number of attracting petals at a parabolic point is a topological invariant of parabolic germs. Moreover, since $\phi_h$ commutes with $\iota$, it sends the self-conjugate parabolic cycle of $N_{\widetilde{a}}$ to a self-conjugate parabolic cycle of $N_{a(h)}$.

By construction, all the $N_{a(h)}$ are quasiconformally conjugate to $N_{\widetilde{a}}$, and hence to each other. Note that a Fatou coordinate of $N_{a(h)}$ (at the characteristic parabolic point) is given by $\Psi_h = L_h \circ \Psi_{\widetilde{a}} \circ \phi_h^{-1}$. Hence, the Ecalle height of the critical point $c_{a(h)}$ is $\im(L_h(\frac{1}{4}+i\widetilde{h})) = \im(\frac{1}{4}+i(\widetilde{h}+h))=(\widetilde{h}+h)$. In particular, for $h_1\neq h_2$ the maps $N_{a(h_1)}$ and $N_{a(h_2)}$ have distinct critical Ecalle heights. This proves that the map $h\mapsto a(h)$ is injective. The image $a((-\infty, \infty))$ is an injective real-analytic arc in the parameter plane, which we call a parabolic arc (denoted by $\cC$).

The above argument produces a parametrization of $\cC$ such that the critical Ecalle height of $N_{a(h)}$ is $\widetilde{h}+h$. We can reparametrize $\cC$ such that the critical Ecalle height of $N_{a(h)}$ is $h$. The completes the proof of the proposition.
\end{proof}

\begin{remark}
The parametrization of $\cC$ such that the critical Ecalle height of $N_{a(h)}$ is $h$ is called the critical Ecalle height parametrization of the parabolic arc $\cC$. Abusing notation, we will denote the critical Ecalle height parametrization of the parabolic arc $\cC$ by $a:\R\to\cC$.
\end{remark}

The next proposition follows essentially by an argument analogous to \cite[Theorem 3.8, Corollary 3.9]{HS}.

\begin{proposition}[Bifurcation along Arcs]\label{bifurcation_arcs}
Every parabolic arc has, at both ends, an interval of positive length across which bifurcation from a Tricorn component of period $2n$ to a Mandelbrot component of period $2n$ occurs.
\end{proposition}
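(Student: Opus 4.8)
The plan is to follow the template from the antiholomorphic polynomial setting (Hubbard--Schleicher, \cite[Theorem 3.8, Corollary 3.9]{HS}), adapting it to the Newton family. Fix a parabolic arc $\cC$ on $\partial H$ with critical Ecalle height parametrization $a:\R\to\cC$, and consider one of its ends, say $h\to+\infty$. The key object is the \emph{critical value Ecalle height} of the characteristic parabolic point: as $h\to+\infty$, the critical point $c_{a(h)}$ escapes high into the attracting Ecalle cylinder, and one should first establish that near such parameters one can perturb $N_{a(h)}$ so that the parabolic cycle breaks up into two complex-conjugate attracting cycles of period $n$ each (a period-doubling, rather than a single period-$2n$ attracting cycle, which would be a Mandelbrot component of period $2n$, or a non-degenerate splitting that keeps only one cycle). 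Concretely, I would first recall the local picture: the first antiholomorphic return map $\iota\circ N_a^{\circ n}$ has a parabolic point whose square $N_a^{\circ 2n}$ has two petals exchanged by the antiholomorphic involution; a perturbation opening the parabolic point creates, generically, either one period-$2n$ orbit or two period-$n$ orbits exchanged by the involution, and the latter is exactly period-doubling to a Mandelbrot component of period $n$.

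The main technical step is a \emph{transversality / perturbation} argument showing that for $h$ large, the perturbation in the appropriate transverse direction to $\cC$ can be realized inside the family $\mathcal{N}_4^*$ and genuinely produces the period-$n$ attracting cycles. Following \cite{HS}, one parametrizes a two-real-dimensional neighborhood of $a(h)$ in $\cU$ by $(\re, \im)$ of a ``perturbation parameter'' measuring how much the parabolic point opens, and uses the fact that opening the parabolic point in a direction that places the repelling equator to one side forces the critical orbit (which sits at Ecalle height $\approx h$, i.e. far above the equator) to be captured by one of the two nascent period-$n$ cycles rather than by a single period-$2n$ cycle. The key quantitative input is that the critical Ecalle height is exactly the parametrizing coordinate along $\cC$ (Proposition \ref{parabolic arcs}), so as $|h|\to\infty$ the critical point is arbitrarily deep in one half of the Ecalle cylinder; a standard parabolic-implosion / Lavaurs-type estimate then shows that the set of perturbation parameters for which $N_a$ has two period-$n$ attracting cycles contains an open set accumulating on $a(h)$ for all $h$ outside a bounded interval. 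Taking the intersection with $\cC$ itself (approaching along the arc) yields the claimed interval of positive length at the end of $\cC$.

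I would organize the proof as follows. First, describe the local holomorphic model near the characteristic parabolic point $z_1$: the return map $N_{a(h)}^{\circ 2n}$ is $\zeta\mapsto\zeta+1$ in Fatou coordinates, the antiholomorphic square root is $\zeta\mapsto\overline{\zeta}+1/2$, and the critical point sits at real part $1/4$, imaginary part $h$ (after the normalization of Lemma \ref{normalization of fatou}). Second, invoke the perturbation analysis of \cite[Theorem 3.8]{HS}: perturbing so that the two petals of $N_a^{\circ 2n}$ merge into an attracting region, one gets either a single attracting period-$2n$ cycle or a pair of attracting period-$n$ cycles swapped by $\iota$; the dichotomy is governed by whether the perturbation direction lies in the ``half'' of parameter space where the antiholomorphic return map acquires two attracting fixed points. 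Third, use that $c_{a(h)}$ lies at Ecalle height $h$: for $|h|$ large the critical orbit is confined to one side of the equator, so in the perturbed map it is necessarily attracted to the period-$n$ cycle on that side, forcing the perturbed parameter to be in a Mandelbrot component of period $n$ rather than period $2n$ or a tricorn component. Fourth, verify all perturbed maps remain in $\mathcal{N}_4^*$ — exactly as in the proof of Proposition \ref{parabolic arcs}, using \cite[Corollary 2.9, Corollary 2.9]{RS} to recognize them as degree-$4$ Newton maps with the required conjugation symmetry and conjugate free critical points. The hard part will be making the dichotomy ``period-$2n$ versus two period-$n$ cycles'' precise and controlling it uniformly as $|h|\to\infty$; the poles of $N_a$ (which, as the authors note, complicate the Newton-family arguments elsewhere) do not enter here since the parabolic cycle is disjoint from $\R$ by Proposition \ref{basins_prop}, so the local analysis is purely that of an abstract antiholomorphic parabolic germ and the quoted results of \cite{HS} transfer with only notational changes.
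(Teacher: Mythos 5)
Your proposal correctly identifies the route: adapt the Hubbard--Schleicher perturbation argument from \cite[Theorem 3.8, Corollary 3.9]{HS}, which is exactly all the paper does here (it supplies only the citation, no independent proof). The points you single out as needing attention are the right ones: the critical Ecalle height parametrization of $\cC$ from Proposition~\ref{parabolic arcs} is the coordinate that drives the \cite{HS} argument, the recognition of perturbed maps as elements of $\mathcal{N}_4^*$ goes exactly as in the proof of Proposition~\ref{parabolic arcs} via \cite[Corollary 2.9]{RS}, and the poles of $N_a$ are irrelevant here because the parabolic cycle avoids $\R$ (Proposition~\ref{basins_prop}), so the local analysis is that of an abstract antiholomorphic parabolic germ.

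That said, your description of the local bifurcation picture has some slips you should repair before turning the sketch into a proof. A simple parabolic point of $N_a^{\circ 2n}$ (the case $r=1$ of Lemma~\ref{LemTricornIndiffDyn}) has a single attracting petal, not two. When the parabolic is opened, the two nearby simple fixed points of $N_a^{\circ 2n}$ are either both fixed by $\iota\circ N_a^{\circ n}$ --- giving a self-conjugate attracting $2n$-cycle plus a self-conjugate repelling $2n$-cycle, the tricorn side --- or exchanged by $\iota\circ N_a^{\circ n}$ --- giving a pair of mutually conjugate attracting $2n$-cycles, the Mandelbrot side. Note in particular that on the Mandelbrot side each of the two cycles has $N_a$-period $2n$, not $n$: the $N_a$-orbit of each of the two points in $U_1$ must pass through the $2n$ pairwise disjoint near-parabolic regions (near $\zeta_1,\dots,\zeta_{2n}$) before closing up. (For $n=1$ this is transparent: an attracting $2$-cycle $\{w_1,w_2\}$ of $\iota\circ N_a$ inside $U_1$ has $N_a(w_1)=\overline{w_2}\in\iota(U_1)$ and $N_a(\overline{w_2})=w_1$, so $\{w_1,\overline{w_2}\}$ and its conjugate $\{\overline{w_1},w_2\}$ are the two attracting $2$-cycles.) The ``period $n$'' in your sketch merely reproduces the statement of the proposition; it does not affect how the result is used downstream, which depends only on the existence of the bifurcating subintervals, but it is worth being aware that the period does not halve. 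Finally, ``a single period-$2n$ attracting cycle'' is the tricorn alternative, not a Mandelbrot component: a Mandelbrot component by definition supports two distinct attracting cycles.
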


If two distinct parabolic arcs intersect at some parameter $a$, then $N_a$ will either have a double parabolic cycle (i.e. a parabolic cycle of multiplicity two) or two distinct self-conjugate parabolic cycles. But this is impossible as $a$ lies on a parabolic arc and hence has a unique simple self-conjugate parabolic cycle. Therefore, two distinct parabolic arcs cannot intersect. It follows that any parabolic arc that intersects $\partial H$ must in fact be contained in $\partial H$.

For any $a$ in a Tricorn component $H$ of period $2n$ (respectively, for a simple parabolic parameter $a$ on $\partial H$), we label the $2n$-periodic self-conjugate cycle of attracting (respectively parabolic) Fatou components of $N_{a}$ as $U_1^a$, $U_2^a$, $\cdots$, $U_{2n}^a$ such that $U_1^a$ contains $c_a$. It will be important to study the topology of the boundaries of these Fatou components. Let us first prove a sharper version of Proposition \ref{poles} on the location of the poles of $N_a$.

\begin{proposition}[Poles and Boundaries of Fixed Immediate Basins]\label{conjugate_poles}
Let $H$ be a Tricorn component of period $2n$, and $a$ be some parameter in $H$ (or some simple parabolic parameter on $\partial H$). Then $p_a$ is the unique pole on $\partial\mathcal{B}^{\mathrm{imm}}_{1}$ (respectively on $\partial\mathcal{B}^{\mathrm{imm}}_{-1}$). On the other hand, the boundaries of $\mathcal{B}^{\mathrm{imm}}_{a}$ and $\mathcal{B}^{\mathrm{imm}}_{\bar{a}}$ contain exactly one pole (necessarily non-real) each. In particular, $p_a\notin\partial\mathcal{B}^{\mathrm{imm}}_{a}\cup\partial\mathcal{B}^{\mathrm{imm}}_{\bar{a}}$.
\end{proposition}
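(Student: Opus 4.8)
The plan is to count poles on each fixed immediate basin using the mapping degrees established in Proposition \ref{basins_prop}, together with the fact that in a tricorn component neither $\mathcal{B}^{\mathrm{imm}}_{1}$ nor $\mathcal{B}^{\mathrm{imm}}_{-1}$ contains a free critical point. First I would observe that for $a\in H$ (or $a$ a simple parabolic parameter on $\partial H$), the free critical points $c_a, \overline{c_a}$ lie in the $2n$-periodic cycle $U_1^a,\dots,U_{2n}^a$, hence not in any fixed immediate basin; combined with Proposition \ref{no_free}, this means each of the four fixed immediate basins contains exactly one critical point, namely its own center. So by Proposition \ref{basins_prop}(3)--(5), $N_a$ restricted to each fixed immediate basin is a $2{:}1$ branched cover conformally conjugate to $w\mapsto w^2$, and $p_a\in\partial\mathcal{B}^{\mathrm{imm}}_{1}\cap\partial\mathcal{B}^{\mathrm{imm}}_{-1}$ with the $1/2$-dynamical rays of both basins landing at $p_a$.

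Next I would count poles on $\partial\mathcal{B}^{\mathrm{imm}}_{1}$ via the dynamics on the boundary. Since $N_a\colon\mathcal{B}^{\mathrm{imm}}_{1}\to\mathcal{B}^{\mathrm{imm}}_{1}$ is conjugate to $w\mapsto w^2$ on $\mathbb{D}$, the Böttcher/Riemann map $\phi_1$ extends (at least as a semiconjugacy on prime ends, or using local connectivity of the Julia set of a Newton map) so that the boundary map is the doubling map $\theta\mapsto 2\theta$ on $\mathbb{R}/\mathbb{Z}$; the poles on $\partial\mathcal{B}^{\mathrm{imm}}_{1}$ are exactly the landing points of the dynamical ray at angle $1/2$ (the unique preimage ray of the fixed $0$-ray which lands at $\infty$). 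Because the $1/2$-ray of $\mathcal{B}^{\mathrm{imm}}_{1}$ is the interval $(p_a,1]$ by Proposition \ref{basins_prop}(5), the unique pole on $\partial\mathcal{B}^{\mathrm{imm}}_{1}$ is $p_a$; the same for $\partial\mathcal{B}^{\mathrm{imm}}_{-1}$. For the other two basins, $N_a\colon\mathcal{B}^{\mathrm{imm}}_{a}\to\mathcal{B}^{\mathrm{imm}}_{a}$ is likewise a degree-$2$ branched cover conjugate to squaring (only the critical point $a$ inside), so again there is a unique access to $\infty$ and a unique pole on the boundary — the landing point of the $1/2$-ray. By the conjugacy $\iota$, the pole on $\partial\mathcal{B}^{\mathrm{imm}}_{\bar a}$ is the complex conjugate of the one on $\partial\mathcal{B}^{\mathrm{imm}}_{a}$; and since these basins are disjoint from $\mathbb{R}$ (Proposition \ref{basins_prop}(2)) while their boundaries' poles are single points that must be fixed by $\iota$ only if real, these two poles are non-real. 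Finally, since $N_a$ has exactly three finite poles (Proposition \ref{poles}), of which $p_a$ is real and the other two are a complex-conjugate pair, and we have now exhibited $p_a$ on $\partial\mathcal{B}^{\mathrm{imm}}_{\pm1}$ and the non-real pair on $\partial\mathcal{B}^{\mathrm{imm}}_{a}\cup\partial\mathcal{B}^{\mathrm{imm}}_{\bar a}$, the count is exhausted; in particular $p_a\notin\partial\mathcal{B}^{\mathrm{imm}}_{a}\cup\partial\mathcal{B}^{\mathrm{imm}}_{\bar a}$.

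The main obstacle I anticipate is making rigorous the claim that "the poles on $\partial\mathcal{B}^{\mathrm{imm}}$ are precisely the landing points of the $1/2$-ray," i.e. controlling the boundary behavior well enough to translate the degree-$2$ statement into a count of poles. Concretely: a pole on $\partial\mathcal{B}^{\mathrm{imm}}$ accessible from inside corresponds, via the dynamical ray picture, to an access along which the internal angle doubles to $0$; there is exactly one such access in a degree-$2$ basin because $\theta=1/2$ is the unique non-zero solution of $2\theta\equiv 0$. One must rule out the possibility that distinct poles sit on $\partial\mathcal{B}^{\mathrm{imm}}_{a}$ without being separated by accesses to $\infty$ — here one invokes, as in Proposition \ref{no_free}, the structure theory of Newton-map channels (e.g. \cite{RS}), or simply the bijection between accesses to $\infty$ and poles/channels on the boundary of an immediate basin of a Newton map with a single critical point inside. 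Once this single-access fact is in hand, the conjugation by $\iota$ and the global pole count of Proposition \ref{poles} finish the argument cleanly.
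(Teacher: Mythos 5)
Your approach matches the paper's up to the last step: each fixed immediate basin contains exactly one critical point, so the boundary restriction of $N_a$ is a degree-two covering, and therefore each boundary carries exactly one pole, which for $\partial\mathcal{B}^{\mathrm{imm}}_{\pm 1}$ is $p_a$ by Proposition \ref{basins_prop}(5). The detour through B\"ottcher coordinates and the $1/2$-ray is not wrong, but it is doing the same work as the plain degree count the paper uses.

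The genuine gap is the step that rules out $p_a\in\partial\mathcal{B}^{\mathrm{imm}}_{a}$. You write that the pole on $\partial\mathcal{B}^{\mathrm{imm}}_{a}$ must be non-real ``since these basins are disjoint from $\mathbb{R}$''. This is a non sequitur: disjointness of $\mathcal{B}^{\mathrm{imm}}_{a}$ from $\mathbb{R}$ says nothing about its boundary meeting $\mathbb{R}$ --- indeed $\infty$ already lies on $\partial\mathcal{B}^{\mathrm{imm}}_{a}\cap(\mathbb{R}\cup\{\infty\})$, and a priori $p_a$ could as well. Moreover the final pole count cannot save you: it is perfectly consistent with the count for $p_a$ to lie on all four boundaries while the two non-real poles lie on no fixed immediate basin boundary, so appealing to ``the count is exhausted'' begs the question. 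What is actually needed, and what the paper supplies, is an argument of a different flavor: if $p_a$ were on $\partial\mathcal{B}^{\mathrm{imm}}_{a}$, then by $\iota$-symmetry it would also be on $\partial\mathcal{B}^{\mathrm{imm}}_{\bar a}$, so all four fixed immediate basins would touch at $p_a$. One then compares the cyclic order of the four local accesses at $p_a$ with the cyclic order of the four accesses at $\infty$ (determined by the arrangement of $\pm 1$, $a$, $\bar a$ relative to the real line) and finds they are incompatible with $N_a$ being an orientation-preserving local diffeomorphism from a neighborhood of $p_a$ onto a neighborhood of $\infty$ (note $p_a$ is not a critical point). Supplying this orientation/cyclic-order contradiction is the missing ingredient in your write-up.
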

\begin{proof}
Note that by our choice of $a$, the closure of the immediate basin $\mathcal{B}^{\mathrm{imm}}_{1}$ (respectively $\mathcal{B}^{\mathrm{imm}}_{-1}$) contains exactly one critical point of $N_a$. Clearly, $N_a:\partial\mathcal{B}^{\mathrm{imm}}_{1}\to\partial\mathcal{B}^{\mathrm{imm}}_{1}$ is a degree two covering map. Since $\infty$ is a fixed point of $N_a$, it follows that $\partial\mathcal{B}^{\mathrm{imm}}_{1}$ (respectively $\partial\mathcal{B}^{\mathrm{imm}}_{-1}$) contains exactly one pole of $N_a$. It follows from Proposition \ref{basins_prop} that this pole must be $p_a$.

Now we turn our attention to the other two fixed immediate basins. Since both the free critical points are in Fatou set of $N_a$, it follows that $N_a:\partial\mathcal{B}^{\mathrm{imm}}_{a}\to\partial\mathcal{B}^{\mathrm{imm}}_{a}$ (respectively, $N_a:\partial\mathcal{B}^{\mathrm{imm}}_{\bar{a}}\to\partial\mathcal{B}^{\mathrm{imm}}_{\bar{a}}$) is a degree two covering map. So $\partial\mathcal{B}^{\mathrm{imm}}_{a}$ (respectively, $\partial\mathcal{B}^{\mathrm{imm}}_{\bar{a}}$) contains exactly one pole of $N_a$. Let us assume that this unique pole is $p_a$. But then, all four fixed immediate basins would touch at $p_a$. However, this contradicts the fact that $N_a$ induces a local orientation-preserving diffeomorphism from a neighborhood of $p_a$ to a neighborhood of $\infty$ (note that $p_a$ is not a critical point). Therefore by Proposition \ref{poles}, the unique pole on $\partial\mathcal{B}^{\mathrm{imm}}_{a}$ (respectively on $\partial\mathcal{B}^{\mathrm{imm}}_{\bar{a}}$) must be non-real.
\end{proof}

We observed in Section \ref{hyperbolic} that each $U_i^a$ is fixed by the degree $2$ antiholomorphic map $\iota\circ N_a^{\circ n}$; i.e. $\iota\circ N_{a}^{\circ n}$ is the first antiholomorphic return map of each $U_i^a$. In order to conclude that there are exactly three fixed points of $\iota\circ N_{a}^{\circ n}$ on the boundary of each $U_i^a$, we need to prove the following proposition.

\begin{proposition}\label{jordan}
Let $H$ be a Tricorn component of period $2n$, and $a$ be some parameter in $H$ (or some simple parabolic parameter on $\partial H$). Then the boundary $\partial U_i^a$ of each $2n$-periodic Fatou component of $N_a$ is a Jordan curve.
\end{proposition}
\begin{proof}
The proof follows the arguments of \cite[Theorem 6.13]{BBM1}, so we only give a sketch. However, we need to address a technical difference: our maps $N_a$ commute with $\iota$ (which has a circle of fixed points), as opposed to the maps considered in \cite{BBM1} which commute with the (fixed-point free) antipodal map.

Since Julia sets of Newton maps arising from polynomials are always connected and $N_a$ is geometrically finite, it follows by \cite[Theorem A]{Tan} that the Julia set of $N_a$ is locally connected.

Note that since $\infty$ is a repelling fixed point of $N_a$, it does not lie in the Fatou component $U_i^a$. Moreover, if $\infty$ belonged to $\partial U_i^a$, then $N_a$ would fail to be a local orientation-preserving diffeomorphism on a neighborhood of $\infty$. Therefore, $U_i^a$ is a bounded subset of the plane. Let $V$ be the unique unbounded component of $\C\setminus\overline{U_i^a}$. It follows by the arguments used in \cite[Theorem 6.13]{BBM1} that $U':=\C\setminus\overline{V}$ has a Jordan curve boundary. It now suffices to show that $U_i^a=U'$.

Evidently, $U_i^a\subset U'$. Let us assume that $U_i^a\subsetneq U'$, and $Y$ is a connected component of $U'\setminus U_i^a$. By construction, we have $\partial Y\subset\partial U_i^a$. Since the closure of each of the $2n$-periodic Fatou components of $N_a$ is disjoint from $\R\cup\{\infty\}$ (compare Proposition \ref{basins_prop}), it follows that no iterated forward image of $\partial Y$ intersects $\R\cup\{\infty\}$.

Since $\partial Y$ is contained in the Julia set of $N_a$, it is easy to see that $Y\cup U_i^a$ is an open set containing a Julia point. Therefore, $Y\cup U_i^a$ must contain some iterated pre-images of $\infty$ (which belongs to the Julia set). Since $U_i^a$ is contained in the Fatou set and no iterated forward image of $\partial Y$ intersects $\R\cup\{\infty\}$, we conclude that $\mathrm{int}(Y)$ contains an iterated pre-image of $\infty$.

Note that $\mathrm{int}(Y)$ is a bounded open set. Hence the orbit of $\mathrm{int}(Y)$ must hit one of the poles of $N_a$ before hitting $\infty$. More precisely, there exists some $j\geq 0$ such that $N_a^{\circ j}(\mathrm{int}(Y))$ is a bounded open set containing a pole of $N_a$.

If $p_a\in N_a^{\circ j}(\mathrm{int}(Y))$, then $N_a^{\circ j}(\partial Y)$ must intersect $\R$ (since $p_a$ lies on the real line) yielding a contradiction. If one of the two non-real poles of $N_a$ lies in $N_a^{\circ j}(\mathrm{int}(Y))$, then $N_a^{\circ j}(\partial Y)$ must intersect one of the fixed immediate basins $\mathcal{B}^{\mathrm{imm}}_{a}$ or $\mathcal{B}^{\mathrm{imm}}_{\bar{a}}$ (note that by Proposition \ref{conjugate_poles}, one of the non-real poles of $N_a$ lies on $\partial\mathcal{B}^{\mathrm{imm}}_{a}$ and the other on $\partial\mathcal{B}^{\mathrm{imm}}_{\bar{a}}$). However, this is impossible since $N_a^{\circ j}(\partial Y)$ is contained in the Julia set of $N_a$.

This proves that $U_i^a=U'$, and hence $\partial U_i^a$ is a Jordan curve.
\end{proof}

We denote the three distinct fixed points of $\iota\circ N_a^{\circ n}$ on $\partial U_i^a$ by $p_1(U_i^a)$, $p_2(U_i^a)$, and $p_3(U_i^a)$. Moreover, $p_1(U_i^a)$, $p_2(U_i^a)$, and $p_3(U_i^a)$ are also precisely the fixed points of the first holomorphic return map $N_{a}^{\circ 2n}$ on $\partial U_i^a$ (as the return map has degree $4$).

\begin{definition}[Roots and Co-roots]\label{root_co_root_dyn}
By definition, $p_k(U_i^a)$ is a dynamical \emph{root point} if there exists $i'\in \{1, 2, \cdots, 2n\}$ with $i'\neq i$ and $k'\in\{1,2,3\}$ such that $p_{k'}(U_{i'}^a)=p_k(U_i^a)$; i.e. if two distinct $2n$-periodic Fatou components touch at $p_k(U_i^a)$. In this case, two of the three self-conjugate cycles $\{p_k(U_i^a)\}_i$ ($k=1, 2, 3$) coincide.

Otherwise, $p_k(U_i^a)$ is called a dynamical \emph{co-root}. In this case, all three self-conjugate cycles $\displaystyle\{p_k(U_i^a)\}_{i=1}^{2n}$ (for $k=1, 2, 3$) are disjoint.
\end{definition}

\begin{proposition}\label{disjoint}
Let $H$ be a Tricorn component of period $2n$, and $a$ be a parameter in $H$ or on a parabolic arc on $\partial H$. For each $i\in\{1,2,\cdots,2n\}$ and $k\in\{1,2,3\}$, $p_k(U_i^a)$ is a dynamical co-root.
\end{proposition}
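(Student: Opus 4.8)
The plan is to argue by contradiction: suppose some $p_k(U_i^a)$ is a dynamical root point, so that two distinct $2n$-periodic Fatou components $U_i^a$ and $U_{i'}^a$ touch at a common boundary fixed point $z$ of $\iota\circ N_a^{\circ n}$. First I would reduce to the case $i=1$, i.e. to the characteristic Fatou component $U_1^a$ and the characteristic parabolic/attracting point; this is legitimate because the first return map $N_a^{\circ 2n}$ permutes the cycle transitively and conjugates the local picture at one touching point to the local picture at its image, so if a root point occurs anywhere in the cycle it occurs at $U_1^a$ (after relabeling). The key structural input is that two distinct $2n$-periodic components touching at $z$ forces two of the three self-conjugate cycles $\{p_k(U_i^a)\}_i$ to coincide, hence the local degree of the first return map $N_a^{\circ 2n}$ at $z$, when restricted to the union $\overline{U_i^a}\cup\overline{U_{i'}^a}$, already uses up degree $2+2=4$ (each Jordan-curve boundary $\partial U_j^a$ maps over itself with degree $2$ by Proposition \ref{jordan} and the Riemann–Hurwitz count in Proposition \ref{basins_prop}-style reasoning). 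So no further components can touch at $z$, and in particular $z$ is a fully-understood fixed point of $N_a^{\circ 2n}$.

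Next I would bring in the real symmetry. Since $\iota\circ N_a^{\circ n}$ fixes $z$ and has $U_i^a$, $U_{i'}^a$ as the only $2n$-periodic components touching there, the antiholomorphic map $\iota\circ N_a^{\circ n}$ must either fix each of $U_i^a, U_{i'}^a$ or swap them; but each $U_j^a$ is individually invariant under $\iota\circ N_a^{\circ n}$ (this was observed in Section \ref{hyperbolic}: $N_a^{\circ n}(U_j^a)=\iota(U_j^a)$, so $\iota\circ N_a^{\circ n}$ maps $U_j^a$ to itself). Hence $\iota\circ N_a^{\circ n}$ fixes both $U_i^a$ and $U_{i'}^a$ and fixes their common boundary point $z$, acting there as an orientation-reversing local diffeomorphism that is the identity on the local separatrix structure. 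The contradiction I am aiming for is topological/combinatorial: a root point forces the two cycles $\{p_k(U_i^a)\}_i$ and $\{p_{k'}(U_{i'}^a)\}_i$ to be equal as sets, which collapses the count "three distinct self-conjugate cycles" and is incompatible either with the degree-$4$ return map having exactly three fixed points on $\partial U_1^a$ (which are the three $p_k$'s, all landing points of distinct pairs of dynamical rays in the Böttcher/Fatou coordinate of $U_1^a$), or — more robustly — with an Ecalle-height / rotation-number obstruction analogous to the antiholomorphic polynomial case.

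Concretely, the cleanest route I would take for the final contradiction mimics the argument that in the tricorn (and its analogues) the "root" of the characteristic component is not a point where two components of the same period touch, because that would make the characteristic parabolic point a cusp or would contradict the self-conjugacy structure. Working in the normalized Fatou coordinate $\psi^{\mathrm{att}}$ of $U_1^a$ from Lemma \ref{normalization of fatou}: the three fixed points $p_1, p_2, p_3$ of $\iota\circ N_a^{\circ n}$ on $\partial U_1^a$ correspond, under the extension of $\psi^{\mathrm{att}}$ to the boundary circle, to the three fixed points of an orientation-reversing involution-type map of $\mathbb{R}/\mathbb{Z}$ (the boundary dynamics of $z\mapsto \overline{z}+1/2$-type models), which are rigidly positioned; a touching between $U_1^a$ and $U_{i'}^a$ at one of these would identify a cycle of one of them with a cycle of the other and hence produce a second self-conjugate parabolic (or attracting) cycle, contradicting the uniqueness of the self-conjugate cycle for parameters in $H$ or on a parabolic arc on $\partial H$ (established in Section \ref{hyperbolic} and Lemma \ref{LemTricornIndiffDyn}). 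For the parameters $a\in H$ one runs the same argument with the attracting cycle in place of the parabolic one, using that all $U_i^a$ are Jordan domains (Proposition \ref{jordan}) and that the cycle is unique.

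\medskip

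The main obstacle I anticipate is making the step "two $2n$-periodic components touch at $z$ $\Rightarrow$ two of the three cycles coincide $\Rightarrow$ a contradiction with uniqueness of the self-conjugate cycle" fully rigorous, because a priori a root point only identifies a *single* point of one cycle with a single point of another cycle, and one must propagate this identification around the whole period-$2n$ orbit (using $N_a^{\circ 2n}$) and check that the resulting coincidence of cycles genuinely forces either an extra attracting/parabolic cycle or violates the exact fixed-point count $\#\{\text{fixed points of }N_a^{\circ 2n}\text{ on }\partial U_1^a\}=3$ coming from the degree-$4$ boundary return map; handling the two regimes ($a\in H$ versus $a$ on a parabolic arc) uniformly, and dealing with the interaction with the poles of $N_a$ (as in the proof of Proposition \ref{jordan}), is where the real work lies.
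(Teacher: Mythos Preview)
Your proposed contradictions do not actually yield contradictions, so the argument has a genuine gap. When two of the three self-conjugate cycles $\{p_k(U_i^a)\}_i$ coincide, this does \emph{not} produce an extra self-conjugate attracting or parabolic cycle: those boundary cycles are repelling (for $a\in H$) or at most one of them is parabolic (for $a$ on an arc), so their coincidence says nothing about the unique attracting/parabolic cycle whose uniqueness you invoke. Likewise, the fixed-point count on $\partial U_1^a$ is unaffected: the three points $p_1(U_1^a),p_2(U_1^a),p_3(U_1^a)$ remain three distinct points on $\partial U_1^a$; a root only says that, say, $p_1(U_1^a)=p_3(U_{i'}^a)$ for some \emph{other} component $U_{i'}^a$, not that two of the $p_k$'s on the same boundary collapse. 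Your Ecalle-height/Fatou-coordinate route also does not close: nothing in the attracting Fatou coordinate of $U_1^a$ sees whether an adjacent component touches at a boundary fixed point.

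The paper's argument is different and genuinely topological. First, the orientation-reversing local diffeomorphism $\iota\circ N_a^{\circ n}$ at a touching point forces at most two periodic components to meet there (your degree remark is a version of this). Then one propagates: if $U_i$ and $U_j$ touch at a root, one shows every $\partial U_m$ carries at least two roots (the alternative forces the touching point to be fixed by $N_a^{\circ n}$ and by $\iota$, hence to equal $\infty$, which is again ruled out by the local orientation-reversing diffeomorphism). Thus the closures $\overline{U_1},\dots,\overline{U_{2n}}$ form a ring, each touching its two neighbors. Now the real symmetry is used in a way you did not: by Proposition~\ref{basins_prop} exactly $n$ of the $U_m$ lie in the upper half-plane and $n$ in the lower, so the ring $\bigcup_m\overline{U_m}$ must meet $\mathbb{R}$ in at least two points---contradicting that the periodic Fatou components are disjoint from $\mathbb{R}$ and that $\infty$ is the only possible real touching point. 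This ``ring must cross the real line'' obstruction is the missing idea.
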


\begin{proof}
Let us fix $a$ in $H$ or on a parabolic arc on $\partial H$. We will drop the superscript $`a\textrm'$, and denote the cycle of attracting/parabolic Fatou components simply by  $U_1$, $U_2$, $\cdots$, $U_{2n}$.

By way of contradiction, suppose that the two components $U_i$ and $U_j$ touch at a dynamical root $p_k(U_i)$. Let $U_{i_1},\ U_{i_2},\ \cdots,\ U_{i_r}$ be all the $2n$-periodic components touching at $p_k(U_i)$. Note that $\iota \circ N_{a}^{\circ n}$ is a local orientation-reversing diffeomorphism from a neighborhood of $p_k(U_i)$ to a neighborhood of $p_k(U_i)$. But if $r\geq 3$, then it would preserve the cyclic order of the Fatou components touching at $p_k(U_i)$. This contradiction proves that $r\leq 2$; i.e. $U_i$ and $U_j$ must be the only Fatou components (among $U_{1},\ U_{2},\ \cdots,\ U_{2n}$) that touch at $p_k(U_i)$. Now let $N_{a}^{\circ l}(U_i)=U_{j}$. We claim that there are at least two dynamical roots on $\partial U_{j}$. Otherwise $N_{a}^{\circ l}(p_k(U_i))=p_k(U_i)$ by the condition that there is exactly one root on $\partial U_{j}$. Clearly, $l\vert 2n$; i.e. $l\leq n$. If $l<n$, then more than two $2n$-periodic Fatou components would touch at $p_k(U_i)$, a contradiction. Hence $l=n$, and $N_{a}^{\circ n}(p_k(U_i))=p_k(U_i)$. But by definition, $p_k(U_i)$ is a fixed point of $\iota\circ N_{a}^{\circ n}$; i.e. $N_{a}^{\circ n}(p_k(U_i))= \iota(p_k(U_i))$. This implies that $p_k(U_i)=\iota(p_k(U_i))$. By Proposition \ref{basins_prop}, the only periodic Julia point that is fixed by $\iota$ is $\infty$.  Thus, $p_k(U_i)=\infty$. But this will contradict the fact that $\iota \circ N_a^{\circ n}$ is a local orientation-reversing diffeomorphism in a neighborhood of $\infty$. This proves the claim that there are at least two dynamical roots (which lie on the same cycle) on $\partial U_{j}$. It follows that there are at least two roots (which lie on the same cycle) on $\partial U_{m}$ for all $m=1, 2, \cdots, 2n$. This implies that (the closures of) the cycle of immediate basins $U_1$, $U_2$, $\cdots$, $U_{2n}$ form a ring such that each component touches its neighboring two components at $2n$-periodic points. However, as $n$ of them lie in the upper half-plane and $n$ of them lie in the lower half-plane, it follows that $\displaystyle\bigcup_{i=1}^{2n} \overline{U_i}$ must intersect the real line in at least two distinct $2n$-periodic points. But this is impossible by Proposition~\ref{basins_prop}. This contradicts our initial assumption that $p_k(U_i)$ is a dynamical root, and proves the proposition.
\end{proof}

\begin{corollary}
Under the assumption of Proposition~\ref{disjoint}, the three self-conjugate $2n$-cycles $\displaystyle\{p_k(U_i^a)\}_{i=1}^{2n}$ (for $k=1, 2, 3$) are disjoint.
\end{corollary}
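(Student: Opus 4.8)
The plan is to read the corollary off directly from Proposition~\ref{disjoint}. Recall that for each $i\in\{1,\dots,2n\}$ the three points $p_1(U_i^a),p_2(U_i^a),p_3(U_i^a)$ are, by construction, the three \emph{distinct} fixed points of $\iota\circ N_a^{\circ n}$ on the Jordan curve $\partial U_i^a$ (using Proposition~\ref{jordan}), and that the labels are chosen coherently along the cycle so that each $\{p_k(U_i^a)\}_{i=1}^{2n}$ is a single self-conjugate cycle. Proposition~\ref{disjoint} asserts that every one of these points is a dynamical co-root, i.e.\ no $p_k(U_i^a)$ lies on the boundary of a second $2n$-periodic Fatou component $U_{i'}^a$ with $i'\neq i$.

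Given this, I would argue by contradiction. Suppose two of the three cycles fail to be disjoint, say $\{p_k(U_i^a)\}_{i=1}^{2n}$ and $\{p_{k'}(U_i^a)\}_{i=1}^{2n}$ with $k\neq k'$ share a point. Then $p_k(U_i^a)=p_{k'}(U_j^a)$ for some indices $i,j$. If $i\neq j$, then $U_i^a$ and $U_j^a$ are two distinct $2n$-periodic Fatou components whose boundaries both contain this point, so by Definition~\ref{root_co_root_dyn} the point $p_k(U_i^a)$ is a dynamical root, contradicting Proposition~\ref{disjoint}. If $i=j$, then $p_k(U_i^a)=p_{k'}(U_i^a)$ with $k\neq k'$, contradicting the fact that $p_1(U_i^a),p_2(U_i^a),p_3(U_i^a)$ are pairwise distinct. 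In either case we reach a contradiction, hence the three self-conjugate cycles are pairwise disjoint.

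There is essentially no obstacle here: all the dynamical content has already been extracted in Proposition~\ref{disjoint}, and the corollary is the purely combinatorial translation of ``every $p_k(U_i^a)$ is a co-root'' into ``the three cycles are disjoint.'' The only point requiring minor care is keeping the cyclic labeling of the $p_k(U_i^a)$ coherent across the $2n$ Fatou components, but this is forced by the requirement that each family $\{p_k(U_i^a)\}_i$ form a single cycle, which is already part of the setup preceding Definition~\ref{root_co_root_dyn}.
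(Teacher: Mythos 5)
Your argument is correct and is exactly the implicit content of the paper: the corollary is stated with no proof because it is an immediate consequence of Proposition~\ref{disjoint} together with Definition~\ref{root_co_root_dyn}, which is precisely what you spell out. Your two-case check ($i\neq j$ giving a dynamical root, $i=j$ contradicting distinctness of the three boundary fixed points) is the right bookkeeping and matches the paper's intent.
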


By Lemma \ref{LemTricornIndiffDyn} and Proposition \ref{parabolic arcs}, the boundary of a Tricorn component consists entirely of parabolic arcs and cusp points. Note that the dynamical co-roots $p_1(U_i^a)$, $p_2(U_i^a)$, and $p_3(U_i^a)$ can be followed continuously as fixed points of $\iota \circ N_{a}^{\circ n}$ (on the boundary of $U_i$) throughout the union of $H$ and the parabolic arcs on $\partial H$. On each parabolic arc on $\partial H$, the unique self-conjugate attracting cycle merges with the self-conjugate repelling cycle $\{p_k(U_i^a)\}_{i=1}^{2n}$, for some fixed $k\in \{1, 2, 3\}$, forming a simple parabolic cycle. Therefore, there are three distinct ways in which a simple parabolic cycle can be formed on $\partial H$. It follows that there are three parabolic arcs $\cC_1$, $\cC_2$ and $\cC_3$ on $\partial H$ satisfying the property that the self-conjugate parabolic cycle of any $a\in \cC_k$  (where $k\in \{1, 2, 3\}$)  is formed by the merger of the self-conjugate attracting cycle  with the self-conjugate repelling cycle $\{p_k(U_i^a)\}_{i=1}^{2n}$. Finally, the cusp points on $\partial H$ are characterized by the merger of two of the three self-conjugate cycles $\{p_k(U_i^a)\}_i$ (for $k=1, 2, 3$) along with the unique self-conjugate attracting cycle. This proves that:

\begin{theorem}[Boundaries of Tricorn Components]\label{Tricorn_boundary}
The boundary of every Tricorn component consists of three parabolic arcs accumulating at cusp points.
\end{theorem}

\bigskip

\section{Constructing centers of tricorn components}\label{sec:ConstructingPCFmaps}
In this section we will apply W. Thurston's characterization of rational maps to prove the existence of infinitely many postcritically finite Newton maps that satisfy certain dynamical properties. A complete combinatorial invariant for postcritically finite Newton maps was given in \cite{LMS1, LMS2}, but for our purposes it is easier to explicitly construct topological branched covers using a subdivision rule, and then apply the ``arcs intersection obstructions'' of \cite{PT} to show that no obstructing multicurves exist in the sense of Thurston. It immediately follows from a slight generalization of Thurston's theorem that the branched cover is equivalent to a rational map that is unique up to M\"obius conjugacy \cite{DH93,BCT}. We then use a result of Pilgrim to show that the Fatou components containing the critical $2n$-cycle are visible from the immediate basins containing $\pm 1$.

A \emph{marked branched cover} is a pair $(f,X)$ where $f:\mathbb{S}^2\to\mathbb{S}^2$ is an orientation-preserving branched cover with topological degree greater than one, and $X\subset \mathbb{S}^2$ is a finite set (called the \emph{marked set}) that contains all critical values of $f$ and satisfies $f(X)\subset X$. Two marked covers $(f,X)$ and $(g,Y)$ are said to be \emph{equivalent} if there are two orientation-preserving homeomorphisms $h_0,h_1:(\mathbb{S}^2,X)\to(\mathbb{S}^2,Y)$ so that $h_0\circ f = g\circ h_1$ where $h_0$ and $h_1$ are homotopic relative to $X$. If furthermore $X=Y$ and $h_0$ and $h_1$ are homotopic to the identity, $f$ and $g$ are said to be \emph{homotopic}.

A simple closed curve $\gamma$ in $\mathbb{S}^2\setminus X$ is said to be \emph{essential} if it does not bound a disk or a punctured disk. A \emph{multicurve in $(\mathbb{S}^2,X)$} is a finite collection of simple closed essential curves in $\mathbb{S}^2\setminus X$ that are pairwise disjoint and non-homotopic. Homotopies of essential curves and multicurves are taken in $\mathbb{S}^2\setminus X$. Thurston's characterization is given in terms of a special kind of multicurve called an irreducible obstructing multicurve. The precise definition is not important for our discussion, so the reader is referred to \cite[\S 3]{PT}.

W. Thurston's theorem is now stated for marked covers, a mild generalization of the original result \cite{DH93}. Our maps will always have more than five postcritical points and will hence have hyperbolic orbifold (the orbifold of a marked cover $(f,X)$ is simply taken to be the orbifold associated to $(f,P_f)$ in the usual sense of \cite{DH93}).

\begin{theorem}[W. Thurston]\label{thm:Thurston}
Suppose that $(f,X)$ is a marked cover with hyperbolic orbifold. Then $(f,X)$ is equivalent to a marked rational map if and only if there is no irreducible obstructing multicurve. The rational map, if it exists, is unique up to M\"obius conjugacy.
\end{theorem}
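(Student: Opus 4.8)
The plan is to follow the Teichm\"uller-theoretic proof of Douady and Hubbard \cite{DH93}, in the marked formulation of \cite{BCT}. Write $\mathcal{T}(\mathbb{S}^2,X)$ for the Teichm\"uller space of the sphere with the finite marked set $X$, and $\mathcal{M}(\mathbb{S}^2,X)$ for the associated moduli space. Since $X$ contains all critical values of $f$ and $f(X)\subset X$, we have $X\subset f^{-1}(X)$ and every critical point lies in $f^{-1}(X)$, so $f$ induces a well-defined \emph{Thurston pullback map} $\sigma_f\colon\mathcal{T}(\mathbb{S}^2,X)\to\mathcal{T}(\mathbb{S}^2,X)$: given a complex structure on $\mathbb{S}^2$ marked by $X$, one pulls it back through $f$, uses $f^{-1}(X)$ as the new marking, and forgets the marked points outside $X$; by construction $f$ becomes holomorphic for the resulting structures. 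The first reduction is the standard one: $(f,X)$ is equivalent to a marked rational map if and only if $\sigma_f$ has a fixed point in $\mathcal{T}(\mathbb{S}^2,X)$. Indeed, a fixed point supplies an $f$-invariant complex structure which after uniformization exhibits $(f,X)$ as equivalent to a rational map, and the converse is immediate.

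Next I would import the two analytic ingredients. First, $\sigma_f$ never increases the Teichm\"uller metric, and because $f$ has degree at least $2$ and the orbifold of $(f,P_f)$ is hyperbolic, it is weakly contracting, with $\|D\sigma_f\|<1$ everywhere and uniformly bounded below $1$ on compact subsets (here hyperbolicity of the orbifold enters through the fact that the coderivative of $\sigma_f$, which is essentially push-forward of integrable quadratic differentials, strictly decreases the $L^1$-norm, since every such differential on $\hat{\mathbb{C}}$ with poles in $X$ has at least four poles). Second, I would run the iteration $\tau_n=\sigma_f^{\circ n}(\tau_0)$ from a basepoint and dichotomize on the behavior of $[\tau_n]$ in $\mathcal{M}(\mathbb{S}^2,X)$. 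If $[\tau_n]$ stays in a compact subset of $\mathcal{M}(\mathbb{S}^2,X)$, the weak contraction forces $\tau_n$ to converge to the unique fixed point of $\sigma_f$, so $(f,X)$ is realized by a rational map. If $[\tau_n]$ leaves every compact subset of $\mathcal{M}(\mathbb{S}^2,X)$, then by Mumford compactness some simple closed curves have extremal length tending to $0$ along the orbit; tracking the degree-bounded behavior of extremal length under pullback by $f$, one extracts from the collection of these ``short'' curves an $f$-stable multicurve whose Thurston transition matrix has spectral radius at least $1$, and then passes to an irreducible sub-multicurve. This proves the equivalence ``realized $\iff$ no irreducible obstructing multicurve.''

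For the uniqueness clause I would invoke Thurston rigidity: under the hyperbolic-orbifold hypothesis, $\sigma_f$ has at most one fixed point. The cleanest route is to show that $D\sigma_f$ is a \emph{strict} contraction at any fixed point, again via the strict decrease of the $L^1$-norm of quadratic differentials under push-forward by $f$ (exactly the hyperbolicity of the orbifold). A strictly contracting self-map of $\mathcal{T}(\mathbb{S}^2,X)$ has a unique fixed point, and two realizations, being uniformizations of the same $f$-invariant complex structure, then differ by a M\"obius transformation.

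Finally, the passage from the classical setting $X=P_f$ to general marked covers is routine: the pullback construction and all the estimates above used only $f(X)\subset X$ and that $X$ contains the critical values, while the orbifold, and hence hyperbolicity, is defined via $P_f$; one checks that enlarging $P_f$ to $X$ introduces no obstructions beyond those homotopic to curves essential in $\mathbb{S}^2\setminus P_f$, so the two notions of ``no obstruction'' coincide. I expect the genuine obstacle to be the obstructed branch of the dichotomy: converting divergence of the $\sigma_f$-orbit in moduli space into an honest \emph{irreducible} obstructing multicurve is the delicate extremal-length bookkeeping of \cite{DH93} (equivalently, the canonical-obstruction analysis), and that is the portion of the argument I would cite rather than reprove.
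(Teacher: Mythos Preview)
The paper does not prove this theorem; it is stated as a cited result from \cite{DH93} and \cite{BCT} and used as a black box. Your sketch is a faithful outline of the Douady--Hubbard argument and would be an acceptable expansion of what the paper merely cites.

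One correction to your final paragraph: the claim that enlarging $P_f$ to $X$ ``introduces no obstructions beyond those homotopic to curves essential in $\mathbb{S}^2\setminus P_f$, so the two notions of `no obstruction' coincide'' is not right. Adding marked points can make previously inessential curves essential in $\mathbb{S}^2\setminus X$, and the obstruction criterion for $(f,X)$ genuinely concerns multicurves in $\mathbb{S}^2\setminus X$, not $\mathbb{S}^2\setminus P_f$. The marked version is a strictly more general statement, not a reduction to the classical one. What does carry over unchanged is the analytic machinery: the pullback map $\sigma_f$ on $\mathcal{T}(\mathbb{S}^2,X)$ is well defined because $f(X)\subset X$ and $X$ contains the critical values, and the contraction estimate on the coderivative (push-forward of integrable quadratic differentials with poles in $X$) goes through since $|X|\geq |P_f|$ and the orbifold hyperbolicity hypothesis is imposed on $(f,P_f)$. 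So the proof transports, but not via the equivalence you assert.
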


In practice, it can be difficult to apply Thurston's theorem since there are typically infinitely many multicurves in the complement of the marked set. We use a special case of the ``arcs intersecting obstructions" theorem to show that obstructions do not exist.

An \emph{arc} in $(\mathbb{S}^2,X)$ is an injective map $\lambda:[0,1]\to\mathbb{S}^2$ where $\lambda(0),\lambda(1)\in X$ and $\lambda((0,1))\cap X = \emptyset$.  For a marked cover $(f,X)$,  we say that the arc $\lambda$ is \emph{periodic} if there is some $n$ so that $f^{\circ n}$ maps $\lambda ([0,1])$ homeomorphically to itself. For such a periodic arc $\lambda$, we denote $f^{\circ i}(\lambda)$ by $\lambda_i$ and call $\Lambda:=\{\lambda_0,...,\lambda_{n-1}\}$ an \emph{invariant arc system} (only invariance up to homotopy is required for the original statement of the following theorem, but we specialize for convenience). If $\Gamma$ is a multicurve, we denote by $\Gamma \cdot\Lambda$ the minimum of $|\Gamma'\cap\Lambda|$ over all curves $\Gamma'$ homotopic to $\Gamma$. Let $\tilde{\Gamma}(f^{\circ k})$ denote the union of those components of $f^{-k}(\Gamma)$ that are isotopic in $\mathbb{S}^2\setminus X$ to elements of $\Gamma$.

We have the following specialized case of \cite[Theorem 3.2]{PT}.

\begin{theorem}[Arcs intersecting obstructions]\label{thm:ArcsIntersectingObstructions}
Let $(f,X)$ be a marked cover, $\Gamma$ an irreducible obstruction, and $\Lambda$ an invariant arc system. Suppose further that $|\Gamma\cap\Lambda|=\Gamma\cdot\Lambda$. Then for each $k\geq 1$, either
\begin{enumerate}
\item $\Gamma\cdot\Lambda=0$, and $\Gamma\cdot f^{-k}(\Lambda)=0$, or
\item  $\Gamma\cdot\Lambda\neq 0$ and for each $\lambda\in\Lambda$, there is exactly one connected component $\lambda'$ of $f^{-k}(\lambda)$ so that $\lambda'\cap\tilde{\Gamma}(f^{\circ k})\neq\emptyset$. Moreover, the arc $\lambda'$ is the unique component of $f^{-k}(\lambda)$ that is an element of $\Lambda$.
\end{enumerate}
\end{theorem}

\begin{proposition}[Newton maps with visibility]\label{thm_NewtonExists}
There is a sequence of complex parameters $\{a_n\}_{n>1}$ so that $N_{a_n}\in\mathcal{N}^*_4$ and $N_{a_n}$ has a superattracting cycle of period $2n$ with accesses from the Fatou components containing $\pm 1$. Moreover, each $a_n$ belongs to the symmetry locus $\mathcal{S}$.
\end{proposition}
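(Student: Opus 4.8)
The plan is to construct, for each $n\in\mathbb{Z}^+$, a suitable postcritically finite topological branched cover $F_n$ of $\mathbb{S}^2$ of degree $4$ via a subdivision rule, verify it is unobstructed using Theorem~\ref{thm:ArcsIntersectingObstructions}, promote it to a rational map by Theorem~\ref{thm:Thurston}, check that this rational map is a Newton map of a degree $4$ polynomial (using \cite[Corollary 2.9]{RS}, i.e.\ verifying there is a fixed point of multiplier $4/3$ and the remaining fixed points are super-attracting), and finally check that the polynomial's roots come in the configuration $\{1,-1,a_n,\overline{a_n}\}$ with $a_n\in i\mathbb{R}$ so that $N_{a_n}\in\mathcal{N}_4^*$ and $a_n\in\mathcal{S}$. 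The combinatorial data to prescribe: four super-attracting fixed points $v_1,v_{-1},v_a,v_{\bar a}$ (to become $\pm1,a,\bar a$), a repelling fixed point $\infty$, a $2n$-cycle of super-attracting periodic points carrying the two free critical points (one critical point in the component we label $U_1$, its ``conjugate'' in $U_{n+1}$), and — crucially — the prescribed adjacencies: the periodic Fatou component containing $c$ should be accessible from $\mathcal{B}^{\mathrm{imm}}_1$, and (by the symmetry $z\mapsto -z$ we build in) the conjugate component accessible from $\mathcal{B}^{\mathrm{imm}}_{-1}$. I would build $F_n$ so that it literally commutes with a fixed orientation-preserving involution $\sigma$ of $\mathbb{S}^2$ (modeling $z\mapsto -z$) and with the conjugation-type antiholomorphic involution, which forces the eventual rational map into the symmetry locus by the rigidity part of Theorem~\ref{thm:Thurston} and the argument already used in the proof of Proposition~\ref{parabolic arcs} (anti-M\"obius involution fixing three points is $\iota$).

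Concretely, I would give an explicit model on the sphere: start from a ``level-$0$'' picture — say the real circle through $v_1=1$, $v_{-1}=-1$, $\infty$, $p$ (a pole), with the two hemispheres, and mark an arc $\alpha$ from $v_1$ into the upper hemisphere ending at $v_a$ and its mirror images — then define a subdivision rule whose $n$-th iterate produces a spine connecting $v_1$ through the $2n$-cycle. The invariant arc system $\Lambda$ to feed into Theorem~\ref{thm:ArcsIntersectingObstructions} is exactly the cycle of arcs realizing the desired accesses: an arc from $v_1$ to the first periodic point, pushed forward around the $2n$-cycle. One then argues: any irreducible obstruction $\Gamma$ can be isotoped to meet $\Lambda$ minimally; Theorem~\ref{thm:ArcsIntersectingObstructions} forces $\Gamma\cdot(F_n^{-k}(\Lambda)\setminus\Lambda)=0$ for all $k$; and because the forward/backward orbit of $\Lambda$ under the subdivision rule ``fills up'' the complement of $X$ (every essential curve must cross some preimage arc — this is where the explicit geometry of the subdivision rule is used), $\Gamma$ cannot be essential, a contradiction. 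This is the step I expect to be the main obstacle: designing the subdivision rule cleanly enough that the ``arcs fill the sphere'' property is transparent, while simultaneously guaranteeing the $2n$-periodicity, the two prescribed accesses, and the two commuting involutions. I would likely organize the combinatorics by induction on $n$, obtaining $F_{n+1}$ from $F_n$ by a single blow-up/insertion move localized near one periodic point, so that the obstruction-freeness is inherited.

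Having produced a rational map $R_n$ equivalent to $F_n$, I would: (i) identify $\infty$ as the unique repelling fixed point and compute its multiplier $=4/3$, then invoke \cite[Corollary 2.9]{RS} to conclude $R_n=N_{g_n}$ for a degree $4$ polynomial $g_n$ whose roots are the four super-attracting fixed points; (ii) use that $F_n$ commutes with the model of $\iota$ and — as in the proof of Proposition~\ref{parabolic arcs} — conclude $R_n$ commutes with an anti-M\"obius involution fixing $1,-1,\infty$, hence with $\iota$ itself, so $R_n$ has real coefficients with roots $1,-1,a_n,\overline{a_n}$; (iii) use the extra symmetry $\sigma$ (conjugating $R_n$ to itself and swapping $1\leftrightarrow-1$) to force $a_n\in i\mathbb{R}$, i.e.\ $a_n\in\mathcal{S}$ by Proposition~\ref{prop_symmetry_locus} — noting $a_n\in\cU$ since the two free critical points are the images of a complex-conjugate pair; (iv) note that the $2n$-cycle is super-attracting and carries both free critical points, so $R_n=N_{a_n}$ lies in a tricorn component of period $2n$ (it is its center), and the prescribed accesses in $F_n$ survive as accesses from $\mathcal{B}^{\mathrm{imm}}_{\pm1}$ to the characteristic periodic Fatou components, which is the required visibility. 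For step (iv)'s access-survival, I would cite Pilgrim's result (as announced in the section intro) that the relevant adjacency/access structure prescribed combinatorially is realized in the dynamical plane of the rational map. Finally, distinctness of the $a_n$ for different $n$ is automatic since the periods $2n$ differ.
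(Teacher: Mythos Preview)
Your plan is essentially the same strategy the paper carries out: build an explicit degree-$4$ branched cover with the desired marked combinatorics and built-in symmetries, rule out obstructions via Theorem~\ref{thm:ArcsIntersectingObstructions} using an invariant arc system whose preimages cut the marked sphere into pieces with at most one marked point each, apply Theorem~\ref{thm:Thurston}, identify the result as a Newton map via the multiplier $4/3$ at $\infty$ and \cite[Corollary~2.9]{RS}, and invoke Pilgrim's access-survival result.

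Two places where the paper's execution differs from your sketch are worth noting. First, the paper gives a single explicit subdivision rule valid for all $n$ simultaneously (a rectangle with $2n$ marked interior points on the horizontal axis, symmetric under both coordinate reflections), rather than an inductive blow-up construction; this makes the ``preimage arcs fill the complement'' verification a one-line observation rather than an inductive step. Second, and more substantively, your transfer of symmetry from $F_n$ to the rational map via ``rigidity plus the argument of Proposition~\ref{parabolic arcs}'' is not quite right as stated: rigidity only gives uniqueness up to M\"obius conjugacy, and the argument in Proposition~\ref{parabolic arcs} relied on having an explicit integrating map $\phi_h$ normalized at $1,-1,\infty$ with $\iota$-invariant Beltrami data, neither of which is directly available here. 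The paper instead runs the Thurston pullback iteration on Teichm\"uller space starting from a \emph{symmetric} initial point $\phi_0$ (with $\phi_0\circ R_x=-\iota\circ\phi_0$ and $\phi_0\circ R_y=\iota\circ\phi_0$), checks that the symmetric normalization is preserved under pullback, and then passes to the limit to obtain equivalence maps $h_0,h_1$ that intertwine the topological involutions with $\pm\iota$. This is the clean way to force the rational map to genuinely commute with $\iota$ and $-\iota$, and hence land in $\mathcal{S}$.
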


\begin{figure}
\begin{center}
\includegraphics[scale=0.2]{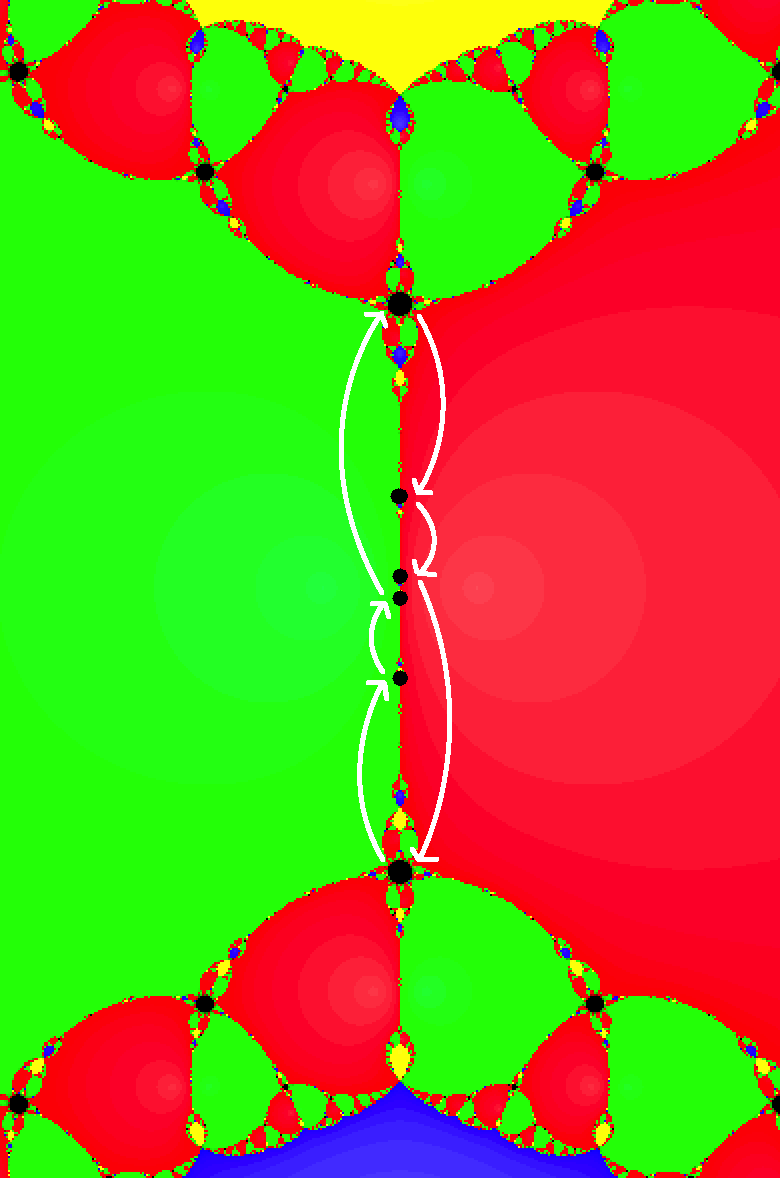}
\caption{The dynamical plane of the map in $\mathcal{N}_4^*$ corresponding to $n=3$ from Proposition \ref{thm_NewtonExists}. The basins of $1$ and $-1$ are colored red and green. The non-fixed critical points are in a period 6 cycle indicated by the white arrows.}
\label{period_6_example}
\end{center}
\end{figure}

\begin{figure}
\begin{tikzpicture}
    \node[anchor=south west,inner sep=0] at (0,0) {\includegraphics[width=\textwidth]{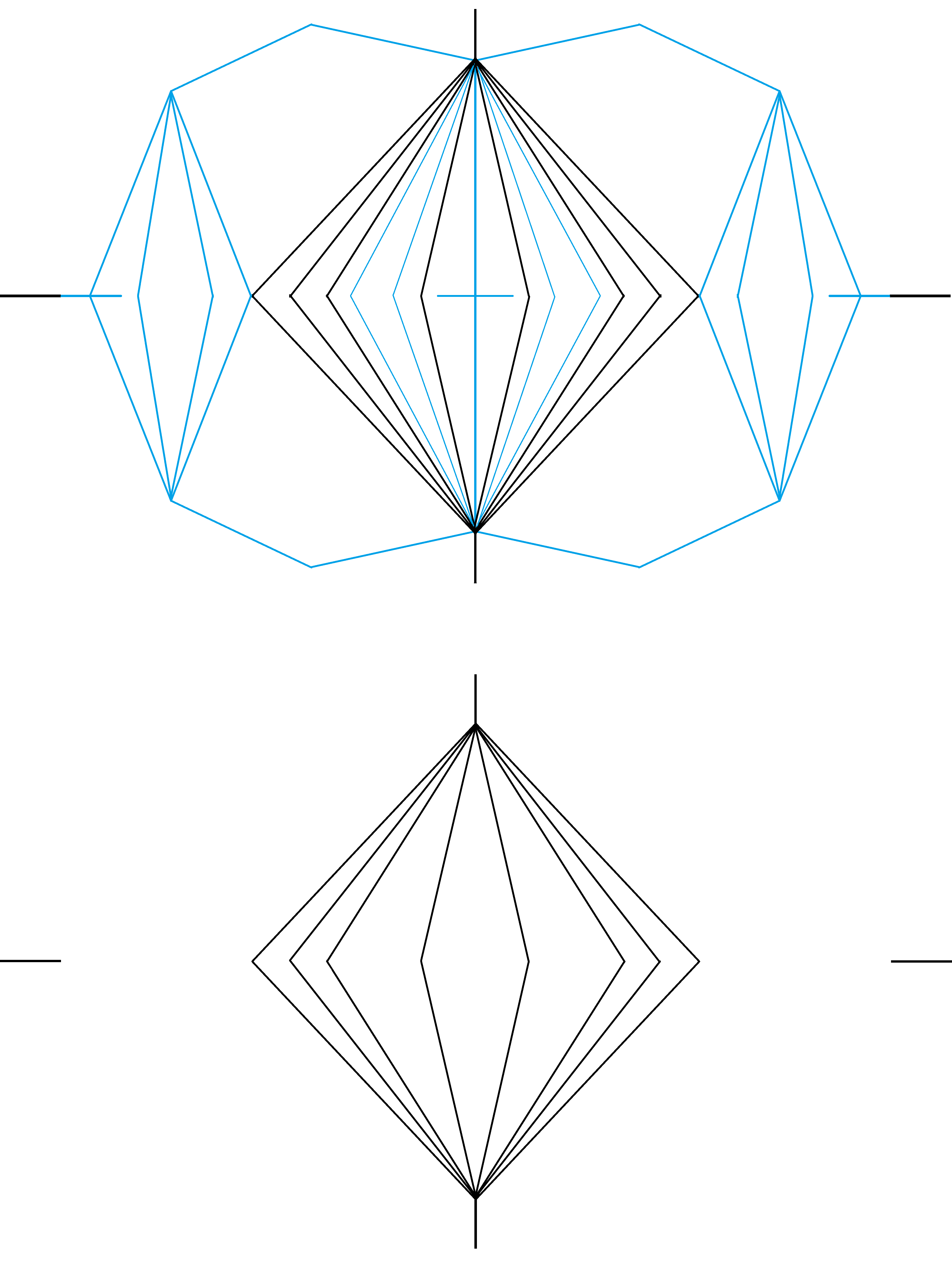}};

\node at (6.95,15.75) {$(0,1)$};
\node at (7,9.75) {$(0,-1)$};

\node at (6.85,7.15) {$(0,1)$};
\node at (6.95,.75) {$(0,-1)$};

\node at (.43,12.6) {$\gamma$};
\node at (12.3,12.6) {$\delta$};
\node at (6.18,16.3) {$\alpha$};
\node at (6.18,9.25) {$\beta$};

\node at (1,12.6) {$\gamma$};
\node at (1.5,12.65) {$\delta$};

\node at (1.5,14) {$\beta$};
\node at (2,13.65) {$\beta_{\text{-}1}$};
\node at (2.7,13.65) {$\beta_3$};
\node at (3,14) {$\beta_2$};

\node at (1.5,11.6) {$\alpha$};
\node at (2,11.9) {$\alpha_{\text{-}1}$};
\node at (2.7,11.9) {$\alpha_3$};
\node at (3,11.6) {$\alpha_2$};

\node at (11.63,12.62) {$\delta$};
\node at (11.2,12.65) {$\gamma$};

\node at (9.6,14) {$\beta_{\text{-}2}$};
\node at (10.18,13.65) {$\beta_{\text{-}3}$};
\node at (10.9,13.65) {$\beta_1$};
\node at (11.2,14) {$\beta$};

\node at (9.5,11.6) {$\alpha_{\text{-}2}$};
\node at (10.2,11.9) {$\alpha_{\text{-}3}$};
\node at (10.9,11.9) {$\alpha_1$};
\node at (11.2,11.6) {$\alpha$};

\node at (5.2,16.35) {$\alpha_1$};
\node at (3.15,16.2) {$\beta_1$};

\node at (7.2,16.3) {$\alpha_{\text{-}1}$};
\node at (9.5,16.2) {$\beta_{\text{-}1}$};

\node at (5.2,9.3) {$\beta_1$};
\node at (3.15,9.45) {$\alpha_1$};

\node at (7.2,9.3) {$\beta_{\text{-}1}$};
\node at (9.5,9.45) {$\alpha_{\text{-}1}$};

\node at (6.27,14.2) {$\alpha$};
\node at (6.27,11.55) {$\beta$};
\node at (6.05,12.8) {$\delta$};
\node at (6.6,12.8) {$\gamma$};

\node at (3.75,12.15) {$\beta_2$};
\node at (4.2,12.35) {$\beta_3$};
\node at (4.6,12.45) {$\beta_{n}$};
\node at (5.05,12.48) {$\beta_{\textit{-}n}$};
\node at (5.5,12.35) {$\beta_{\textit{-}2}$};
\node at (5.9,12.1) {$\beta_{\text{-}1}$};

\node at (8.85,12.1){$\beta_{\text{-}2}$};
\node at (8.5,12.35){$\beta_{\text{-}3}$};
\node at (8.15,12.55) {$\beta_{\text{-}n}$};
\node at (6.9,12.1) {$\beta_{1}$};
\node at (7.75,12.48) {$\beta_{n}$};
\node at (7.3,12.35) {$\beta_{2}$};

\node at (3.75,13.55) {$\alpha_2$};
\node at (4.2,13.35) {$\alpha_3$};
\node at (4.6,13.25) {$\alpha_{n}$};
\node at (5.05,13.15) {$\alpha_{\textit{-}n}$};
\node at (5.5,13.25) {$\alpha_{\textit{-}2}$};
\node at (5.9,13.55) {$\alpha_{\text{-}1}$};

\node at (8.15,13.25){$\alpha_{\text{-}n}$};
\node at (8.5,13.35){$\alpha_{\text{-}3}$};
\node at (8.9,13.55){$\alpha_{\text{-}2}$};
\node at (7.75,13.15) {$\alpha_{n}$};
\node at (7.25,13.25) {$\alpha_{2}$};
\node at (6.9,13.55){$\alpha_{1}$};

\foreach \Point in {(.74,12.82), (3.37,12.82), (4.38,12.82), (6.34,15.95), (6.34,9.67), (8.3,12.82),(9.3,12.82), (11.85,12.82), (3.9,12.82),(8.8,12.82),(5.62,12.82),(7.02,12.82)}{
    \node at \Point {\textbullet};
}

\foreach \Point in {(4.04,12.82),(4.14,12.82),(4.24,12.82)}{
    \node at \Point {$\cdot$};
}

\foreach \Point in {(4.84,12.82),(4.94,12.82),(5.04,12.82)}{
    \node at \Point {$\cdot$};
}

\foreach \Point in {(7.54,12.82), (7.74,12.82), (7.64,12.82)}{
    \node at \Point {$\cdot$};
}

\foreach \Point in {(8.64,12.82), (8.44,12.82), (8.54,12.82)}{
    \node at \Point {$\cdot$};
}

\foreach \Point in {(2.15,12.82), (2.35,12.82), (2.55,12.82)}{
    \node at \Point {$\cdot$};
}

\foreach \Point in {((10.1,12.82), (10.3,12.82), (10.5,12.82)}{
    \node at \Point {$\cdot$};
}

\node at (.43,3.74) {$\gamma$};
\node at (12.3,3.74) {$\delta$};
\node at (6.17,7.4) {$\alpha$};
\node at (6.17,.4) {$\beta$};

\node at (3.75,3.3) {$\beta_1$};
\node at (4.2,3.5) {$\beta_2$};
\node at (4.85,3.6) {$\beta_{n-1}$};
\node at (5.85,3.7) {$\beta_n$};

\node at (8.9,3.2){$\beta_{\text{-}1}$};
\node at (8.4,3.4){$\beta_{\text{-}2}$};
\node at (7.85,3.5) {$\beta_{1\text{-}n}$};
\node at (7.2,3.6) {$\beta_{\text{-}n}$};

\node at (3.75,4.7) {$\alpha_1$};
\node at (4.2,4.5) {$\alpha_2$};
\node at (4.85,4.4) {$\alpha_{n-1}$};
\node at (5.9,4.3) {$\alpha_{n}$};

\node at (7.2,4.3){$\alpha_{\text{-}n}$};
\node at (7.95,4.4){$\alpha_{1\text{-}n}$};
\node at (8.4,4.5){$\alpha_{\text{-}2}$};
\node at (8.9,4.7){$\alpha_{\text{-}1}$};

\foreach \Point in {(.74,3.97), (3.4,3.97), (3.9,3.97), (5.6,3.97), (4.4,3.97), (6.34,7.09), (6.34,.85),(7,3.97), (8.28,3.97),(8.78,3.97),(9.28,3.97), (11.85,3.97)}{
    \node at \Point {\textbullet};
}

\foreach \Point in {(4.05,3.97), (4.15,3.97), (4.25,3.97)}{
    \node at \Point {$\cdot$};
}

\foreach \Point in {(8.6,3.97), (8.5,3.97), (8.4,3.97)}{
    \node at \Point {$\cdot$};
}

\end{tikzpicture}
\caption{Subdivision rule that defines the sequence of critically finite branched covers $f_{n}:\sphere\to\sphere$ where $n>1$. Black edges $\alpha,\beta,\gamma,\delta$ terminate at $\infty$ in both the domain (top) and the range (bottom).  Light blue edges in the domain represent preimage edges of $f_{n}$ that are not edges in the range. Both domain and range graphs are symmetric about the horizontal and vertical coordinate axes.}
\label{fig:subdivision}
\end{figure}

\begin{proof}
The subdivision rule exhibited in Figure \ref{fig:subdivision} defines a sequence of critically finite branched covers $f_{n}:\sphere\to\sphere$ for $n\geq 2$ as follows.  The upper graph represents the domain sphere as the one-point compactification of $\mathbb{R}^2$, and the lower graph represents the range sphere similarly.  Each vertex in the domain maps to a vertex in the range, each edge in the domain maps homeomorphically to the unique edge with the same label in the range, and complementary components of the domain graph are mapped to complementary components of the range graph. All mappings are chosen to respect symmetry over both coordinate axes. This defines $f_{n}$ uniquely up to homotopy relative to the marked set $X:=P_{f_{n}}\cup\{\infty\}$, where the postcritical set $P_{f_n}$ consists of $4+2n$ points. Furthermore, $f_n$ commutes with the homeomorphisms of $\sphere$ induced by $R_y(x,y):=(-x,y)$, $R_x(x,y):=(x,-y)$, and $R_x\circ R_y$.

From here, the label of an edge will be taken to be the label assigned in the range. It is easy to see that the edges labeled $\alpha,\beta,\gamma,$ and $\delta$ are fixed by $f_n$, and edges of the form $\alpha_i$ are permuted as follows:
\begin{align*}
\alpha_1\mapsto\alpha_2\mapsto\alpha_3\mapsto...\mapsto\alpha_{n-2}\mapsto\alpha_{n-1}\mapsto\alpha_{n}\mapsto\alpha_{-1},\\
\alpha_{-1}\mapsto\alpha_{-2}\mapsto\alpha_{-3}\mapsto...\mapsto\alpha_{2-n}\mapsto\alpha_{1-n}\mapsto\alpha_{-n}\mapsto\alpha_{1}.\\
\end{align*}
Furthermore, $f_{n}$ acts similarly on indices of the form $\beta_i$. Thus each of the following six sets forms irreducible arc systems:
\[\{\alpha\},\{\beta\},\{\gamma\},\{\delta\},\]
\[\{\alpha_{-n},...,\alpha_{n}\},\{\beta_{-n},...,\beta_{n}\}.\]
Denote by $\Lambda$ the union of these six arc systems.

We prove by contradiction that $f_{n}$ is unobstructed. Suppose that $\Gamma$ is an irreducible obstruction for $f_n$. Without loss of generality assume that $|\Gamma\cap\Lambda|=\Gamma\cdot\Lambda$. Since $\{\alpha\}$ is an invariant arc system, we apply both cases of Theorem~\ref{thm:ArcsIntersectingObstructions} to argue that $\tilde{\Gamma}(f_n^{\circ n})\cap (f_n^{-n}(\alpha)\setminus\alpha)=\emptyset$. Applying the same argument to the five other invariant arc systems in $\Lambda$, we see that $\tilde{\Gamma}(f_n^{\circ n})\cap(f_n^{-n}(\Lambda)\setminus\Lambda)=\emptyset$. By inspection of Figure \ref{fig:subdivision}, it is seen that the closure of $f_n^{-n}(\Lambda)\setminus\Lambda$ has the property that each complementary component contains at most one marked point. Thus every component of the multicurve $\tilde{\Gamma}(f_n^{\circ n})$ (and consequently $\Gamma$ by irreducibility) bounds a disk or a once-punctured disk in $\sphere\setminus X$, and so $\Gamma$ is not an obstruction contrary to assumption. Theorem~\ref{thm:Thurston} implies that $f_{n}$ is equivalent to a rational map which we call $g_{n}$.

To show that $g_n$ is symmetric, we choose appropriately symmetric maps to realize the equivalence to $f_n$. This requires some tools used in the proof of Thurston's theorem in the setting of marked branched covers \cite{BCT}, which is proven by iteration on the Teichm\"uller space
\[\mathcal{T}_X=\{\phi:\sphere\to\mathbb{\C}:\phi\text{ an orientation-preserving homeomorphism}\}/\sim\]
where $\phi,\psi:\sphere\to\mathbb{\C}$ are equivalent (denoted $\phi\sim\psi$) precisely when there is a M\"obius transformation $M$ so that $M\circ\psi|_X=\phi|_X$ and $M\circ\psi = \phi\circ h$ where $h$ is a homeomorphism isotopic to the identity relative to $X$.   Let $[\tau]\in\mathcal{T}_X$. Pull back the complex structure of $\widehat{\C}$ under the map $\tau\circ f_n:\sphere\to\widehat{\C}$ to give a complex structure on the domain $\sphere$ and choose some $\tilde{\tau}:\sphere\to\widehat{\C}$ that uniformizes this pulled back complex structure. Then the pullback map $\sigma_{f_n}:\mathcal{T}_X\to\mathcal{T}_X$  is defined by $\sigma_{f_n}([\tau])=[\tilde{\tau}]$. It is shown in \cite[\S 1.3]{BCT} that $\sigma_{f_n}$ is well-defined and holomorphic.

Let $\phi_0:\sphere\to\widehat{\C}$ be the homeomorphism induced by $\phi_0(x,y)=y-ix$. Note that $f_n$ commutes with $R_x$. Thus the complex structure pulled back under $\phi_0\circ f_n$ is invariant under $R_x$. Let $\phi_1:\sphere\to\widehat{\C}$ be the unique uniformizing map so that $(0,\pm 1)\mapsto\pm 1$ and $\infty\mapsto\infty$. It follows that $\phi_1\circ R_x = -\iota\circ\phi_1$ and so $\phi_1(\R)\subset i\R$. Evidently $\sigma_{f_n}([\phi_0])=[\phi_1]$. Similarly $\phi_1\circ R_y=\iota \circ \phi_1$.

Normalizing in the same way, obtain a sequence $\phi_k:\sphere\to\widehat{\C}$, $k\in\mathbb{Z}^{\geq 0}$ so that $[\phi_k]=\sigma_{f_n}^{\circ k}([\phi_0])$ and $\phi_k(\R)\subset i\R$. By \cite[Theorem 2.2]{BCT}, $[\phi_k]$ converges in the Teichm\"uller metric to a unique point $[\tau]\in\mathcal{T}_X$ fixed by $\sigma_{f_n}$. Thus there are representatives $h_0, h_1\in[\tau]$ that realize an equivalence between $f_n$ and $g_n$ (i.e. they satisfy $h_0\circ f_n=g_n\circ h_1$) and further satisfy the following for $j\in\{0,1\}$:
 \[h_j\circ R_x = -\iota\circ h_j\] \[h_j\circ R_y=\iota \circ h_j\] \[h_j(\R)= i\R.\]
It immediately follows that $g_n$ commutes with $\iota$ and $-\iota$.

We now prove that $g_{n}\in\mathcal{N}^*_4$. Since $f_{n}$ has degree four, it follows that $g_{n}$ has degree four. Due to our normalization, $g_n$ has fixed simple critical points at $\pm 1$, and two other fixed simple critical points in the imaginary axis that are complex conjugate. Denote the unique fixed simple critical point in $\mathbb{H}$ by $a_n$.  Again due to the normalization, $g_n$ has a fixed point at $\infty$. The holomorphic fixed point theorem implies that $\infty$ is repelling with multiplier $4/3$.  Then $g_n(z)$ is the Newton's map associated to the polynomial $(z-1)(z+1)(z-a_n)(z-\bar{a}_n)$ by \cite[Corollary 2.9]{RS}. Since $g_n$ commutes with $\iota$ and $-\iota$, it follows that $g_{n}\in\mathcal{N}_4^*$. More precisely, $g_n=N_{a_n}$ for some $a_n\in\mathcal{S}$.

Since the arcs $\alpha_i$ are permuted transitively by $f_n$, the arcs $h_0(\alpha_{i})$ are permuted by $N_{a_n}$ up to homotopy. Recall that $f_n$ (and hence $N_{a_n}$) has a unique postcritical cycle of length greater than one. Thus each $h_0(\alpha_i)$ connects distinct points in the $N_{a_n}$-orbit of the free critical point to $1$. By \cite[Theorem 5.13]{KMPthesis}, it follows that $\overline{\mathcal{B}_1^{\mathrm{imm}}}$ intersects the closure of the characteristic Fatou component. Apply iterates of $N_{a_n}$ to this access to produce accesses connecting $1$ to any Fatou component in the forward orbit of the characteristic Fatou component. Since $N_{a_n}$ commutes with $-\iota$, the same statement holds for $-1$.
\end{proof}

Recall that by Proposition \ref{jordan}, there are three fixed points of $\iota\circ N_{a_n}^{\circ n}$ on the boundary of the characteristic component $U_1^{a_n}$. As in Section \ref{Tricorn_comp}, we denote these three boundary fixed points by $p_1(U_1^{a_n})$, $p_2(U_1^{a_n})$, and $p_3(U_1^{a_n})$. We will now show that one of these three boundary fixed  points does not lie on $\partial\mathcal{B}_1^{\mathrm{imm}}\cup\partial\mathcal{B}_{-1}^{\mathrm{imm}}$.

\begin{corollary}[Invisible Co-root]\label{inv_co_root}
For each $n>1$, exactly two of the three fixed points of $\iota\circ N_{a_n}^{\circ n}$ on the boundary of $U_1^{a_n}$ lie on $\partial\mathcal{B}_1^{\mathrm{imm}}\cup\partial\mathcal{B}_{-1}^{\mathrm{imm}}$. In other words, $\lbrace p_1(U_1^{a_n}),$ $p_2(U_1^{a_n}),$ $p_3(U_1^{a_n})\rbrace\setminus$ $\left(\partial\mathcal{B}_1^{\mathrm{imm}}\cup\partial\mathcal{B}_{-1}^{\mathrm{imm}}\right)$ is a singleton.
\end{corollary}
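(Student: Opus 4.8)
The plan is to exploit the extra antiholomorphic symmetry available along the symmetry locus together with the rigidity of postcritically finite maps. Throughout, write $N=N_{a_n}$, $U_1=U_1^{a_n}$, $p_i=p_i(U_1^{a_n})$, and recall from the discussion following Proposition~\ref{jordan} that $p_1,p_2,p_3$ are exactly the three distinct fixed points of both $\iota\circ N^{\circ n}$ and $N^{\circ 2n}$ on the Jordan curve $\partial U_1$. Since $a_n\in\mathcal{S}=\cU\cap i\R$, the M\"obius map $M\colon z\mapsto -z$ lies in $\mathrm{Aut}(N)$; as $M\neq\mathrm{id}$ fixes $0$ and $\infty$, it must interchange the two free critical points, so $M(c_{a_n})=\overline{c_{a_n}}=\iota(c_{a_n})$ and therefore $M(U_1)=\iota(U_1)$ (both are the Fatou component containing $\overline{c_{a_n}}$). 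Hence $\tau:=M\circ\iota$, explicitly $\tau(z)=-\overline{z}$, is an antiholomorphic involution commuting with $N$, with $\mathrm{Fix}(\tau)=i\R\cup\{\infty\}$, and it satisfies $\tau(U_1)=M(\iota(U_1))=M(M(U_1))=U_1$ together with $\tau(\mathcal{B}_1^{\mathrm{imm}})=M(\iota(\mathcal{B}_1^{\mathrm{imm}}))=M(\mathcal{B}_1^{\mathrm{imm}})=\mathcal{B}_{-1}^{\mathrm{imm}}$, where the middle equality uses Proposition~\ref{basins_prop}(2).

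Because $\tau$ commutes with $N$ and maps the Jordan curve $\partial U_1$ onto itself, it permutes $\{p_1,p_2,p_3\}$. As an orientation-reversing involution of $\partial U_1$, the restriction $\tau|_{\partial U_1}$ has exactly two fixed points, and these lie in $\mathrm{Fix}(\tau)=i\R\cup\{\infty\}$; since $\infty\notin\partial U_1$, both lie on $i\R$. In particular $\tau$ cannot fix all three of $p_1,p_2,p_3$, so — being an involution of a three-element set — it fixes precisely one of them; relabel so that $\tau(p_1)=p_1$ (hence $p_1\in i\R$) while $\tau$ transposes $p_2$ and $p_3$. On the other side, Proposition~\ref{thm_NewtonExists} gives $\overline{U_1}\cap\overline{\mathcal{B}_1^{\mathrm{imm}}}\neq\emptyset$; invoking the fact that the closures of two distinct Fatou components of a postcritically finite rational map meet in at most one point, we get $\overline{U_1}\cap\overline{\mathcal{B}_1^{\mathrm{imm}}}=\{\xi_1\}$ with $\xi_1\in\partial U_1\cap\partial\mathcal{B}_1^{\mathrm{imm}}$. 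Since $N^{\circ 2n}$ is a proper self-map of each of $U_1$ and $\mathcal{B}_1^{\mathrm{imm}}$, it carries $\partial U_1$ and $\partial\mathcal{B}_1^{\mathrm{imm}}$ into themselves, so $N^{\circ 2n}(\xi_1)\in\overline{U_1}\cap\overline{\mathcal{B}_1^{\mathrm{imm}}}=\{\xi_1\}$; thus $\xi_1$ is a fixed point of $N^{\circ 2n}|_{\partial U_1}$, that is, $\xi_1\in\{p_1,p_2,p_3\}$. Applying $\tau$ yields $\overline{U_1}\cap\overline{\mathcal{B}_{-1}^{\mathrm{imm}}}=\{\tau(\xi_1)\}$, again one of the $p_i$.

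It remains to rule out $\xi_1=p_1$ and to identify the third point. If $\xi_1=p_1$, then $\tau(\xi_1)=p_1$ as well, so $p_1\in\partial\mathcal{B}_1^{\mathrm{imm}}\cap\partial\mathcal{B}_{-1}^{\mathrm{imm}}$; but the pole $p_{a_n}$ also lies in $\partial\mathcal{B}_1^{\mathrm{imm}}\cap\partial\mathcal{B}_{-1}^{\mathrm{imm}}$ by Proposition~\ref{basins_prop}(5), so the one-point property would force $p_1=p_{a_n}$, which is impossible since $p_1$ is fixed by $N^{\circ 2n}$ whereas $N^{\circ n}(p_{a_n})=\infty$ with $\infty$ a repelling fixed point, making $p_{a_n}$ strictly preperiodic. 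Hence $\xi_1\in\{p_2,p_3\}$, and after possibly swapping the names of $p_2$ and $p_3$ we may take $\xi_1=p_2$ and $\tau(\xi_1)=p_3$. Then $p_2\in\partial\mathcal{B}_1^{\mathrm{imm}}$ and $p_3\in\partial\mathcal{B}_{-1}^{\mathrm{imm}}$, so two of the three co-roots lie on $\partial\mathcal{B}_1^{\mathrm{imm}}\cup\partial\mathcal{B}_{-1}^{\mathrm{imm}}$; and $p_1\notin\partial\mathcal{B}_1^{\mathrm{imm}}\cup\partial\mathcal{B}_{-1}^{\mathrm{imm}}$, because $p_1\in\partial\mathcal{B}_1^{\mathrm{imm}}$ would give $p_1\in\overline{U_1}\cap\overline{\mathcal{B}_1^{\mathrm{imm}}}=\{p_2\}$ while $p_1\in\partial\mathcal{B}_{-1}^{\mathrm{imm}}$ would give $p_1=p_3$, each contradicting distinctness. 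Therefore $\{p_1(U_1^{a_n}),p_2(U_1^{a_n}),p_3(U_1^{a_n})\}\setminus\bigl(\partial\mathcal{B}_1^{\mathrm{imm}}\cup\partial\mathcal{B}_{-1}^{\mathrm{imm}}\bigr)=\{p_1(U_1^{a_n})\}$, a singleton; as a by-product, the invisible co-root is the one on the symmetry axis $i\R$.

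The step I expect to be the main obstacle is the rigidity input used twice above: that the closures of two distinct Fatou components of the postcritically finite map $N$ intersect in at most one point. I would derive this either from local connectivity of $J(N)$ — Tan Lei's theorem \cite{Tan}, already invoked in Proposition~\ref{jordan} — together with expansion along the Julia set, or, more in the spirit of the present setup, from the theory of accesses between Fatou components of \cite{KMPthesis} underlying Proposition~\ref{thm_NewtonExists}. Once this is granted, the remainder is elementary symmetry and counting; a secondary point that needs a line of care is the passage from ``$\iota\circ N^{\circ n}$ has a single critical point in $U_1$'' to ``it has exactly three boundary fixed points,'' which rests on the Carath\'eodory extension of the linearizing coordinate to the Jordan curve $\partial U_1$ and is already recorded in the excerpt.
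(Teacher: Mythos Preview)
Your setup via the antiholomorphic involution $\tau(z)=-\overline{z}$ is exactly the symmetry the paper exploits, and your reduction to ``$\tau$ fixes one of the three co-roots and swaps the other two'' is correct and matches the paper. The gap is the rigidity input you flag yourself: the claim that the closures of two distinct Fatou components of a postcritically finite rational map meet in at most one point is \emph{false}, and it fails for the very maps $N_{a_n}$ you are studying. Indeed, by Proposition~\ref{basins_prop}(5) the pole $p_{a_n}$ lies in $\partial\mathcal{B}_1^{\mathrm{imm}}\cap\partial\mathcal{B}_{-1}^{\mathrm{imm}}$, and the fixed dynamical $0$-rays in $\mathcal{B}_1^{\mathrm{imm}}$ and $\mathcal{B}_{-1}^{\mathrm{imm}}$ both land at $\infty$, so $\infty\in\partial\mathcal{B}_1^{\mathrm{imm}}\cap\partial\mathcal{B}_{-1}^{\mathrm{imm}}$ as well. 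Thus $\overline{\mathcal{B}_1^{\mathrm{imm}}}\cap\overline{\mathcal{B}_{-1}^{\mathrm{imm}}}$ already contains two points, and your argument ruling out $\xi_1=p_1$ collapses. The same issue undermines the earlier step $\overline{U_1}\cap\overline{\mathcal{B}_1^{\mathrm{imm}}}=\{\xi_1\}$: nothing in local connectivity or the access theory of \cite{KMPthesis} forces a single touching point between a periodic Fatou component and a fixed immediate basin of a Newton map.

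The paper bypasses this entirely with a local argument at the $\tau$-fixed co-root (your $p_1$, their $p_3$). If that point were on $\partial\mathcal{B}_1^{\mathrm{imm}}$, then by $\tau$-symmetry it would also be on $\partial\mathcal{B}_{-1}^{\mathrm{imm}}$, so the three Fatou components $U_1$, $\mathcal{B}_1^{\mathrm{imm}}$, $\mathcal{B}_{-1}^{\mathrm{imm}}$ would all touch there. But $\iota\circ N^{\circ n}$ is a local orientation-reversing diffeomorphism fixing that point and fixing each of the three components (the immediate basins are $\iota$-invariant and $N$-invariant), which is impossible since an orientation-reversing circle map cannot fix three cyclically ordered sectors. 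This replaces your global one-point rigidity with a purely local obstruction. Combined with Proposition~\ref{thm_NewtonExists} (which already supplies the ``at least two visible'' half), this gives exactly two visible and one invisible co-root without ever counting intersection points of Fatou closures.
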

\begin{proof}
We already know from Proposition \ref{thm_NewtonExists} that two of the three fixed points of $\iota\circ N_{a_n}^{\circ n}$ on the boundary of $U_1^{a_n}$ lie on $\partial\mathcal{B}_1^{\mathrm{imm}}\cup\partial\mathcal{B}_{-1}^{\mathrm{imm}}$.

Note that since $a_n\in\mathcal{S}$, the critical point $c_{a_n}$ is strictly imaginary. Since $N_{a_n}$ commutes with $-\iota$, it follows that $-\iota$ fixes the characteristic Fatou component $U_1^{a_n}$. Hence, $-\iota$ must act on the set $\lbrace p_1(U_1^{a_n}),$ $p_2(U_1^{a_n}),$ $p_3(U_1^{a_n})\rbrace$ as a permutation of order two. Therefore, we can assume that $p_2(U_1^{a_n})=-\iota(p_1(U_1^{a_n}))$ and $p_3(U_1^{a_n})=-\iota(p_3(U_1^{a_n}))$.

If $p_3(U_1^{a_n})$ lies on the boundary of $\mathcal{B}_1^{\mathrm{imm}}$, then it must also lie on the boundary of $\mathcal{B}_{-1}^{\mathrm{imm}}$ (note that since $N_{a_n}$ commutes with $-\iota$, we have that $-\iota(\mathcal{B}_1^{\mathrm{imm}})=\mathcal{B}_{-1}^{\mathrm{imm}}$). But this would contradict the fact that $\iota\circ N_{a_n}^{\circ n}$ is a local orientation-reversing diffeomorphism on a neighborhood of $p_3(U_1^{a_n})$. Therefore, $p_3(U_1^{a_n})$ does not lie on $\partial\mathcal{B}_1^{\mathrm{imm}}\cup\partial\mathcal{B}_{-1}^{\mathrm{imm}}$. This completes the proof.
\end{proof}

\begin{remark}
According to the terminology of the next section, the dynamical co-roots $p_1(U_1^{a_n})$ and $p_2(U_1^{a_n})$ are \emph{visible}, and the third dynamical co-root $p_3(U_1^{a_n})$ is \emph{invisible}.
\end{remark}

\bigskip

\section{Invisible parabolic points and invisible hyperbolic components}\label{sec_implosion}

In this section, we will employ a parabolic perturbation argument to show how invisible dynamical co-roots imply the existence of invisible hyperbolic components in the parameter plane. Throughout this section, $H$ will stand for a Tricorn component of period $2n$ with center $a_0$.

\subsection{Background on parabolic implosion}\label{para_imp_back}
The main technical tool used in the proof of the main theorems is perturbation of antiholomorphic parabolic points. For details on the concepts of near-parabolic antiholomorphic Fatou coordinates and the transit map, see \cite[\S 2]{IM1}. The technique of perturbation of antiholomorphic parabolic points will allow us to transfer information from the dynamical planes to the parameter plane.

Let us now fix the notations for our parabolic perturbation step. Let $\cC$ be one of the parabolic arcs on $\partial H$. Its critical Ecalle height parametrization is denoted by $a:\mathbb{R}\to\cC$. We will denote an attracting petal of $N_{a(h)}$ by $V_{a(h)}^{\mathrm{in}}$, and a repelling petal of $N_{a(h)}$ by $V_{a(h)}^{\mathrm{out}}$. There exists an open neighborhood $U$ of $a(h)$ (in the parameter plane) such that for all $a \in U^{-}:= U \setminus \overline{H}$, the characteristic parabolic point splits into two simple periodic points, and the perturbed Fatou coordinates can be followed throughout $U^-$. More precisely, for $a \in U^{-}$, there exist an incoming domain $V^\mathrm{in}_{a}\ni N_a^{\circ 2n}(c_a)$, and an outgoing domain $V^\mathrm{out}_{a}$ (such that they are disjoint) having the two simple periodic points on their boundaries. There exists a curve joining the two simple periodic points, which we call the ``gate'', such that the points in the incoming domain eventually transit through the gate, and escape to the outgoing domain (see \cite[Figure 2]{IM1}). For $a\in U^-$, very close to $a(h)$, the point $N_a^{\circ 2n}(c_a)$ (which is contained in the incoming domain $V^\mathrm{in}_{a}$), takes a large number of iterates to escape to the outgoing domain. There exist injective holomorphic maps $\psi_{a}^{\mathrm{in}}: V^\mathrm{in}_{a} \rightarrow \mathbb{C}$ and $\psi_{a}^{\mathrm{out}}: V^\mathrm{out}_{a} \rightarrow \mathbb{C}$ such that $\psi_{a}^{\mathrm{in/out}}(N_{a}^{\circ 2n}(z))=  \psi_{a}^{\mathrm{in/out}}(z) +1$, whenever $z$ and $N_{a}^{\circ 2n}(z)$ are both in $V^\mathrm{in/out}_{a}$. Moreover, with suitable normalizations, the Fatou coordinates $\psi_{a}^{\mathrm{in}}$ and $\psi_{a}^{\mathrm{out}}$ change continuously. It follows that for every $a \in U^{-}$, the quotients $C^\textrm{in}_{a}:= V^{\mathrm{in}}_a/N_a^{\circ 2n}$ and $C^\textrm{out}_{a}:= V^{\mathrm{out}}_a/N_a^{\circ 2n}$ (the quotients of $V^\textrm{in}_{a}$ and $V^\textrm{out}_{a}$ by the dynamics, identifying points that are on the same finite orbits entirely in $V^\textrm{in}_{a}$ or in $V^\textrm{out}_{a}$) are complex annuli isomorphic to $\mathbb{C}/\mathbb{Z}$. The isomorphisms are given by Fatou coordinates which depend continuously on the parameter throughout $U^{-}$.

Since the map $\iota\circ N_{a}^{\circ n}$ commutes with $N_{a}^{\circ 2n}$, it induces antiholomorphic self-maps from $C^\textrm{in}_{a}$ (respectively $C^\textrm{out}_{a}$) to itself. As $\iota\circ N_{a}^{\circ n}$ interchanges the two periodic points at the ends of the gate, it interchanges the ends of the cylinders, so it must fix a (necessarily unique) closed geodesic in the cylinders $\mathbb{C}/\mathbb{Z}$. This is similar to the situation at the parabolic parameter $a(h)$, so we will call this invariant geodesic the equator. As for the parabolic parameter $a(h)$, we will choose our Fatou coordinates such that they map the equators to the real line. Thus we can again define Ecalle height as the imaginary part in these Fatou coordinates. We will denote the Ecalle height of a point $z \in C^\textrm{in/out}_{a}$ by $E(z)$. For $a\in U^{-}$, the incoming and outgoing cylinders are isomorphic to each other by a natural biholomorphism, namely $N^{\circ 2n}_{a}$. This isomorphism is called the ``transit map'', and is denoted by $T_{a}$. The transit map clearly depends continuously on the parameter $a \in U^{-}$. It maps the fixed geodesic of the incoming cylinder to the fixed geodesic of the outgoing cylinder, and preserves the upper (respectively lower) ends of the cylinders. Thus it must preserve Ecalle heights. The existence of this special isomorphism allows us to relate the Ecalle heights of points in the incoming and outgoing cylinders, and is going to be a crucial tool in our study.

We will also need the concept of the \emph{phase}, which determines the conformal position of the escaping critical point $c_a$ in the outgoing cylinder. To do this, we need to fix a normalization of the persistent Fatou coordinates. Following \cite[\S 2]{IM1},  we choose two continuous functions $\xi_1,\xi_2 :\overline{U^{-}} \rightarrow \mathbb{C}$ such that $\xi_1(a)$ (respectively $\xi_2(a)$) lies on the incoming (respectively outgoing) equator in $V^\mathrm{in}_{a}$ (respectively in $V^\mathrm{out}_{a}$), for all $a \in \overline{U^{-}}$. We can also assume that $\psi_{a}^{\mathrm{out}}(V^\mathrm{out}_{a})$ contains the vertical bi-infinite strip $\left[0,1\right]\times \mathbb{R}$. Let us normalize $\psi_{a}^{\mathrm{in}}$ and $\psi_{a}^{\mathrm{out}}$ by the requirements $\psi_{a}^{\mathrm{in}}(\xi_1(a))=0$ and  $\psi_{a}^{\mathrm{out}}(\xi_2(a))=0$. With these normalization, we have that $(a,z) \mapsto \psi_{a}^{\mathrm{in}}(z)$ and $(a,z)\mapsto \psi_{a}^{\mathrm{out}}(z)$ are continuous functions on the open sets $\mathcal{V}^{\mathrm{in}}:=\{ (a,z): z \in V^\mathrm{in}_{a}\}$ and  $\mathcal{V}^{\mathrm{out}}:=\{ (a,z): z \in V^\mathrm{out}_{a}\}$ (respectively) in $\overline{U^{-}} \times \mathbb{C}$ (for $a\in \partial H \cap \overline{U^{-}}$, $\psi_{a}^{\mathrm{in}}$ and $\psi_{a}^{\mathrm{out}}$ are respectively the attracting and repelling Fatou coordinates for $N_{a}$ as in Lemma \ref{normalization of fatou}). For $a\in U^{-}$, let $k_a$ be the smallest positive integer such that $N_a^{2nk_a}(c_a)$ lies in $V^\textrm{out}_{a}$ and satisfies $\re(\psi_{a}^{\mathrm{out}}(N_a^{2nk_a}(c_a))\geq 0$. The integer $k_a$ will be called the \emph{escaping time} of $c_a$. The phase is a continuous map
\begin{equation*}
\phi:U^{-} \to \R/\Z
\end{equation*}
\begin{equation*}
a\mapsto\re(\psi_{a}^{\mathrm{out}}(N_a^{2nk_a}(c_a))).
\end{equation*}
The \emph{lifted phase} is the following continuous lift of $\phi$:
\begin{equation*}
\widetilde{\phi}:U^{-} \to \R
\end{equation*}
\begin{equation*}
a\mapsto\re(\psi_{a}^{\mathrm{out}}(N_a^{2nk_a}(c_a)))-k_a.
\end{equation*}
By \cite[Lemma 2.5]{IM1}, the lifted phase $\widetilde{\phi}(a)$ tends to $-\infty$ as $a$ approaches $\cC$ from $U^{-}$.

\subsection{Visibility in the dynamical and parameter planes}\label{vis_dyn_plane}

Recall that the immediate basins of attraction of the super-attracting fixed points $1, -1, a$, and $\bar{a}$ of $N_a$ are denoted by $\mathcal{B}^{\mathrm{imm}}_{1}, \mathcal{B}^{\mathrm{imm}}_{-1}, \mathcal{B}^{\mathrm{imm}}_{a}$ and $\mathcal{B}^{\mathrm{imm}}_{\bar{a}}$ respectively. We start with a simple lemma.

\begin{lemma}\label{visible}
Let $H$ be a Tricorn component. Let $a$ be a parameter in $H$ or a simple parabolic parameter on $\partial H$. Let $U_1$ be the characteristic Fatou component of $N_{a}$ and $p_k(U_1)$ (for some $k\in\{1, 2, 3\}$) be a dynamical co-root on the boundary of $U_1$. The following two statements about $p_k(U_1)$ are equivalent.
\begin{enumerate}
\item $p_k(U_1)\in\partial\mathcal{B}^{\mathrm{imm}}_{1}\cup\partial\mathcal{B}^{\mathrm{imm}}_{-1}$.

\item $p_k(U_1)$ lies on the boundary of a Fatou component other than $U_1$.

\end{enumerate}
\end{lemma}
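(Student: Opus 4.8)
The plan is to prove the two implications separately, with $(2)\Rightarrow(1)$ being essentially immediate and $(1)\Rightarrow(2)$ requiring the classification of Fatou components.

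First I would handle the direction $(2)\Rightarrow(1)$. Suppose $p_k(U_1)$ lies on the boundary of some Fatou component $W\neq U_1$. Since $p_k(U_1)$ is a dynamical co-root, Proposition \ref{disjoint} tells us it is not on the boundary of any other $2n$-periodic Fatou component $U_i$ (nor a preimage thereof under iterating into the cycle), so $W$ cannot be one of the $U_i$ or any Fatou component that eventually maps into the $2n$-cycle. By the classification in Section \ref{hyperbolic} (tricorn component dynamics) and Proposition \ref{no_free}, the only Fatou components of $N_a$ are: the basins of $1$, $-1$, $a$, $\bar a$, and the $2n$-periodic cycle $U_1,\dots,U_{2n}$ together with their preimages. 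Now one follows $p_k(U_1)$ forward under the (holomorphic) first return map $N_a^{\circ 2n}$, which fixes $p_k(U_1)$; thus every forward image of $W$ under $N_a^{\circ 2n}$ still has $p_k(U_1)$ on its boundary. Since $W$ must eventually map to one of the \emph{immediate} basins $\mathcal{B}^{\mathrm{imm}}_{1}$, $\mathcal{B}^{\mathrm{imm}}_{-1}$, $\mathcal{B}^{\mathrm{imm}}_{a}$, $\mathcal{B}^{\mathrm{imm}}_{\bar a}$ under iteration (the only periodic immediate basins available other than the $U_i$), and the $N_a^{\circ 2n}$-orbit of $W$ lands in whichever of these is fixed by $N_a^{\circ 2n}$ — all four are, since they are fixed — we conclude $p_k(U_1)\in\partial\mathcal{B}^{\mathrm{imm}}_{1}\cup\partial\mathcal{B}^{\mathrm{imm}}_{-1}\cup\partial\mathcal{B}^{\mathrm{imm}}_{a}\cup\partial\mathcal{B}^{\mathrm{imm}}_{\bar a}$. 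Finally, $p_k(U_1)$ is a fixed point of $\iota\circ N_a^{\circ n}$; if it lay on $\partial\mathcal{B}^{\mathrm{imm}}_{a}$, then applying $\iota$ (which swaps $\mathcal{B}^{\mathrm{imm}}_{a}$ and $\mathcal{B}^{\mathrm{imm}}_{\bar a}$ by Proposition \ref{basins_prop}) shows it also lies on $\partial\mathcal{B}^{\mathrm{imm}}_{\bar a}$, so $\mathcal{B}^{\mathrm{imm}}_{a}$ and $\mathcal{B}^{\mathrm{imm}}_{\bar a}$ would touch at $p_k(U_1)$, and $N_a^{\circ n}$ would carry $U_1$'s boundary point to a point where two complex-conjugate immediate basins meet on the real line — but $\mathcal{B}^{\mathrm{imm}}_{a}$, $\mathcal{B}^{\mathrm{imm}}_{\bar a}$ are disjoint from $\R$ and from each other's closures except possibly at such a point; one argues as in Proposition \ref{conjugate_poles}/Proposition \ref{disjoint} (local orientation considerations for $\iota\circ N_a^{\circ n}$) that this is impossible. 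Hence $p_k(U_1)\in\partial\mathcal{B}^{\mathrm{imm}}_{1}\cup\partial\mathcal{B}^{\mathrm{imm}}_{-1}$, giving $(1)$.

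For the direction $(1)\Rightarrow(2)$: if $p_k(U_1)\in\partial\mathcal{B}^{\mathrm{imm}}_{1}\cup\partial\mathcal{B}^{\mathrm{imm}}_{-1}$, then it lies on the boundary of the Fatou component $\mathcal{B}^{\mathrm{imm}}_{1}$ or $\mathcal{B}^{\mathrm{imm}}_{-1}$, and since $U_1$ is a $2n$-periodic component with $n\ge 1$ it is disjoint from these fixed immediate basins (and from $\R$, hence $U_1\neq\mathcal{B}^{\mathrm{imm}}_{\pm1}$); so $(2)$ holds with the witnessing component being $\mathcal{B}^{\mathrm{imm}}_{1}$ or $\mathcal{B}^{\mathrm{imm}}_{-1}$. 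This direction is trivial.

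I expect the main obstacle to be the bookkeeping in $(2)\Rightarrow(1)$: ruling out the possibility that $W$ eventually maps to $\mathcal{B}^{\mathrm{imm}}_{a}$ or $\mathcal{B}^{\mathrm{imm}}_{\bar a}$ with $p_k(U_1)$ persisting on the boundary. The cleanest route is the symmetry/orientation argument above — $p_k(U_1)$ is $(\iota\circ N_a^{\circ n})$-fixed and $\iota$ interchanges $\mathcal{B}^{\mathrm{imm}}_{a}\leftrightarrow\mathcal{B}^{\mathrm{imm}}_{\bar a}$, so membership in one forces touching of both, which contradicts the local diffeomorphism property of $\iota\circ N_a^{\circ n}$ near $p_k(U_1)$ exactly as in the proof of Proposition \ref{conjugate_poles}. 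One should also double-check, when $a$ is a simple parabolic parameter rather than interior to $H$, that $p_k(U_1)$ is genuinely a parabolic periodic point and the Fatou-component classification still applies (it does, by Lemma \ref{LemTricornIndiffDyn} and the geometric finiteness used in Proposition \ref{jordan}).
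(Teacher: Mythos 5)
Your overall outline matches the paper: $(1)\Rightarrow(2)$ is trivial, and for $(2)\Rightarrow(1)$ one classifies where the witnessing component $W$ can eventually land, ruling out the $2n$-cycle (by the co-root hypothesis) and then ruling out $\mathcal{B}^{\mathrm{imm}}_{a}$ and $\mathcal{B}^{\mathrm{imm}}_{\bar{a}}$. But the last step — the only genuinely nontrivial one — is where your argument breaks down. You assert that ``applying $\iota$'' shows $p_k(U_1)\in\partial\mathcal{B}^{\mathrm{imm}}_{\bar a}$, but $p_k(U_1)$ is \emph{not} $\iota$-fixed (it generically lies off the real line); it is $\iota\circ N_a^{\circ n}$-fixed, which is a different thing. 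Applying $\iota$ to $p_k(U_1)\in\partial\mathcal{B}^{\mathrm{imm}}_{a}$ only gives $\overline{p_k(U_1)}\in\partial\mathcal{B}^{\mathrm{imm}}_{\bar a}$. To actually put $p_k(U_1)$ (or its conjugate) on both boundaries you must first apply $N_a^{\circ n}$, which fixes the immediate basin $\mathcal{B}^{\mathrm{imm}}_{a}$ and sends $p_k(U_1)$ to $\overline{p_k(U_1)}$.

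You then defer to ``local orientation considerations for $\iota\circ N_a^{\circ n}$'' à la Proposition \ref{conjugate_poles}, but that route does not obviously close: an orientation-reversing local diffeomorphism that fixes $p_k(U_1)$, preserves the single access of $U_1$, and interchanges the accesses of $\mathcal{B}^{\mathrm{imm}}_{a}$ and $\mathcal{B}^{\mathrm{imm}}_{\bar a}$ is perfectly consistent with reversing cyclic order (think of $z\mapsto\bar z$ acting on three sectors, fixing the one bisected by $\R$ and swapping the other two), so no contradiction is forced by orientation alone. The paper instead argues as follows: from $p_k(U_1)\in\partial\mathcal{B}^{\mathrm{imm}}_{a}$, invariance of $\mathcal{B}^{\mathrm{imm}}_{a}$ under $N_a^{\circ n}$ gives $\overline{p_k(U_1)}=N_a^{\circ n}(p_k(U_1))\in\partial\mathcal{B}^{\mathrm{imm}}_{a}$; since $\partial\mathcal{B}^{\mathrm{imm}}_{a}$ avoids the open lower half-plane (Proposition \ref{basins_prop}) and, in our setting, also avoids $\R$, one concludes $p_k(U_1)=\infty$; but $\infty$ is a fixed point, so all $2n$ periodic Fatou components would meet there, contradicting the co-root property (Proposition \ref{disjoint}). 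That chain — landing on $\infty$ and then invoking the co-root contradiction — is the idea your write-up is missing, and it is what actually carries the proof.
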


\begin{proof}
$\mathbf{1)\implies 2)}$ This is obvious.
\vspace{2mm}

$\mathbf{2)\implies 1)}$ Suppose that $U$ is a Fatou component different from $U_1$ with $p_k(U_1)\in\partial U$. By the classification of Fatou components of rational maps, $U$ eventually maps to a periodic Fatou component.

If $U$ eventually maps to the $2n$-periodic cycle of Fatou components, then $p_k(U_1)$ must lie on the boundary of some $U_i$ with $i\neq1$. But this contradicts the fact that $p_k(U_1)$ is a co-root.

If $U$ eventually maps to $\mathcal{B}^{\mathrm{imm}}_{a}$, then $p_k(U_1)$ lies on the boundary of the fixed Fatou component $\mathcal{B}^{\mathrm{imm}}_{a}$. But $N_{a}^{\circ n}(p_k(U_1))=\overline{p_k(U_1)}$. Since $\mathcal{B}^{\mathrm{imm}}_{a}$ is a fixed Fatou component, this implies that $\overline{p_k(U_1)}\in \partial \mathcal{B}^{\mathrm{imm}}_{a}$. By Proposition \ref{basins_prop}, $\partial \mathcal{B}^{\mathrm{imm}}_{a}$ is disjoint from the lower half-plane. Therefore, we must have that $p_k(U_1)=\infty$. Since $\infty$ is a fixed point, this contradicts Proposition \ref{disjoint}, which states that two distinct $2n$-periodic Fatou components of $N_{a}$ cannot touch at $p_k(U_1)$. Thus $U$ does not eventually map to $\mathcal{B}^{\mathrm{imm}}_{a}$. One can similarly prove that $U$ does not eventually map to $\mathcal{B}^{\mathrm{imm}}_{\bar{a}}$.

Therefore, $U$ eventually maps to the fixed Fatou components $\mathcal{B}^{\mathrm{imm}}_{1}$ or $\mathcal{B}^{\mathrm{imm}}_{-1}$. It follows that $p_k(U_1)\in\partial\mathcal{B}^{\mathrm{imm}}_{1}\cup\partial\mathcal{B}^{\mathrm{imm}}_{-1}$.
\end{proof}

Lemma \ref{visible} leads to the following definition, which is inspired by an analogous definition in \cite{BBM1}.

\begin{definition}[Visibility of Co-roots]\label{dynamical_visibility}
Let $a$ be a parameter in $H$ or on a parabolic arc on $\partial H$. A dynamical co-root of $N_a$ on the boundary of $U_1$ is called \emph{visible} if it satisfies the equivalent conditions of Lemma \ref{visible}. Otherwise, it is called \emph{invisible}.
\end{definition}

\begin{figure}[ht!]
\begin{center}
\includegraphics[scale=0.345]{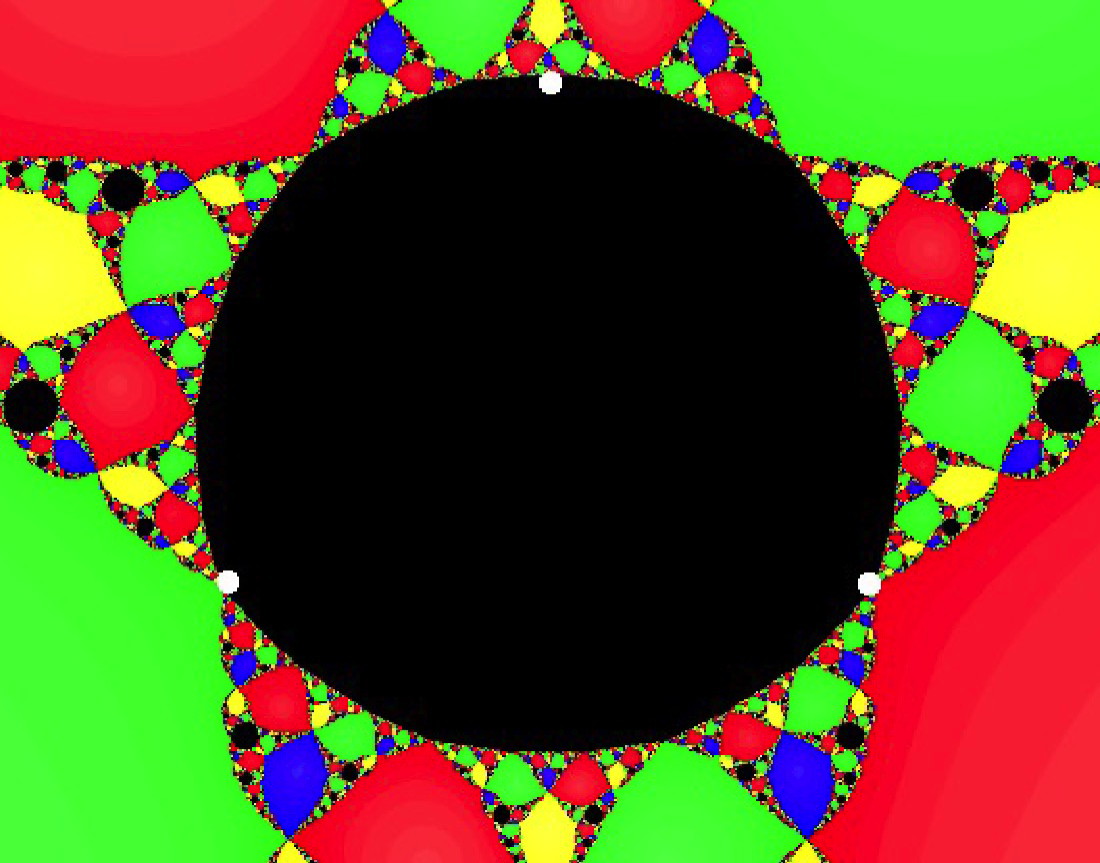}\ \includegraphics[scale=0.28]{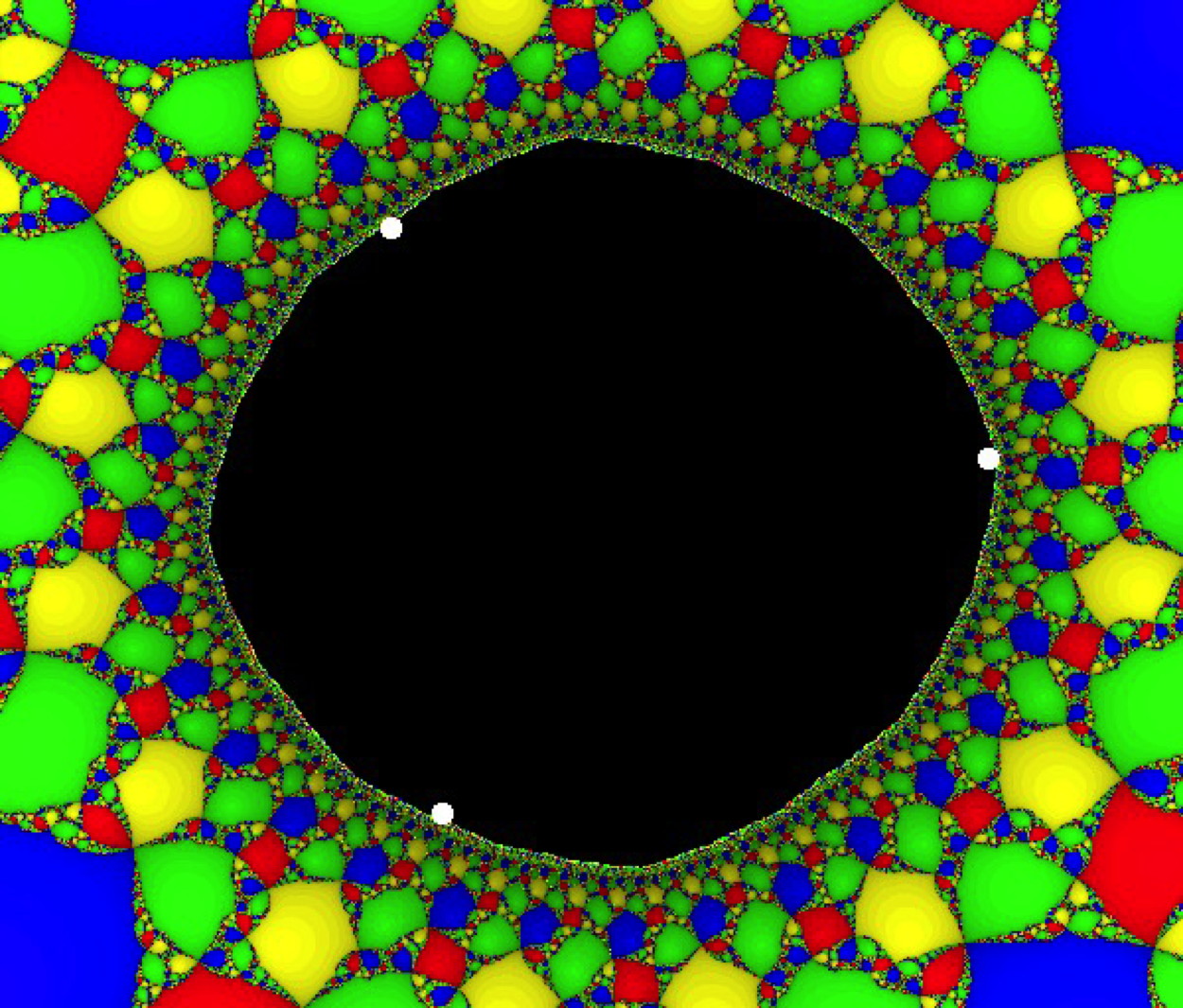}
\caption{Left: The characteristic Fatou component of a parameter belonging to a Tricorn component. The three dynamical co-roots, two of which are visible and one invisible, are marked. Right: The characteristic Fatou component of a parameter belonging to a Tricorn component. The three dynamical co-roots, each of which is invisible, are marked. }
\label{invisible_dynamical}
\end{center}
\end{figure}

Note that the topological structure of the Julia set remains unchanged throughout the union of $H$ and the parabolic arcs of $\partial H$. Therefore, $p_k(U_1)$ (for some $k\in\{1, 2, 3\}$) is visible in the dynamical plane of $N_{a_0}$ (where $a_0$ is the center of the tricorn component $H$) if and only if $p_k(U_1)$ is visible in the dynamical plane of $N_a$ for every $a$ in $H$ or on the parabolic arcs on $\partial H$. So the property of being visible does not depend on the choice of a parameter in the union of a given hyperbolic component and the parabolic arcs on its boundary. By Section \ref{Tricorn_comp}, there are three parabolic arcs $\cC_1, \cC_2, \cC_3$ on $\partial H$ such that for any $a\in\cC_k$ (for $k\in\{1, 2, 3\}$), the self-conjugate simple parabolic cycle of $N_a$ is formed by the merger of the self-conjugate attracting cycle with the self-conjugate repelling cycle $\{p_k(U_i)\}_{i=1}^{2n}$. In particular, for any $a\in\cC_k$, the characteristic parabolic point of $N_a$ is $p_k(U_1)$. We paraphrase this observation in the following lemma.

\begin{lemma}[Visibility of Characteristic Parabolic Points]\label{visibility_parabolic}
The dynamical co-root $p_k(U_1)$ is visible (respectively invisible) in the dynamical plane of $N_{a_0}$ (where $a_0$ is the center of the tricorn component $H$) if and only if the characteristic parabolic point of $N_a$ is visible (respectively invisible) for each $a\in \cC_k$.
\end{lemma}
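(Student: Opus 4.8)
The plan is to observe that this lemma merely re-packages the two facts recorded in the paragraph immediately preceding its statement, so the proof should be short. The two ingredients are: (i) that the property of a co-root being visible is locally constant along $H$ together with the parabolic arcs on $\partial H$, and (ii) the explicit identification, coming from Section \ref{tricorn_comp}, of the characteristic parabolic point of a map on $\cC_k$ as the continuation of $p_k(U_1)$.

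First I would recall the structural stability along $\overline{H}$ restricted to $H$ and its parabolic arcs: since $N_{a_0}$ is postcritically finite (hence geometrically finite) with connected and locally connected Julia set, the conjugacy furnished by $J$-stability over $H$, together with the quasiconformal conjugacies produced in Proposition \ref{parabolic arcs} (extended continuously to $H\cup\cC_1\cup\cC_2\cup\cC_3$), respects all the marked combinatorial data: in particular the labeled cycle $U_i^a$, the fixed immediate basins $\mathcal{B}^{\mathrm{imm}}_{1},\mathcal{B}^{\mathrm{imm}}_{-1},\mathcal{B}^{\mathrm{imm}}_{a},\mathcal{B}^{\mathrm{imm}}_{\bar{a}}$, the boundary fixed points $p_k(U_i^a)$ of $\iota\circ N_a^{\circ n}$, and the adjacency pattern among all these Jordan curves. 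Hence the condition $p_k(U_1^a)\in\partial\mathcal{B}^{\mathrm{imm}}_{1}\cup\partial\mathcal{B}^{\mathrm{imm}}_{-1}$ is independent of the choice of $a\in H\cup\cC_1\cup\cC_2\cup\cC_3$, and in particular of $a\in\cC_k$; by Lemma \ref{visible} and Definition \ref{dynamical_visibility} this is exactly the assertion that $p_k(U_1)$ is visible in the dynamical plane of $N_{a_0}$ if and only if it is visible in the dynamical plane of $N_a$ for every $a\in\cC_k$.

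Second, I would use the description of the parabolic arcs from Section \ref{tricorn_comp} leading to Theorem \ref{tricorn_boundary}: for $a\in\cC_k$ the self-conjugate simple parabolic cycle of $N_a$ is born from the merger of the self-conjugate attracting cycle with the self-conjugate repelling cycle $\{p_k(U_i^a)\}_{i=1}^{2n}$. Since $U_1^a$ is the characteristic Fatou component (it contains $c_a$), the parabolic periodic point on $\partial U_1^a$, which is by definition the characteristic parabolic point of $N_a$, is precisely $p_k(U_1^a)$. Chaining these two observations — and passing to complements via Definition \ref{dynamical_visibility} for the ``invisible'' version — yields the lemma: $p_k(U_1)$ is visible in the dynamical plane of $N_{a_0}$ iff $p_k(U_1^a)$ is visible for every $a\in\cC_k$ iff the characteristic parabolic point of $N_a$ is visible for every $a\in\cC_k$, and likewise with ``visible'' replaced by ``invisible''.

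The only point that requires genuine care is the first one, namely that no adjacency between Fatou-component boundaries is created or destroyed as $a$ moves within $H\cup\cC_k$, so that ``which basins touch $p_k(U_1)$'' is truly locally constant. This is where local connectivity of the Julia sets (Tan Lei's theorem, already invoked in the proof of Proposition \ref{jordan}) and the fact that the motion restricts to an honest topological conjugacy on the Julia set over $H$ are essential; granting this, the lemma is immediate.
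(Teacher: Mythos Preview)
Your proposal is correct and matches the paper's approach exactly: the paper does not give a separate proof but explicitly introduces the lemma as a ``paraphrase'' of the two observations in the preceding paragraph --- that the topological structure of the Julia set (and hence visibility of $p_k(U_1)$) is constant along $H$ and its parabolic arcs, and that for $a\in\cC_k$ the characteristic parabolic point is $p_k(U_1)$. Your write-up supplies somewhat more justification for the first point (invoking $J$-stability, the quasiconformal conjugacies of Proposition~\ref{parabolic arcs}, and local connectivity) than the paper does, but the content and structure are the same.
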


In order to set up the platform where we can apply the perturbation techniques, we will now discuss some geometric properties of the repelling Ecalle cylinder at an invisible characteristic parabolic point $p_k(U_1)$ for the map $N_{a(h)}$ on $\cC_k$. In this case, $p_k(U_1)$ is on the boundary of each of the fixed basins $\mathcal{B}_1, \mathcal{B}_{-1}, \mathcal{B}_a, \mathcal{B}_{\bar{a}}$ (since the basin of attraction of an attracting fixed point is totally invariant, its boundary is the whole Julia set), but not on the boundary of any single component thereof.

\begin{figure}[ht!]
\begin{center}
\includegraphics[scale=0.298]{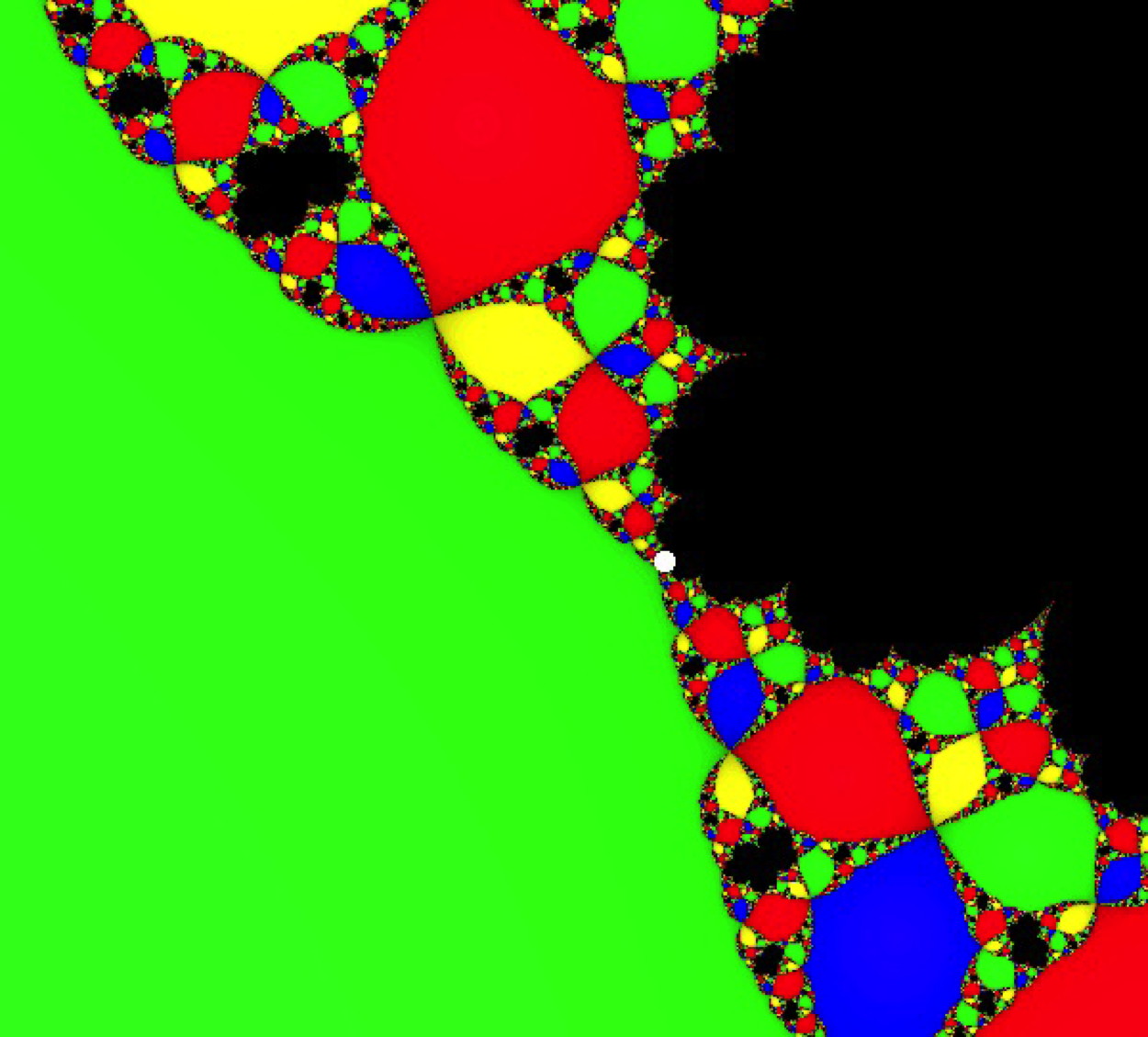}\ \includegraphics[scale=0.27]{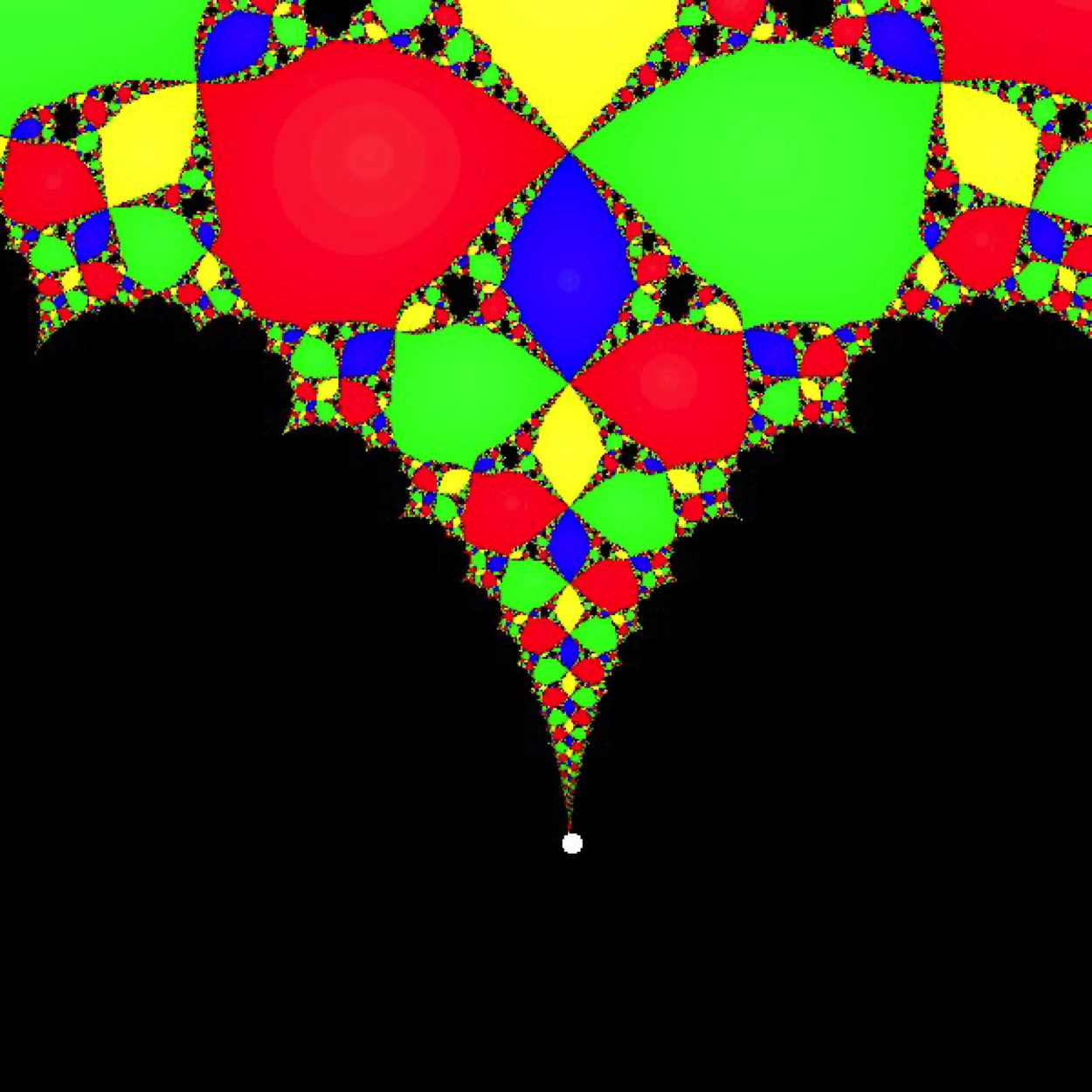}
\caption{Left: A blow-up of the dynamical plane (of a parameter on a parabolic arc of a Tricorn component) around a visible characteristic parabolic point. Here, the characteristic parabolic point is on the boundary of $\mathcal{B}^{\mathrm{imm}}_{-1}$. Right: A blow-up of the dynamical plane (of a parameter on a parabolic arc of a Tricorn component) around an invisible characteristic parabolic point. Here, the characteristic point is in the accumulation set of the pre-periodic Fatou components of $\mathcal{B}_1$ (and of $\mathcal{B}_{-1}$), but not on the boundary of any single component thereof. Consequently, there is a `Julia path' in the repelling cylinder connecting $\partial U_1^+$ and $\partial U_1^-$.}
\label{parabolic_dynamical_zoom}
\end{center}
\end{figure}

Let $V_{a(h)}^{\mathrm{out}}$ be a repelling petal at the invisible characteristic parabolic point $p_k(U_1)$ of $N_{a(h)}$. The projection of $U_1$ into the repelling Ecalle cylinder (of $N_{a(h)}$ at the characteristic parabolic point $p_k(U_1)$) consists of two one-sided infinite cylinders, let us call them $U_1^+$ and $U_1^-$. The projection of $\partial U_1$ to the same cylinder consists of two disjoint Jordan curves $\partial U_1^+$ and $\partial U_1^-$. Note that these two Jordan curves are related by the map $z\mapsto\overline{z}+1/2$. Hence we can assume that the interval of Ecalle heights traversed by $\partial U_1^+$ (respectively by $\partial U_1^-$) is $\left[l_h,u_h\right]$ (respectively $\left[-u_h,-l_h\right]$), where $u_h>l_h$ (since $\partial U_1^+$ is not an analytic curve, its projection to the repelling Ecalle cylinder is not a geodesic, hence not a round circle; therefore, the projection must traverse a positive interval of Ecalle heights). Therefore, $U_1^+$ contains $\mathbb{R}/\mathbb{Z}\times\left(u_h,+\infty\right)$, and $U_1^-$ contains $\mathbb{R}/\mathbb{Z}\times\left(-\infty,-u_h\right)$. Clearly, $u_h>0$ for all $h$ in $\R$.

\begin{lemma}\label{julia_path}
Let the characteristic parabolic point $p_k(U_1)$ of the map $N_{a(h)}$ be invisible for all $a(h)$ on $\cC_k$. Let $\widetilde{J}$ be the projection of the Julia set $J(N_{a(h)})$ into the repelling cylinder at $p_k(U_1)$. Then there exists a path in $\widetilde{J}$ connecting a point of $\partial U_1^+$ at height $u_h$ and a point of $\partial U_1^-$ at height $-u_h$.
\end{lemma}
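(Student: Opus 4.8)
The plan is to exploit the fact that the invisible characteristic parabolic point $p_k(U_1)$ lies in the accumulation set of infinitely many preperiodic Fatou components that eventually land in $\mathcal{B}_1$ and $\mathcal{B}_{-1}$, but on the boundary of none of them, together with local connectivity of the Julia set (from \cite{Tan}, as already invoked in the proof of Proposition \ref{jordan}). First I would set up the picture in the repelling petal $V_{a(h)}^{\mathrm{out}}$: the projection $\widetilde{J}$ of the Julia set to the repelling Ecalle cylinder $\C/\Z$ is a compact, connected, locally connected set (being the continuous image under the projection of the portion of $J$ inside the petal, closed up), and $U_1^{+}$, $U_1^{-}$ are the two one-sided infinite subcylinders that are the projections of $U_1$, with boundary Jordan curves $\partial U_1^{+}$, $\partial U_1^{-}$ traversing height intervals $[l_h,u_h]$ and $[-u_h,-l_h]$ respectively. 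The goal is a path inside $\widetilde J$ joining a point of $\partial U_1^+$ at height exactly $u_h$ to a point of $\partial U_1^-$ at height exactly $-u_h$.

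The key step is to produce a \emph{connected} subset of $\widetilde J$ that meets both $\partial U_1^+$ and $\partial U_1^-$ and is disjoint from the two open infinite ends $\mathbb{R}/\mathbb{Z}\times(u_h,+\infty)$ and $\mathbb{R}/\mathbb{Z}\times(-\infty,-u_h)$ (which lie inside $U_1^\pm$ and hence in the Fatou set), and then upgrade connectedness to arcwise connectedness. For the connectedness: consider $K$, the connected component of $\widetilde J \setminus \big(U_1^+\cup U_1^-\big)$ — equivalently of the projection of $J\cap V^{\mathrm{out}}_{a(h)}$ minus the projection of $U_1$ — that separates the two ends of the cylinder. Such a separating continuum must exist because $\widetilde J$ together with the two projected Fatou disks $U_1^\pm$ does not fill the cylinder (there are other Fatou components whose projections lie between $\partial U_1^+$ and $\partial U_1^-$, namely the preperiodic ones accumulating at $p_k(U_1)$) yet $\widetilde J$ does topologically connect the two ends: since $p_k(U_1)$ is \emph{invisible}, $\partial U_1^+$ and $\partial U_1^-$ are not joined \emph{through} a single other Fatou component boundary, so any continuum joining them inside the closed region between heights $-u_h$ and $u_h$ consisting of Fatou-and-Julia material must use a Julia continuum to bridge across; more precisely, the complement in the closed cylinder band of the union of $\widetilde J$ and the closed disks bounded by $\partial U_1^\pm$ has no component touching both ends, because such a component would be a Fatou component of $N_{a(h)}$ whose boundary meets both $\partial U_1^+$ and $\partial U_1^-$, forcing that Fatou component's boundary to contain $p_k(U_1)$ — contradicting invisibility (the only Fatou component with $p_k(U_1)$ on its boundary on the side of $U_1$ is $U_1$ itself, and on the other fixed basins' side we would get a contradiction exactly as in Lemma \ref{visible}). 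Hence by a standard plane-separation argument (a version of the ``boundary bumping'' / Janiszewski-type lemma, applied on the cylinder, or passed to the universal cover $\C$) there is a connected subset $K\subset \widetilde J$ meeting both $\partial U_1^+$ and $\partial U_1^-$. Since $\widetilde J$ is locally connected and compact, $K$ can be taken to be a Peano continuum, hence arcwise connected, so it contains an arc $\lambda$ from a point $x^+\in K\cap\partial U_1^+$ to a point $x^-\in K\cap\partial U_1^-$. Finally, by shrinking $\lambda$ to a subarc from its last exit point of $\overline{U_1^+}$ to its first entry point of $\overline{U_1^-}$, we may assume $\lambda$ meets $\partial U_1^+$ only at $x^+$ and $\partial U_1^-$ only at $x^-$; then because the open ends above height $u_h$ and below height $-u_h$ are Fatou (inside $U_1^\pm$) and hence disjoint from $\widetilde J\supset\lambda$, the endpoints $x^+, x^-$ — lying on $\partial U_1^+\cap\widetilde J$ and $\partial U_1^-\cap\widetilde J$ — are forced to sit at the extreme heights $u_h$ and $-u_h$ respectively (a point of $\partial U_1^+$ at height strictly below $u_h$ has a neighborhood entirely below height $u_h$, but the only way $\lambda$ can leave $\overline{U_1^+}$ and reach $\overline{U_1^-}$ without entering the forbidden open ends is by crossing the band, and the ``topmost'' such crossing of $\partial U_1^+$ is at height $u_h$; a cleaner route is to observe that $\widetilde{J}\setminus U_1^+$ meets $\partial U_1^+$ precisely in the height-$u_h$ ``top arc'' of the Jordan curve $\partial U_1^+$, since everything on $\partial U_1^+$ above the height-$u_h$ level set is accessible only from inside the end $\mathbb{R}/\mathbb{Z}\times(u_h,\infty)\subset U_1^+$). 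This yields the desired path in $\widetilde J$ from a point of $\partial U_1^+$ at height $u_h$ to a point of $\partial U_1^-$ at height $-u_h$.

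The main obstacle I anticipate is the separation argument that forces a \emph{Julia} continuum (as opposed to a chain of Fatou-component boundaries) to bridge $\partial U_1^+$ and $\partial U_1^-$: this is exactly where the hypothesis of invisibility must be used, and it requires care because one must rule out the possibility that the two boundary curves are linked through a single \emph{preperiodic} Fatou component mapping into $\mathcal{B}_1$ or $\mathcal{B}_{-1}$. The resolution is that if some Fatou component $U$ had both $p_k(U_1)\in\partial U$ witnessed from both the $U_1^+$ and $U_1^-$ sides, then lifting through the antiholomorphic return map $\iota\circ N_{a(h)}^{\circ n}$ (which fixes $p_k(U_1)$ and is a local orientation-reversing diffeomorphism there) and applying Lemma \ref{visible} would put $p_k(U_1)$ on $\partial\mathcal{B}^{\mathrm{imm}}_1\cup\partial\mathcal{B}^{\mathrm{imm}}_{-1}$, i.e.\ make it visible, contrary to assumption. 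A secondary technical point is making the ``extreme height'' claim for the endpoints rigorous; the cleanest formulation, which I would adopt, is that the set $\partial U_1^+\cap \overline{\widetilde J\setminus U_1^+}$ is contained in the height-$u_h$ level of the Jordan curve $\partial U_1^+$, since points of $\partial U_1^+$ at strictly lower height are accessible from $\widetilde J$ only through the interior of $U_1^+$ — and symmetrically for $\partial U_1^-$ at height $-u_h$ — so any arc in $\widetilde J$ touching $\partial U_1^+$ does so at height $u_h$.
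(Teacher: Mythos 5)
Your strategy shares the key ingredients with the paper's (invisibility, local connectedness of $J$ via Tan's theorem, and path-connectedness of Peano continua), but the route you take is more convoluted and has genuine gaps precisely at the step where invisibility has to do its work. The paper argues directly that $\widetilde{J}$ \emph{itself} is connected: invisibility implies that $p_k(U_1)$ is not on the boundary of any Fatou component other than $U_1$, hence no complementary component of $\widetilde{J}$ other than the two ends $U_1^\pm$ is \emph{essential} (i.e.\ wraps around the cylinder, equivalently contains a finite-modulus annulus in the homotopy class of $\R/\Z$), and simple connectedness of Fatou components rules out the remaining annular possibilities; this forces $\widetilde{J}$ to be connected, and then compact $+$ connected $+$ locally connected $\Rightarrow$ path-connected finishes the job at once, since $\partial U_1^\pm\subset\widetilde{J}$ and you simply join the extremal-height points. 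Your separation argument instead hinges on the claim that a complementary component touching both $\partial U_1^+$ and $\partial U_1^-$ would force $p_k(U_1)$ onto the boundary of the corresponding Fatou component. That implication is not justified and is not what invisibility gives you: a Fatou projection can touch both boundary Jordan curves without being essential and without accumulating at $p_k(U_1)$. The correct dichotomy is ``essential vs.\ inessential,'' not ``touches both vs.\ doesn't''; as a result, the plane-separation step you invoke is not actually supported by what you've established. Relatedly, the parenthetical ``being the continuous image of $J$ inside the petal, closed up'' does not justify connectedness of $\widetilde{J}$ ($J\cap V^{\mathrm{out}}_{a(h)}$ is typically disconnected, and closure does not repair that); you implicitly notice this by retreating to the bridging-continuum formulation, but the bridging argument is then circular (``$\widetilde{J}$ does topologically connect the two ends'' is what needs proof).

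The endpoint discussion is also both unnecessary and incorrect as written. Since $U_1^+$ is an open Fatou region disjoint from $\widetilde J$, one has $\widetilde J\setminus U_1^+=\widetilde J$, so $\partial U_1^+\cap\overline{\widetilde J\setminus U_1^+}=\partial U_1^+$ — the whole Jordan curve, not just its height-$u_h$ level set — which falsifies your ``cleanest formulation.'' In any case no such refinement is needed: once $\widetilde J$ is a Peano continuum containing $\partial U_1^+$ and $\partial U_1^-$, pick the point of $\partial U_1^+$ realizing the maximal height $u_h$ and the point of $\partial U_1^-$ realizing the minimal height $-u_h$, and join them by a path in $\widetilde J$; there is nothing to ``force'' the endpoints to extreme heights, one simply chooses them there.
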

\begin{proof}
Since $p_k(U_1)$ is not on the boundary of any Fatou component other than $U_1$, it follows that the projection of the Fatou set of $N_{a(h)}$ into the repelling Ecalle cylinder at $p_k(U_1)$ does not contain any conformal annulus of finite modulus in the homotopy class of $\mathbb{R}/\mathbb{Z}$ (compare Figure \ref{parabolic_dynamical_zoom} (right)). Moreover, as every Fatou component of $N_{a(h)}$ is simply connected (recall the the Julia set of any Newton map arising from a polynomial is connected), it follows that no connected component of the projection of its Fatou set into the repelling Ecalle cylinder is an annulus of finite modulus.

Since $N_{a(h)}$ is geometrically finite and $J(N_{a(h)})$ is connected, it is locally connected \cite[Theorem A]{Tan}. Let us denote the projection of $J(N_{a(h)})$ into the repelling cylinder by $\widetilde{J}$. Then $\widetilde{J}$ is compact. Since no connected component of the complement of $\widetilde{J}$ in the repelling cylinder is an annulus of finite modulus, it follows that $\widetilde{J}$ is connected. Furthermore, since $J(N_{a(h)})$ is locally connected, $\widetilde{J}$ is locally connected.\footnote{Quotients of locally connected spaces are locally connected.} As compact, connected, locally connected metrizable spaces are path-connected, we conclude that $\widetilde{J}$ is path connected. Hence in the repelling Ecalle cylinder, there is a path in $\widetilde{J}$ connecting a point of $\partial U_1^+$ at height $u_h$ and a point of $\partial U_1^-$ at height $-u_h$.
\end{proof}

Recall that $\cC_k$ has, at both ends, an interval of positive length across which bifurcation from $H$ (of period $2n$) to a Mandelbrot component (of period $2n$) occurs. According to \cite[Theorem 7.3]{HS}, if $a(h)$ is such a bifurcating parameter, then either $h\geq u_h$, or $h\leq -u_h$. If the critical Ecalle height $0$ parameter $a(0)$ is a bifurcating parameter, then $u_0\leq 0$, which is impossible. Hence there is an interval $(-\epsilon,\epsilon)$ of Ecalle heights such that no bifurcation to Mandelbrot components occurs across the sub-arc $a((-\epsilon,\epsilon))$ of $\cC_k$. It follows that there exist $h_2>0>h_1$ such that $a((h_1,h_2))$ is the maximal sub-arc of $\cC_k$ across which bifurcation to Mandelbrot components does not occur; we call $a((h_1,h_2))$ the \emph{non-bifurcating sub-arc} of $\cC_k$ (compare \cite[\S 7]{HS}).

\begin{definition}[Visibility of Parabolic Arcs and Tricorn Components]
\label{parameter_visibility}
The parabolic arc $\cC_k$ is called \emph{visible} if some point of $a((h_1,h_2))$ lies on the boundary of a hyperbolic component other than $H$. Otherwise, $\cC_k$ is called \emph{invisible}. A Tricorn component $H$ is called invisible if each parabolic arc on $\partial H$ is invisible.
\end{definition}

\subsection{Characterizing invisible tricorn components}\label{char_inv_tri}

In the following, we will analyze some topological properties of the parameter space in a neighborhood of $a((h_1,h_2))$. When $h\in\left(h_1,h_2\right)$, there is no bifurcation to Mandelbrot components across $a(h)$. This implies that the critical Ecalle height $h$ of $N_{a(h)}$ lies in $\left(-u_h,u_h\right)$. Let us start with a couple of lemmas that show that capture and Tricorn components accumulate on parabolic arcs on which the characteristic parabolic points are invisible.

Let $\mathcal{H}^{\mathrm{cap}}$ be the union of all the capture hyperbolic components of the family $\mathcal{N}_4^*$.

\begin{figure}[ht!]
\begin{center}
\includegraphics[scale=0.35]{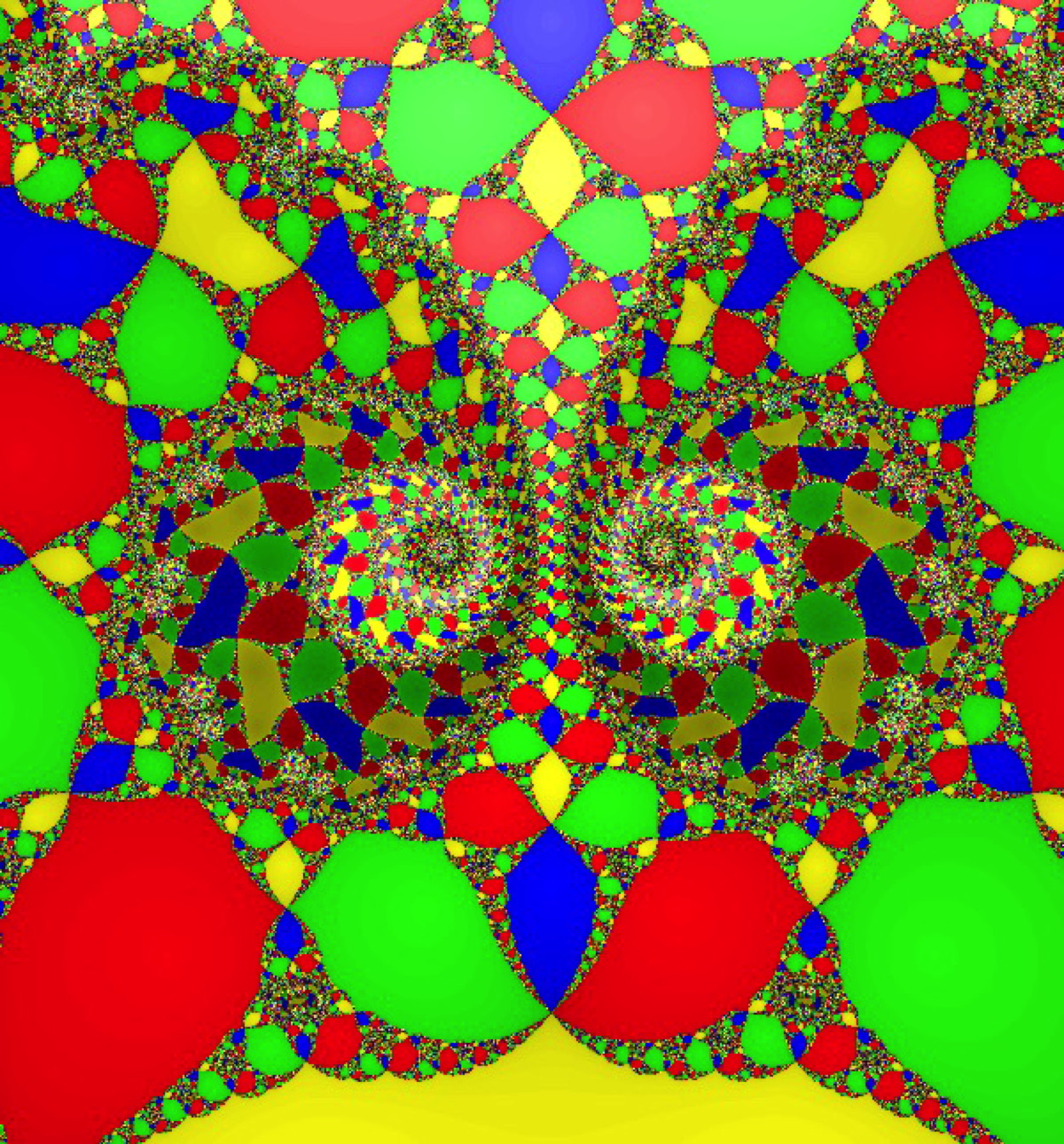}
\caption{A blow-up of the dynamical plane of a near-parabolic parameter showing the eggbeater dynamics ``near'' an invisible parabolic point.}
\label{perturbed_dynamical_invisible_eggbeater}
\end{center}
\end{figure}

\begin{lemma}[Cluster of Capture Components]\label{capture_components_accumulate_1}
Suppose that the characteristic parabolic point $p_k(U_1)$ of $N_a$ is invisible for each $a\in\cC_k$. Then $a[h_1,h_2]$ is contained in the closure of $\mathcal{H}^{\mathrm{cap}}$.
\end{lemma}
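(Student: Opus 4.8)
The plan is to prove the statement for every $h\in(h_1,h_2)$ and then take closures: since $a\colon\R\to\cC_k$ is continuous and $\overline{\mathcal{H}^{\mathrm{cap}}}$ is closed, $a[h_1,h_2]=\overline{a((h_1,h_2))}\subset\overline{\mathcal{H}^{\mathrm{cap}}}$. Fix $h\in(h_1,h_2)$. The idea is a parabolic implosion at $a(h)$: perturb into $U^-$ so that the escaping free critical point $c_a$ of $N_a$ lands, after transiting the gate, inside a Fatou component that is eventually mapped to $\mathcal{B}^{\mathrm{imm}}_1$. By the $\iota$-symmetry of $N_a$ the conjugate critical point $\overline{c_a}$ then lands in $\mathcal{B}_1$ as well, so $N_a$ is hyperbolic with both free critical points in strictly preperiodic Fatou components; hence $a\in\mathcal{H}^{\mathrm{cap}}$ (one chooses the target component so that it is a strict preimage of $\mathcal{B}^{\mathrm{imm}}_1$, not $\mathcal{B}^{\mathrm{imm}}_1$ itself). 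Doing this with $a$ arbitrarily close to $a(h)$ gives $a(h)\in\overline{\mathcal{H}^{\mathrm{cap}}}$.

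First I would locate a target in the repelling Ecalle cylinder of $N_{a(h)}$ at the invisible characteristic parabolic point $p_k(U_1)$. By Lemma \ref{julia_path}, the projection $\widetilde J$ of $J(N_{a(h)})$ to this cylinder contains a path joining a point of $\partial U_1^+$ at Ecalle height $u_h$ to a point of $\partial U_1^-$ at Ecalle height $-u_h$. Since $h\in(h_1,h_2)$ we have $h\in(-u_h,u_h)$, so the intermediate value theorem applied to the Ecalle-height coordinate along this path produces a point $\zeta\in\widetilde J$ of Ecalle height exactly $h$. Lift $\zeta$ to a point $z_0\in J(N_{a(h)})$ lying in the repelling petal $V^{\mathrm{out}}_{a(h)}$. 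Because $\mathcal{B}_1$ is a nonempty completely invariant proper open subset of $\hat{\C}$, we have $\partial\mathcal{B}_1=J(N_{a(h)})$, so the connected components of $\mathcal{B}_1$ accumulate at $z_0$; since projection to the cylinder is a local homeomorphism near $z_0$, the projections $\widetilde W$ of the (infinitely many, hence not all equal to $\mathcal{B}^{\mathrm{imm}}_1$ or $\mathcal{B}^{\mathrm{imm}}_{-1}$) components of $\mathcal{B}_1$ meeting the repelling petal accumulate at $\zeta$. Pick one such $\widetilde W_0$, coming from a strict preimage of $\mathcal{B}^{\mathrm{imm}}_1$, that is close to $\zeta$; it is a genuine open subset of the cylinder meeting a neighborhood of Ecalle height $h$.

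Next comes the perturbation step, following the scheme of \cite[\S 2]{IM1}. For $a\in U^-$ the repelling cylinders $C^{\mathrm{out}}_a$ form a continuous family of copies of $\C/\Z$ with continuously varying Fatou coordinates $\psi^{\mathrm{out}}_a$; the Fatou components of $N_a$ lying in the perturbed repelling petal depend continuously on $a$ and converge (in the Carath\'eodory sense, after projection) to the $\widetilde W$'s as $a\to a(h)$, and $\mathcal{B}^{\mathrm{imm}}_1$ persists since $1$ remains a super-attracting fixed point. The conformal position $\mathrm{pos}(a):=\psi^{\mathrm{out}}_a\big(N_a^{\circ 2nk_a}(c_a)\big)\in\C/\Z$ of the escaped critical point has imaginary part tending to $h$ as $a\to a(h)$, because the transit map $T_a$ preserves Ecalle heights and the incoming Ecalle height of $c_a$ tends to the critical Ecalle height $h$; its real part is the phase $\phi(a)$, whose lift $\widetilde\phi(a)$ tends to $-\infty$ by \cite[Lemma 2.5]{IM1}. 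Consequently, near $a(h)$ the phase takes every value of $\R/\Z$, and the map $a\mapsto(\im\,\mathrm{pos}(a),\phi(a))$ is open onto a full neighborhood of $\{h\}\times(\R/\Z)$: its height coordinate is (up to the parametrization) real-analytic, and along each of its level sets the phase sweeps all of $\R/\Z$. Therefore I can choose $a\in U^-$ arbitrarily close to $a(h)$ with $\mathrm{pos}(a)$ lying inside the perturbed component $\widetilde W_a$ corresponding to $\widetilde W_0$. For such $a$, the forward orbit of $c_a$ enters $\mathcal{B}_1$, and as noted above this makes $a$ a capture parameter, completing the proof.

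The main obstacle is the second half of the perturbation step: one must know that a definite Fatou component $\widetilde W_a$ of the perturbed map survives near the target $\widetilde W_0$ and that $\mathrm{pos}(a)$ can actually be steered into it. This rests on the continuity of the persistent Fatou coordinates and the convergence of near-parabolic dynamics developed in \cite[\S 2]{IM1}, together with the openness of $a\mapsto(\im\,\mathrm{pos}(a),\phi(a))$ near $a(h)$; it is precisely the interplay between the real-analytic Ecalle-height coordinate and the phase sweeping $\R/\Z$ along its level sets that lets the escaping critical orbit be captured by $\widetilde W_a$, which is what converts the invisibility hypothesis (via the path of Lemma \ref{julia_path}) into the accumulation of capture components on $a[h_1,h_2]$.
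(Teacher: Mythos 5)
Your proposal is correct and follows essentially the same strategy as the paper: work with $h\in(h_1,h_2)$ (so $h\in(-u_h,u_h)$), use the invisibility hypothesis and Lemma~\ref{julia_path} to locate a suitable Fatou target in the repelling Ecalle cylinder at height near $h$, and then use the parabolic implosion machinery (phase sweeping $\R/\Z$ with lifted phase $\to-\infty$, persistence of Fatou coordinates, persistence of the attracting basin) to steer the escaping critical orbit into that target. The one substantive difference is the choice of target basin. You send the escaping critical orbit into $\mathcal{B}_1$, which forces you to carefully pick a \emph{strict} preimage component of $\mathcal{B}^{\mathrm{imm}}_1$ (justified by invisibility, i.e.\ $\mathcal{B}^{\mathrm{imm}}_1$ stays bounded away from $p_k(U_1)$), and then to check that this strictness survives the perturbation; this is what distinguishes a capture parameter from a point of $\mathcal{H}_1$. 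The paper instead aims the critical orbit into $\mathcal{B}_a$, at which point Proposition~\ref{no_free} (the free critical points never lie in $\mathcal{B}^{\mathrm{imm}}_a\cup\mathcal{B}^{\mathrm{imm}}_{\bar a}$) automatically guarantees the landing component is a strict preimage of $\mathcal{B}^{\mathrm{imm}}_a$, and hence the perturbed parameter is a capture parameter, with no extra bookkeeping. Both routes are valid; the paper's is slightly cleaner because it offloads the hard part onto an already-proved proposition, whereas yours recovers the same conclusion directly from invisibility.
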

\begin{proof}
For $h\in (h_1,h_2)$, we have that $h\in(-u_h,u_h)$. Choose an $\epsilon>0$ such that $(h-\epsilon,h+\epsilon)\subset(-u_h,u_h)$. Then by Lemma \ref{julia_path}, the annulus $\mathbb{R}/\mathbb{Z}\times\left(h-\epsilon,h+\epsilon\right)$ in the repelling cylinder at the characteristic parabolic point of $N_{a(h)}$ intersects a path $\gamma$ contained in the Julia set. Hence, this annulus must also intersect $\mathcal{B}_{a(h)}$. Note that the basin of attraction of a super-attracting fixed point does not get much smaller upon perturbation \cite[Theorem~6.1(a)]{D2}. Therefore, by the parabolic orbit correspondence theorem \cite[Proposition 2.2]{Lei1},\footnote{Due to real-analytic parameter dependence of the family $\mathcal{N}_4^*$, we need to appeal to the alternative topological argument outlined in the proof of \cite[Proposition 2.2]{Lei1}. In fact, the proof is essentially an application of the Brouwer fixed point theorem.} the parameter $a(h)$ can be slightly perturbed outside of $\overline{H}$ so that for the perturbed parameter $a$, the critical point $c_a$ has Ecalle height in $(h-\epsilon,h+\epsilon)$ and the critical orbit $\{N_a^{\circ k}(c_a)\}_{k\in\mathbb{N}}$  lies in $\mathcal{B}_a$ (compare Figure \ref{perturbed_dynamical_invisible_eggbeater}). But by Proposition \ref{no_free}, the critical point $c_a$ cannot belong to $\mathcal{B}^{\mathrm{imm}}_a$. Therefore, the perturbed parameter $a$ lies in a capture component. This shows that there are points of $\mathcal{H}^{\mathrm{cap}}$ arbitrarily close to $a(h)$, for every $h\in(h_1,h_2)$. Hence, $a((h_1,h_2))\subset \overline{\mathcal{H}^{\mathrm{cap}}}$. Since $\overline{\mathcal{H}^{\mathrm{cap}}}$ is a closed set, it follows that $a[h_1,h_2]\subset \overline{\mathcal{H}^{\mathrm{cap}}}$.
\end{proof}

A slightly modified version of the proof of Lemma \ref{capture_components_accumulate_1} exhibits the existence of infinitely many Tricorn components near parabolic arcs with invisible characteristic parabolic points. This will be an important step for the proof of our main theorem.

\begin{lemma}[Accumulation of Tricorn Components]\label{Tricorns_accumulate}
Suppose that the characteristic parabolic point $p_k(U_1)$ of $N_a$ is invisible for each $a\in\cC_k$, and $h\in(h_1,h_2)$. Then $a(h)$ is a limit point of the centers of Tricorn components $\{H_r\}_{r=1}^\infty$. Moreover, if $b_r\in \overline{H_r}$ for each $r$, then all limit points of the sequence $\{b_r\}$ belong to $\cC_k$.
\end{lemma}
\begin{proof}
We will argue as in the proof of Lemma \ref{capture_components_accumulate_1}.

By Lemma \ref{julia_path}, the annulus $\mathbb{R}/\mathbb{Z}\times\left(h-\epsilon,h+\epsilon\right)$ in the repelling cylinder at the characteristic parabolic point of $N_{a(h)}$ intersects a path $\gamma$ contained in the Julia set. Since the set of iterated pre-images of $\overline{c_{a(h)}}$ accumulate on the curve $\gamma$, the annulus under consideration contains some iterated pre-image of $\overline{c_{a(h)}}$. Under small perturbation, this iterated pre-image of $\overline{c_{a(h)}}$ can be continued as an iterated pre-image of the corresponding free critical point, and their heights change continuously. Therefore, by \cite[Proposition 2.2]{Lei1}, there exist parameters $\{a_r\}$ near $a(h)$ (and lying outside of $\overline{H}$) so that for the perturbed parameter $a_r$, the critical point $c_{a_r}$ exits through the gate in $k_r$ steps (with $k_r\to+\infty$) and its image $T_{a_r}(c_{a_r})=N_{a_r}^{\circ 2nk_r}(c_{a_r})$ under the transit map is an iterated pre-image of $\overline{c_{a_r}}$.

Evidently, for each such parameter $a_r$, the free critical point $c_{a_r}$ eventually maps to the other free critical point $\overline{c_{a_r}}$, so they are centers of Tricorn components. This proves that $a(h)$ is a limit point of centers of infinitely many Tricorn components, which we label as $H_r$.

For the second part of the lemma, assume that $b_r\in \overline{H_r}$ for each $r$. Since all maps in $H_r$ (except its center) are topologically conjugate, it is easy to see that the escaping time of the critical point $c_a$ of each parameter $a$ in $\overline{H_r}$ is $k_r-1$ or $k_r$ or $k_r+1$. Since $k_r\to+\infty$, it follows that the escaping times of the critical points $c_{b_r}$ of $N_{b_r}$ also tend to $+\infty$. Hence their lifted phase $\widetilde{\phi}(b_r)$ tends to $-\infty$. Therefore, any accumulation point of the sequence $\{b_r\}$ belongs to $\cC_k$.
\end{proof}

The next lemma is a sharpened version of Lemma \ref{capture_components_accumulate_1} that relates the notions of invisibility in the dynamical plane (Definition \ref{dynamical_visibility}) and in the parameter plane (Definition \ref{parameter_visibility}).

\begin{lemma}[Invisible Parabolic Points Yield Invisible Parabolic Arcs]\label{capture_components_accumulate_2}
If the dynamical co-root $p_k(U_1)$ is invisible in the dynamical plane of the center $a_0$ (of $H$), then the parabolic arc $\cC_k$ is invisible.
\end{lemma}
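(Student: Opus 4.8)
The plan is to upgrade Lemma \ref{capture_components_accumulate_1} (which already places $a[h_1,h_2]$ in $\overline{\mathcal{H}^{\mathrm{cap}}}$) to a statement that \emph{rules out} the sub-arc $a((h_1,h_2))$ from meeting the closure of any hyperbolic component other than $H$ itself. Equivalently, I must show that if $a\in U^-$ is a perturbation of some $a(h)$, $h\in(h_1,h_2)$, and $N_a$ is hyperbolic, then $a$ must lie in a capture component of $\mathcal{N}_4^*$ (never in another tricorn, Mandelbrot, or principal component, nor on their boundaries); and that $a$ cannot land in $\overline{\mathcal{H}^{\mathrm{cap}}}\cap\overline{H}$ either. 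The mechanism is the parabolic implosion / eggbeater picture of Subsection \ref{para_imp_back}: for $a$ near $a(h)$ in $U^-$, the free critical point $c_a$ has an Ecalle height close to $h\in(-u_h,u_h)$ in the incoming cylinder, and by the transit map $T_a$ (which preserves Ecalle heights) the escaped critical orbit point $N_a^{\circ 2nk_a}(c_a)$ lands at height close to $h$ in the \emph{outgoing} cylinder.

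The first step is to invoke Lemma \ref{julia_path}: in the repelling cylinder at the invisible characteristic parabolic point $p_k(U_1)$ of $N_{a(h)}$, there is a path in the projected Julia set $\widetilde J$ joining $\partial U_1^+$ (at height $u_h$) to $\partial U_1^-$ (at height $-u_h$), and this path stays within the band of heights $[-u_h,u_h]$. Since Fatou coordinates, the cylinders, and the transit map all depend continuously on $a\in\overline{U^-}$, this ``Julia arc across the equator'' persists for $a\in U^-$ sufficiently close to $a(h)$: the outgoing cylinder of $N_a$ contains a Julia arc connecting the two ends while staying at heights in (roughly) $(-u_h-\delta,u_h+\delta)$, and in particular separating the upper and lower ``polar caps'' of the cylinder along a curve at bounded height. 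Now, for $a\in U^-$ with $\widetilde\phi(a)$ very negative (i.e.\ $a$ very close to $\mathcal C$), the escaping critical value $N_a^{\circ 2nk_a}(c_a)$ sits at Ecalle height $\approx h \in(-u_h,u_h)$ in the outgoing cylinder, hence it lies in the ``bulk'' of the cylinder cut off by the Julia arc — the same region into which $\mathcal{B}_a$ (and $\mathcal{B}_{\bar a}$, $\mathcal{B}_1$, $\mathcal{B}_{-1}$) project and accumulate, but which contains no periodic Fatou component of $N_{a(h)}$ and whose perturbations carry no periodic cycle of period $>1$ of $N_a$. The key point (using \cite[Theorem 5.1(a)]{D2} and the continuity of basins) is that the projection of $\mathcal{B}_a\cup\mathcal{B}_{\bar a}\cup\mathcal{B}_1\cup\mathcal{B}_{-1}$ into the outgoing cylinder contains an open neighbourhood of the Julia arc's complement \emph{except} for a measure-zero set — more precisely, that near the invisible parabolic point there are no ``fat'' Fatou components in the outgoing cylinder other than (perturbations of) the fixed basins, so any critical value placed there whose orbit does not escape to infinity must be captured into one of the fixed basins. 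Combined with Proposition \ref{no_free} (the free critical points never enter $\mathcal{B}^{\mathrm{imm}}_a$ or $\mathcal{B}^{\mathrm{imm}}_{\bar a}$), this forces $a$ to lie in a \emph{capture} component whenever $\widetilde\phi(a)$ is sufficiently negative and the perturbation is small and $N_a$ is hyperbolic.

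The second step is to promote this to a statement about \emph{all} sufficiently small perturbations, not merely hyperbolic ones, and to cover the non-bifurcating sub-arc uniformly. Here I would argue by contradiction: suppose $\cC_k$ is visible, so some $a(h_0)$ with $h_0\in(h_1,h_2)$ lies on $\partial H'$ for a hyperbolic component $H'\neq H$. Since $\cC_k$ consists of simple parabolic parameters not on any other parabolic arc (shown in Section \ref{tricorn_comp}), and since $H'\ne H$, the component $H'$ must approach $a(h_0)$ from the side $U^-$ (outside $\overline H$). Pick a sequence $a_j\to a(h_0)$ with $a_j\in H'$; then $\widetilde\phi(a_j)\to-\infty$ by \cite[Lemma 2.5]{IM1}, and eventually the escaping critical orbit of $N_{a_j}$ lands at bounded Ecalle height $\approx h_0\in(-u_{h_0},u_{h_0})$ in the outgoing cylinder. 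By Step 1 this forces $a_j$ into a capture component, contradicting $a_j\in H'$ with $H'$ not a capture component. If instead $H'$ \emph{is} a capture component, I use the precise location of the escaping critical value: in a capture component the free critical point's orbit lands in a \emph{strictly pre-periodic} Fatou component that eventually maps to a fixed basin, and the persistent Fatou coordinates let me read off which basin; continuity then shows the limiting critical orbit of $N_{a(h_0)}$ lands in the same (fixed) basin $\mathcal{B}_\bullet$, so $p_k(U_1)=$ characteristic parabolic point of $N_{a(h_0)}$ lies on $\partial\mathcal{B}_1$ or $\partial\mathcal{B}_{-1}$ via a \emph{single} component — contradicting Lemma \ref{julia_path}/invisibility (which says $p_k(U_1)$ is not on the boundary of any single Fatou component other than $U_1$). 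Either way we reach a contradiction, so $\cC_k$ is invisible.

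The main obstacle I expect is making rigorous the claim underlying Step 1 that ``bounded-height region of the outgoing cylinder $\Rightarrow$ captured into a fixed basin.'' This requires controlling which Fatou components of the perturbed map $N_a$ can project to a neighbourhood of a given Ecalle-height value in the outgoing cylinder, uniformly in the perturbation, and knowing that the only ``large'' such components are perturbations of the four fixed basins. The honest tools are: (i) the stability of the fixed basins (\cite[Theorem 5.1(a)]{D2}), (ii) the fact that near the parabolic parameter no \emph{new} attracting cycle of period $>1$ can appear at bounded Ecalle height except the one that would come from the tricorn/Mandelbrot bifurcation — which is precisely excluded on the non-bifurcating sub-arc by the inequalities $h\in(-u_h,u_h)$ and \cite[Theorem 7.3]{HS} — and (iii) Lemma \ref{julia_path} to guarantee the Julia arc actually separates the critical value's height band from infinity. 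Assembling these so that the escaping critical orbit is genuinely trapped (rather than eventually escaping to $\infty$ through a pole) is the delicate part; the poles of $N_a$, which complicated several earlier proofs, must be tracked to ensure the orbit of the escaped critical value does not sneak out to $\infty$, and this is where I would spend the most care.
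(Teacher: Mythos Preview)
Your approach differs substantially from the paper's, and Step 2 in the capture case has a genuine gap.

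The paper's argument is more direct: it shows that every open set $V$ with $V\cap\overline{H}=\emptyset$ and $\partial V\cap a((h_1,h_2))\neq\emptyset$ contains infinitely many \emph{centers} of capture components. The mechanism is that iterated pre-images of the super-attracting fixed point $a$ are dense on the Julia path $\gamma$ of Lemma \ref{julia_path}; since the lifted phase $\widetilde\phi$ winds to $-\infty$ over $V$, the image $\{T_a(c_a):a\in V\}$ sweeps across these pre-images infinitely often, producing infinitely many parameters where $c_a$ eventually hits $a$. Each is a center, and by Proposition \ref{unique_center} each hyperbolic component has a unique center, so $V$ meets infinitely many distinct hyperbolic components. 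Taking $V\subset H'$ for any candidate $H'\neq H$ with $a(h_0)\in\partial H'$ gives an immediate contradiction, with no case analysis on the type of $H'$.

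Your Step 2, capture case, does not go through as written. You claim that if $a_j\in H'$ (a single capture component) with $a_j\to a(h_0)$, then ``continuity shows the limiting critical orbit of $N_{a(h_0)}$ lands in the same fixed basin, so $p_k(U_1)$ lies on the boundary of a single component of $\mathcal{B}_1$ or $\mathcal{B}_{-1}$.'' But the critical orbit of $N_{a(h_0)}$ converges to the parabolic point $p_k(U_1)$ inside $U_1$; it does not land in any fixed basin, so nothing forces $p_k(U_1)$ onto the boundary of a single pre-periodic component. Nor does the fixed pre-period $m$ of $H'$ conflict with $k_{a_j}\to\infty$: the fixed immediate basins themselves intersect $V^{\mathrm{out}}_{a_j}$, so for $2nk_{a_j}>m$ the escaped point $N_{a_j}^{\circ 2nk_{a_j}}(c_{a_j})$ is already in a fixed immediate basin while still sitting in the outgoing domain. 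There is no contradiction here, and you have not excluded a single capture component from touching the arc. Your Step 1 is also more delicate than necessary (you flag this yourself): the paper avoids the issue entirely by hunting for centers rather than trying to classify the type of every nearby hyperbolic parameter.
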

\begin{proof}
It suffices to prove that every open set $V$ satisfying
\begin{enumerate}
\item $V\cap \overline{H}=\emptyset$, and

\item $\partial V\cap a\left(h_1, h_2\right)\neq\emptyset$,
\end{enumerate}
intersects infinitely many distinct hyperbolic components.

Let $V$ be an open set as above and $a(h)\in \partial V\cap a\left(h_1, h_2\right)$. Since $h\in(h_1,h_2)$, we have that $h\in(-u_{h},u_{h})$. Note that by Lemma \ref{visibility_parabolic}, if the dynamical co-root $p_k(U_1)$ is invisible in the dynamical plane of the center $a_0$ (of $H$), then the characteristic parabolic point $p_k(U_1)$ of $N_a$ is invisible for each $a\in\cC_k$. Hence, there is a curve $\gamma$ in the Julia set of $N_{a(h)}$ connecting a point of $\partial U_1^+$ at height $u_h$ and a point of $\partial U_1^-$ at height $-u_h$. Since $a(h)$ is not an exceptional point of $N_{a(h)}$, the closure of the iterated pre-images of $a(h)$ under $N_{a(h)}$ contains $J(N_{a(h)})$. In particular, any neighborhood of a point on $\gamma$ contains infinitely many iterated pre-images of $a(h)$.

Choose a neighborhood $U$ of $a((h_1,h_2))$ such that the Fatou coordinates of $N_{a(h)}$ persist throughout $U^-:=U\setminus\overline{H}$. Since $a(h)\in \overline{U^-\cap V}$, the projection of the set $\widetilde{U}:=\displaystyle \{(T_{a}(c_a), a):a\in U^-\cap V\}\subset \bigsqcup_{a\in \overline{U^-}} C_a^{\mathrm{out}}\cong \C/\Z\times\overline{U^-}$ to the first coordinate ``winds around" $\C/\Z$ infinitely often as the lifted phase goes to $-\infty$ \cite[Lemma 2.5]{IM1}. The accumulation set of the image contains $\{h\}\times\R/\Z$. Hence, in particular, the part of the image inside the cylinder $(-u_h,u_h)\times\R/\Z$ winds around $\C/\Z$ infinitely often (since $h\in(h_1,h_2)$). It follows that the projection of $\gamma$ in the cylinder $\C/\Z$ intersects the projection of $\widetilde{U}$ in $\C/\Z$ infinitely many times.

Under sufficiently small perturbations, the iterated pre-images of $a(h)$ that accumulate on $\gamma$ can be continuously followed as iterated pre-images of $a$ under $N_a$, and their heights change continuously (since the normalized Fatou coordinates depend continuously on the parameter). So the set $\widetilde{U}$ defined above will contain infinitely many elements of the form $\left(z(a),a\right)$, where $a\in U^-\cap V$, and $z(a)$ is an iterated pre-image of $a$ under $N_a$. This means that there are infinitely many parameters $a$ in $U^-\cap V$ for which the critical point $c_a$ eventually maps to the super-attracting fixed point $a$. Evidently, any such parameter $a$ belongs to a capture component and the corresponding map $N_a$ is postcritically finite. Since every hyperbolic component in the parameter space of the family $\mathcal{N}_4^*$ has a unique center (see Proposition \ref{unique_center}), we have proved that $V$ intersects infinitely many distinct hyperbolic components. This completes the proof of the fact that no parameter on $a((h_1,h_2))$ lies on the boundary of a hyperbolic component other than $H$.
\end{proof}

\begin{corollary}\label{no_path}
If the dynamical co-root $p_k(U_1)$ is invisible in the dynamical plane of the center $a_0$ (of $H$), then there does not exist any path in the hyperbolic locus that accumulates on $a((h_1,h_2))$ from the exterior of $H$.
\end{corollary}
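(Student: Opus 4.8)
The plan is to derive this as a formal consequence of Lemma~\ref{capture_components_accumulate_2}, together with the elementary fact that the hyperbolic locus of $\mathcal{N}_4^*$ is an open subset of $\cU$ whose connected components are precisely the hyperbolic components. Suppose, towards a contradiction, that $\gamma\colon[0,1)\to\cU$ is a path whose image lies in the hyperbolic locus, is disjoint from $\overline{H}$, and whose accumulation set as $t\to 1^-$ contains a point $a(h^*)$ with $h^*\in(h_1,h_2)$.

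First I would observe that $\gamma([0,1))$ is a connected subset of the hyperbolic locus, and is therefore contained in a single hyperbolic component $H'$; moreover $H'\neq H$, since the image of $\gamma$ avoids $\overline{H}$. Next, choosing a sequence $t_j\to 1^-$ with $\gamma(t_j)\to a(h^*)$, we get $a(h^*)\in\overline{H'}$. Since $a(h^*)$ lies on the parabolic arc $\cC_k$, the map $N_{a(h^*)}$ has a parabolic cycle attracting its critical orbits, and is thus not a hyperbolic map; hence $a(h^*)\notin H'$, and so $a(h^*)\in\partial H'$ with $H'\neq H$.

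This contradicts Lemma~\ref{capture_components_accumulate_2}: under the standing hypothesis that $p_k(U_1)$ is invisible in the dynamical plane of the center $a_0$, that lemma shows (in the terminology of Definition~\ref{parameter_visibility}) that $\cC_k$ is invisible, i.e.\ that no point of $a((h_1,h_2))$ lies on the boundary of any hyperbolic component other than $H$. The only point meriting a word of care is the meaning of ``accumulates on $a((h_1,h_2))$ from the exterior of $H$'', which I read as: the path lies in the hyperbolic locus minus $\overline{H}$ and has a limit point on the open sub-arc $a((h_1,h_2))$. Everything else is standard point-set topology, using only that hyperbolicity is an open condition and that the parabolic arcs bounding $H$ contain no hyperbolic parameters. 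There is accordingly no real obstacle here; the corollary merely repackages Lemma~\ref{capture_components_accumulate_2} as a statement about paths.
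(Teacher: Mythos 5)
Your proof is correct and follows the intended approach. The paper states Corollary~\ref{no_path} without an explicit proof, and your argument---a path in the hyperbolic locus avoiding $\overline{H}$ is connected and hence lies in a single hyperbolic component $H'\neq H$, so an accumulation point on the parabolic sub-arc $a((h_1,h_2))$ would lie on $\partial H'$, contradicting the invisibility of $\cC_k$ established in Lemma~\ref{capture_components_accumulate_2}---is exactly the point-set-topology reduction the word ``Corollary'' is signalling.
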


We now prove the converse of Lemma \ref{capture_components_accumulate_2} which completes the connection between visibility in the dynamical plane and in the parameter plane.

\begin{lemma}[Visible Parabolic Points Yield Visible Parabolic Arcs]\label{visible_arc}
If the dynamical co-root $p_k(U_1)$ is visible in the dynamical plane of the center $a_0$ (of $H$), then the parabolic arc $\cC_k$ is visible.
\end{lemma}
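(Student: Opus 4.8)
The plan is to run a parabolic perturbation argument parallel to the one in Lemma~\ref{capture_components_accumulate_2}, but this time to exploit the \emph{presence} of a fixed immediate basin along the repelling Ecalle cylinder (rather than the presence of a Julia path) in order to push perturbed critical orbits into that basin. Since the two cases are symmetric, I would assume $p_k(U_1)\in\partial\mathcal{B}^{\mathrm{imm}}_{-1}$ (the case $p_k(U_1)\in\partial\mathcal{B}^{\mathrm{imm}}_{1}$ being identical, and the case in which $p_k(U_1)$ lies on the boundary of both only simpler). By Lemma~\ref{visibility_parabolic}, for every $a\in\cC_k$ the characteristic parabolic point of $N_a$ is $p_k(U_1)$ and lies on $\partial\mathcal{B}^{\mathrm{imm}}_{-1}$.

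The first step is a dynamical-plane statement dual to Lemma~\ref{julia_path}. Fix $a\in\cC_k$ and work in the repelling Ecalle cylinder $C^{\mathrm{out}}_a\cong\C/\Z$ of $N_a$ at $p_k(U_1)$, with Ecalle height normalized as in Section~\ref{para_imp_back}. Since $\mathcal{B}^{\mathrm{imm}}_{-1}$ is a fixed (hence $N_a^{\circ 2n}$-invariant) Fatou component with $p_k(U_1)\in\partial\mathcal{B}^{\mathrm{imm}}_{-1}$, its intersection with $V^{\mathrm{out}}_a$ is non-empty and descends to a non-empty open subset $S_a\subset C^{\mathrm{out}}_a$; because $\mathcal{B}^{\mathrm{imm}}_{-1}$ is also invariant under $\iota\circ N_a^{\circ n}$, the set $S_a$ is invariant under the height-reversing glide reflection $w\mapsto\overline w+1/2$. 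I would then prove that the set of Ecalle heights attained by $S_a$ meets the non-bifurcating interval $(h_1,h_2)$: since $p_k(U_1)$ is visible, $\mathcal{B}^{\mathrm{imm}}_{-1}$ accumulates on $p_k(U_1)$ from just outside $U_1$, so in $C^{\mathrm{out}}_a$ the open set $S_a$ abuts the two Jordan curves $\partial U_1^{\pm}$ (which traverse the height intervals $[l_a,u_a]$ and $[-u_a,-l_a]$) along the middle band of the cylinder; consequently the projection of $\overline{\mathcal{B}^{\mathrm{imm}}_{-1}}$ contains a curve essential in $C^{\mathrm{out}}_a$ and, being connected and height-symmetric, attains all heights in a symmetric interval $[-M_a,M_a]$ with $M_a>0$, so $S_a$ attains a dense open set of heights there; since $0\in(h_1,h_2)$, such heights occur inside $(h_1,h_2)$. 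I expect this to be the main obstacle: it requires the repelling-cylinder bookkeeping relating $l_a$, $u_a$, and $(h_1,h_2)$, carried out in the spirit of the analyses in \cite[\S 6]{BBM1} and \cite[\S 7]{HS}. It is also useful to observe that the quasiconformal conjugacies of Proposition~\ref{parabolic arcs} commute with $\iota$, hence preserve repelling Fatou coordinates and therefore repelling Ecalle heights, so that the height-range of $S_{a(h)}$ is independent of $h$ and it suffices to exhibit a single good height $h^*\in(h_1,h_2)$. Once $h^*$ is fixed, the circle of Ecalle height $h^*$ in $C^{\mathrm{out}}_{a(h^*)}$ meets the open set $S_{a(h^*)}$ in a non-empty open arc $J^*$.

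Finally I would carry out the parabolic perturbation near $a(h^*)$. Take a neighborhood $U$ of $a(h^*)$ as in Section~\ref{para_imp_back}, so that for $a\in U^-:=U\setminus\overline H$ the persistent Fatou coordinates and the transit map $T_a$ are defined and vary continuously. As $a\to a(h^*)$ in $U^-$: the incoming Ecalle height of $c_a$ tends to the critical Ecalle height $h^*$ of $N_{a(h^*)}$, the transit map preserves Ecalle heights so $T_a(c_a)$ has Ecalle height tending to $h^*$, and by \cite[Lemma~2.5]{IM1} the lifted phase $\widetilde\phi(a)\to-\infty$, so the angular coordinate of $T_a(c_a)$ runs around $\R/\Z$ in every neighborhood of $a(h^*)$. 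On the other hand $\mathcal{B}^{\mathrm{imm}}_{-1}$ does not collapse under perturbation \cite[Theorem~5.1(a)]{D2}, so, using continuity of the outgoing Fatou coordinate, $S_a$ still contains a fixed compact sub-arc of $J^*$ for all $a\in U^-$ sufficiently close to $a(h^*)$. Hence there are parameters $a\in U^-$ arbitrarily close to $a(h^*)$ with $T_a(c_a)\in S_a$; for any such $a$ the forward orbit of $c_a$ enters $\mathcal{B}^{\mathrm{imm}}_{-1}$ (since the preimage of $S_a$ in $V^{\mathrm{out}}_a$ is exactly $\mathcal{B}^{\mathrm{imm}}_{-1}\cap V^{\mathrm{out}}_a$, by invariance of $\mathcal{B}^{\mathrm{imm}}_{-1}$), so $c_a\in\mathcal{B}^{\mathrm{imm}}_{-1}$, whence $\overline{c_a}\in\mathcal{B}^{\mathrm{imm}}_{-1}$ by $\iota$-invariance of $\mathcal{B}^{\mathrm{imm}}_{-1}$ (Proposition~\ref{basins_prop}(2)), i.e.\ $a\in\mathcal{H}_{-1}$. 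This yields $a(h^*)\in\overline{\mathcal{H}_{-1}}$; since $a(h^*)$ is a parabolic, hence non-hyperbolic, parameter on $\cC_k\subset\partial H$ and $\mathcal{H}_{-1}\neq H$, it follows that $a(h^*)\in\partial\mathcal{H}_{-1}$, so $\cC_k$ is visible by Definition~\ref{parameter_visibility}.
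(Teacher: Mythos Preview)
Your overall strategy matches the paper's: exploit the presence of the fixed immediate basin at the characteristic parabolic point, and use parabolic implosion to push the perturbed critical orbit into that basin. However, there is a genuine gap at the very end.

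You write: ``for any such $a$ the forward orbit of $c_a$ enters $\mathcal{B}^{\mathrm{imm}}_{-1}$ \ldots, so $c_a\in\mathcal{B}^{\mathrm{imm}}_{-1}$.'' This inference is not valid. Knowing that some iterate $N_a^{\circ m}(c_a)$ lies in $\mathcal{B}^{\mathrm{imm}}_{-1}$ only gives $c_a\in\mathcal{B}_{-1}$; the Fatou component containing $c_a$ could a priori be strictly pre-periodic, in which case $a$ lies in a capture component rather than in $\mathcal{H}_{-1}$. Since different small perturbations might then land in different capture components, you would not place $a(h^*)$ on the boundary of any single hyperbolic component other than $H$, and the visibility conclusion would not follow. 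The paper isolates exactly this issue (``we only need to show that for such parameters $a$, the critical point $c_a$ lies in $\mathcal{B}_1^{\mathrm{imm}}$'') and closes it by a connectedness argument: the tube $W_a$ wraps essentially around the outgoing cylinder, so successive $N_a^{\circ 2n}$-preimages of $W_a$ overlap and form a connected chain $\widehat W_a$ reaching back through the gate to $c_a$; this chain lies in a single Fatou component $F\subset\mathcal{B}_1$, and since $N_a^{\circ 2n}(\widehat W_a)\cap\widehat W_a\neq\emptyset$, the component $F$ is periodic, hence $F=\mathcal{B}_1^{\mathrm{imm}}$.

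A secondary point: your construction of a height $h^*\in(h_1,h_2)$ with the height-$h^*$ circle meeting $S_{a(h^*)}$ is somewhat heuristic. The paper bypasses this by using the $2n$-periodic dynamical ray $R_\theta\subset\mathcal{B}^{\mathrm{imm}}_{\pm 1}$ landing at $p_k(U_1)$. That ray is fixed by $\iota\circ N_a^{\circ n}$, so its projection to the repelling cylinder is a glide-reflection-invariant essential curve through Ecalle height $0$; a thin tubular neighborhood of it gives an explicit annulus $\widetilde W\subset S_a$ meeting the equator. This lets the paper work directly at $a(0)$, avoids the bookkeeping with $l_a,u_a,M_a$, and also supplies for free the ``wraps around the cylinder'' property of $W_a$ needed for the connectedness argument above.
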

\begin{proof}
By assumption, $p_k(U_1)\in\partial\mathcal{B}^{\mathrm{imm}}_{1}\cup\partial\mathcal{B}^{\mathrm{imm}}_{-1}$. To be specific, let us assume that $p_k(U_1)\in\partial\mathcal{B}^{\mathrm{imm}}_{1}$. We will show that $a(0)\in\partial \mathcal{H}_1$.

Since the free critical points of $N_{a(0)}$ do not lie in $\mathcal{B}^{\mathrm{imm}}_{1}$, it follows that $\partial\mathcal{B}^{\mathrm{imm}}_{1}$ is a Jordan curve. In particular, some $2n$-periodic dynamical ray $R_\theta$ (of $\mathcal{B}^{\mathrm{imm}}_{1}$) lands at $p_k(U_1)$. Note that $R_\theta$ is fixed by $\iota \circ N_{a(0)}^{\circ n}$. Let $V$ be a fundamental domain of $N_{a(0)}^{\circ 2n}$ in the repelling petal at $p_k(U_1)$. The repelling Fatou coordinates induce an isomorphism $\Psi^{\mathrm{rep}}:V\to\C/\Z$. We choose a thin tubular neighborhood $\widetilde{W}$ of $\Psi^{\mathrm{rep}}(R_\theta)$ such that
\begin{enumerate}
\item $\widetilde{W}\subset\Psi^{\mathrm{rep}}(\mathcal{B}^{\mathrm{imm}}_1)$, and

\item $\widetilde{W}$ is invariant under the automorphism $\zeta\mapsto\overline{\zeta}+\frac{1}{2}$ $(\textrm{mod}\ \Z)$ of $\C/\Z$.
\end{enumerate}
In particular, it follows from the construction that $\widetilde{W}$ is an annulus in the homotopy class of $\R/\Z$, and it intersects $\{0\}\times\R/\Z$. Setting $W:=(\Psi^{\mathrm{rep}})^{-1}(\widetilde{W})$, we have that $W\subset \mathcal{B}^{\mathrm{imm}}_1$, and $W$ is ``$\iota \circ N_{a(0)}^{\circ n}$-invariant''.

\begin{comment}
Let us choose two points $z_1, z_2$ on $R_\theta$ such that $N_{a(0)}^{\circ 2n}(z_2)=z_1$. Then there exists a fundamental domain $V$ of $N_{a(0)}^{\circ 2n}$ in the repelling petal such that $z_1$ and $z_2$ lie on the two boundary components of $V$. We can choose a thin enough tubular neighborhood $W$ of $V\cap R_\theta$ such that
\begin{enumerate}
\item $W\subset \mathcal{B}^{\mathrm{imm}}_1$;

\item if $z\in W$ and $\iota \circ N_{a(0)}^{\circ n}(z)\in V$, then $\iota \circ N_{a(0)}^{\circ n}(z)\in W$; and
\item the boundary curve of $W$ containing $z_2$ maps to the boundary curve of $W$ containing $z_1$ under $N_{a(0)}^{\circ 2n}$.
\end{enumerate}

$\widetilde{W}$ is an annulus in the homotopy class of $\R/\Z$ that is invariant under $\zeta\mapsto \overline{\zeta}+1/2$. Hence the projection of $W$ intersects $\{0\}\times\R/\Z$.
\end{comment}

\begin{figure}
\begin{tikzpicture}
\node[anchor=south west,inner sep=0] at (0,0) {\includegraphics[scale=0.5]{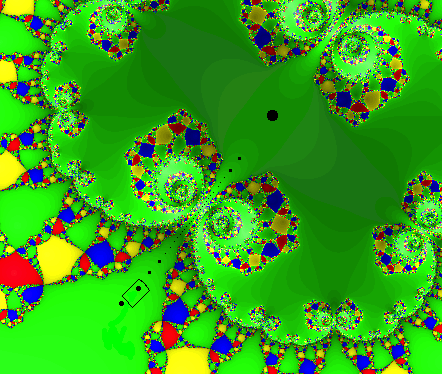}};
\node at (2,.5) {$W_a$};
\draw [->] (2,.7) -- (2.35,1.3);
\node at (5.1,4.5) {$c_a$};
\end{tikzpicture}
\caption{The $N_a^{\circ 2n}$-orbit of $c_a$ exits through the gate and lands in $W_a$, which is contained in the basin $\mathcal{B}_1$.}
\label{perturbed_dynamical_visible_eggbeater}
\end{figure}

It is known that the basin of attraction $\mathcal{B}_1$ can not get too small when $a(0)$ is perturbed a little bit (compare \cite[Theorem 6.1(a)]{D2}). Thus for sufficiently small perturbations $a$ of $a(0)$, there exists a fundamental domain $V_a$ of $N_{a}^{\circ 2n}$ in the outgoing domain $V_a^{\mathrm{out}}$ and a connected set $W_a\subset V_a$ such that
\begin{enumerate}
\item $W_a\subset\mathcal{B}_1$, and

\item the projection of $W_a$ into the outgoing cylinder is invariant under the automorphism $\zeta\mapsto\overline{\zeta}+\frac{1}{2}$ $(\textrm{mod}\ \Z)$ of $\C/\Z$.
\end{enumerate}

Clearly, the projection of $W_a$ into the outgoing cylinder intersects $\{0\}\times\R/\Z$. Since the critical point $c_a$ and the Fatou coordinates depend continuously on the parameter, there exist parameters $a$ arbitrarily close to $a(0)$ such that the forward orbit of $c_a$ hits $W_a$ (compare Figure \ref{perturbed_dynamical_visible_eggbeater}). Clearly, such a parameter $a$ either lies in a capture component or in $\mathcal{H}_1$.

To finish the proof, we only need to show that for such parameters $a$, the critical point $c_a$ lies in $\mathcal{B}_1^{\mathrm{imm}}$. It easily follows from our construction that the connected component $\widehat{W}_a$ of $\displaystyle\bigcup_{k=0}^\infty N_a^{\circ (-2nk)}(W_a)$ containing $c_a$ also contains $W_a$. It follows that $N_a^{\circ 2n}(c_a)\in N_a^{\circ 2n}(\widehat{W}_a)\cap \widehat{W}_a$; i.e. $N_a^{\circ 2n}(\widehat{W}_a)\cap \widehat{W}_a\neq\emptyset$. Moreover, as $W_a\subset\mathcal{B}_1$, it follows that $\widehat{W}_a$ is contained in a single periodic component of $\mathcal{B}_1$. So, $c_a\in \widehat{W}_a\subset\mathcal{B}_1^{\mathrm{imm}}$.

To summarize, we have showed that $a(0)\in\cC_k$ lies on the boundary of $\mathcal{H}_1$. Therefore, $\cC_k$ is visible.
\end{proof}

Combining Lemma \ref{capture_components_accumulate_2} and (the proof of) Lemma \ref{visible_arc}, we have the following.

\begin{corollary}\label{visibility_principal}
A parabolic arc $\cC_k$ on the boundary of a Tricorn component $H$ is visible if and only if $\cC_k$ intersects $\partial\mathcal{H}_1\cup\partial\mathcal{H}_{-1}$.
\end{corollary}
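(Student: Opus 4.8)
The plan is to read off the equivalence from Lemma \ref{capture_components_accumulate_2} together with the proof of Lemma \ref{visible_arc}, organizing everything around the dichotomy that the dynamical co-root $p_k(U_1)$ is, once and for all, either visible or invisible. This dichotomy is parameter-independent because, as observed just before Lemma \ref{visibility_parabolic}, the topology of the Julia set does not change throughout $\overline{H}$ away from the cusps. First I would dispose of the visible case: if $p_k(U_1)$ is visible, then Lemma \ref{visible_arc} gives at once that $\cC_k$ is visible, and I would revisit the proof of that lemma to extract its sharper conclusion, namely that the critical Ecalle height $0$ parameter $a(0)$ actually lies on $\partial\mathcal{H}_1\cup\partial\mathcal{H}_{-1}$ (the proof shows $a(0)\in\partial\mathcal{H}_1$ when $p_k(U_1)\in\partial\mathcal{B}^{\mathrm{imm}}_1$, and symmetrically $a(0)\in\partial\mathcal{H}_{-1}$ in the other case). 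Since $a(0)\in\cC_k$ because $h_1<0<h_2$, this shows $\cC_k$ meets $\partial\mathcal{H}_1\cup\partial\mathcal{H}_{-1}$, so in this case both sides of the asserted equivalence hold.

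Next I would treat the invisible case. Here Lemma \ref{capture_components_accumulate_2} gives immediately that $\cC_k$ is invisible, so the left side of the equivalence fails; I must then verify that the right side fails too, i.e.\ that $\cC_k$ avoids $\partial\mathcal{H}_1\cup\partial\mathcal{H}_{-1}$ not merely along its non-bifurcating sub-arc $a((h_1,h_2))$ (which is exactly the content of invisibility) but along all of $\cC_k$, including the two bifurcating sub-arcs $a((-\infty,h_1])$ and $a([h_2,+\infty))$. I would argue this from Proposition \ref{bifurcation_arcs}: each bifurcating sub-arc is a shared boundary arc between $H$ and a Mandelbrot component of period $n$, and, a parabolic arc being locally two-sided, no hyperbolic component other than $H$ and that Mandelbrot component is adjacent to $\cC_k$ along such a sub-arc; since $\mathcal{H}_1$ and $\mathcal{H}_{-1}$ are principal components, not Mandelbrot components, they cannot touch $\cC_k$ there. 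Combining the two cases then yields the stated ``if and only if''.

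The main obstacle is precisely this last step: excluding that a bifurcating sub-arc of $\cC_k$ secretly lies on $\partial\mathcal{H}_1$ or $\partial\mathcal{H}_{-1}$ in the invisible case. If the ``locally two-sided'' argument seems too soft, the fallback is a direct parabolic-perturbation argument mirroring Lemma \ref{visible_arc}: were there a parameter $a^{*}\in\cC_k\cap\partial\mathcal{H}_1$, pick $a_j\to a^{*}$ with $a_j\in\mathcal{H}_1$, so that both free critical points of $N_{a_j}$ lie in $\mathcal{B}^{\mathrm{imm}}_1$; running the non-shrinking-of-basins estimate \cite[Theorem 5.1(a)]{D2} and the transit-map/egg-beater analysis in the direction of the limit should force the characteristic parabolic point $p_k(U_1)$ of $N_{a^{*}}$ onto $\partial\mathcal{B}^{\mathrm{imm}}_1$, contradicting invisibility of $p_k(U_1)$ (and, incidentally, handing back directly the right-to-left implication, since then Lemma \ref{visible_arc} makes $\cC_k$ visible). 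Finally I would sanity-check the low-period situation, e.g.\ $n=1$ with $H$ of period $2$, to make sure the ``Mandelbrot component of period $n$'' appearing in Proposition \ref{bifurcation_arcs} is genuinely of that type and cannot coincide with $\mathcal{H}_{\pm1}$.
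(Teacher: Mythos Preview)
Your approach is correct and is essentially the paper's own argument, just reorganized: the paper proves the two implications separately (the ``if'' direction is declared evident, and the ``only if'' direction chains the contrapositive of Lemma~\ref{capture_components_accumulate_2} with the proof of Lemma~\ref{visible_arc}), while you run a case split on the dynamical visibility of $p_k(U_1)$ and verify that both sides of the biconditional agree in each case. The underlying content---Lemma~\ref{capture_components_accumulate_2}, Lemma~\ref{visible_arc}, and the observation that the proof of the latter actually places $a(0)$ on $\partial\mathcal{H}_1\cup\partial\mathcal{H}_{-1}$---is identical.

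One point worth noting: your careful treatment of the bifurcating sub-arcs in the invisible case is something the paper glosses over with ``Evidently''. The paper's one-line ``if'' direction tacitly assumes that any intersection of $\cC_k$ with $\partial\mathcal{H}_{\pm1}$ must lie on the non-bifurcating sub-arc $a((h_1,h_2))$; your locally-two-sided argument (that along a bifurcating sub-arc a small neighborhood is exhausted by $H$ and the adjacent Mandelbrot component, leaving no room for $\mathcal{H}_{\pm1}$) makes this explicit and is the right justification. Your fallback perturbation argument is unnecessary here but would also work.
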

\begin{proof}
Evidently, if $\cC_k$ intersects $\partial\mathcal{H}_1\cup\partial\mathcal{H}_{-1}$, then $\cC_k$ is visible.

The opposite implication, although not automatic, follows from our previous work. First note that by Lemma \ref{capture_components_accumulate_2}, visibility of the parabolic arc $\cC_k$ implies visibility of the dynamical co-root $p_k(U_1)$ in the dynamical plane of the center $a_0$ (of $H$). It now follows from the proof of Lemma \ref{visible_arc} that $\cC_k$  intersects $\partial\mathcal{H}_1\cup\partial\mathcal{H}_{-1}$.
\end{proof}

\begin{corollary}[Characterization of Invisible Tricorn Components]\label{visibility_parameter_dynamical}
The parabolic arc $\cC_k\subset\partial H$ is visible (equivalently, it intersects $\partial\mathcal{H}_1\cup\partial\mathcal{H}_{-1}$) if and only if the dynamical co-root $p_k(U_1)$ is visible in the dynamical plane of the center of the Tricorn component $H$. In particular, a Tricorn component $H$ is invisible if and only if all dynamical co-roots are invisible in the dynamical plane of the center of $H$.
\end{corollary}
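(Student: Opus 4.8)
The plan is to assemble the corollary from the parabolic-implosion machinery already in place, since the genuine content is contained in Lemmas \ref{capture_components_accumulate_2} and \ref{visible_arc}. The first point to record is that, by Definition \ref{dynamical_visibility}, a dynamical co-root $p_k(U_1)$ on $\partial U_1$ is \emph{either} visible \emph{or} invisible, with no third possibility (a co-root fails the equivalent conditions of Lemma \ref{visible} precisely when it is invisible). Consequently, Lemma \ref{capture_components_accumulate_2} and Lemma \ref{visible_arc} fit together: the former gives ``$p_k(U_1)$ invisible $\implies$ $\cC_k$ invisible,'' and the latter gives ``$p_k(U_1)$ visible $\implies$ $\cC_k$ visible.'' Since both dichotomies ($\cC_k$ visible/invisible, and $p_k(U_1)$ visible/invisible) are genuinely exhaustive, these two implications combine into the biconditional ``$\cC_k$ is visible $\iff$ $p_k(U_1)$ is visible in the dynamical plane of the center $a_0$ of $H$.''

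Next I would invoke Corollary \ref{visibility_principal}, which identifies visibility of the arc $\cC_k$ with the condition $\cC_k\cap(\partial\mathcal{H}_1\cup\partial\mathcal{H}_{-1})\neq\emptyset$. Substituting this equivalence into the biconditional of the previous paragraph yields the full first assertion of the corollary: the three conditions --- $\cC_k$ visible, $\cC_k$ meeting $\partial\mathcal{H}_1\cup\partial\mathcal{H}_{-1}$, and $p_k(U_1)$ visible in the dynamical plane of the center --- are mutually equivalent.

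For the ``in particular'' clause, recall from Theorem \ref{tricorn_boundary} and the discussion preceding it in Section \ref{tricorn_comp} that $\partial H$ carries exactly three parabolic arcs $\cC_1,\cC_2,\cC_3$, where $\cC_k$ is the arc along which the attracting cycle merges with the self-conjugate repelling cycle $\{p_k(U_i^a)\}_{i=1}^{2n}$; in particular the three arcs are in natural bijection with the three dynamical co-roots $p_1(U_1),p_2(U_1),p_3(U_1)$ on $\partial U_1$. By Definition \ref{parameter_visibility}, $H$ is invisible exactly when every $\cC_k$ ($k=1,2,3$) is invisible, which by the first part of the corollary holds exactly when every $p_k(U_1)$ is invisible in the dynamical plane of the center of $H$. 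That is precisely the claimed statement. I do not expect a substantive obstacle here: all the analytic work --- the eggbeater/transit-map perturbation arguments --- has already been carried out in Lemmas \ref{capture_components_accumulate_2} and \ref{visible_arc}, and what remains is only the bookkeeping of matching the indexing of the three arcs with that of the three co-roots.
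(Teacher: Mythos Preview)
Your proposal is correct and matches the paper's approach: the paper states this corollary immediately after Corollary~\ref{visibility_principal} with no separate proof, treating it as the evident combination of Lemma~\ref{capture_components_accumulate_2}, Lemma~\ref{visible_arc}, and Corollary~\ref{visibility_principal}. Your write-up simply makes explicit the contrapositive bookkeeping that the paper leaves implicit.
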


\begin{figure}[ht!]
\begin{center}
\includegraphics[scale=0.263]{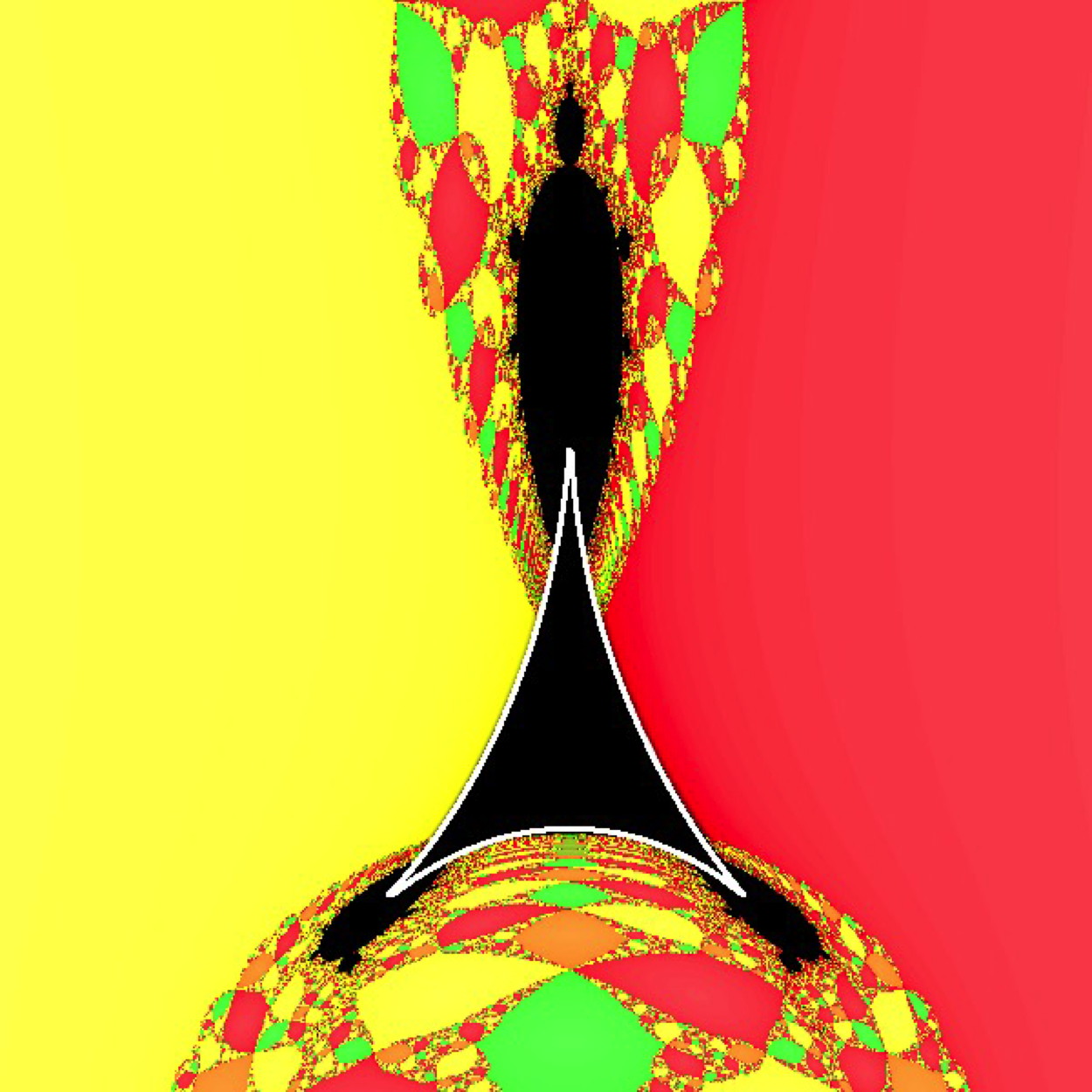}\ \includegraphics[scale=0.342]{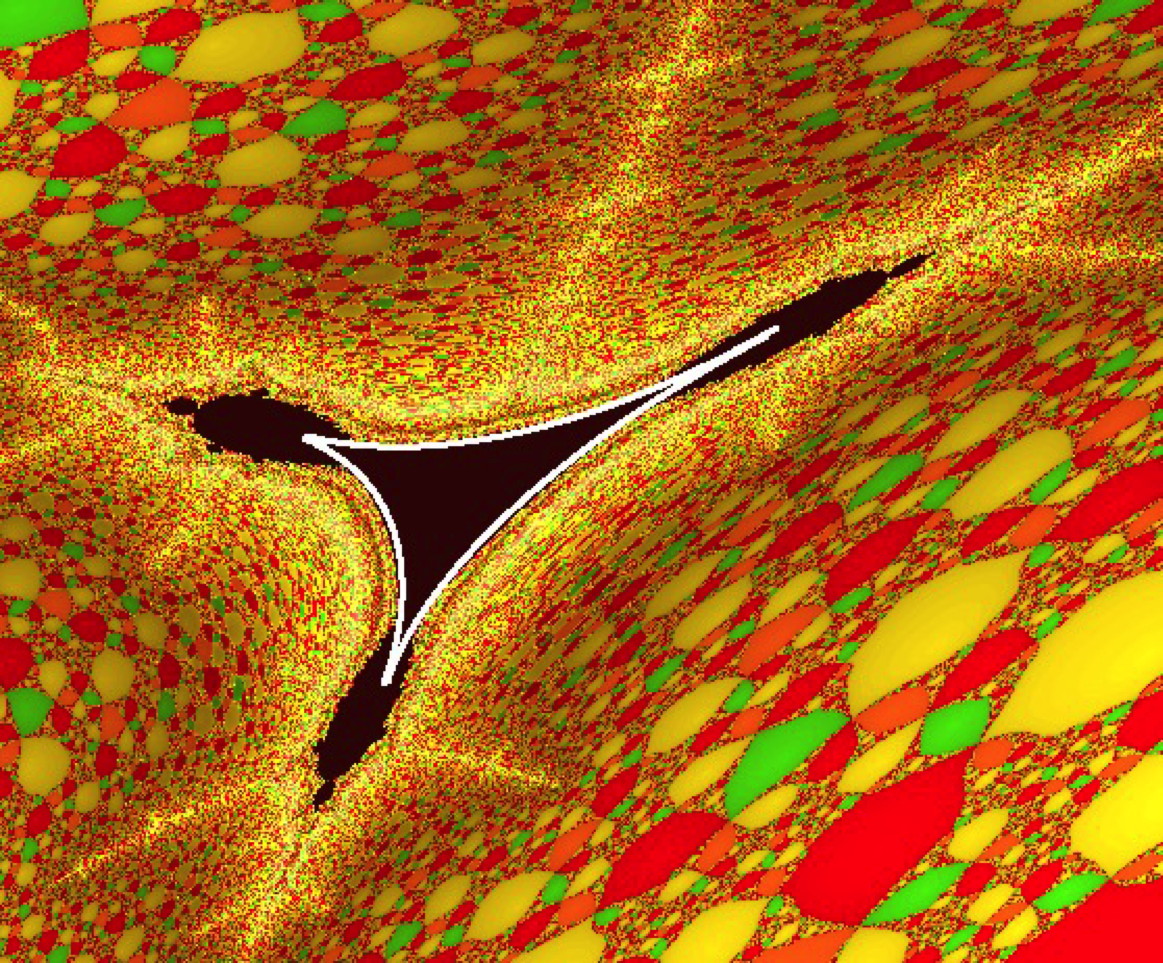}
\caption{Left: A Tricorn component (enclosed by white parabolic arcs) with an invisible parabolic arc. Right: A blow-up of the parameter plane near the invisible parabolic arc (on the left) shows an invisible Tricorn component (enclosed by white parabolic arcs).}
\label{inaccessible}
\end{center}
\end{figure}

\bigskip

\section{Proof of Theorem \ref{inaccessible_arcs}}\label{sec_mainthm_1}

We are now ready to prove one of the main theorems of this paper. The proof essentially consists of two steps. The first step is to show the existence of infinitely many Tricorn components each of which has an invisible parabolic arc. This follows from our construction of postcritically finite maps in Section \ref{sec:ConstructingPCFmaps}. Note that these components, whose centers were constructed in Proposition \ref{thm_NewtonExists}, are not invisible because each of them has two visible parabolic arcs on its boundary (coming from the two visible dynamical co-roots in the dynamical plane of their centers). In order to find invisible Tricorn components, we need to study the local topology of the parameter space near invisible parabolic arcs. This is where the results of Section \ref{sec_implosion} come into play. Indeed, thanks to Lemma \ref{Tricorns_accumulate}, there exist infinitely many Tricorn components ``near" each invisible parabolic arc. A straightforward topological argument using the criterion of visibility of parabolic arcs (obtained in Corollary \ref{visibility_principal}) now shows that most of these nearby Tricorn components are invisible (compare Figure \ref{inaccessible}).

\begin{proof}[Proof of Theorem \ref{inaccessible_arcs}]
By Proposition \ref{thm_NewtonExists} and Corollary \ref{inv_co_root}, each $N_{a_n}$ is a postcritically finite map (with a self-conjugate $2n$-periodic superattracting cycle) in the family $\mathcal{N}_4^*$ with two visible dynamical co-roots $p_1(U_1^{a_n})$ and $p_2(U_1^{a_n})$, and an invisible dynamical co-root $p_3(U_1^{a_n})$. Let $a_n$ be the center of the Tricorn component $H^{(n)}$. By Corollary \ref{visibility_parameter_dynamical}, the boundary of the Tricorn component $H^{(n)}$ has two visible parabolic arcs and an invisible parabolic arc. Let us fix any such hyperbolic component $H^{(n)}$, and call its invisible parabolic arc $\cC^{(n)}$. By Lemma~\ref{capture_components_accumulate_1}, capture components accumulate on the invisible parabolic arc $\cC^{(n)}$.

By Lemma \ref{Tricorns_accumulate}, there is a sequence $\{H_r^{(n)}\}_r$ of Tricorn components whose closures accumulate on $\cC^{(n)}$. Suppose that for a fixed $n>1$, infinitely many of these hyperbolic components $H_r^{(n)}$ are visible. According to Corollary \ref{visibility_principal}, this implies that there exist infinitely many components $H_{r_s}^{(n)}$ such that $\partial H_{r_s}^{(n)}\cap(\partial\mathcal{H}_1\cup\partial\mathcal{H}_{-1})\neq\emptyset$, for $s\in\N$. For each $s\geq 1$, let us choose $b_{r_s}^{(n)}\in\partial H_{r_s}^{(n)}\cap(\partial\mathcal{H}_1\cup\partial\mathcal{H}_{-1})$. By Lemma \ref{Tricorns_accumulate}, the sequence $\{b_{r_s}^{(n)}\}_s$ accumulates on $\cC^{(n)}$. It follows that $\cC^{(n)}\cap(\partial\mathcal{H}_1\cup\partial\mathcal{H}_{-1})\neq\emptyset$, which contradicts the fact that $\cC^{(n)}$ is an invisible parabolic arc. This contradiction proves that in the neighborhood of each $\cC^{(n)}$, all but (possibly) finitely many hyperbolic components $H_r^{(n)}$ are invisible. The proof is now complete.
\end{proof}

\begin{huge}\part{The family $\mathcal{A}_3$}\label{part_two}\end{huge}
\bigskip

In this part, we will look at the parameter space of antipode preserving cubic rational maps, and prove Theorem \ref{inaccessible_arcs_1} concerning local topology of the parameter space near tongues and Tricorn components. While discussing the parameter space of $\mathcal{A}_3$, we will use the same notations used in \cite{BBM1}. Since many important combinatorial and topological properties of the dynamics and parameter space of the family $\mathcal{A}_3$ have been studied in \cite{BBM1,BBM3}, we will only review the relevant aspects and refer the readers to the above-mentioned papers for the proofs.

\bigskip

\section{Hyperbolic components}\label{hyperbolic_1}

Note that each $f_q$ has two fixed critical points at $0$ and $\infty$, and two mutually antipodal critical points (different from $0$ and $\infty$ for $q\neq 0$). These two ``free'' critical points of $f_q$ are denoted by $c_0(q)$ and $c_{\infty}(q)$ such that $c_0(q)$ is a positive multiple of $q$, and $c_\infty(q)$ is a negative multiple of $q$ (see \cite[p. 10]{BBM1}).

The classification of hyperbolic components of the family $\mathcal{A}_3$ is similar to that for the family $\mathcal{N}_4^*$ (compare \cite[Lemma 2.5]{BBM1}).

\textbf{Principal Hyperbolic component}: In the principal hyperbolic component $\mathcal{H}_0$, the free critical point $c_0(q)$ (respectively $c_\infty(q)$) lies in the immediate basin of attraction of $0$ (respectively $\infty$).

\textbf{Capture components}: The capture components are characterized by the fact that one of the free critical points lies in a pre-periodic Fatou component of the basin of $0$, and the other belongs to a pre-periodic Fatou component of the basin of $\infty$.

\textbf{Mandelbrot components}: In the Mandelbrot components, the maps have two distinct antipodal attracting cycles such that one of them attracts $c_0$, and the other attracts $c_{\infty}$. Due to the antipodal symmetry, both these attracting cycles have common period $n$. We will refer to such a component as a Mandelbrot component of period $n$.

\textbf{Tricorn components}: In a Tricorn component, each map has exactly one self-antipodal attracting cycle (of even period $2n$), and this cycle attracts both the critical points $c_0$ and $c_{\infty}$. Let $q_0$ be the center of a Tricorn-type hyperbolic component $H$ of period $2n$. Let us label the periodic (super-attracting) Fatou components $\{U_1^q, U_2^q, \cdots, U_{2n}^q\}$ so that $c_0(q_0)$ is contained in $U_1^q$. By symmetry, it follows that $c_{\infty}(q_0)$ belongs to the antipodal Fatou component $\eta(U_1^q)$. By an argument similar to the one used for the family $\mathcal{N}_4^*$, we have that $f_{q_0}^{\circ n}(U_i^q)=\eta(U_i^q)$ for each $i$ in $\{1, 2, \cdots, 2n\}$. In particular, $f_{q_0}^{\circ n}(c_0(q_0))=c_{\infty}(q_0)$.

The Tricorn components in the parameter space of $\mathcal{A}_3$ come in two different flavors. In addition to the bounded Tricorn components, there are unbounded Tricorn components that are called \emph{tongues} (see \cite[Theorem~6.1]{BBM3} for a proof of existence and dynamical classification of tongues).

Finally, the following result is a consequence of the general theory of hyperbolic components of rational maps \cite[Theorem 7.13, Theorem 9.3]{MP}.

\begin{proposition}\label{unique_center_1}
Every hyperbolic component in the parameter space of the family $\mathcal{A}_3$ is simply connected and has a unique center (i.e. a postcritically finite parameter).
\end{proposition}

\bigskip

\section{Tongues and bounded tricorn components}\label{tri_comp_antipode}

Throughout this section, we assume that $H$ is a tongue or a bounded Tricorn component of period $2n$.

As in the family $\mathcal{N}_4^*$, antiholomorphic dynamics plays a key role in studying the dynamics of maps in the Tricorn components of $\mathcal{A}_3$. Since $f_q$ commutes with the complex conjugation map $\eta$, we have that $f_q^{\circ 2n}=(\eta\circ f_q^{\circ n})\circ(\eta\circ f_q^{\circ n})$. Hence the first return map $f_q^{\circ 2n}$ of any $2n$-periodic Fatou component is the second iterate of the first antiholomorphic return map $\eta\circ f_q^{\circ n}$.

\begin{lemma}[Indifferent Dynamics on the Boundary of Tricorn Components]\label{LemTricornIndiffDyn_1}
The boundary of a Tricorn component consists
entirely of parameters having a self-antipodal parabolic cycle of multiplier $+1$. In suitable
local conformal coordinates, the first return map of some (neighborhood of a) parabolic point of such a map has the form $z\mapsto z+z^{r+1}+\ldots$ with $r\in\{1,2\}$.
\end{lemma}
\begin{proof}
The proof of Lemma \ref{LemTricornIndiffDyn} applies mutatis mutandis to this setting.
\end{proof}

This leads to the following definition.

\begin{definition}[Parabolic Cusps]\label{DefCusp_1}
A parameter $q$ on $\partial H$ is called a {\em parabolic cusp point} if it has a self-antipodal parabolic
cycle such that $r=2$ in the previous lemma. Otherwise, it is called a \emph{simple} parabolic parameter.
\end{definition}

As in Lemma \ref{normalization of fatou}, for a simple parabolic parameter $q\in\partial H$, the first antiholomorphic return map $\eta\circ f_q^{\circ n}$ of an attracting petal is conformally conjugate to the map $\zeta\mapsto\overline{\zeta}+1/2$ via the Fatou coordinate $\psi^{\mathrm{att}}$.

The pre-image of the real line (which is fixed by $\zeta\mapsto\overline{\zeta}+1/2$) under $\psi^{\mathrm{att}}$ is called the attracting equator. By definition, the attracting equator is invariant under $\eta\circ f_q^{\circ n}$. The imaginary part of $\psi^{\mathrm{att}}$ is called the \emph{Ecalle height}.

For any simple parabolic parameter $q\in\partial H$, we will call the Fatou component $U_1^q$ containing the critical point $c_0(q)$ the \emph{characteristic Fatou component}. Clearly, there is a parabolic point $z_1$ on the boundary of $U_1^q$ such that the $f_q^{\circ 2n}$-orbit of each point in $U_1^q$ converges to $z_1$. We will refer to $z_1$ as the \emph{characteristic parabolic point}. We will mainly work with $U_1^q$, and its normalized attracting Fatou coordinate (as above) $\psi^{\mathrm{att}}$. The \emph{critical Ecalle height} of $f_q$ is defined as $$h(f_q):=\im(\psi^{\mathrm{att}}(c_0(q))).$$

The next proposition, which is similar to \cite[Theorem 3.2]{MNS}, shows the existence of real-analytic arcs of simple parabolic parameters on the boundaries of Tricorn components.

\begin{proposition}[Parabolic Arcs]\label{parabolic arcs_1}
Let $\widetilde{q}$ be a parameter such that $f_{\widetilde{q}}$ has a self-antipodal simple parabolic cycle. Then $\widetilde{q}$ is on a parabolic arc in the  following sense: there  exists an injective real-analytic arc $\cC$ of simple parabolic parameters $q(h)$ (for $h\in\mathbb{R}$) with quasiconformally equivalent dynamics of which $\widetilde{q}$ is an interior point, and the critical Ecalle height of $f_{q(h)}$ is $h$. In particular, each $f_{q(h)}$ has a self-antipodal simple parabolic cycle.
\end{proposition}

The following result was proved in \cite[Theorem 6.13]{BBM1}.

\begin{proposition}\label{jordan_1}
Let $H$ be a Tricorn component of period $2n$, and $q$ be some parameter in $H$ (or some simple parabolic parameter on $\partial H$). Then the boundary $\partial U_i^q$ of each $2n$-periodic Fatou component of $f_q$ is a Jordan curve.
\end{proposition}

For any $q\in H$ (respectively, for a simple parabolic parameter $q$ on $\partial H$), we label the $2n$-periodic self-antipodal cycle of attracting (respectively parabolic) Fatou components of $f_{q}$ as $U_1^q$, $U_2^q$, $\cdots$, $U_{2n}^q$ such that $U_1^q$ contains $c_0(q)$. There are exactly three fixed points of the degree two first antiholomorphic return map $\iota\circ f_{q}^{\circ n}$ on the boundary of each $U_i^q$, and we call them $p_1(U_i^q)$, $p_2(U_i^q)$, $p_3(U_i^q)$. These are also precisely the fixed points of the first holomorphic return map $f_{q}^{\circ 2n}$ on $\partial U_i^q$.

\begin{definition}[Roots and Co-roots]\label{root_co_root_dyn_1}
By definition, $p_k(U_i^q)$ is a dynamical \emph{root point} if there exists $i'\in \{1, 2, \cdots, 2n\}$ with $i'\neq i$ and $k'\in\{1,2,3\}$ such that $p_{k'}(U_{i'}^q)=p_k(U_i^q)$; i.e. if two distinct $2n$-periodic Fatou components touch at $p_k(U_i^q)$. In this case, two of the three self-antipodal cycles $\{p_k(U_i^q)\}_i$ ($k=1, 2, 3$) coincide.

Otherwise, $p_k(U_i^q)$ is called a dynamical \emph{co-root}. In this case, all three self-antipodal cycles $\displaystyle\{p_k(U_i^q)\}_{i=1}^{2n}$ (for $k=1, 2, 3$) are disjoint.
\end{definition}

The points $p_1(U_i^q)$, $p_2(U_i^q)$, $p_3(U_i^q)$ can be followed continuously as fixed points of $\eta\circ f_q^{\circ n}$ throughout the union of $H$ and the parabolic arcs on $\partial H$, and the topological structure of the Julia set remains unchanged along the way. On each parabolic arc on $\partial H$, the unique self-antipodal attracting cycle merges with the self-antipodal repelling cycle $\{p_k(U_i^q)\}_{i=1}^{2n}$, for some fixed $k\in \{1, 2, 3\}$, forming a simple parabolic cycle. Therefore, there are three distinct ways in which a simple parabolic cycle can be formed on $\partial H$. It follows that there are three parabolic arcs $\cC_1$, $\cC_2$ and $\cC_3$ on $\partial H$ satisfying the property that the self-antipodal parabolic cycle of any $q\in \cC_k$  (where $k\in \{1, 2, 3\}$)  is formed by the merger of the self-antipodal attracting cycle with the self-antipodal repelling cycle $\{p_k(U_i^q)\}_{i=1}^{2n}$. Finally, the cusp points (when they exist) on $\partial H$ are characterized by the merger of two of the three self-antipodal cycles $\{p_k(U_i^q)\}_i$ (for $k=1, 2, 3$) with the unique self-antipodal attracting cycle.

\begin{figure}[ht!]
\begin{tikzpicture}
\node[anchor=south west,inner sep=0] at (0,0) {\includegraphics[width=0.48\linewidth]{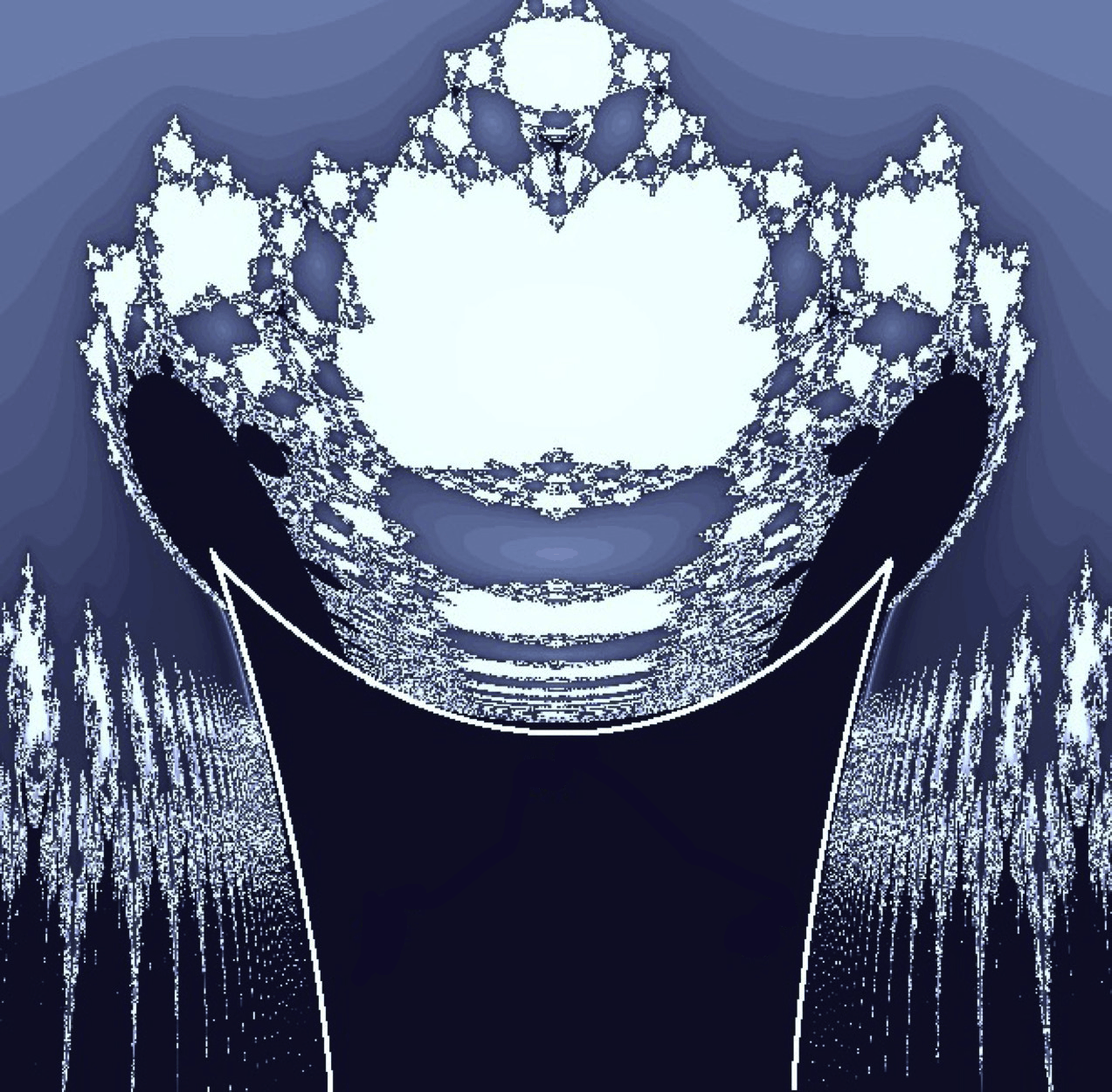}\ \includegraphics[width=0.47\linewidth]{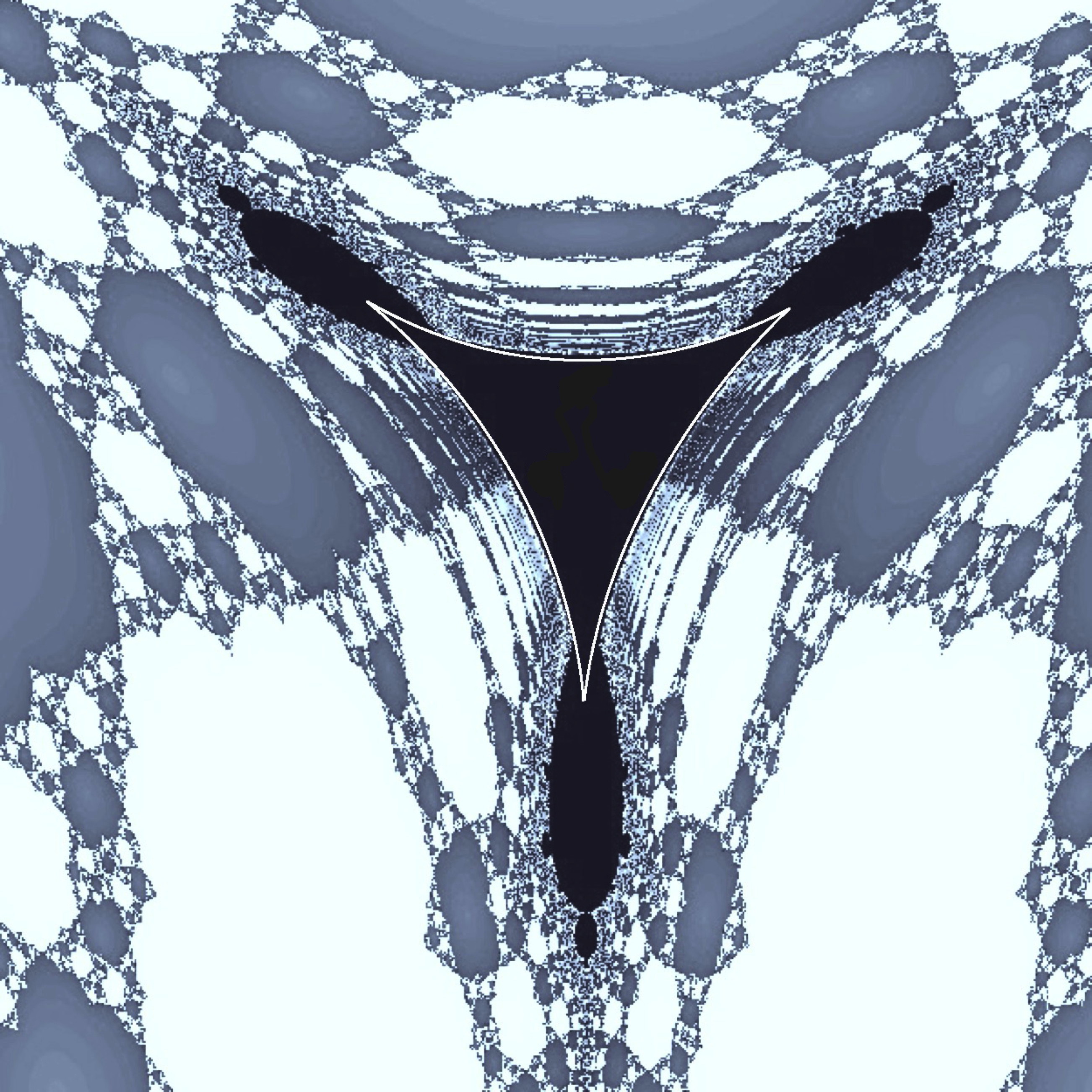}};
\node at (2,0.9) {\textcolor{white}{$\cC_1$}};
\node at (3.2,1.7) {\textcolor{white}{$\cC_2$}};
\node at (4.2,0.9) {\textcolor{white}{$\cC_3$}};
\node at (9.15,3.6) {\textcolor{white}{\begin{scriptsize}$\cC_1$\end{scriptsize}}};
\node at (9.47,3.9) {\textcolor{white}{\begin{scriptsize}$\cC_2$\end{scriptsize}}};
\node at (9.8,3.6) {\textcolor{white}{\begin{scriptsize}$\cC_3$\end{scriptsize}}};
\end{tikzpicture}
\caption{Left: A tongue component of period $2$ with an invisible co-root parabolic arc $\cC_2$. The parabolic arcs $\cC_1$ and $\cC_3$ contain bare regions. Right: A bounded Tricorn component with all three co-root parabolic arcs invisible.}
\label{inaccessible_2}
\end{figure}

\begin{figure}
\begin{tikzpicture}
\node[anchor=south west,inner sep=0] at (0,0) {\includegraphics[scale=0.285]{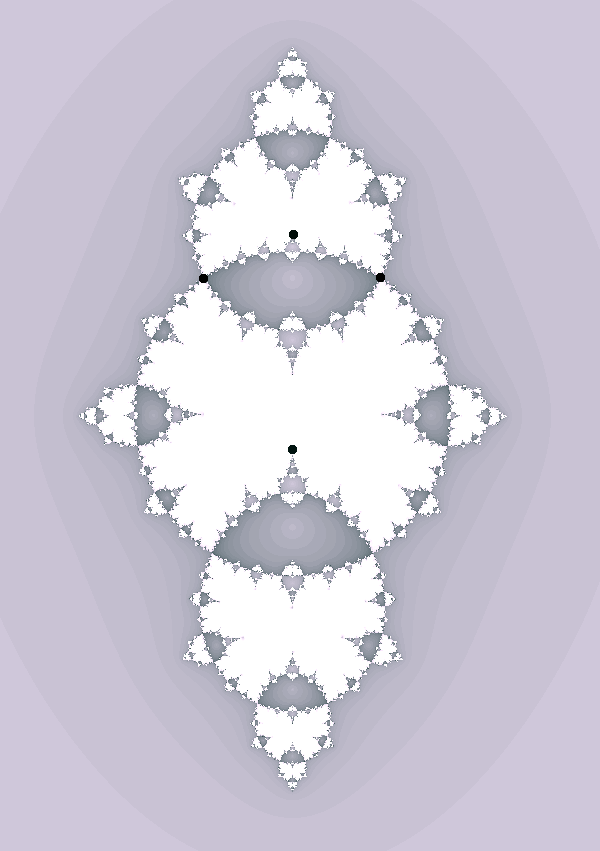}\ \includegraphics[scale=0.539]{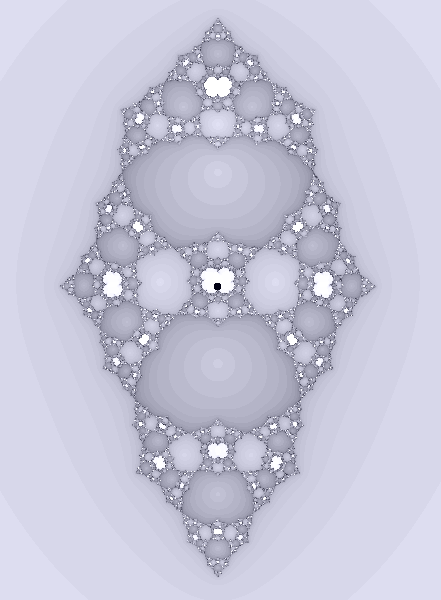}};

\node at (.6,6.5) {$p_3(U_2)$};
\node at (.73,6) {$=p_1(U_1)$};
\draw [->] (1.5,6) -- (2.02,5.8);

\node at (5.3,6.5) {$p_1(U_2)$};
\node at (5.3,6) {$=p_3(U_1)$};
\draw [->] (4.7,6.3) -- (3.9,5.8);

\node at (5,7.3) {$p_2(U_2)$};
\draw [->] (4.5,7) -- (3.0,6.2);

\node at (4.8,3.1) {$p_2(U_1)$};
\draw [->] (4.5,3.3) -- (3.0,4.06);

\node at (2.6,6.6) {$U_2$};
\node at (2.2,4) {$U_1$};

\node at (9,5.5) {$U_1$};
\draw [->] (9,5.33) -- (9.15,4.6);

\node at (9.5,3.5) {$p_2(U_1)$};
\draw [->] (9.5,3.7) -- (9.25,4.4);
\end{tikzpicture}
\caption{Left: The dynamical plane of a parameter on the co-root arc $\cC_2$ of a tongue of period $2$. The only dynamical co-root on the boundary of $U_1$ is $p_2(U_1)$ (which is also the characteristic parabolic point for parameters on $\cC_2$), and it is invisible. The dynamical roots $p_1(U_1)$ and $p_3(U_1)$ on $\partial U_1$ are visible. Right: The dynamical plane of a parameter on a co-root arc $\cC_2$ of a bounded Tricorn component. For this parameter, each dynamical co-root $p_k(U_1)$ is invisible. In particular, the characteristic parabolic point $p_2(U_1)$ is invisible (marked in black).}
\label{invisible_dynamical_antipode}
\end{figure}

\begin{definition}[Root and Co-root Arcs]\label{root_co_root}
A parabolic arc $\cC_k$ on $\partial H$ is called a root arc if for any parameter $q$ on $\cC_k$, the characteristic parabolic point $p_k(U_1^q)$ of $f_q$ is a root point of the characteristic Fatou component. Otherwise, $\cC_k$ is called a co-root arc.
\end{definition}

\begin{remark}
The above definition makes sense because all parameters on a given parabolic arc have quasi-conformally conjugate dynamics.
\end{remark}

Following \cite[\S 6]{BBM3}, we now describe the principal differences between tongues and bounded Tricorn components. To ease notations, we will now drop the superscript $`q\textrm'$ from $U_j^q$.

\textbf{Tongues}: For a tongue component of period $2n$, the fixed points of $\eta\circ f_q^{\circ n}$ on the boundaries of $U_i$ can be numbered such that $p_3(U_j) = p_1(U_{j+1})$, for $j\in\Z_{2n}$. Topologically speaking, the component $U_j$ only touches its neighboring components $U_{j-1}$ and $U_{j+1}$, and hence, $U_1,\cdots, U_{2n}$ form a ring separating $0$ from $\infty$. In particular, $p_1(U_j)$ and $p_3(U_j)$ are dynamical root points, and $p_2(U_j)$ is a dynamical co-root (see Figure \ref{invisible_dynamical_antipode} (left)). It follows that $\cC_1$ and $\cC_3$ are root arcs and $\cC_2$ is a co-root arc on $\partial H$. The parabolic arc $\cC_2$ is bounded, and it meets $\cC_1$ (respectively, $\cC_3$) at a cusp parameter such that in the corresponding dynamical plane, the attracting periodic point in $U_j$ merges with the repelling periodic points $p_1(U_j)$ and $p_2(U_j)$ (respectively, with the repelling periodic points $p_3(U_j)$ and $p_2(U_j)$). On the other hand, if the parabolic arcs $\cC_1$ and $\cC_3$ were to meet in the finite parameter plane, then the attracting periodic point in $U_j$ would merge with both $p_1(U_j)$ and $p_3(U_j)$. However, $p_1(U_j)$ and $p_3(U_j)$ are distinct points of a common self-antipodal period $2n$ orbit; thus, if $p_1(U_j)$ and $p_3(U_j)$ merged, then this entire orbit would collapse to a self-antipodal fixed point, which is impossible. It follows from this discussion and the description of the boundary of a Tricorn component given in Lemma~\ref{LemTricornIndiffDyn_1} that $\cC_1$ and $\cC_3$ are unbounded arcs. In particular, every tongue component is unbounded.

\textbf{Bounded Tricorn Components}: For a bounded Tricorn component of period $2n$, all the repelling cycles $\{p_k(U_i)\}$ are disjoint. In particular, each $p_k(U_1)$ is a dynamical co-root (see Figure \ref{invisible_dynamical_antipode} (right)). It follows that each parabolic arc $\cC_k$ on $\partial H$ is a co-root arc. Moreover, each $\cC_k$ is bounded.

We will denote the immediate basins of attraction of $0$ and $\infty$ by $\mathcal{B}^{\mathrm{imm}}_{0}$ and $\mathcal{B}^{\mathrm{imm}}_{\infty}$ respectively. The following Lemma will allow us to define a notion of visibility for dynamical co-roots (analogous to the corresponding notion for the family $\mathcal{N}_4^*$). In fact, we will use the antipodal symmetry of $\mathcal{B}^{\mathrm{imm}}_{0}$ and $\mathcal{B}^{\mathrm{imm}}_{\infty}$ to obtain a slightly stronger characterization of visibility for the family $\mathcal{A}_3$ than what was proved for the family $\mathcal{N}_4^*$ in Lemma \ref{visible}.

\begin{lemma}\label{visible_1}
Let $H$ be a tongue or a bounded Tricorn component of period $2n$, and $q$ be a parameter in $H$ or a simple parabolic parameter on $\partial H$. Let $U_1$ be the characteristic Fatou component of $f_{q}$ and $p_k(U_1)$ (for some $k\in\{1, 2, 3\}$) be a dynamical co-root on the boundary of $U_1$. The following statements about $p_k(U_1)$ are equivalent.
\begin{enumerate}
\item $p_k(U_1)\in\partial\mathcal{B}^{\mathrm{imm}}_{0}\cap\partial\mathcal{B}^{\mathrm{imm}}_{\infty}$.

\item $p_k(U_1)\in\partial\mathcal{B}^{\mathrm{imm}}_{0}\cup\partial\mathcal{B}^{\mathrm{imm}}_{\infty}$.

\item $p_k(U_1)$ lies on the boundary of a Fatou component other than $U_1$.

\end{enumerate}
\end{lemma}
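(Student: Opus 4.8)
The plan is to mirror the proof of Lemma \ref{visible} for the family $\mathcal{N}_4^*$, exploiting the extra antipodal symmetry of $\mathcal{A}_3$ to also obtain the equivalence with condition (1). The implications $(1)\implies(2)\implies(3)$ are immediate (a point on the boundary of $\mathcal{B}^{\mathrm{imm}}_0$ in particular lies on the boundary of some Fatou component other than $U_1$, since by Proposition \ref{jordan} the boundaries of the $2n$-periodic components $U_i$ are disjoint Jordan curves and $\mathcal{B}^{\mathrm{imm}}_0$ is a distinct component). So the real content is $(3)\implies(1)$, and the key new input compared to Lemma \ref{visible} is the antipodal relation $\eta(\mathcal{B}^{\mathrm{imm}}_0)=\mathcal{B}^{\mathrm{imm}}_\infty$.

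First I would take a Fatou component $U\neq U_1$ with $p_k(U_1)\in\partial U$, and use the classification of Fatou components to say $U$ is eventually periodic, so it eventually maps into one of the periodic cycles: the $2n$-periodic cycle $\{U_i\}$, or $\mathcal{B}^{\mathrm{imm}}_0$, or $\mathcal{B}^{\mathrm{imm}}_\infty$. The case where $U$ eventually lands on the $2n$-cycle is ruled out exactly as in Lemma \ref{visible}: it would force $p_k(U_1)$ to lie on $\partial U_i$ for some $i\neq 1$, contradicting that $p_k(U_1)$ is a co-root (all three cycles $\{p_k(U_i)\}_i$ being disjoint, and no two distinct $U_i$ touching at it). Hence $U$ eventually maps into $\mathcal{B}^{\mathrm{imm}}_0$ or $\mathcal{B}^{\mathrm{imm}}_\infty$; since these are fixed, this means $p_k(U_1)$ lies on the boundary of the grand orbit of $\mathcal{B}^{\mathrm{imm}}_0$ or of $\mathcal{B}^{\mathrm{imm}}_\infty$. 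I would then push forward: applying a suitable iterate of $f_q$, the image of $p_k(U_1)$ lies on $\partial\mathcal{B}^{\mathrm{imm}}_0$ or $\partial\mathcal{B}^{\mathrm{imm}}_\infty$; but $p_k(U_1)$ is a fixed point of $\eta\circ f_q^{\circ n}$, hence $f_q^{\circ 2n}(p_k(U_1))=p_k(U_1)$, so I want to argue the point itself (not just some forward image) lies on such a boundary. The cleanest route: if $p_k(U_1)\in\partial W$ where $W$ is a preperiodic component landing on $\mathcal{B}^{\mathrm{imm}}_0$ after $m$ steps, then $f_q^{\circ m}(p_k(U_1))\in\partial\mathcal{B}^{\mathrm{imm}}_0$; but $f_q^{\circ 2n}$ fixes $p_k(U_1)$, so iterating by a common multiple of $2n$ and using that $\mathcal{B}^{\mathrm{imm}}_0$ is forward invariant shows $p_k(U_1)\in f_q^{-2nj}(\partial\mathcal{B}^{\mathrm{imm}}_0)$ for all $j$; taking $j$ large so that $2nj\geq m$ and noting $f_q^{\circ(2nj-m)}(\partial\mathcal{B}^{\mathrm{imm}}_0)\subseteq\partial\mathcal{B}^{\mathrm{imm}}_0$, together with $f_q^{\circ 2nj}(p_k(U_1))=p_k(U_1)$, forces $p_k(U_1)\in\partial\mathcal{B}^{\mathrm{imm}}_0$ directly.

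Finally, the antipodal upgrade to condition (1): suppose $p_k(U_1)\in\partial\mathcal{B}^{\mathrm{imm}}_0$. Since $\eta\circ f_q^{\circ n}$ fixes $p_k(U_1)$, we get $f_q^{\circ n}(p_k(U_1))=\eta(p_k(U_1))$. Because $f_q^{\circ n}$ maps $\mathcal{B}^{\mathrm{imm}}_0$ into $\mathcal{B}^{\mathrm{imm}}_0$ (it is a fixed component), $f_q^{\circ n}(\partial\mathcal{B}^{\mathrm{imm}}_0)\subseteq\partial\mathcal{B}^{\mathrm{imm}}_0$, so $\eta(p_k(U_1))\in\partial\mathcal{B}^{\mathrm{imm}}_0$, whence $p_k(U_1)\in\eta(\partial\mathcal{B}^{\mathrm{imm}}_0)=\partial\mathcal{B}^{\mathrm{imm}}_\infty$. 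Thus $p_k(U_1)\in\partial\mathcal{B}^{\mathrm{imm}}_0\cap\partial\mathcal{B}^{\mathrm{imm}}_\infty$, which is (1). The symmetric argument handles the case $p_k(U_1)\in\partial\mathcal{B}^{\mathrm{imm}}_\infty$. I expect the main obstacle to be the bookkeeping in the middle paragraph — getting from "some forward image of $p_k(U_1)$ is on $\partial\mathcal{B}^{\mathrm{imm}}_0$" to "$p_k(U_1)$ itself is on $\partial\mathcal{B}^{\mathrm{imm}}_0$" cleanly, using periodicity of $p_k(U_1)$ under $f_q^{\circ 2n}$ and forward-invariance of the fixed immediate basins; everything else is a direct transcription of the $\mathcal{N}_4^*$ argument, with the role of the real-line obstruction there replaced by the fixed-point-free-ness of $\eta$ (which makes the case analysis simpler, not harder, since we never need to separately quarantine a point like $p_a\in\mathbb{R}$).
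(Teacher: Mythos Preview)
Your proposal is correct and follows essentially the same route as the paper. The paper proves $(2)\Leftrightarrow(3)$ by simply citing Lemma~\ref{visible} (the $\mathcal{A}_3$ case is easier since there are only two fixed immediate basins, so no analogue of the $\mathcal{B}^{\mathrm{imm}}_a,\mathcal{B}^{\mathrm{imm}}_{\bar a}$ elimination is needed), and then proves $(2)\Rightarrow(1)$ by exactly your antipodal argument: $p_k(U_1)$ is fixed by $\eta\circ f_q^{\circ n}$, so if it lies on $\partial\mathcal{B}^{\mathrm{imm}}_0$ it also lies on $(\eta\circ f_q^{\circ n})(\partial\mathcal{B}^{\mathrm{imm}}_0)=\partial\mathcal{B}^{\mathrm{imm}}_\infty$. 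Your extra bookkeeping paragraph (passing from ``some forward image lies on $\partial\mathcal{B}^{\mathrm{imm}}_0$'' to ``the point itself does'') is a detail the paper leaves implicit both here and in Lemma~\ref{visible}, but your justification via periodicity and forward-invariance is the right one.
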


\begin{proof}
The proof of equivalence of conditions (2) and (3) is similar to that of Lemma \ref{visible}.

Condition (1) trivially implies condition (2). Hence, it suffices to show that condition (2) implies condition (1).

To this end, let us first suppose that $p_k(U_1)$ lies on $\partial \mathcal{B}^\mathrm{imm}_0$. Since $p_k(U_1)$ is a fixed point of $\eta\circ f_q^{\circ n}$, it follows that $p_k(U_1)$ also lies on $(\eta\circ f_q^{\circ n})(\partial \mathcal{B}^\mathrm{imm}_0)$. Since $f_q$ commutes with $\eta$, we have that $(\eta\circ f_q^{\circ n})(\partial \mathcal{B}^\mathrm{imm}_0)=\partial\mathcal{B}^\mathrm{imm}_\infty$. But then, $p_k(U_1)\in\partial\mathcal{B}^{\mathrm{imm}}_{0}\cap\partial\mathcal{B}^{\mathrm{imm}}_{\infty}$.

A completely analogous argument proves that if $p_k(U_1)$ lies on $\partial \mathcal{B}^\mathrm{imm}_\infty$, then $p_k(U_1)\in\partial\mathcal{B}^{\mathrm{imm}}_{0}\cap\partial\mathcal{B}^{\mathrm{imm}}_{\infty}$.

This completes the proof.
\end{proof}

\begin{definition}[Visibility of Co-roots]\label{dynamical_visibility_1}
Let $q$ be a parameter in $H$ or a simple parabolic parameter on $\partial H$. A dynamical co-root of $f_q$ on the boundary of $U_1$ is called \emph{visible} if it satisfies the equivalent conditions of Lemma \ref{visible_1}. Otherwise, it is called \emph{invisible}.
\end{definition}

We will now prove invisibility of parabolic cycles for parameters on the bounded arc of a tongue.

\begin{lemma}[Invisibility of Parabolic Points on Bounded Arcs of Tongues]\label{bi_invisible}
Let $H$ be a tongue, and $\cC_2$ be the co-root (i.e. bounded) arc on $\partial H$. Then, for any $q\in \cC_2$, the characteristic parabolic point $p_2(U_1)$ of $f_q$ is invisible.
\end{lemma}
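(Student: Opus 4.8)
The plan is to mimic the structure used for the family $\mathcal{N}_4^*$, namely to rule out each possible way in which the characteristic parabolic point $p_2(U_1)$ could lie on the boundary of a Fatou component other than $U_1$, using the topological picture of the tongue established in Section \ref{tri_comp_antipode}. By the equivalence of conditions (1)--(3) in Lemma \ref{visible_1}, it suffices to prove that $p_2(U_1)$ does not lie on the boundary of any Fatou component except $U_1$. So suppose, toward a contradiction, that some Fatou component $U\neq U_1$ has $p_2(U_1)\in\partial U$; by the classification of Fatou components, $U$ is eventually periodic, hence eventually maps either into the $2n$-cycle $\{U_i\}$ or into $\mathcal{B}^{\mathrm{imm}}_0$ or $\mathcal{B}^{\mathrm{imm}}_\infty$.

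First I would dispose of the case in which $U$ eventually maps into the $2n$-periodic cycle: then $p_2(U_1)$ would lie on $\partial U_i$ for some $i\neq 1$, which contradicts the fact (Definition \ref{root_co_root_dyn_1} and the tongue description in Section \ref{tri_comp_antipode}, where $p_2(U_1)$ is the co-root) that $p_2(U_1)$ is a dynamical co-root and hence is touched by no other $2n$-periodic Fatou component. The remaining case is that $U$ eventually maps onto $\mathcal{B}^{\mathrm{imm}}_0$ or $\mathcal{B}^{\mathrm{imm}}_\infty$, i.e. that $p_2(U_1)$ is \emph{visible} in the sense of Definition \ref{dynamical_visibility_1}. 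This is precisely the scenario I must exclude, and it is where I would invoke the structural result of \cite{BBM2}. Concretely, for a tongue of period $2n$ the closures $\overline{U_1},\dots,\overline{U_{2n}}$ form a ring separating $0$ from $\infty$, with $p_1(U_j)=p_3(U_{j+1})$ the root points; the co-root $p_2(U_j)$ sits on the ``outer'' side of the ring. I would argue that if $p_2(U_1)\in\partial\mathcal{B}^{\mathrm{imm}}_0\cap\partial\mathcal{B}^{\mathrm{imm}}_\infty$ (which is forced by Lemma \ref{visible_1}(1) once visibility holds), then both fixed immediate basins reach the co-root points of all $2n$ components in the ring, and one obtains a configuration of the Julia set near the ring that is incompatible with the known combinatorics/rotation number of the tongue recorded in \cite[Theorem 5.2]{BBM2} — in particular with the cyclic arrangement of accesses of $\mathcal{B}^{\mathrm{imm}}_0$ and $\mathcal{B}^{\mathrm{imm}}_\infty$ around the invariant ring. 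The self-antipodal symmetry $\eta$ is the main extra ingredient: $\eta$ interchanges $\mathcal{B}^{\mathrm{imm}}_0$ and $\mathcal{B}^{\mathrm{imm}}_\infty$ and maps $U_1$ to $\eta(U_1)$ (a component on the opposite side of the ring), so a visible $p_2(U_1)$ would produce, via $\eta\circ f_q^{\circ n}$, a matching touching on the antipodal side, forcing $\overline{\mathcal{B}^{\mathrm{imm}}_0}\cup\overline{\mathcal{B}^{\mathrm{imm}}_\infty}$ to straddle the ring in a way that contradicts their being disjoint simply connected basins with the prescribed number of accesses.

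The main obstacle I anticipate is making the last step — deriving the contradiction — fully rigorous rather than merely plausible from the pictures in Figure \ref{invisible_dynamical_antipode}: one needs to pin down exactly which combinatorial datum from \cite[\S 4--5]{BBM2} is violated, and to phrase the argument so that it applies uniformly to all $q\in\cC_2$ (using, as in Section \ref{tri_comp_antipode}, that all parameters on a parabolic arc have quasiconformally conjugate dynamics, so it is enough to treat the center or any one representative). A clean way to package this is to show directly that the access structure of $\mathcal{B}^{\mathrm{imm}}_0$ forces its boundary to meet $\partial U_1$ only at the \emph{root} points $p_1(U_1),p_3(U_1)$ (shared with $U_{2n}$ and $U_2$ in the ring), and never at the co-root $p_2(U_1)$; then Lemma \ref{visible_1} immediately yields invisibility of $p_2(U_1)$. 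I would phrase the proof as: ``By \cite[Theorem 5.2]{BBM2}, the ring $\overline{U_1}\cup\cdots\cup\overline{U_{2n}}$ has $\mathcal{B}^{\mathrm{imm}}_0$ and $\mathcal{B}^{\mathrm{imm}}_\infty$ attached only along the root points $\{p_1(U_i)=p_3(U_{i+1})\}$; hence the co-root $p_2(U_1)\notin\partial\mathcal{B}^{\mathrm{imm}}_0\cup\partial\mathcal{B}^{\mathrm{imm}}_\infty$, and by Lemma \ref{visible_1} it is invisible; since dynamics is quasiconformally constant along $\cC_2$, this holds for every $q\in\cC_2$.''
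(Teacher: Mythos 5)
Your approach is essentially the same as the paper's: reduce to showing $p_2(U_1)\notin\partial\mathcal{B}^{\mathrm{imm}}_0\cap\partial\mathcal{B}^{\mathrm{imm}}_\infty$ via Lemma~\ref{visible_1}, and obtain that exclusion from the structural description of tongues in \cite{BBM2}. You correctly anticipate that the crux is pinning down exactly where the fixed basins meet the $2n$-cycle, and you correctly guess the answer (only at the root points).

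The anxiety you raise about making the last step ``fully rigorous rather than merely plausible from the pictures'' is resolved in the paper simply by citing the right reference: the paper invokes \cite[Proposition~5.12]{BBM2}, which states directly that $\partial\mathcal{B}^{\mathrm{imm}}_0\cap\partial\mathcal{B}^{\mathrm{imm}}_\infty$ equals the root cycle $\{p_1(U_i)\}_i=\{p_3(U_i)\}_i$. Your cited \cite[Theorem~5.2]{BBM2} gives the ring/separation picture but is not the statement that nails the intersection; the attempt to derive the exclusion yourself from the ring geometry, accesses, and antipodal symmetry is the weak link you flagged, and it is unnecessary once the correct proposition is in hand. Also note that the first case distinction in your argument (ruling out that $U$ maps into the $2n$-cycle) is already built into the equivalence in Lemma~\ref{visible_1}, so you can skip it; and the reduction to a single representative of $\cC_2$ via quasiconformal conjugacy is correct but likewise not needed once Proposition~5.12 is applied for each $q$.
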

\begin{proof}
If $q$ lies on the unique co-root arc on the boundary of a tongue, then $\partial\mathcal{B}^{\mathrm{imm}}_{0}\cap\partial\mathcal{B}^{\mathrm{imm}}_{\infty}$ consists of the root cycle $\{p_1(U_i)\}_i=\{p_3(U_i)\}_i$ (see \cite[Remark~6.13]{BBM3} and \cite[Theorem~6.8]{BBM1}). In particular, the characteristic parabolic point $p_2(U_1)$ (which is a dynamical co-root) does not lie on $\partial\mathcal{B}^{\mathrm{imm}}_{0}\cap\partial\mathcal{B}^{\mathrm{imm}}_{\infty}$. Therefore, $p_2(U_1)$ is invisible (compare Figure \ref{invisible_dynamical_antipode} (left)).
\end{proof}

\bigskip

\section{Invisible tricorns and bare regions}\label{inv_tri_bare_arc}

Throughout this section, $H$ will stand for a tongue of period $2n$, and $\cC_2$ will stand for the unique co-root (i.e. bounded) arc on $\partial H$ (unless mentioned otherwise). We will denote the critical Ecalle height parametrization of $\cC_2$ by $q:\R\to\cC_2$.

Let $V_{q(h)}^{\mathrm{out}}$ be a repelling petal at the characteristic parabolic point $p_2(U_1)$ of $f_{q(h)}$. The projection of $U_1$ into the repelling Ecalle cylinder (of $f_{q(h)}$ at the characteristic parabolic point $p_2(U_1)$) consists of two one-sided infinite cylinders, let us call them $U_1^+$ and $U_1^-$. The projection of $\partial U_1$ to the same cylinder consists of two disjoint Jordan curves $\partial U_1^+$ and $\partial U_1^-$. Note that these two Jordan curves are related by the map $z\mapsto\overline{z}+1/2$. Hence we can assume that the interval of Ecalle heights traversed by $\partial U_1^+$ (respectively by $\partial U_1^-$) is $\left[l_h,u_h\right]$ (respectively $\left[-u_h,-l_h\right]$), where $u_h>l_h$ (since $\partial U_1^+$ is not an analytic curve, its projection to the repelling Ecalle cylinder is not a geodesic, hence not a round circle; therefore, the projection must traverse a positive interval of Ecalle heights). Therefore, $U_1^+$ contains $\mathbb{R}/\mathbb{Z}\times\left(u_h,+\infty\right)$, and $U_1^-$ contains $\mathbb{R}/\mathbb{Z}\times\left(-\infty,-u_h\right)$. Clearly, $u_h>0$ for all $h$ in $\R$.

By Lemma \ref{bi_invisible}, $p_2(U_1)$ is invisible. Thus, we are in the situation of Lemma \ref{julia_path}. Hence, there is a path in $\widetilde{J}$ (which is the projection of $J(f_{q(h)})$ into the repelling cylinder at $p_2(U_1)$) connecting a point of $\partial U_1^+$ at height $u_h$ and a point of $\partial U_1^-$ at height $-u_h$.

In order to define visibility of parabolic arcs, we need the following result.

\begin{proposition}[Bifurcation along Arcs]\label{bifurcation_arcs_1}
Every co-root parabolic arc (on the boundary of a tongue or a bounded Tricorn) has, at both ends, an interval of positive length across which bifurcation from a Tricorn component of period $2n$ to a Mandelbrot component of period $2n$ occurs.
\end{proposition}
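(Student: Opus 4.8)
This is the $\mathcal{A}_3$ counterpart of Proposition~\ref{bifurcation_arcs}, and the plan is to verify that the argument behind that proposition --- which follows \cite[Theorem~3.8, Corollary~3.9]{HS} --- goes through verbatim here, provided one keeps track of where the co-root hypothesis enters. First I would fix a co-root parabolic arc $\cC_k\subset\partial H$ (so $\cC_2$ when $H$ is a tongue, and any of the three arcs when $H$ is a bounded tricorn), with critical Ecalle height parametrization $q\colon\R\to\cC_k$, and record the local picture at the characteristic parabolic point $p_k(U_1)$: by the co-root hypothesis $p_k(U_1)$ is the only point of the self-antipodal parabolic $2n$-cycle lying on $\partial U_1$, the antiholomorphic return map $\eta\circ f_{q(h)}^{\circ n}$ fixes $p_k(U_1)$ and maps $U_1$ onto itself, and the holomorphic first return map $f_{q(h)}^{\circ 2n}$ has a single attracting petal inside $U_1$ and a single repelling petal --- exactly the simple-parabolic configuration on which the perturbation machinery of \cite{HS} and of Subsection~\ref{para_imp_back} operates, with the critical value sitting at Ecalle height $h$ in the incoming cylinder.

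The main step is to perturb $q(h)$ out of $\overline{H}$ for $|h|$ large. As in Subsection~\ref{para_imp_back}, the characteristic parabolic point then splits into two simple periodic points joined by a gate, with the outgoing Fatou coordinate persisting and depending continuously on the parameter. Following the reasoning of \cite[Corollary~3.9]{HS}, for $|h|$ sufficiently large the critical Ecalle height is extreme enough relative to the Ecalle-height span $[l_h,u_h]$ of $\partial U_1^+$ (recall that \cite[Theorem~7.3]{HS} gives the reverse implication, namely $|h|\ge u_h$ for any bifurcating parameter) that, after transiting through the gate, the orbit of the escaping critical point re-enters a perturbed copy of the characteristic Fatou component so as to make $f_q^{\circ n}$ acquire an attracting cycle of period exactly $n$. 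Since $f_q^{\circ n}(U_i)=\eta(U_i)$ for every $i$, the perturbed $2n$-cycle of Fatou components splits into two $\eta$-conjugate $n$-cycles, so by antipodal symmetry $f_q$ carries a second attracting $n$-cycle attracting the other free critical point; hence $q$ lies in a Mandelbrot component $M$ of period $n$. As perturbations of $q(h)$ into $\overline{H}$ remain inside $H$, the parameter $q(h)$ then lies on $\partial H\cap\partial M$, which is the asserted period-doubling bifurcation; performing this near both ends $h\to+\infty$ and $h\to-\infty$ of $\cC_k$ produces the two bifurcating intervals of positive length.

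The co-root hypothesis is what licenses the argument, since the entire analysis is carried out inside the single Fatou component $U_1$ and its incoming and outgoing Ecalle cylinders at $p_k(U_1)$; if $\cC_k$ were a root arc --- which occurs only for the two unbounded arcs of a tongue --- then $p_k(U_1)$ would be shared by two distinct $2n$-periodic Fatou components arranged in a ring separating $0$ from $\infty$, the local picture at the parabolic point would be two-sided, and the transit of the critical orbit would not produce a period-$n$ attracting cycle, which is consistent with the bare regions such root arcs carry (to be treated in the concluding section). The hardest point, I expect, is the quantitative part: showing that for $|h|$ large the critical Ecalle height really does force the transited critical orbit into the perturbed characteristic component, and that the attracting cycle it produces has period exactly $n$ rather than $2n$ --- this is precisely where the symmetry relation $f_q^{\circ n}(U_i)=\eta(U_i)$ has to be used. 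Checking that the maps $f_q$ meet the standing hypotheses of the \cite{HS} perturbation lemmas (geometric finiteness, non-shrinking of $\mathcal{B}_0$ and $\mathcal{B}_\infty$ under perturbation via \cite[Theorem~5.1(a)]{D2}, and connectivity and local connectivity of the Julia set) is routine and has effectively been recorded above.
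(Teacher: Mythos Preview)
Your proposal cites the same source as the paper --- the paper's own proof is a one-liner referring to \cite[Theorem~3.8, Corollary~3.9]{HS} --- so at the level of attribution you are aligned. However, your elaboration of the mechanism and your explanation of the co-root hypothesis both miss the point.

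The argument of \cite[Theorem~3.8, Corollary~3.9]{HS} is not an implosion argument. It does not work by perturbing a simple parabolic $q(h)$, letting the critical orbit transit a gate, and then arguing that it ``re-enters a perturbed copy of the characteristic Fatou component'' to manufacture an attractor; that picture does not produce an attracting periodic point. Rather, the argument uses the \emph{parabolic cusps} that sit at the ends of the arc: near a cusp the map has a double parabolic with two attracting petals, and a suitable perturbation splits this into an attracting cycle of twice the period (for the antiholomorphic return), which in the present setting yields two mutually antipodal attracting $n$-cycles --- a Mandelbrot component of period $n$. The Ecalle-height criterion of \cite[Theorem~7.3]{HS} that you invoke gives the \emph{necessary} condition for bifurcation, not the \emph{existence} of the bifurcating interval; the existence comes from the cusp.

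This also corrects your reading of the co-root hypothesis. The relevant point is not that a root parabolic has a ``two-sided local picture''; it is that the two root arcs on $\partial H$ for a tongue are \emph{unbounded} and therefore have a finite cusp at only one end, so the HS argument cannot be run at the missing end. The paper records exactly this in Remark~\ref{restrict_co_root_bif}. For co-root arcs --- the bounded arc of a tongue, or any of the three arcs of a bounded tricorn --- both ends terminate at finite cusps, and the HS argument applies at each.
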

\begin{proof}
The proof is similar to the proof of \cite[Theorem 3.8, Corollary 3.9]{HS}.
\end{proof}

\begin{remark}\label{restrict_co_root_bif}
The restriction to co-root arcs is essential here. Indeed, the proof of Proposition \ref{bifurcation_arcs_1} uses the existence of parabolic cusps (double parabolic parameters) at the ends of a parabolic arc. However, a root arc on the boundary of a tongue component is unbounded, and has a finite parabolic cusp only on one end.
\end{remark}

According to \cite[Theorem 7.3]{HS}, if $q(h)$ is such a bifurcating parameter, then either $h\geq u_h$, or $h\leq -u_h$. If the critical Ecalle height $0$ parameter $q(0)$ is a bifurcating parameter, then $u_0\leq 0$, which is impossible. Hence there is an interval $(-\epsilon,\epsilon)$ of Ecalle heights such that no bifurcation to a Mandelbrot component occurs across the sub-arc $q((-\epsilon,\epsilon))$ of $\cC_2$. It follows that there exist $h_2>0>h_1$ such that $q(h_1,h_2)$ is the maximal sub-arc of $\cC_2$ across which bifurcation to Mandelbrot components does not occur (compare \cite[\S 7]{HS}). We can now define visibility of parabolic arcs and Tricorn components for the family $\mathcal{A}_3$ following the corresponding definition for $\mathcal{N}_4^*$ (see Definition \ref{parameter_visibility}).

\begin{comment}
\begin{definition}[Visibility of Parabolic Arcs and Tricorn Components]
A co-root arc $\cC$ (on the boundary of a tongue or a bounded Tricorn $H$) is called \emph{visible} if some point of $q((h_1,h_2))$ lies on the boundary of a hyperbolic component other than $H$. Otherwise, $\cC$ is called \emph{invisible}. A Tricorn component $H$ is called invisible if each parabolic arc on $\partial H$ is invisible.
\end{definition}
\end{comment}

We are now ready to prove the key lemmas leading to Theorem \ref{inaccessible_arcs_1}.

For $h\in\left(h_1,h_2\right)$, there is no bifurcation to a period $2n$ Mandelbrot component across $f_{q(h)}$. This implies that the critical Ecalle height $h$ of $f_{q(h)}$ lies in $\left(-u_h,u_h\right)$.

\begin{lemma}[Bounded Arcs of Tongues are Invisible]\label{capture_components_accumulate_3}
Let $H$ be a tongue, and $\cC_2$ be the bounded arc on $\partial H$. Then $\cC_2$ is invisible.
\end{lemma}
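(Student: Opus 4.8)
The plan is to mirror the proof of Lemma~\ref{capture_components_accumulate_2} from Part~\ref{part_one}, which already established the analogous statement for the family $\mathcal{N}_4^*$. The ingredients needed there are all available here: by Lemma~\ref{bi_invisible} the characteristic parabolic point $p_2(U_1)$ is invisible for every $q\in\cC_2$; by Proposition~\ref{jordan} the boundary $\partial U_1^q$ is a Jordan curve; by Proposition~\ref{unique_center} every hyperbolic component of $\mathcal{A}_3$ is simply connected with a unique center; and the parabolic implosion machinery (Fatou coordinates, equators, Ecalle heights, transit map, phase, and the fact from \cite[Lemma 2.5]{IM1} that the lifted phase tends to $-\infty$) transfers verbatim to the antipodal setting, since $\eta\circ f_q^{\circ n}$ plays the role of $\iota\circ N_a^{\circ n}$.

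First I would invoke Lemma~\ref{julia_path} (whose hypotheses hold because $p_2(U_1)$ is invisible by Lemma~\ref{bi_invisible}): in the repelling Ecalle cylinder of $f_{q(h)}$ at $p_2(U_1)$, there is a path $\gamma$ in the projection $\widetilde J$ of the Julia set joining a point of $\partial U_1^+$ at height $u_h$ to a point of $\partial U_1^-$ at height $-u_h$. Then, to establish invisibility of $\cC_2$, I would take an arbitrary open set $V$ with $V\cap\overline H=\emptyset$ and $\partial V\cap q((h_1,h_2))\neq\emptyset$, pick $q(h)\in\partial V\cap q((h_1,h_2))$ (so $h\in(-u_h,u_h)$), and follow the perturbation argument: since $q(h)$ is not an exceptional point, the backward orbit of $q(h)$ is dense in $J(f_{q(h)})$ and in particular accumulates on $\gamma$; working in the bundle of outgoing cylinders $\bigsqcup_{q\in\overline{U^-}}C_q^{\mathrm{out}}\cong\C/\Z\times\overline{U^-}$ over a suitable neighborhood $U$ of $q((h_1,h_2))$, the image of the open set $\{(T_q(c_0(q)),q):q\in U^-\cap V\}$ winds around $\C/\Z$ infinitely often inside the sub-cylinder of heights $(-u_h,u_h)$ as the phase $\to-\infty$ (by \cite[Lemma 2.5]{IM1}). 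Continuously following the backward orbit of $q(h)$ under perturbation, one gets infinitely many parameters $q\in U^-\cap V$ for which the critical point $c_0(q)$ eventually maps to $q$ — i.e.\ post-critically finite maps in which $c_0$ lands on a preimage of the fixed critical point $0$. Each such $q$ lies in a capture component, and since capture components have unique centers, $V$ meets infinitely many distinct hyperbolic components; hence no point of $q((h_1,h_2))$ lies on the boundary of any hyperbolic component other than $H$, which is precisely the statement that $\cC_2$ is invisible.

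The one point requiring genuine (if minor) care — and the place the proof genuinely differs from the $\mathcal{N}_4^*$ case — is checking that the parameters $q$ produced really do lie in \emph{capture} components, i.e.\ that $c_0(q)$ lands on a strictly preperiodic Fatou component of the basin of $0$ or $\infty$ rather than accidentally landing in an immediate basin. In Part~\ref{part_one} this was handled via Proposition~\ref{no_free} (the free critical points cannot lie in $\mathcal{B}^{\mathrm{imm}}_a$); here the analogue is automatic from the construction, since the backward orbit points of $q(h)$ that accumulate on $\gamma$ are genuine Julia points whose perturbed continuations are preimages of $q$ of ever-increasing depth, so the orbit of $c_0(q)$ passes through arbitrarily many Fatou components before reaching $0$. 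A second, even more routine point is to double-check that the hypotheses of Lemma~\ref{julia_path} were stated for $\mathcal{N}_4^*$ but its proof — local connectivity of $J$ via geometric finiteness \cite[Theorem A]{Tan}, simple connectivity of Fatou components, and the path-connectivity of compact connected locally connected metric spaces — uses nothing specific to that family, so it applies here as well. I expect the entire argument to consist of little more than citing Lemma~\ref{julia_path} and transcribing the proof of Lemma~\ref{capture_components_accumulate_2} with $N_a\rightsquigarrow f_q$, $\iota\rightsquigarrow\eta$, $c_a\rightsquigarrow c_0(q)$, and ``the super-attracting fixed point $a$'' $\rightsquigarrow$ ``the super-attracting fixed point $0$'', so the write-up can reasonably be compressed to ``the proof is identical to that of Lemma~\ref{capture_components_accumulate_2}, using Lemma~\ref{bi_invisible} in place of the invisibility hypothesis there.''
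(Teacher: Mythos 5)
Your proposal is correct and follows exactly the same route as the paper: invoke Lemma~\ref{bi_invisible} and Lemma~\ref{julia_path} to obtain the Julia path $\gamma$ in the repelling cylinder, then transcribe the parabolic-implosion argument of Lemma~\ref{capture_components_accumulate_2} to produce infinitely many post-critically finite parameters (hence infinitely many distinct hyperbolic components, by uniqueness of centers) in any neighborhood cutting $q((h_1,h_2))$. The only cosmetic difference is that you follow iterated pre-images of $q(h)$ whereas the paper tracks iterated pre-images of the fixed critical point $0$; since $f_q(q)=0$ for every $q$, these are pre-images of $0$ either way and the two choices are equivalent.
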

\begin{proof}
Let $h\in(h_1,h_2)$. Recall that there is a curve $\gamma$ in the Julia set of $f_{q(h)}$ connecting a point of $\partial U_1^+$ at height $u_h$ and a point of $\partial U_1^-$ at height $-u_h$. Moreover, the iterated pre-images of $0$ are dense on $\gamma$. One can now mimic the proof of Lemma \ref{capture_components_accumulate_2} to show that for every open set $V$ satisfying
\begin{enumerate}
\item $V\cap \overline{H}=\emptyset$, and

\item $\partial V\cap q\left(h_1, h_2\right)\neq\emptyset$,
\end{enumerate}
there exist infinitely many parameters $q$ in $V$ such that in the dynamical plane of $f_q$, the forward orbit of $c_0(q)$ lands on an iterated pre-image of $0$. But this means that $V$ contains centers of infinitely many distinct capture components. Therefore, $q(h)$ cannot lie on the boundary of any hyperbolic component other than $H$.

This proves that $\cC_2$ is invisible.
\end{proof}

Recall that the ``non-bifurcating" sub-arc of a parabolic arc $\cC$ stands for the part of $\cC$ across which bifurcation to Mandelbrot components does not occur.

\begin{corollary}\label{capture_components_accumulate_4}
Let $H$ be a tongue, and $\cC_2$ be the bounded arc on $\partial H$. Then any neighborhood of the non-bifurcating sub-arc of $\cC_2$ intersects infinitely many distinct capture components.
\end{corollary}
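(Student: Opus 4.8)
The plan is to deduce this statement immediately from (the proof of) Lemma~\ref{capture_components_accumulate_3}. Recall that in the course of that proof it was actually established that for \emph{every} open set $V$ with $V\cap\overline{H}=\emptyset$ and $\partial V\cap q\left((h_1,h_2)\right)\neq\emptyset$, the set $V$ contains the centers of infinitely many distinct capture components (the dynamical input being the eggbeater/parabolic-perturbation argument of Lemma~\ref{capture_components_accumulate_2}, which forces $T_a(c_a)$ to land on an iterated pre-image of $0$ for infinitely many parameters $a\in V$). So the only thing left to do is to feed an appropriate open set into this statement.

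Concretely, I would let $W$ be an arbitrary open neighborhood of the non-bifurcating sub-arc $q\left((h_1,h_2)\right)$ of $\cC_2$ and set $V:=W\setminus\overline{H}$. Then $V\cap\overline{H}=\emptyset$ holds automatically, so the one point that needs checking is $\partial V\cap q\left((h_1,h_2)\right)\neq\emptyset$. For this, fix any $h\in(h_1,h_2)$; then $q(h)\in W$, while $q(h)\in\partial H\subset\overline{H}$, so $q(h)\notin V$, and it remains to observe that $q(h)$ is accumulated by parameters lying in $V$. This holds because parameters outside $\overline{H}$ accumulate at $q(h)$ — this is built into the parabolic implosion set-up of Subsection~\ref{para_imp_back}: the persistent Fatou coordinates, the transit map, and the lifted phase are defined on the non-empty open set $U^{-}=U\setminus\overline{H}$, which accumulates on the parabolic arc (cf.\ \cite[Lemma 2.5]{IM1}). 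Since such parameters, once they lie in $W$, lie in $V$, we get $q(h)\in\partial V$, hence $\partial V\cap q\left((h_1,h_2)\right)\neq\emptyset$.

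Having verified both hypotheses, the conclusion recorded inside the proof of Lemma~\ref{capture_components_accumulate_3} applies to $V$ and yields infinitely many distinct capture components contained in $V\subset W$, which is exactly the assertion. I do not expect any real obstacle here: the whole dynamical content has already been carried out, and the corollary is purely a matter of repackaging. The only genuinely new point — and it is routine — is the topological observation that the interior parameters $q(h)$, $h\in(h_1,h_2)$, of the bounded arc lie in the closure of $\C\setminus\overline{H}$, so that the open set $V=W\setminus\overline{H}$ has (a sub-arc of) $\cC_2$ in its boundary; this is implicit in the parabolic implosion framework used throughout the paper.
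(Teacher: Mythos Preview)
Your proposal is correct and follows exactly the approach the paper intends: the corollary is stated without a separate proof and is meant to be read off directly from the proof of Lemma~\ref{capture_components_accumulate_3}, which already showed that any open set $V$ disjoint from $\overline{H}$ whose boundary meets $q((h_1,h_2))$ contains centers of infinitely many capture components. Your verification that $V=W\setminus\overline{H}$ satisfies these hypotheses is routine and is precisely the repackaging the paper leaves implicit.
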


\begin{remark}\label{root_visible}
By \cite[Remark~6.13]{BBM3}, for a parameter in a tongue component, the dynamical root points of $U_1$ lie on $\partial\mathcal{B}^{\mathrm{imm}}_{0}\cap\partial\mathcal{B}^{\mathrm{imm}}_{\infty}$. The parabolic implosion techniques of this paper can be easily applied to this situation to show that the root (i.e. unbounded) arcs on the boundary of a tongue always intersect the boundary of the principal hyperbolic component $\mathcal{H}_0$.
\end{remark}

\begin{lemma}\label{Tricorn_everywhere}
Let $H$ be a tongue, and $\cC_2$ be the bounded arc on $\partial H$. Let $q(h)$ be a parameter on the non-bifurcating sub-arc of $\cC_2$. Then $q(h)$ is a limit point of the centers of bounded Tricorn components $\{H_k\}_{k=1}^\infty$.
\end{lemma}
\begin{proof}
Pick a parameter $q(h)$ on the non-bifurcating sub-arc of $\cC_2$; i.e. $h\in(h_1,h_2)$. Let $\gamma$ be a curve in the Julia set of $f_{q(h)}$ connecting a point of $\partial U_1^+$ at height $u_h$ and a point of $\partial U_1^-$ at height $-u_h$. Note that the set of iterated pre-images of $c_{\infty}(q(h))$ are dense on the curve $\gamma$. Now the proof of Lemma \ref{Tricorns_accumulate} shows mutatis mutandis that there are infinitely many parameters $q$, arbitrarily close to $q(h)$, such that some forward image of $c_0(q)$ is an iterated pre-image of $c_{\infty}(q)$. Hence for each such parameter $q$, there is some $r\in\N$ (depending on $q$) with $f_{q}^{\circ r}(c_0(q))=c_{\infty}(q)$. Since $f_{q}$ commutes with $\eta$, it follows that $f_{q}^{\circ r}(c_{\infty}(q))=c_0(q)$. Therefore, such a parameter $q$ has a self-antipodal super-attracting cycle; i.e. $q$ is the center of a Tricorn component. This proves that $q(h)$ is a limit point of the centers of Tricorn components $\{H_k\}_{k=1}^\infty$.

We can now argue as in the proof of Lemma~\ref{Tricorns_accumulate} to conclude that if $q_k\in \overline{H_k}$ for each $k$, then all limit points of the sequence $\{q_k\}$ belong to $\cC_2$. Since tongue components are unbounded and $\cC_2$ is a bounded arc, it follows that at most finitely many $H_k$ can be tongue components. Hence, $q(h)$ is a limit point of the centers of bounded Tricorn components.
\end{proof}

\begin{remark}\label{period_increases}
As the parameter $q$ approaches $H$ from the exterior, the ``escape time'' of the critical point $c_0(q)$ (i.e. the number of iterates taken by $c_0(q)$ to pass through the gate and escape to the outgoing domain $V_q^{\mathrm{out}}$) tends to $+\infty$. It follows that the periods of the bounded Tricorn components $H_k$ constructed in Lemma \ref{Tricorn_everywhere} tend to $+\infty$ as $k$ increases.
\end{remark}

Before proceeding to the proof of Theorem \ref{inaccessible_arcs_1}, we need to discuss some properties of the set of ``bi-visible'' points for maps outside $\mathcal{H}_0$.

We continue to work with a tongue of period $2n$ with bounded parabolic arc $\cC_2$. By Lemma \ref{Tricorn_everywhere}, there exists a sequence of bounded Tricorn components $\{H_k\}$ accumulating on $\cC_2$. Let $q$ be a parameter in $H_k$ or on a parabolic arc on $\partial H_k$. Then $f_q$ has no Herman ring. Hence by \cite[Theorem 2.1]{BBM1}, the Julia set $J(f_q)$ is connected. Since such a map $f_q$ is geometrically finite, it follows that $J(f_q)$ is also locally connected \cite[Theorem A]{Tan}. By \cite[Corollary 6.5]{BBM1}, every point of $\partial \mathcal{B}^{\mathrm{imm}}_0\cap\partial \mathcal{B}^{\mathrm{imm}}_\infty$ in the dynamical plane of $f_q$ is the common landing point of a dynamical ray in $\mathcal{B}^{\mathrm{imm}}_0$ and a dynamical ray in $\mathcal{B}^{\mathrm{imm}}_\infty$. In the language of \cite{BBM1}, $\partial \mathcal{B}^{\mathrm{imm}}_0\cap\partial \mathcal{B}^{\mathrm{imm}}_\infty$ is precisely the set of bi-visible points for the maps $f_q$ under consideration. Now, according to \cite[Theorem 6.8]{BBM1}, if the set of bi-visible points $\partial \mathcal{B}^{\mathrm{imm}}_0\cap\partial \mathcal{B}^{\mathrm{imm}}_\infty$ (of $f_q$) contains a periodic point, then it consists of a single cycle of even period.

Recall that for parameters on $\cC_2$, the intersection $\partial \mathcal{B}^{\mathrm{imm}}_0\cap\partial \mathcal{B}^{\mathrm{imm}}_\infty$ of the boundaries of the immediate basins of $0$ and $\infty$ consists precisely of a repelling cycle of period $2n$. Under small perturbation, this repelling cycle can be real-analytically followed and it continues to lie on the intersection of the boundaries of the immediate basins. Hence, if $H_k$ is sufficiently close to $\cC_2$ (i.e. if $k$ is sufficiently large), and $q$ lies in $H_k$ or on a parabolic arc on $\partial H_k$, then $\partial \mathcal{B}^{\mathrm{imm}}_0\cap\partial \mathcal{B}^{\mathrm{imm}}_\infty$ consists of a single $2n$-periodic orbit of $f_q$.

\begin{proof}[Proof of Theorem \ref{inaccessible_arcs_1}]
The first statement follows from Lemma \ref{capture_components_accumulate_3}.

The statement about accumulating capture components follows from Corollary~\ref{capture_components_accumulate_4}.

For the final statement about invisible bounded Tricorn components, consider a tongue $H$ of period $2n$ and a parameter $q(h)$ on the non-bifurcating sub-arc of the bounded arc $\cC_2$ on $\partial H$. By Lemma \ref{Tricorn_everywhere}, every neighborhood of $q(h)$ intersects infinitely many bounded Tricorn components $\{H_k\}_{k=1}^\infty$. Moreover, if the period of $H_k$ is $2r_k$, then $2r_k\to+\infty$ as $k\to+\infty$. Now note that the parabolic cycle of any simple parabolic parameter on $\partial H_k$ has length $2r_k$. Since $2r_k>2n$ for all large $k$, it follows from Lemma \ref{visible_1} and the discussion preceding the proof of this theorem that the characteristic parabolic point of every simple parabolic parameter on $\partial H_k$ is invisible (for $k$ large enough). One can now repeat the arguments of the proof of Lemma \ref{capture_components_accumulate_3} to conclude that all parabolic arcs on $\partial H_k$ are invisible; i.e. $H_k$ is an invisible Tricorn component (for $k$ large enough). This proves that every neighborhood of a non-bifurcating parameter on $\cC_2$ intersects infinitely many invisible bounded Tricorn components.
\end{proof}

We end with a complementary result about the unbounded arcs on the boundaries of tongue components. We show that unlike the bounded arcs, the unbounded arcs on the boundaries of tongues of small period contain sub-arcs that are accessible from the principal hyperbolic component $\mathcal{H}_0$. This was numerically observed in \cite[Remark~6.12]{BBM3}.  A similar phenomenon can be observed in the actual Tricorn, which was proved in \cite[Theorem 1.2]{IM1}. We employ the same techniques here, but the existence of capture components in $\mathcal{A}_3$ add some subtleties to the situation.

\begin{proposition}[Bare Regions for Low Period Tricorns]\label{open_beach}
Let $H$ be a tongue component of period at most $8$. Then any unbounded (i.e. root) parabolic arc $\cC$ on $\partial H$ contains a sub-arc of points that are accessible from the principal hyperbolic component $\mathcal{H}_0$; i.e. there exists $h$ in $\R$, and an open neighborhood $U$ of $q(h)\in\cC$ such that $U\setminus\overline{H}$ is contained in $\mathcal{H}_0$.
\end{proposition}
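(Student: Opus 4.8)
The plan is to adapt the argument of \cite[Theorem 1.2]{IM1}, which establishes the analogous ``bare region'' phenomenon on low period parabolic arcs of the tricorn, to the family $\mathcal{A}_3$; the main new feature to handle is the presence of capture components. First I would fix notation: let $\cC$ be an unbounded (root) parabolic arc on the boundary of a tongue $H$ of period $2n\leq 8$, with critical Ecalle height parametrization $q\colon\R\to\cC$, and let $p$ denote the characteristic parabolic point of $f_{q(h)}$. Since $\cC$ is a root arc, $p$ is a dynamical root point, and by \cite[Proposition 5.12]{BBM2} the intersection $\partial\mathcal{B}^{\mathrm{imm}}_{0}\cap\partial\mathcal{B}^{\mathrm{imm}}_{\infty}$ is exactly the root cycle; in particular $p\in\partial\mathcal{B}^{\mathrm{imm}}_{0}\cap\partial\mathcal{B}^{\mathrm{imm}}_{\infty}$, so $p$ is visible. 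As $f_{q(h)}$ is geometrically finite with connected Julia set, $J(f_{q(h)})$ is locally connected \cite[Theorem A]{Tan}, and hence $p$ is the common landing point of dynamical rays in $\mathcal{B}^{\mathrm{imm}}_{0}$, in $\mathcal{B}^{\mathrm{imm}}_{\infty}$, and in the two $2n$-periodic ring components meeting at $p$.

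The key step, and the one I expect to be the main obstacle, is a purely dynamical-plane analysis in the repelling Ecalle cylinder of $f_{q(0)}$ at $p$. I would project $\mathcal{B}^{\mathrm{imm}}_{0}$, $\mathcal{B}^{\mathrm{imm}}_{\infty}$ and the ring components into this cylinder and, using that $\partial\mathcal{B}^{\mathrm{imm}}_{0}\cap\partial\mathcal{B}^{\mathrm{imm}}_{\infty}$ is a rotation set of rotation number $m/2n$ (as for the parameters in $H$), show that there is an $\epsilon>0$ for which the sub-cylinder $\R/\Z\times(-\epsilon,\epsilon)$ is entirely covered by the projections of $\overline{\mathcal{B}^{\mathrm{imm}}_{0}}$, $\overline{\mathcal{B}^{\mathrm{imm}}_{\infty}}$ and the ring components, and, more precisely, that on the side of the ring containing $0$ only $\mathcal{B}^{\mathrm{imm}}_{0}$ (together, a priori, with a ring component) reaches a full height-neighborhood of the equator. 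This is exactly where the hypothesis $2n\leq 8$ enters: for these small periods the complementary gaps of the rotation set are wide enough that no iterated-preimage Fatou component, and no ``Julia path'' of the type produced in Lemma \ref{julia_path}, can intrude into a full height-interval around the equator. It is this absence of preimage components near the equator that prevents the critical orbit from being absorbed by a capture component and instead forces it into the immediate basin; carrying the quantitative estimates of \cite{IM1} over to the rotation numbers realized by tongues of period at most $8$ is the technical heart of the proof.

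Granting this, I would conclude by parabolic perturbation, following Section \ref{para_imp_back}. Taking a neighborhood $U$ of $q(0)$ small enough that the persistent Fatou coordinates at $p$ are defined on $U^-:=U\setminus\overline{H}$ and the escaping time of $c_0(q)$ is arbitrarily large there, the point $T_q(c_0(q))$ lies in the outgoing cylinder at Ecalle height within $(-\epsilon,\epsilon)$ of $0$, since the transit map preserves Ecalle height. By \cite[Theorem 5.1(a)]{D2} the immediate basin $\mathcal{B}^{\mathrm{imm}}_{0}$ does not shrink appreciably under perturbation, so the covering property above persists on $U^-$. Since $c_0(q)$ escapes on the $0$-side of the ring, a short additional argument — ruling out that its forward orbit lands in a ring component, using that no period-$2n$ attracting cycle survives at Ecalle heights in $(-u_h,u_h)$ by \cite[Theorem 7.3]{HS} — shows that the forward orbit of $c_0(q)$ enters $\mathcal{B}^{\mathrm{imm}}_{0}$; the antipodal critical point $c_\infty(q)=\eta(c_0(q))$ then automatically lies in $\mathcal{B}^{\mathrm{imm}}_{\infty}$, so $q\in\mathcal{H}_0$. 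As this holds for \emph{every} $q\in U^-$, we obtain $U\setminus\overline{H}\subset\mathcal{H}_0$. Running the argument for all Ecalle heights $h$ in a small interval around $0$ (whose critical Ecalle heights still lie in $(-\epsilon,\epsilon)$) then yields the sought sub-arc of $\cC$ that is accessible from $\mathcal{H}_0$.
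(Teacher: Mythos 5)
Your high-level plan --- project the fixed basins into the repelling Ecalle cylinder, use the hypothesis $2n\le 8$ to find a round sub-cylinder in the projection of $\mathcal{B}^{\mathrm{imm}}_{0}$, then perturb --- is the paper's strategy, but the two ingredients you leave vague are exactly the ones that carry the proof, and in one case your guess points the wrong way.

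First, the mechanism behind the bound $2n\le 8$ is not the width of the complementary gaps of the rotation set. The paper uses an explicit modulus computation: the projection of $\mathcal{B}^{\mathrm{imm}}_{0}$ into the repelling Ecalle cylinder is a conformal annulus of modulus $\pi/(2n\ln 2)$, which exceeds $1/2$ precisely when $n\le 4$. An annulus in $\C/\Z$ of modulus $>1/2$ contains a round annulus $(a_h,b_h)\times\R/\Z$ (this is the thick-annulus theorem, cited as \cite[Theorem I]{BDH}). This gives, for each $h$, a concrete round sub-cylinder lying entirely in the projection of $\mathcal{B}^{\mathrm{imm}}_{0}$ alone --- not merely in the union of the basins and ring components, which is what your ``covering'' statement asserts and which is both harder to establish and less useful. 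Invoking ``the quantitative estimates of \cite{IM1} carried over to the rotation numbers'' is not a substitute: it is not clear how gap widths of the rotation set on the circle at infinity control whether a preimage Fatou component intrudes near the equator in the Ecalle cylinder, and you acknowledge this is the ``technical heart'' without supplying it.

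Second, you cannot expect to work at critical Ecalle height $h$ near $0$. The round sub-cylinder $(a_h,b_h)\times\R/\Z$ is not centered at the equator; the paper shows that for negative $h$ one has $h<0<a_h$, while for sufficiently large positive $h$ one has $h>b_h>a_h>0$, and then an intermediate-value argument produces an $h_0$ (necessarily $>0$) with $a_{h_0}<h_0<b_{h_0}$. Without locating such an $h_0$, a perturbation near $q(0)$ sends the escaping critical orbit to outgoing Ecalle heights near $0$, which may lie entirely outside the projection of $\mathcal{B}^{\mathrm{imm}}_{0}$, and the argument collapses. Once $h_0$ is in hand, the escaping critical orbit lands in $\mathcal{B}_{0}$, and the last step is not ruling out landing in a ring component (which cannot happen at heights in $(a_{h_0},b_{h_0})$) but rather ruling out landing in a strictly pre-periodic component of $\mathcal{B}_{0}$. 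The paper does this by noting that the connected set $S_\epsilon$ of incoming points at the relevant heights satisfies $f_q^{\circ 2n}(S_\epsilon)\cap S_\epsilon\neq\emptyset$ when the escape time is large, forcing $S_\epsilon$ into a single, hence periodic, component of $\mathcal{B}_0$; your appeal to \cite[Theorem 7.3]{HS} addresses a different issue and does not close this gap.
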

\begin{proof}
In the dynamical plane of a parameter $q(h)$ on $\cC$ (of even parabolic period $2n$), the projection of the immediate basin of zero $\mathcal{B}_0^{\mathrm{imm}}$ (respectively the immediate basin of infinity $\mathcal{B}_{\infty}^{\mathrm{imm}}$) into the repelling Ecalle cylinder is an annulus of modulus $\frac{\pi}{2n\ln2}$ (see the proof of \cite[Theorem B]{BE} for a computation of the modulus). For $n\leq 4$, this modulus is greater than $1/2$; i.e. the corresponding annulus is not too thin. It is well-known (see \cite[Theorem I]{BDH}, for instance) that such a conformal annulus contains a round annulus centered at the origin. In other words, there is a non-degenerate interval $(a_h,b_h)$ of outgoing Ecalle heights such that in the repelling Ecalle cylinder, the round cylinder $(a_h,b_h)\times\R/\Z$ (respectively $(-b_h,-a_h)\times\R/\Z$) is contained in the projection of $\mathcal{B}_0^{\mathrm{imm}}$ (respectively in the projection of $\mathcal{B}_{\infty}^{\mathrm{imm}}$).

We can assume that as $h\to+\infty$, $q(h)$ limits at a finite cusp point, and as $h\to-\infty$, $q(h)$ goes to an ``ideal'' cusp point at infinity. Hence, perturbing a parameter $q(h)$ with sufficiently large positive $h$ outside $\overline{H}$ makes its critical point $c_0$ escape through the gate to a period $2n$ attracting Fatou component at positive heights. It follows that for sufficiently large positive $h$, we have that $h>b_h>a_h>0$. On the other hand, for negative $h$, we have $h<0<a_h$. Since critical Ecalle heights and Fatou coordinates depend continuously on the parameter, we deduce that there is an $h_0$ such that $h_0\in(a_{h_0},b_{h_0})$.

Let us fix an $\epsilon>0$ such that $h_0\in(a_{h_0}+2\epsilon,b_{h_0}-2\epsilon)$. It is known that the basin of zero $\mathcal{B}_0$ can not get too small when $q(h_0)$ is perturbed a little bit (compare \cite[Theorem~6.1(a)]{D2}). Since the critical point $c_0(q)$ and the Fatou coordinates depend continuously on the parameter, we can choose a small neighborhood $U$ of $q(h_0)$ such that for all $q\in U\setminus\overline{H}$, the round cylinder $(a_{h_0}+\epsilon/2,b_{h_0}-\epsilon/2)\times\R/\Z$ is contained in projection of $\mathcal{B}_0$ into the repelling Ecalle cylinder (note that in the outgoing cylinder of $q(h_0)$, the round cylinder $(a_{h_0},b_{h_0})\times\R/\Z$ is contained in the projection of $\mathcal{B}_0$), and such that the critical point $c_0(q)$ has incoming Ecalle height in $(a_{h_0}+\epsilon,b_{h_0}-\epsilon)$.

We claim that $U\setminus\overline{H}$ is contained in the principal hyperbolic component $\mathcal{H}_0$. Let $q\in U\setminus\overline{H}$. To finish the proof, we only need to show that $c_0(q)$ lies in $\mathcal{B}_0^{\mathrm{imm}}$. Let us denote the set of all points in $V_q^{\mathrm{in}}$ with incoming Ecalle heights in the interval $(a_{h_0}+\epsilon,b_{h_0}-\epsilon)$ by $S_\epsilon$. In particular, $f_q^{\circ 2n}(c_0(q))\in S_\epsilon$. All points in $S_\epsilon$ eventually escape through the gate, and their Ecalle heights are preserved by the transit map. Since all points in $V^{\mathrm{out}}_q$ with outgoing Ecalle height in $(a_{h_0}+\epsilon/2,b_{h_0}-\epsilon/2)$ lie in $\mathcal{B}_0$, it follows that every point of $S_\epsilon$ eventually lands in $\mathcal{B}_0$.\footnote{In the polynomial case, this already completes the proof since the basin of infinity of a polynomial is connected. However, in the present setting, $\mathcal{B}_0$ is not necessarily connected. Hence, we need to rule out the possibility that $c_0(q)$ lies in a strictly pre-periodic Fatou component of $\mathcal{B}_0$.}  As $S_\epsilon$ is a connected open set, it must be contained in a single component of $\mathcal{B}_0$. However, as $q$ is very close to $q(h_0)$, the critical point $c_0(q)$ takes a large number of iterates to escape from $V_q^{\mathrm{in}}$. It follows that $f_q^{\circ 2n}\left(S_\epsilon\right)\cap S_\epsilon\neq\emptyset$. Therefore, $S_\epsilon$ cannot be contained in a strictly pre-periodic component of $\mathcal{B}_0$; i.e. $c_0(q)\in S_\epsilon\subset\mathcal{B}_0^{\mathrm{imm}}$.

To summarize, we have shown that for all $q\in U\setminus\overline{H}$, $c_0(q)$ is contained in the corresponding immediate basin of zero. Therefore, $U\setminus\overline{H}\subset\mathcal{H}_0$ (see Figure \ref{inaccessible_2}).
\end{proof}

\begin{remark}
Numerical experiments suggest that there are tongue components (of large period) whose root parabolic arcs do not contain such bare regions.
\end{remark}

\noindent\textbf{Acknowledgments.} The authors would like to acknowledge the support of the Institute for Mathematical Sciences at Stony Brook University during part of the work on this project. The second author was supported by an endowment from Infosys Foundation.

\end{document}